\def\?[#1]{\textbf{[#1]}\marginpar{\Large{\textbf{??}}}}
\newlist{inlineroman}{enumerate*}{1}
\setlist[inlineroman]{itemjoin*={{, and }},afterlabel=~,label=\roman*.}
\newcommand{\inlineitem}[1][]{%
	\ifnum\enit@type=\tw@
	{\descriptionlabel{#1}}
	\hspace{\labelsep}
	\else
	\ifnum\enit@type=\z@
	\refstepcounter{\@listctr}\fi
	\quad\@itemlabel\hspace{\labelsep}
	\fi}
\DeclareSymbolFont{fouriersymbols}{FMS}{futm}{m}{n}
\DeclareSymbolFont{fourierlargesymbols}{FMX}{futm}{m}{n}
\DeclareMathDelimiter{\VERT}{\mathord}{fouriersymbols}{152}{fourierlargesymbols}{147}
\theoremstyle{plain}
\newtheorem{theo}{Theorem}[section]
\newtheorem{prop}{Proposition}[section]
\newtheorem{lem}[prop]{Lemma}
\newtheorem{cor}[prop]{Corollary}
\theoremstyle{remark}
\theoremstyle{definition}
\newtheorem{rem}[prop]{Remark}
\newtheorem{defi}[prop]{Definition}
\newtheorem{example}[prop]{Example}
\numberwithin{equation}{section}
\DeclareMathOperator{\supp}{supp}
\newcommand{\TP}{{\partial}_\nu}
\newcommand{\TD}{D_\nu}
\newcommand{\RNP}{\mathbb{T}^n_+}
\newcommand{\RP}{\mathbb{R}_+}
\newcommand{\RN}{\mathbb{T}^{n-1}}
\newcommand{\Cnu}{\mathcal{F}_\nu}
\newcommand{\Sob}{\mathcal{H}}
\newcommand{\TSob}{\widetilde{\mathcal{H}}}
\newcommand{\TSHARP}{\mathbb{T}^n_\sharp}
\newcommand{\overbar}[1]{\mkern 1.5mu\overline{\mkern-1.5mu#1\mkern-1.5mu}\mkern 1.5mu}
\newcommand{\UL}[1]
{
	\underline{#1}
}
\newcommand{\OL}[1]
{
	\overbar{#1}
}
\newcommand{\Ran}[3]
{
	\Sob^{#1 - 2}(#2) \times H^{#1-\UL{\mu}}(#3)
}
\newcommand*\colvec[3][]{
	\begin{pmatrix}\ifx\relax#1\relax\else#1\\\fi#2\\#3\end{pmatrix}
}
\renewcommand{\Im}{\operatorname{Im}}
\renewcommand{\Re}{\operatorname{Re}}
\begin{document}
	
	\title
	[Elliptic boundary value problems for Bessel operators]
	{Elliptic boundary value problems for Bessel operators, with applications to anti-de Sitter spacetimes}
	\author{Oran Gannot}
	\email{gannot@northwestern.edu}
\address{Department of Mathematics, Lunt Hall, Northwestern University,
	Evanston, IL 60208, USA}

	\begin{abstract}
		
		This paper considers boundary value problems for a class of singular elliptic operators which appear naturally in the study of asymptotically anti-de Sitter (aAdS) spacetimes. These problems involve a singular Bessel operator acting in the normal direction. After formulating a Lopatinski\v{\i} condition, elliptic estimates are established for functions supported near the boundary. The Fredholm property follows from additional hypotheses in the interior. This paper provides a rigorous framework for mode analysis on aAdS spacetimes for a wide range of boundary conditions considered in the physics literature. Completeness of eigenfunctions for some Bessel operator pencils is shown.
	\end{abstract}
	
	\maketitle

\section{Introduction}
The study of linear fields on asymptotically anti-de Sitter (aAdS) spaces has stimulated new interest in boundary value problems for a class of singular elliptic equations wherein the operator $D_x^2 + (\nu^2 -1/4)x^{-2}$ acts on one of the variables \cite{enciso:2013,holzegel:2012:jhde,holzegel:2015,holzegel:2013:cmp,vasy:2012:apde,warnick:2013:cmp,warnick:2015:cmp}. To formulate this class of operators more precisely, consider a product manifold $[0,\varepsilon) \times \partial X$, where $\partial X$ is compact. The model for what we call a Bessel operator has the form
\[
P(x,y,D_x,D_y) = D_x^2 + (\nu^2 - 1/4)x^{-2} + A(x,y,D_y),
\]
where $(x,y) \in (0,\varepsilon)\times \partial X$ and $A$ is a family of second order differential operators on $\partial X$ depending smoothly on $x\in [0,\varepsilon)$. The parameter $\nu$ is required to be real and strictly positive. In the study of linear waves on aAdS spacetimes, $\nu$ is related to the mass of a scalar field --- see Section \ref{sect:motivation} for more details. The condition $\nu > 0$ corresponds to the Breitenlohner--Freedman bound \cite{breitenlohner:1982:plb,breitenlohner:1982:ap}.

Boundary data for this problem are formally defined by the following weighted restrictions:
\[
\gamma_- u = x^{\nu-1/2}u|_{\partial X}, \quad \gamma_+ u = x^{1-2\nu}\partial_x (x^{\nu - 1/2}u)|_{\partial X}.
\]
Some care is needed to give precise meaning to these restrictions --- see Section \ref{subsect:traces}, along with an earlier discussion in \cite{warnick:2013:cmp}. The boundary operators in this paper are of the form $T = T^- \gamma_- + T^+ \gamma_+ $, where $T^-, \, T^+$ are differential operators on $\partial X$ of order at most one and zero, respectively. This paper is concerned with solvability of the boundary value problem
\begin{equation} \label{eq:introBVP}
\begin{cases}
P u  = f  \text{ on $X$} \\ 
T u = g \text{ on $\partial X$}
\end{cases}
\end{equation}
when $0 < \nu <1$, and the simpler equation
\begin{equation} \label{eq:introeqn}
Pu = f \text{ on $X$}
\end{equation}
when $\nu \geq 1$. No boundary conditions are imposed when $\nu \geq 1$. The difference between the cases $0 < \nu < 1$ and $\nu \geq 1$ is explained in more detail in the introduction to Section \ref{sect:ellipticBVP}.  

Ellipticity of the Bessel operator $P$ is defined in Section \ref{subsect:ellipticity}. As in the study of smooth boundary value problems, there is also a notion of ellipticity for \eqref{eq:introBVP} given by a natural Lopatinski\v{\i} condition on the pair $(P,T)$. This condition is introduced in Section \ref{subsect:lopatinskii}. Elliptic estimates are proved in Theorem \ref{theo:bvptheo}. When the operators $P,\, T$ depend polynomially on a spectral parameter $\lambda$, there is a notion of parameter-ellipticity for both $P$ and the boundary value problem \eqref{eq:introBVP}. Theorem \ref{theo:bvptheosemiclassical} provides elliptic estimates in terms of parameter-dependent norms which are uniform as $|\lambda|\rightarrow \infty$ in the cone of ellipticity.

For the global problem, consider a compact manifold  $\OL X$ where $[0,\varepsilon) \times \partial X$ is identified with a collar neighborhood of $\partial X$. Suppose that the restriction of $P$ to this collar is a Bessel operator --- see Section \ref{sect:prelim} for details. As in the case of smooth boundary value problems, estimates for $P$ near $\partial X$ may often be combined with estimates in the interior $X$ to establish the Fredholm property (including some cases where $P$ fails to be everywhere elliptic on $X$). In Section \ref{sect:fredholm}, a sufficient condition of this type is discussed. Furthermore, in the presence of a spectral parameter $\lambda$, unique solvability is established for $\lambda$ in the cone of ellipticity provided $|\lambda|$ is sufficiently large.

Section \ref{sect:motivation} recalls the notion of an aAdS metric, which is the primary motivation for this paper. It is shown that the Fourier transformed Klein--Gordon operator is indeed a Bessel operator whose order $\nu$ depends on the Klein--Gordon mass. One goal is to study the Klein--Gordon equation by Fourier synthesis once its spectral family is understood, corresponding to the study of normal or quasinormal modes. This paper provides a rigorous framework for future work in that direction; see \cite{gannot2017existence,gannot:2014:kerr} for some recent progress.

For stationary aAdS spacetimes with compact time slices and an everywhere timelike Killing field $\partial_t$, Section \ref{sect:completeness} describes a class of boundary conditions which yield a complete set of normal modes associated to a discrete set of eigenvalues. Of particular interest are boundary conditions which depend on $\partial_t$ (hence depend on the spectral parameter after a Fourier transform). This is important for the study of modes with transparent or dissipative boundary conditions, along with superradiance phenomena \cite{avis:1978:prd,holzegel:2015,winstanley:2001}.

The approach of this paper is inspired by the texts of Roitberg \cite{roitberg:1996} and Kozlov--Maz'ya--Rossman \cite{kozlov:1997} in the smooth setting. This approach is particularly suited to the singular nature of Bessel operators, and allows for the study of boundary value problems in low regularity spaces as needed in applications to general relativity --- see Section \ref{sect:fredholm}. All the methods are classical, using only homogeneity properties of differential operators. The key is exploiting the theory of ``twisted'' derivatives as first emphasized in \cite{warnick:2013:cmp}. This is based on the classical observation that the one-dimensional Bessel operator $D_x^2 + (\nu^2 - 1/4)x^{-2}$ admits a factorization as the product of a first order order operator and its adjoint; this first order operator is then treated as an elementary derivative. 

Using a variational approach, similar elliptic estimates were studied  by Holzegel and Warnick \cite{holzegel:2012:jhde,holzegel:2012wt,warnick:2013:cmp,warnick:2015:cmp}. However, only the ``classical'' self-adjoint boundary conditions were handled when $0 < \nu < 1$; these are the Dirichlet ($T = \gamma_-$) and Robin boundary conditions ($T = \gamma_+ + \beta \gamma_- $ with $\beta$ a real-valued function). The approach taken here accounts for a larger class of non-self-adjoint boundary conditions.

Using results of this paper, discreteness of quasinormal frequencies on Kerr--AdS black holes with arbitrary rotation speed is established in \cite{gannot:2014:kerr}. These frequencies replace eigenvalues in scattering problems \cite{cardoso:2014:jhep,dias:2012:cqg:b,gannot:2014:kerr,horowitz:2000:prd,konoplya:2011,warnick:2015:cmp}.  When $0 < \nu < 1$, arbitrary boundary conditions satisfying the Lopatinski\v{\i} condition may be imposed on the field (although of course one does not have any completeness statement). This generalizes earlier work of Warnick \cite{warnick:2015:cmp}, where, as noted above, only Dirichlet or Robin boundary conditions were considered.

The results of this paper should also be compared to earlier works of Vasy \cite{vasy:2012:apde} and  Holzegel  \cite{holzegel:2012:jhde} on aAdS spaces, where a more restrictive measure is used to define the space of square integrable functions. In those works, the square integrability condition is equivalent to the generalized Dirichlet boundary condition. This limits the range of applications, since different boundary conditions are employed throughout the physics literature on aAdS spaces \cite{avis:1978:prd,berkooz:2002:jhep,breitenlohner:1982:plb,breitenlohner:1982:ap,dias:2013:jhep,ishibashi:2004:cqg,witten:2001}.

There is also a general microlocal approach to degenerate boundary value problems developed by Mazzeo--Melrose \cite{mazzeo:1987} and Mazzeo \cite{mazzeo:1991}, culminating in the work of Mazzeo--Vertman \cite{mazzeo:2013} on general boundary value problems. In particular, the elliptic theory in \cite{mazzeo:2013} could likely reproduce the results of this paper, and is also applicable to much more general classes of elliptic operators. On the other hand, the approach developed here is directly motivated by the physics literature. For instance, the Sobolev spaces used in this paper were originally defined in \cite{warnick:2013:cmp} to give precise meaning to the energy renormalization implicit in the work of Breitenlohner--Freedman \cite{breitenlohner:1982:plb}. Furthermore, the parameter-dependent (or semiclassical) theory is not developed in \cite{mazzeo:2013}. There is also a simplicity advantage in using physical space methods, rather than a more sophisticated microlocal approach. It should also be stressed that Bessel operators of the precise kind studied here arise in numerous contexts outside of general relativity with negative cosmological constant, both mathematical and physical.

\section{Preliminaries} \label{sect:prelim}

\subsection{Conventions for differential operators} \label{subsect:conventions}
If $P$ is a smooth differential operator on a manifold $Y$, then in local coordinates,
\begin{equation} \label{eq:difflocalcoords}
P = \sum_{|\alpha| \leq m} a_{\alpha}(y) D_y^\alpha.
\end{equation}
The order of $P$ is said to no greater than $m$, written $\mathrm{ord}(P) \leq m$. If $\mathrm{ord}(P) \leq m$, then the symbol $\sigma_m(P)$ of $P$ with respect to $m$ is the polynomial function on $T^*Y$ given in local coordinates by
\[
\sigma_m(P)(y,\eta) = \sum_{|\alpha|=m} a_\alpha(y) \eta^\alpha, \quad (y,\eta) \in T^*Y.
\]
The space of smooth differential operators of order no greater than $m$ is denoted $\mathrm{Diff}^m(Y)$. The following convention will be useful throughout Section \ref{sect:ellipticBVP}:  if $\mathrm{ord}(P) \leq m$ with $m < 0$, then $P = 0$; conversely if $P = 0$, then $P$ can be assigned any negative order. 

The class of parameter-dependent differential operators on a manifold $Y$ is defined as follows: $P \in \mathrm{Diff}_{(\lambda)}^m(Y)$ if in local coordinates,
\[
P(y,D_y;\lambda) = \sum_{j + |\alpha| \leq m} a_{\alpha,j}(x) \lambda^j D_y^\alpha.
\]
The statement  $\mathrm{ord}^{(\lambda)}(P) \leq m$  about the parameter-dependent order of $P$ is defined by assigning to $\lambda^j$ the same weight as a derivative of order $j$. Thus the parameter-dependent principal symbol of $P$ is given by
\[
\sigma^{(\lambda)}_{m}(A) = \sum_{j + |\alpha| = m} a_{\alpha,j}(t)\lambda^j \eta^\alpha ,\quad (y,\eta,\lambda) \in T^*Y \times \mathbb{C}.
\]
If $Y$ is a compact manifold without boundary, the parameter -dependent Sobolev norms on $Y$ are defined for $s\geq 0$ by
\[
\VERT u \VERT^2_{H^s(Y)} = |\lambda|^{2s} \| u \|^2_{H^0(Y)} + \| u \|^2_{H^s(Y)},
\]
and an operator $P \in \mathrm{Diff}^m_{(\lambda)}(Y)$ is bounded $H^s(Y) \rightarrow H^{s-m}(Y)$ uniformly with respect to $|\lambda|$ in these norms.

\subsection{Manifolds with boundary} \label{subsect:manifoldwithboundary}
Let $\OL X = X \cup \partial X$ denote an $n$-dimensional manifold with compact boundary $\partial X$ and interior $X$. A boundary defining function for $\partial X$ is a function $x \in C^\infty(\OL X)$ satisfying
\[
x^{-1}(0) = \partial X, \quad x > 0 \text{ on } X, \quad {dx}|_{\partial X} \neq 0.
\]
Given $x$, there exists an open subset $\OL{W} \supseteq \partial X$, a number $\varepsilon > 0$, and a diffeomorphism $\phi : [0,\varepsilon) \times \partial X \rightarrow \OL{W}$ such that $x \circ \phi$ agrees with the projection $[0,\varepsilon) \times \partial X \rightarrow [0,\varepsilon)$. A collar of this type is said to be compatible with $x$. 
Unless otherwise specified, a manifold with boundary $\OL{X}$ will always refer to $\OL{X}$ equipped with a distinguished boundary defining function $x$ and a choice of compatible collar diffeomorphism $\phi$.

\subsection{Bessel operators} \label{subsect:bessel}
Given $\nu \in \mathbb{R}$, formally define the differential operator $\partial_\nu$ by the formula
\[
\TP = \partial_x + (\nu-1/2){x^{-1}} = x^{1/2-\nu}\partial_x x^{\nu-1/2}.
\]
Furthermore, let $\TP^* = -x^{\nu-1/2}\partial_x x^{1/2-\nu}$, which is the formal adjoint of $\TP$ with respect to Lebesgue measure on $\RP$. Similarly, let $\TD = -i \TP$ and $\TD^* = i\TP^*$. Note that
\[
|\TD|^2 := \TD^*\TD = D_x^2 + (\nu^2 - 1/4)x^{-2}
\]
is the one-dimensional Bessel operator in Schr\"odinger form.

\begin{defi}
	Let $\OL{X}$ denote a manifold with compact boundary. A differential operator $P \in \mathrm{Diff}^2(X)$ is called a Bessel operator of order $\nu > 0$ if there exist
	\[
	A = A(x,y,D_y) \in \mathrm{Diff}^2(\partial X), \quad B= B(x,y,D_y) \in \mathrm{Diff}^1(\partial X)
	\]
	depending smoothly on $x \in [0,\varepsilon)$, such that $B(0,y,D_y) = 0$ and
	\begin{equation} \label{eq:besseloperator}
	\phi^* P = |\TD|^2 + B(x,y,D_y)\TD + A(x,y,D_y).
	\end{equation}
The set of such operators is denoted by $\mathrm{Bess}_\nu(X)$.
	\end{defi}

The requirement that $|\TD|^2$ appears with unit coefficient is not at all essential. If the coefficient is a nonvanishing function smooth up to $x = 0$, then the quotient of $P$ by this coefficient is a Bessel operator as above, and this normalization does not affect any of the arguments in Sections \ref{sect:ellipticBVP}, \ref{sect:fredholm}.

The class of Bessel operators depends strongly on the pair $(x,\phi)$, where $x$ is a boundary defining function and $\phi$ is a collar diffeomorphism compatible with $x$ in the sense of Section \ref{subsect:manifoldwithboundary}. This dependence will be written as $\mathrm{Bess}_\nu(X;x,\phi)$ when necessary. On the other hand, let $(\tilde x, \tilde y)$ denote arbitrary local coordinates near $\partial X$, where $\tilde{x}$ is a local boundary defining function. If $(x,y)$ are local coordinates induced by $\phi$, then $P$ is still of the form \eqref{eq:besseloperator} (up to a smooth nonvanishing multiple) in $(\tilde x,\tilde{y})$ coordinates provided that $\tilde{x}/x$ and $\tilde{y}$ are even functions of $x$ modulo $\mathcal{O}(x^3)$. 

A (smooth, positive) density $\mu$ on $\OL X$ is said to be of product type near $\partial X$ if 
\[
\phi^*\mu = |dx| \otimes \mu_{\partial X}
\]
for a fixed density $\mu_{\partial X}$ on $\partial X$. It is of course always possible to choose a density of product type near $\partial X$. This is useful in light of the next lemma. If $\OL{X}$ is compact, then $L^2(X)$ may be defined as the space of square integrable functions with respect to any smooth density $\mu$ on $\OL X$, in particular one of product type near $\partial X$. 

\begin{lem} \label{lem:closedunderadjoint}
Suppose that $\mu$ is of product type near $\partial X$. If $P \in \mathrm{Bess}_\nu(X)$, then $P^* \in \mathrm{Bess}_\nu(X)$, where $P^*$ is the formal adjoint of $P$ with respect to $\mu$.
\end{lem}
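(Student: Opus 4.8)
The plan is to localize the computation to the collar neighborhood and to exploit the fact that taking formal adjoints is an anti-homomorphism which interacts cleanly with the twisted derivative $\TD$. Since $\mu$ is of product type, $\phi^*\mu = |dx|\otimes\mu_{\partial X}$, and pullback by the diffeomorphism $\phi$ intertwines the formal adjoint with respect to $\mu$ and the formal adjoint with respect to $|dx|\otimes\mu_{\partial X}$; thus $\phi^*(P^*) = (\phi^*P)^*$, where the right-hand adjoint is computed on $(0,\varepsilon)\times\partial X$ with the product measure. In the interior $P^*$ is automatically a smooth second order operator, so only the structure near $\partial X$ must be verified. Writing $\phi^* P = |\TD|^2 + B\TD + A$ and using $(ST)^* = T^*S^*$ together with the self-adjointness $(|\TD|^2)^* = (\TD^*\TD)^* = |\TD|^2$, I would obtain $(\phi^*P)^* = |\TD|^2 + \TD^* B^* + A^*$. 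Here $A^*$ and $B^*$ denote the adjoints taken fibrewise over $\partial X$ with respect to $\mu_{\partial X}$ (with $x$ a smooth parameter), so that $A^*\in\mathrm{Diff}^2(\partial X)$ and $B^*\in\mathrm{Diff}^1(\partial X)$ depend smoothly on $x$, and the hypothesis $B(0,y,D_y)=0$ forces $B^*(0,y,D_y)=0$.

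The heart of the matter is to rewrite $\TD^* B^*$ in the required normal form $\widetilde B\,\TD + \widetilde A$. From $\TP^* = -\partial_x + (\nu-1/2)x^{-1}$ and $\TP = \partial_x + (\nu-1/2)x^{-1}$ I would record the identity $\TD^* = \TD + i(2\nu-1)x^{-1}$. Commuting $\TD$ past the $y$-operator $B^*$ produces only an $x$-derivative of its coefficients, namely $\TD B^* = B^*\TD - i(\partial_x B^*)$, where $\partial_x B^*\in\mathrm{Diff}^1(\partial X)$ is again a smooth family. Substituting these two facts yields $(\phi^*P)^* = |\TD|^2 + B^*\TD + \bigl(-i(\partial_x B^*) + i(2\nu-1)x^{-1}B^* + A^*\bigr)$, so that the candidate $\widetilde B = B^*$ vanishes at $x=0$ as required.

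The one genuinely singular term is $i(2\nu-1)x^{-1}B^*$, and this is where I expect the main (though mild) obstacle to lie: a priori it is only $\mathcal{O}(x^{-1})$ near the boundary. The resolution is that $B^*(0,y,D_y)=0$, so by Hadamard's lemma each coefficient of $B^*$ equals $x$ times a function smooth up to $x=0$; hence $x^{-1}B^*$ extends to an element of $\mathrm{Diff}^1(\partial X)$ depending smoothly on $x\in[0,\varepsilon)$. Consequently $\widetilde A := -i(\partial_x B^*) + i(2\nu-1)x^{-1}B^* + A^*$ lies in $\mathrm{Diff}^2(\partial X)$ with smooth dependence on $x$, and the decomposition $(\phi^*P)^* = |\TD|^2 + \widetilde B\,\TD + \widetilde A$ exhibits $P^*$ as a Bessel operator of order $\nu$. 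It remains only to confirm the bookkeeping that taking adjoints preserves both smoothness in the parameter $x$ and the orders of $A$ and $B$, which is routine.
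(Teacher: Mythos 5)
Your proposal is correct and follows essentially the same route as the paper: both compute $(\phi^*P)^* = |\TD|^2 + \TD^*B^*+A^*$ with fibrewise adjoints over $\partial X$, commute the twisted derivative past $B^*$, and absorb the residual terms into the zeroth-order part, using the factorization $B^* = xB_1^*$ (your appeal to Hadamard's lemma is exactly the paper's hypothesis-driven writing $B = xB_1$) to see that the singular term $x^{-1}B^*$ is in fact smooth up to $x=0$. The only cosmetic difference is the order of operations — you substitute $\TD^* = \TD + i(2\nu-1)x^{-1}$ before commuting, while the paper commutes first via $\TD^*B^* = B^*\TD^* + [D_x,B^*]$ and then converts $B^*\TD^*$ to $B^*\TD$ — which yields the same decomposition.
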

\begin{proof}
The pullback of $P^*$ to $(0,\varepsilon)\times \partial X$ is given by	
\[
|\TD|^2 + \TD^* B^* + A^*,
\]
where $A^*,B^*$ are the formal adjoints of $A, B$ with respect to $\mu_{\partial X}$. On the other hand, 
\[
\TD^* B^* = B^* \TD^* + [D_x,B^*].
\]
Furthermore, since $B = xB_1$ for a first order operator $B_1$ on $\partial X$ depending smoothly on $x\in[0,\varepsilon)$, it follows that
\[
B^*\TD^* = xB_1^*\TD^* = B_1^*(xD_x - i(1/2-\nu)) = B^*\TD - i(1/2-\nu)B_1^*,
\]
which completes the proof since the multiple of $B_1^*$ as well as $[D_x,B^*]$ may be absorbed into $A^*$.
\end{proof}

For the local theory, it is convenient to work on $\RNP = \RP \times \RN$, where $\RN = \left( \mathbb{R}/2\pi \mathbb{Z} \right)^{n-1}$. The set of Bessel operators on $\RNP$ is defined with respect to the canonical product decomposition on $\RNP$. Thus $P \in \mathrm{Bess}_\nu(\RNP)$ if
\[
P(x,y,\TD,D_y) =  |\TD|^2 + \sum_{|\beta|\leq 1} b_\beta(x,y) D_y^\beta \TD + \sum_{|\alpha| \leq 2} a_{\alpha}(x,y)D_y^\alpha
\]
for $b_\beta \in xC^\infty(\OL{\RNP})$ and $a_{\alpha} \in C^\infty(\OL{\RNP})$. When working on $\RNP$, the functions $b_\beta, \, a_{\alpha}$ are referred to as the coefficients of $P$. 

Fix a coordinate chart $Y \subseteq \partial X$ and a diffeomorphism $\theta : Y \rightarrow V$, where $V$ is an open subset of $\mathbb{T}^{n-1}$. Setting 
\[
U = \phi([0,\varepsilon) \times Y),
\]
the map $\psi : U \rightarrow [0,\varepsilon) \times V$ given by $\psi = (1\times \theta)\circ \phi^{-1}$ defines a boundary coordinate chart on $\OL X$. Given $P \in \mathrm{Bess}_\nu(X)$, there clearly exists $P_U \in \mathrm{Bess}_\nu(\RNP)$ such that
\[
Pu = P_U(u \circ \psi)
\]
for each $u \in {C}_c^\infty(U^\circ)$. Furthermore, it is always possible to arrange it so that the coefficients of $P_U$ (in the sense of the previous paragraph) are constant outside a compact subset of $\OL{\RNP}$.

\subsection{Ellipticity and the boundary symbol} \label{subsect:ellipticity}

Given $P \in \mathrm{Bess}_\nu(X)$ which near $\partial X$ has the form
\[
P = |\TD|^2 + B\TD + A,
\] 
let $A_0(y,D_y) = A(0,y,D_y)$. Ellipticity of $P$ at a point $p \in \partial X$ is defined via the function
\begin{equation} \label{eq:besselsymbol}
\xi^2 + \sigma_2(A_0)(p,\eta),
\end{equation}
which is a homogeneous polynomial of degree two in $(\xi,\eta) \in \mathbb{R} \times T_p^* \partial X$.

\begin{defi} \label{def:elliptic}
The Bessel operator $P \in \mathrm{Bess}_\nu(X)$ is said to be (properly) elliptic at $p \in \partial X$ if for each $\eta \in T_p^*\partial X \setminus 0$ the polynomial
\begin{equation} \label{eq:properlyellipticpolynomial}
\xi \mapsto \xi^2 + \sigma_2(A_0)(p,\eta)
\end{equation}
has no real roots.
\end{defi}

Thus, ellipticity implies the existence of nonreal roots $\pm \xi(p,\eta)$, where $\Im \xi(p,\eta) < 0$ by convention. For each $(p,\eta) \in T^* \partial X \setminus 0$, the symbol $\sigma_2(A_0)(p,\eta)$ determines a family of one dimensional Bessel operators given by
\begin{equation} \label{eq:boundarysymbol}
\widehat{P}_{(p,\eta)} = |\TD|^2 + \sigma_2(A_0)(p,\eta).
\end{equation}
The operator $\widehat{P}_{(p,\eta)}$ is called the boundary symbol operator of $P$. Let $\mathcal{M}_+(p,\eta)$ denote the space of solutions to the equation
\[
\widehat{P}_{(p,\eta)}u = 0
\]
which are bounded as $x\rightarrow \infty$. Ellipticity at $p\in \partial X$ implies that $\dim \mathcal{M}_+(p,\eta) = 1$ for each $\eta \in T_p^* \partial X \setminus 0$. Indeed, the space of solutions to $\widehat{P}_{(p,\eta)}u = 0$ is spanned by the modified Bessel functions 
\[
\left\lbrace x^{1/2}K_\nu(i\xi(p,\eta)x),\, x^{1/2}I_\nu(i\xi(p,\eta)x)\right\rbrace.
\]
Since $\Re i\xi(p,\eta) > 0$, it follows that 
\[
x^{1/2}K_\nu(i\xi(p,\eta)x) = \mathcal{O}\left(e^{-x/C}\right), \quad x \rightarrow \infty, 
\]
while the second solution grows exponentially \cite[Chapter 7.8]{olver:2014}. Thus only the first solution can possibly lie in $\mathcal{M}_+(p,\eta)$.

\subsection{Parameter-dependent Bessel operators} \label{subsect:semiclassicalbessel}

\begin{defi} \label{def:semiclassicalbesseloperator}
Let $\OL{X}$ denote a compact manifold with boundary as in Section \ref{subsect:manifoldwithboundary}. A differential operator $P(\lambda) \in \mathrm{Diff}^2_{(\lambda)}(X)$ is called a parameter-dependent Bessel operator of order $\nu > 0$ if there exist
\[
A(\lambda) = A(x,y,D_y;\lambda) \in \mathrm{Diff}^2_{(\lambda)}(\partial X), \quad B(\lambda) = B(x,y,D_y;\lambda)\in \mathrm{Diff}^1_{(\lambda)}(\partial X)
\]
depending smoothly on $x \in [0,\varepsilon)$, such that $B(0,y,D_y;\lambda) = 0$ and 
\begin{equation} \label{eq:parameterbesseloperator}
\phi^* P(\lambda) = |\TD|^2 + B(x,y,D_y;\lambda)\TD + A(x,y,D_y;\lambda).
\end{equation}
The set of such operators is denoted by $\mathrm{Bess}^{(\lambda)}_\nu(X)$.
\end{defi}

Ellipticity with parameter is defined by replacing the standard principal symbol of $A$ with its parameter-dependent version. Begin by fixing an angular sector $\Lambda \subseteq \mathbb{C}$.

\begin{defi} \label{def:semiclassicalelliptic}
A parameter-dependent Bessel operator $P(\lambda)$ is said to be (properly) parameter-elliptic with respect to $\Lambda$ at $p \in \partial X$ if for each $(\eta,\lambda) \in T_p^*\partial X \times \Lambda \setminus 0$, the polynomial
\begin{equation} \label{eq:besselsymbolsemiclassical}
\xi \mapsto \xi^2 + \sigma_2^{(\lambda)}(A_0)(p,\eta;\lambda)
\end{equation}
has no real roots.
\end{defi}

Similarly, for $(p,\eta,\lambda) \in T^* \partial X \times \Lambda \setminus 0$, define
\[
\widehat{P}_{(p,\eta;\lambda)} = |\TD|^2 + \sigma^{(\lambda)}_2 (A_0)(p,\eta;\lambda),
\]
and then let $\mathcal{M}_+(p,\eta;\lambda)$ denote the space of solutions to $\widehat{P}_{(p,\eta;\lambda)}u=0$ which are bounded as $x\rightarrow \infty$. As in Section \ref{subsect:ellipticity}, this space is one-dimensional.

\section{Motivation: asymptotically anti-de Sitter manifolds} \label{sect:motivation}

This section recalls the notion of an asymptotically anti-de Sitter (aAdS) metric. Then, a convenient expression for the Klein--Gordon equation is given with respect to a certain product decomposition near the conformal boundary. By means of a Fourier transform, the initial boundary value problem for the Klein--Gordon equation is reduced to the study of the boundary value problem for a stationary partial differential equation depending polynomially on the spectral parameter. The corresponding operator is a Bessel operator whose order $\nu$ depends on the Klein--Gordon parameter; the condition $\nu > 0$ translates into the well-known Breitenlohner--Freedman bound.

\subsection{aAdS metrics}

Let $\OL{X}$ denote an $n$-dimensional compact manifold with boundary as in Section \ref{subsect:manifoldwithboundary}, and set $\OL M = \mathbb{R} \times \OL X$. Here $t \in \mathbb{R}$ will denote a global time coordinate. A boundary defining function $\rho$ on $M$ satisfying $\partial_t \rho = 0$ is said to be stationary. There is an obvious one-to-one correspondence between stationary boundary defining functions on $\OL M$ and boundary defining functions $x$ for $\OL X$.

\begin{defi}
A smooth Lorentzian metric $g$ on $M$ is said to asymptotically simple if there exists a boundary defining function $\rho \in C^\infty(\OL M)$ with the following properties.
\begin{enumerate} \itemsep6pt
\item the Lorentzian metric $\bar{g} = \rho^2 g$ admits a smooth extension to $\OL M$,
\item the restriction $\bar{g}|_{T\partial  M}$ is again Lorentzian.
\end{enumerate}
\end{defi}
The map $g \mapsto \bar{g}|_{\partial  M}$ depends on $g$ and a choice of boundary defining function. However, the conformal class $[\bar{g}|_{T \partial  M}]$ depends only on $g$, since any two boundary defining functions differ by a positive multiple. Also note that if $g$ is asymptotically simple, then $d\rho$ is spacelike for $\bar{g}|_{\partial  M}$.

\begin{defi}
An asymptotically simple manifold $(\OL M,g)$ is said to be aAdS if there exists a boundary defining function $\rho$ such that 
$|d\rho|_{\bar g}^2 = -1$ on $\partial M$.
\end{defi}

The aAdS property does not depend on the choice of boundary defining function. In addition to being aAdS, suppose that $g$ is stationary in the sense that $\partial_t$ is a Killing vector field for $g$. For the remainder of this section, all metrics are assumed to be stationary.

The following is a well-known observation of Graham--Lee \cite{graham:1991}, adapted to the Lorentzian setting.
\begin{lem} \label{lem:geodesicbdf}
Suppose that $(\OL M, g)$ is an aAdS spacetime. If $g$ and $\gamma_0 \in [\bar{g}|_{T\partial  M}]$ are stationary, then there exists a unique stationary boundary defining function $x$ with the following properties.
\begin{enumerate} \itemsep6pt
\item $x^2 g|_{T\partial  M} = \gamma_0$,
\item $|dx|^2_{x^2 g} = -1$ in a collar neighborhood of $\partial M$.
\end{enumerate}
\end{lem}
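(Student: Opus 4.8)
The plan is to reduce condition (2) to an eikonal equation with respect to the \emph{fixed} (singular) metric $g$, and then to solve it by the method of characteristics along the transversal flow already supplied by Lemma \ref{lem:productdecomp}. The starting observation is the identity $|dx|^2_{x^2 g} = x^{-2}|dx|^2_g$, valid for any positive $x$, which shows that condition (2) is equivalent to $|dx|^2_g = -x^2$, and hence, after dividing by $x^2$, to the clean eikonal equation $|d\log x|^2_g = -1$. In particular the apparent dependence of the conformal metric $x^2 g$ on the unknown $x$ disappears entirely, and the problem becomes that of finding $u = \log x$ with $g^{-1}(du,du) = -1$ and the correct behavior at $\partial M$.

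First I would fix a stationary reference function $x_0$ as produced by Lemma \ref{lem:stationarygamma}, so that $x_0^2 g|_{T\partial M} = \gamma_0$, and seek the desired function in the form $x = e^{w}x_0$ with $w$ stationary. Since $x^2 g|_{T\partial M} = e^{2w}\gamma_0$, condition (1) is equivalent to $w|_{\partial M} = 0$. Conjugating by the smooth extension $\bar{g} = x_0^2 g$ (using $g^{-1} = x_0^2\,\bar{g}^{-1}$) and expanding $d\log x = x_0^{-1}dx_0 + dw$, the eikonal equation $|d\log x|^2_g = -1$ becomes
\[
2 x_0\, \bar{g}^{-1}(dx_0, dw) + x_0^2\, |dw|^2_{\bar{g}} = -\bigl(1 + |dx_0|^2_{\bar{g}}\bigr).
\]
The right-hand side vanishes on $\partial M$: indeed $1 + |dx_0|^2_{\bar{g}} = 1 + |dx_0|^2_{x_0^2 g}$, and the aAdS property gives $|dx_0|^2_{x_0^2 g} = -1$ on $\partial M$ for every boundary defining function, as a short computation confirms. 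By Hadamard's lemma we may write $1 + |dx_0|^2_{\bar{g}} = x_0\,\psi$ with $\psi$ smooth up to the boundary, and dividing by $2x_0$ yields the non-degenerate equation $\bar{g}^{-1}(dx_0,dw) + \tfrac12 x_0\, |dw|^2_{\bar{g}} = -\tfrac12\psi$.

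Next I would solve this as a non-characteristic first-order equation for $w$ with initial condition $w|_{\partial M} = 0$. Its characteristic vector field is $\nabla_{\bar{g}}x_0$, which is transverse to $\partial M$ precisely because $|dx_0|^2_{\bar{g}} = -1 \neq 0$ there; this non-characteristic condition is exactly the aAdS normalization. The flow of $\nabla_{\bar{g}}x_0$ is the map $\phi$ of Lemma \ref{lem:productdecomp}, so in the collar $\overline{\mathcal{C}}$ the associated coordinates $(s,\cdot)$ are already available and $\bar{g}^{-1}(dx_0,dw) = \partial_s w$. In these coordinates the equation is a first-order evolution equation in $s$, with the genuinely nonlinear term $\tfrac12 x_0|dw|^2_{\bar{g}}$ carrying the small factor $x_0$; I would solve it by the full method of characteristics (Charpit--Lagrange), integrating the characteristic system in phase space from the initial strip determined by $w|_{\partial M} = 0$ together with the equation itself, and reading off $w$ as a smooth function near $\partial M$. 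Smooth dependence on initial conditions gives smoothness of $w$, and uniqueness of the characteristic initial value problem gives uniqueness of $w$, hence of $x = e^{w}x_0$ as a boundary defining function near $\partial M$.

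Finally, stationarity follows by invariance: the data $g$, $\gamma_0$, $x_0$, $\bar{g}$, the characteristic field $\nabla_{\bar{g}}x_0$, and the initial condition $w = 0$ are all invariant under time translations (because $\partial_t$ is Killing for $g$ and preserves $x_0$), so the time-translate of any solution solves the identical characteristic problem; uniqueness then forces $w$, and hence $x$, to be stationary. The same uniqueness statement proves the uniqueness clause of the lemma, since any stationary $x$ satisfying (1)--(2) yields, via $u = \log x$, a solution of the identical characteristic initial value problem and so coincides with the one constructed. The main obstacle is the genuinely nonlinear (Hamilton--Jacobi) character of the reduced equation: one must verify that the projected characteristics foliate a full collar neighborhood of $\partial M$ diffeomorphically, so that the solution is smooth and single-valued and $x = e^{w}x_0$ is a legitimate boundary defining function --- and this rests entirely on the transversality furnished by the aAdS normalization $|dx_0|^2_{\bar{g}} = -1$.
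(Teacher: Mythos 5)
Your proposal is correct and follows essentially the same route as the paper's proof (which adapts Graham's Lemma 5.2): the ansatz $x = e^{w}x_0$ with a stationary reference function from Lemma \ref{lem:stationarygamma} leads to exactly the paper's equation $2(\nabla_{\bar g}x_0)w + x_0|dw|^2_{\bar g} = -x_0^{-1}(1+|dx_0|^2_{\bar g})$, with the aAdS normalization guaranteeing both that the right-hand side is smooth up to $\partial M$ and that the problem is noncharacteristic, and with stationarity and uniqueness obtained by time-translation invariance of the characteristic initial value problem with $w|_{\partial M}=0$. Your extra observations --- the conformal invariance of $|d\rho|^2_{\bar g}=-1$ at $\partial M$ and the Charpit treatment of the genuinely nonlinear term --- simply make explicit what the paper leaves implicit.
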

\begin{proof}
The proof of \cite[Lemma 5.2]{graham:1991} goes through unchanged. 
\end{proof} 
If $x$ satisfies the condition described in Lemma \ref{lem:geodesicbdf}, then $x$ is said to be a geodesic boundary defining function. Note the integral curves of $\nabla_{x^2 g} x$ are geodesics of $x^2 g$ near $\partial M$. Thus the Gauss lemma implies that
\begin{equation} \label{eq:modelmetric}
\phi^*(g) = \frac{-ds^2 + \gamma(s)}{s^2}
\end{equation}
on $[0,\varepsilon) \times \partial M$, where $\phi : [0,\varepsilon) \times \partial M \rightarrow \OL{\mathcal{C}}$ is the collar diffeomorphism obtained by the flow-out of $-\nabla_{x^2 g}x$. Here $s\mapsto \gamma(s)$ is a smooth family of stationary Lorentzian metrics on $\partial M$ such that $\gamma(0) = \gamma_0$. Furthermore, $\phi^*x = s$, so by an abuse of notation it is convenient to write $g = x^{-2}(-dx^2 + \gamma(x))$ on the collar neighborhood $\OL{\mathcal{C}}$.

\begin{defi} \label{defi:modulox3} The metric $g$ as in Lemma \ref{lem:geodesicbdf} is said to even modulo $\mathcal{O}(x^3)$ (in the sense of Guillarmou \cite{guillarmou:2005}) if there exists a two-tensor $\gamma_1$ on $\partial M$ such that
\[
\gamma(s) = \gamma_0 + s^2 \gamma_1 + \mathcal{O}(s^3).
\]
\end{defi}

\noindent As in \cite[Proposition 2.1]{guillarmou:2005}, this evenness property is instrinsic to the conformal class $[\bar{g}|_{T\partial  M}]$. The fundamental class of aAdS metrics which are even modulo $\mathcal{O}(x^3)$ are the Einstein aAdS metrics; see \cite[Section 2]{anderson2004structure} for example. This includes all of the physically motivated aAdS spaces. The Einstein condition also enforces additional conditions on the expansion of $\gamma(s)$ which are not exploited here (in the asymptotically hyperbolic setting, see Mazzeo--Pacard \cite[Section 2]{mazzeo2011constant}).

Let $\phi : [0,\varepsilon) \times \partial M \rightarrow \OL{\mathcal{C}}$ denote the collar diffeomorphism associated to a geodesic boundary defining function $x$ as above, and let $t : \OL{M} \rightarrow \mathbb{R}$ be the time function associated with the identification $\OL{M} = \mathbb{R} \times \OL{X}$. If $t_0$ denotes the restriction of $t$ to $\partial M$, then $t_0$ gives a time coordinate on $[0,\varepsilon)\times \partial M$. In general it is not true that $\phi^*(t) = t_0$ unless $x^{-2}g^{-1}(dx,dt)$ vanishes identically. On the other hand, by stationarity there is a $t_0$-invariant function $h$ such that $\phi^*(t) = t_0 + h$, so the map 
\begin{equation} \label{eq:phi0}
\phi_0(x,y,t_0) = \phi(x,y,t_0-h)
\end{equation}
does satisfy $\phi_0^*(t) = t_0$, and $\phi_0 : [0,\varepsilon)\times \partial M \rightarrow \OL{\mathcal{C}}$ is still compatible with $x$.  Furthermore, if $\{t=0\}$ meets $\partial M$ orthogonally with respect to $x^2 g$, then $dh|_{\partial M} = 0$ as well.

\subsection{The Klein--Gordon equation} \label{subsect:kleingordon}
Fix a stationary aAdS spacetime $(\OL M, g)$ and a geodesic boundary defining function $x$ as in Lemma \ref{lem:geodesicbdf}. Furthermore, suppose that $g$ is even modulo $\mathcal{O}(x^3)$ in the sense of Definition \ref{defi:modulox3}. In light of the product decomposition $\eqref{eq:modelmetric}$ it follows that near $\partial M$,
\[
\Box_g = x^2 D_x^2 + i(n - 1 + e(x))xD_x + x^2\Box_{\gamma(x)},
\]
where $x \mapsto e(x)$ is a smooth family of functions on $\partial M$ such that 
\[
e(x) = x^2 e_0 + \mathcal{O}(x^3)
\] 
for some $e_0 \in C^\infty(\partial M)$ and $\partial_{t} e(x) = 0$. Indeed, $e(x) = (1/2) x\partial_x \log(\det \gamma(x))$, and $\det \gamma(x) = \det \gamma_0 + \mathcal{O}(x^2)$. Given $\nu > 0$, let
\[
P_g = x^{-(n+1)/2}( \Box_g + \nu^2 - n^2/4) x^{(n-1)/2},
\]
which corresponds to conjugating the Klein--Gordon operator with mass $\nu^2 - n^2/4$ by $x^{(n-1)/2}$ and then dividing by $x^2$. Explicitly,
\begin{equation} \label{eq:Pg}
P_g = D_x^2 +(\nu^2 - 1/4)x^{-2} + ix^{-1}e(x)D_x + \left( \tfrac{n-1}{2} \right) x^{-2}e(x) + \Box_{\gamma(x)}.
\end{equation}
In what follows, $\OL{X}$ will be identified with the time slice $\{t=0\}$ within $\OL{M}$, and functions on $\OL{X}$ are  identified with $t$-invariant functions on $\OL{M}$. Using that $g$ is stationary, it is possible to define a differential operator on $X$, depending on $\lambda \in \mathbb{C}$, by
\begin{equation} \label{eq:stationarykleingordon}
P(\lambda)u = e^{i\lambda t} P_g (e^{-i\lambda t}u),
\end{equation}
where $u \in C^\infty(X)$.

\begin{lem} \label{lem:stationarykleingordon}
Let $(\OL M, g)$ denote a stationary aAdS spacetime. Suppose that $x$ is a geodesic boundary defining function, and $g$ is even modulo $\mathcal{O}(x^3)$. Let $\phi_0$ be given by \eqref{eq:phi0}. If $\OL{X}$ meets $\partial M$ orthogonally with respect to $x^2 g$ and $P(\lambda)$ is given by \eqref{eq:stationarykleingordon}, then 
\[
P(\lambda) \in \mathrm{Bess}^{(\lambda)}_\nu(X;x,\phi_0).
\]
\end{lem}
\begin{proof}
Note that $\phi_0 = \phi \circ H$, where $H(x,y,t_0) = (x,y,t_0-h(x,y))$ in the notation of \eqref{eq:phi0}. As noted following \eqref{eq:phi0}, the differential of $h$ vanishes along $\partial M$. It remains to combine this observation with the expression \eqref{eq:Pg} for $P_g$ with respect to the product structure induced by $\phi$.
\end{proof}

\begin{lem} \label{lem:adsiselliptic} The following ellipticity properties hold.
	\begin{enumerate} \itemsep6pt
		\item If $\partial_t$ is timelike for $\gamma_0$, then $P(\lambda)$  is elliptic on $\partial X$ in the sense of Section \ref{subsect:ellipticity}.
		
		\item If $\partial_t$ and $dt$ are both timelike for $\gamma_0$, then $P(\lambda)$ is parameter-elliptic on $\partial X$ in the sense of Section \ref{subsect:semiclassicalbessel} with respect to any angular sector $\Lambda\subseteq \mathbb{C}$ disjoint from $\mathbb{R}\setminus 0$.
	\end{enumerate}
\end{lem}
\begin{proof}
	See \cite[Section 3.2]{vasy:2013} and \cite[Lemma 2.3]{gannot:2014:kerr}.
	\end{proof}

\section{Function spaces and mapping properties} \label{sect:functionmapping}

The purpose of this section is to define Sobolev-type spaces $\Sob^{s}$ based on the elementary derivatives $\TD$ and $|\TD|^2$, both on $\RNP$ and on a manifold with boundary. Finally, it is shown that Bessel operators act continuously between these spaces. 

The exposition is closest to that of \cite{warnick:2013:cmp}, where these ``twisted'' Sobolev spaces were first introduced in the context of aAdS geometry. The relationship between $\Sob^1$ and certain weighted Sobolev spaces was exploited both in \cite{warnick:2013:cmp} and also in the closely related study of asymptotically hyperbolic spaces in \cite{gonzalez:2010}. 

Throughout this section the spaces $L^2(\RNP)$ and $L^2(\RN)$ are defined with respect to ordinary Lebesgue measure, and $H^m(\RN)$ denotes the standard Sobolev space of order $m$ on $\RN$. Here $\RN$ will always denote the boundary $\partial \RNP$. The notation $\Sob^0(\RNP) := L^2(\RNP)$ is also frequently used.

\subsection{The weighted space $H^1_\mu(\RNP)$} \label{subsect:weighted} Given $\mu \in \mathbb{R}$, let 
\[
H_\mu^1(\RNP) = \{ u \in {\mathscr{D}}'(\RNP): x^{\frac{\mu}{2}} D^\alpha u \in L^2(\RNP) \text{ for } |\alpha| \leq 1 \},
\]
which is a Hilbert space under the norm 
\[
\| u \|^2_{H^1_\mu(\RNP)} = \sum_{|\alpha| \leq 1} \| x^{\frac{\mu}{2}} D^\alpha u \|^2_{L^2(\RNP)}.
\]
Furthermore, let $\dot{H}_\mu^1(\RNP)$ denote the closure of $C_c^\infty(\RNP)$ in $H_\mu^1$. These spaces are well studied; see Lions \cite{lions:1961} or Grisvard \cite{grisvard:1963} for example.
\begin{lem} \label{lem:weightedensity} The following hold for $\mu \in \mathbb{R}$.
\begin{enumerate} \itemsep6pt
\item If $|\mu| < 1$, then $C_c^\infty(\OL{\RNP})$ is dense in $H^1_\mu(\RNP)$.
\item If $|\mu| \geq 1$, then $H^1_\mu(\RNP)= \dot{H}^1_\mu(\RNP)$.
\end{enumerate}
\end{lem}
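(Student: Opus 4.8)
The plan is to reduce both parts to a single mechanism near $x=0$ governed by the weighted Hardy inequality, the two regimes being distinguished only by the behaviour of a boundary term. Two preliminary reductions are valid for every $\mu$. Cutting off with $\chi(x/R)$, where $\chi\equiv 1$ near $0$, reduces matters to $u$ supported in a fixed strip $\{x\le M\}$, since the commutator $R^{-1}\chi'(x/R)u$ has weighted $L^2$-norm bounded by a tail of $\int x^\mu|u|^2$; and since $\RN$ is a compact torus, convolution in $y$ with a mollifier commutes with multiplication by $x^{\mu/2}$ and with restriction to $x=0$, so I may further assume $u$ is smooth in $y$. It then remains to approximate in the $x$ variable near the boundary, where the essential tool is the one-dimensional Hardy inequality
\[
\int_0^\infty x^{\mu-2}|u|^2\,dx\le\frac{4}{(\mu-1)^2}\int_0^\infty x^\mu|\partial_x u|^2\,dx,
\]
proved by writing $x^{\mu-2}=(\mu-1)^{-1}\partial_x(x^{\mu-1})$, integrating by parts, and applying Cauchy--Schwarz. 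Its use for a given $u$ rests entirely on the vanishing of the boundary term $x^{\mu-1}|u(x)|^2$ as $x\to 0$.

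The central observation is a zero-trace lemma valid for all $\mu<1$: if $v\in H^1_\mu(\RNP)$ has vanishing trace at $x=0$, then $v\in\mathring{H}^1_\mu(\RNP)$. Indeed, because $\int_0^x t^{-\mu}\,dt<\infty$ when $\mu<1$, Cauchy--Schwarz gives $|v(x)|\le C x^{(1-\mu)/2}\varepsilon(x)$ with $\varepsilon(x)\to 0$, so $x^{\mu-1}|v|^2\to 0$, Hardy applies, and $x^{(\mu-2)/2}v\in L^2$. The plain cutoff $\zeta_\delta$, equal to $0$ for $x\le\delta$ and $1$ for $x\ge 2\delta$, then satisfies $\zeta_\delta v\to v$, because the only dangerous term $\partial_x\zeta_\delta\cdot v$ contributes $\int_{\delta\le x\le 2\delta}x^{\mu-2}|v|^2$, a vanishing tail; as $\zeta_\delta v$ is supported in $\{x\ge\delta\}$, it is reached by $C_c^\infty(\RNP)$ through interior mollification. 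For part (i), $-1<\mu<1$, the trace $\gamma u\in L^2(\RN)$ now exists and is continuous, and is smooth after the $y$-mollification; subtracting $g=\chi(x)\,\gamma u\in C_c^\infty(\OL{\RNP})$ leaves $v=u-g$ with zero trace, whence $u=g+v\in\overline{C_c^\infty(\OL{\RNP})}$.

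For part (ii) the goal is $H^1_\mu=\mathring{H}^1_\mu$, i.e. that boundary values may be removed at no cost. When $\mu\le -1$ the trace still exists but is forced to vanish, since $\int_0^1 x^\mu|u|^2\,dx<\infty$ is incompatible with a nonzero limit; the zero-trace lemma then applies verbatim. When $\mu>1$ there may be no trace, but the weight suppresses the boundary term $x^{\mu-1}|u|^2$ directly, so Hardy (whose constant is finite for $\mu\ne 1$) holds for every $u\in H^1_\mu$ and the plain cutoff again gives $u\in\mathring{H}^1_\mu$; the vanishing of the boundary term for a general element is the one point requiring a standard limiting argument along a sequence $x\to 0$.

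The genuine obstacle is the endpoint $\mu=1$, where Hardy's constant blows up and the boundary term no longer vanishes. Here I would replace $\zeta_\delta$ by a logarithmic cutoff $\eta_\delta$ that is linear in $\log x$ on $[\delta^2,\delta]$, so that $\partial_x\eta_\delta=(x\log(1/\delta))^{-1}$ and the commutator equals $(\log(1/\delta))^{-2}\int_{\delta^2\le x\le\delta}x^{-1}|u|^2$. The crude estimate $|u(x)|^2\lesssim 1+\log(1/x)$ only shows this stays bounded, so the decisive ingredient is the \emph{critical} Hardy inequality
\[
\int_0^1 \frac{|u|^2}{x\,(\log(1/x))^2}\,dx\le C\int_0^1 x\,|\partial_x u|^2\,dx,
\]
whose left-hand side is a convergent integral: its tail over $[\delta^2,\delta]$ tends to $0$ and, even after multiplication by the factor $(\log(1/\delta))^2$ lost in the transition, still yields $\eta_\delta u\to u$. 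Establishing this borderline inequality, together with its routine companion (the sequential justification of the boundary terms in the non-endpoint cases), is the only delicate part; everything else is the standard truncate--mollify--cutoff scheme.
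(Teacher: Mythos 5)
Your argument is correct, but it is worth noting that the paper does not actually prove this lemma: it simply cites Lions and Grisvard, so what you have written is a self-contained reconstruction of the classical argument from those references. The structure — truncate in $x$ and mollify in $y$, subtract a smooth extension of the trace when $|\mu|<1$, reduce everything to the vanishing of the boundary term $x^{\mu-1}|u(x)|^2$ in the weighted Hardy inequality, and treat the endpoint $\mu=1$ with a logarithmic cutoff and the critical Hardy inequality — is exactly the standard route, and you have correctly isolated the two genuinely delicate points. Two small repairs are needed where you left things as claims. First, your critical Hardy inequality is false as literally stated, since with $u\equiv 1$ the left side diverges at $x=1$ (where $\log(1/x)\to 0$) while the right side vanishes; you must either restrict the left-hand integral to $(0,1/2]$ and add an interior term such as $|u(1/2)|^2$, or replace $\log(1/x)$ by $\log(e/x)$ and add $\|x^{1/2}u\|_{L^2}^2$. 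This costs nothing in your application, since the commutator only lives on $[\delta^2,\delta]$ with $\delta$ small. Second, the boundary-term vanishing for $\mu>1$ is not merely a "sequential" fact: from
\[
|u(x)|\le |u(x_0)|+\Bigl(\int_x^{x_0}t^{-\mu}\,dt\Bigr)^{1/2}\Bigl(\int_0^{x_0}t^{\mu}|\partial_t u|^2\,dt\Bigr)^{1/2}
\]
one gets $\limsup_{x\to 0}x^{\mu-1}|u(x)|^2\le (\mu-1)^{-1}\int_0^{x_0}t^{\mu}|\partial_t u|^2\,dt$ for every $x_0$, and letting $x_0\to 0$ shows the boundary term tends to zero outright (for a.e.\ $y$, after which one integrates the one-dimensional Hardy inequality in $y$); the same device with $\int_x^{x_0}t^{-1}\,dt=\log(x_0/x)$ justifies the boundary term in the integration by parts behind the critical Hardy inequality at $\mu=1$. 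With these two points filled in, the proof is complete and matches the classical one the paper delegates to the literature.
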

\begin{proof}
Proofs of these facts may be found in \cite{grisvard:1963,lions:1961}.
\end{proof} 

Given a Hilbert space $E$, let $H_\mu^1(\RP;E)$ denote the Hilbert space of $E$-valued distributions $u \in {\mathscr{D}}'(\RP;E)$ such that
\[
x^{\frac{\mu}{2}} u \in L^2(\RP;E), \quad x^{\frac{\mu}{2}} u' \in L^2(\RP;E),
\]
equipped with obvious norm. The Sobolev embedding theorem in this setting, \cite[Proposition 1.1']{grisvard:1963}, is
\[
H^1_\mu(\RP;E) \hookrightarrow C^0(\OL{\RP};E), \quad \mu < 1,
\]
so $u \mapsto u(0)$ is continuous $H^1_\mu(\RP;E) \rightarrow E$. Taking $E = L^2(\RN)$, it follows that when $\mu < 1$, any $u \in H^1_\mu(\RNP)$ admits a trace
\begin{equation} \label{eq:L2trace}
u \mapsto u|_{\RN} \in L^2(\RN).
\end{equation}
Furthermore, the kernel of $u \mapsto u|_{\RN}$ is $\dot{H}^1_\mu(\RNP)$ (see \cite[Proposition 1.2]{grisvard:1963}). The next lemma improves upon the regularity of this restriction.

\begin{lem} \label{lem:trace} If $|\mu| <1$, then the restriction 
\[
u \mapsto u|_{\RN}, \quad u \in C_c^\infty(\OL{\RNP})
\]
extends uniquely to continuous map $\gamma: H_\mu^1(\RNP) \rightarrow H^{(1-\mu)/2}(\RN)$. Furthermore, $\gamma$ admits a continuous right inverse.
\end{lem}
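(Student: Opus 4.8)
The plan is to pass to the Fourier series expansion in the tangential variable $y \in \RN$, which simultaneously diagonalizes the norm and the trace and reduces the whole statement to a single one-dimensional trace inequality combined with a scaling argument. Writing $u(x,y) = \sum_{k \in \mathbb{Z}^{n-1}} \widehat{u}(x,k)\, e^{ik\cdot y}$ and setting $\tau_k = (1+|k|^2)^{1/2}$, Parseval's identity gives
\[
\| u \|^2_{H^1_\mu(\RNP)} = \sum_{k} \int_0^\infty x^\mu \left( \tau_k^2\, |\widehat{u}(x,k)|^2 + |\partial_x \widehat{u}(x,k)|^2 \right) dx,
\]
while the Fourier coefficients of the restriction are $\widehat{(u|_{\RN})}(k) = \widehat{u}(0,k)$, so that $\| u|_{\RN} \|^2_{H^{(1-\mu)/2}(\RN)} = \sum_k \tau_k^{1-\mu}\, |\widehat{u}(0,k)|^2$. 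It therefore suffices to establish, uniformly over the parameter $\tau \geq 1$, the scalar estimate
\[
\tau^{1-\mu}\, |v(0)|^2 \leq C \int_0^\infty x^\mu \left( \tau^2 |v(x)|^2 + |v'(x)|^2 \right) dx
\]
with $C$ independent of $\tau$ and of $v$.

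First I would dispose of this estimate by a dilation. Substituting $v(x) = w(\tau x)$ and $s = \tau x$ turns the right-hand side into $\tau^{1-\mu} \int_0^\infty s^\mu \left( |w(s)|^2 + |w'(s)|^2 \right) ds = \tau^{1-\mu}\, \|w\|^2_{H^1_\mu(\RP)}$, whereas the left-hand side becomes $\tau^{1-\mu} |w(0)|^2$; the identical power $\tau^{1-\mu}$ produced on both sides is exactly what pins down the tangential regularity index $(1-\mu)/2$. After cancelling $\tau^{1-\mu}$, the claim reduces to the scalar trace inequality $|w(0)|^2 \leq C\, \|w\|^2_{H^1_\mu(\RP)}$, which is precisely the $E = \mathbb{C}$ case of the Sobolev embedding $H^1_\mu(\RP;E) \hookrightarrow C^0(\OL{\RP};E)$ recorded above and valid for $\mu < 1$. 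Summing over $k$ yields $\| \gamma u \|_{H^{(1-\mu)/2}(\RN)} \leq C\, \| u \|_{H^1_\mu(\RNP)}$ for $u \in C_c^\infty(\OL{\RNP})$, and uniqueness of the continuous extension follows from the density of $C_c^\infty(\OL{\RNP})$ in $H^1_\mu(\RNP)$ supplied by Lemma \ref{lem:weightedensity}.

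For the continuous right inverse I would fix once and for all a profile $\chi \in H^1_\mu(\RP)$ with $\chi(0) = 1$ — for instance a smooth function with compact support in $[0,\infty)$, which lies in $H^1_\mu(\RP)$ since $\mu < 1$ — and define the extension frequency by frequency,
\[
(\mathcal{E}g)(x,y) = \sum_{k} \widehat{g}(k)\, \chi(\tau_k x)\, e^{ik\cdot y}.
\]
Because $\chi(0) = 1$ one has $\gamma \mathcal{E}g = g$, and the very same scaling computation shows that the $k$-th mode contributes $\|\chi\|^2_{H^1_\mu(\RP)}\, \tau_k^{1-\mu}\, |\widehat{g}(k)|^2$ to $\|\mathcal{E}g\|^2_{H^1_\mu(\RNP)}$, whence $\|\mathcal{E}g\|_{H^1_\mu(\RNP)} = \|\chi\|_{H^1_\mu(\RP)}\, \|g\|_{H^{(1-\mu)/2}(\RN)}$ and $\mathcal{E}$ is bounded with $\gamma \circ \mathcal{E} = \mathrm{id}$.

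The step I expect to be the main obstacle is not any individual inequality — the dilation argument is clean — but the bookkeeping required to justify it rigorously: interchanging the trace with the Fourier sum, checking that the dilation $v \mapsto v(\,\cdot\,/\tau)$ acts on $H^1_\mu(\RP)$ with exactly the claimed weight factor, and confirming that a profile $\chi$ can be chosen in $H^1_\mu(\RP)$ with nonvanishing trace. This last point is where the hypothesis is genuinely used: such a $\chi$ with $\chi(0) \neq 0$ exists in $H^1_\mu(\RP)$ only when $\mu > -1$, so the right-inverse statement is substantive precisely in the range $|\mu| < 1$ relevant to the applications, in accordance with the fact that the trace becomes identically zero once $H^1_\mu = \mathring{H}^1_\mu$.
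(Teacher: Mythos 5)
Your proposal is correct and follows essentially the same route as the paper: Fourier decomposition in $y$, the one-dimensional weighted Sobolev embedding for $\mu<1$, and a dilation argument producing the weight $\tau_k^{1-\mu}$ --- your per-mode rescaling $v(x)=w(\tau x)$ is a fixed-dilation variant of the paper's optimally scaled interpolation inequality \eqref{eq:interpinequality} (which balances the two terms and then sums via Young's inequality), and your extension operator $\mathcal{E}$ built from the frequency-rescaled profile $\chi(\tau_k x)$ is exactly the mechanism of the paper's Lemma \ref{lem:tracelift}. One small correction: membership $\chi \in H^1_\mu(\RP)$ for a compactly supported profile with $\chi(0)=1$ requires $\mu>-1$ (integrability of $x^\mu$ near $x=0$), not $\mu<1$, as you yourself observe in your closing paragraph --- consistent with the fact that the right-inverse statement is substantive only in the range $|\mu|<1$ actually used in the paper.
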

\begin{proof}
By the Sobolev embedding, any $\varphi \in C_c^\infty(\OL{\RP}) \subseteq H^1_\mu(\RP)$ admits an estimate of the form
\[
| \varphi(0) |^2 \leq C \int_{\RP} x^{\mu} \left( | \varphi |^2 + | \varphi' |^2 \right) dx.
\]
Apply this inequality to the function $\varphi(sx)$, and then choose $s$ (depending on $\varphi$) satisfying
\[
\int_{\mathbb{R}} x^{\mu} |\varphi|^2 \, dx = s^2 \int_{\mathbb{R}} x^{\mu} |\varphi'|^2 \, dx.
\]
This yields the estimate
\begin{equation} \label{eq:interpinequality}
| \varphi(0) |^2 \leq 2 C \left( \int_{\RP} x^{\mu} | \varphi|^2 dx \right)^{(1-\mu)/2}\left( \int_{\RP} x^{\mu} | \varphi' |^2 dx \right)^{(1+\mu)/2}.
\end{equation}
Now consider $u \in H^1_\mu(\RNP)$ and let $\hat{u}(q)$ denote its Fourier coefficients, where $q \in \mathbb{Z}^{n-1}$. It suffices to apply the inequality \eqref{eq:interpinequality} to $\hat{u}(q)$, which lies in $H^1_\mu(\RP)$ for each $q \in \mathbb{Z}^{n-1}$. Multiplying \eqref{eq:interpinequality} by $\left< q \right>^{1-\mu}$ and summing over all $q$, it follows that
\[
\| \gamma u \|_{H^{(1-\mu)/2}} \leq C \| u \|_{H^1_\mu(\RP)}.
\]
When $|\mu| < 1$, the unique continuation of $\gamma$ follows from the density of $C_c^\infty(\OL{\RNP})$ in $H^1_\mu(\RNP)$. That $\gamma$ admits a right inverse is also straightforward; see Lemma \ref{lem:tracelift} for a closely related result.
\end{proof}
The trace $u \mapsto \gamma u$ defined in Lemma \ref{lem:trace} agrees with the restriction given by \eqref{eq:L2trace} since they both agree on the dense set $C_c^\infty(\OL{\RNP})$.

\subsection{The space $\Sob^1(\RNP)$} \label{subsect:sob1} Given $\nu \in \mathbb{R}$, define
\[
\Sob^1(\RNP) = \{ u \in {\mathscr{D}}'(\RNP): \TD^j D_y^\alpha u \in L^2(\RNP) \text{ for $j + |\alpha| \leq 1$} \},
\]
where $\TD^j D_y^\alpha u$ is taken in the sense of distributions on $\RNP$; then $\Sob^1(\RNP)$ is a Hilbert space when equipped with the norm
\[
\| u \|^2_{\Sob^1(\RNP)} = \sum_{j + |\alpha| \leq 1} \| \TD^j D_y^\alpha u \|^2_{L^2(\RNP)}.
\]
The space $\Sob^1_*(\RNP)$ is defined analogously by replacing $\TD$ with its formal adjoint $\TD^*$. Let $\dot{\Sob}^1(\RNP)$ denote the closure of $C_c^\infty(\RNP)$ in $\Sob^1(\RNP)$, and similarly for $\dot{\Sob}^1_*(\RNP)$.
\begin{lem} \label{lem:usualsobolev}
	If $\nu \neq 0$, then $\dot{\Sob}^1(\RNP) = \dot{H}^1(\RNP)$ with an equivalence of norms.
\end{lem}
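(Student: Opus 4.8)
The plan is to reduce everything to an equivalence of the two norms on the common dense subspace $C_c^\infty(\RNP)$ and then pass to closures. Since the $L^2(\RNP)$ term and the tangential terms $\sum_i \|\partial_{y_i}u\|_{L^2(\RNP)}$ appear identically in both the $\Sob^1$ and the $H^1$ norms, it suffices to compare $\|\TP u\|_{L^2(\RNP)}$ with $\|\partial_x u\|_{L^2(\RNP)}$ for $u \in C_c^\infty(\RNP)$. The starting point is an exact identity obtained by expanding $\TP u = \partial_x u + (\nu - 1/2)x^{-1}u$ and integrating by parts in $x$ (the boundary terms at $x = 0$ and $x = \infty$ vanish because each such $u$ is supported in a compact subset of $\RP \times \RN$):
\[
\|\TP u\|^2_{L^2(\RNP)} = \|\partial_x u\|^2_{L^2(\RNP)} + (\nu^2 - 1/4)\,\|x^{-1}u\|^2_{L^2(\RNP)}.
\]
This is just the statement $\langle |\TD|^2 u, u\rangle = \||\TD| u\|^2$ made explicit, and it isolates the weighted term $\|x^{-1}u\|_{L^2(\RNP)}$ as the only obstruction to equivalence.

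Next I would control the weighted term by Hardy-type inequalities, treating $y \in \RN$ as a parameter and working one variable at a time. For the bound of $\|\TP u\|$ by $\|\partial_x u\|$, the classical Hardy inequality $\|x^{-1}u\|^2_{L^2(\RNP)} \leq 4\|\partial_x u\|^2_{L^2(\RNP)}$ suffices when $\nu^2 - 1/4 \geq 0$, while for $\nu^2 - 1/4 < 0$ the identity already gives $\|\TP u\| \leq \|\partial_x u\|$. For the reverse bound I would use a twisted Hardy inequality: writing $v = x^{\nu - 1/2}u$ so that $\TP u = x^{1/2-\nu}\partial_x v$, the estimate
\[
\|x^{-1}u\|^2_{L^2(\RNP)} \leq \nu^{-2}\,\|\TP u\|^2_{L^2(\RNP)}
\]
is exactly the weighted Hardy inequality $\int x^{\alpha - 2}|v|^2 \leq \tfrac{4}{(\alpha-1)^2}\int x^\alpha |\partial_x v|^2$ with exponent $\alpha = 1 - 2\nu$. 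Feeding this into the identity yields $\|\partial_x u\|^2_{L^2(\RNP)} \leq (4\nu^2)^{-1}\|\TP u\|^2_{L^2(\RNP)}$ when $\nu^2 - 1/4 < 0$, and the reverse bound is immediate from the identity when $\nu^2 - 1/4 \geq 0$. Combining the two directions gives constants $c(\nu), C(\nu) > 0$ with $c(\nu)\|\partial_x u\| \leq \|\TP u\| \leq C(\nu)\|\partial_x u\|$, hence an equivalence of the full $\Sob^1$ and $H^1$ norms on $C_c^\infty(\RNP)$.

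The hypothesis $\nu \neq 0$ enters precisely here: the weighted Hardy inequality with exponent $\alpha = 1 - 2\nu$ is valid if and only if $\alpha \neq 1$, and its constant $\nu^{-2}$ degenerates as $\nu \to 0$. I expect this twisted Hardy estimate, in the regime $0 < \nu < 1/2$ where $\|x^{-1}u\|$ cannot be absorbed for free, to be the only substantive step; computing the optimal constant is routine. Finally, once the norms are equivalent on $C_c^\infty(\RNP)$, a sequence is Cauchy in one norm if and only if it is Cauchy in the other, and both notions of convergence imply convergence in $\mathscr{D}'(\RNP)$, so the two completions coincide as subspaces of $\mathscr{D}'(\RNP)$. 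This gives $\mathring{\Sob}^1(\RNP) = \mathring{H}^1(\RNP)$ with equivalent norms.
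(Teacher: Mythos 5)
Your proof is correct and follows essentially the same route as the paper: both rest on the exact identity $\|\TP u\|^2_{L^2(\RNP)} = \|\partial_x u\|^2_{L^2(\RNP)} + (\nu^2-\tfrac14)\|x^{-1}u\|^2_{L^2(\RNP)}$ for $u \in C_c^\infty(\RNP)$, Hardy-type control of the weighted term, and density of $C_c^\infty(\RNP)$ in both spaces. The only (cosmetic) difference is in the regime $0<|\nu|<1/2$, where the paper feeds the classical Hardy inequality $\|x^{-1}u\|^2 \leq 4\|\partial_x u\|^2$ back into the identity while you invoke the weighted Hardy inequality with exponent $\alpha = 1-2\nu$ applied to $v = x^{\nu-1/2}u$; both yield the same constant $4\nu^2\|\partial_x u\|^2 \leq \|\TP u\|^2$.
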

\begin{proof}
This can be deduced from Hardy's inequality 
\[
(1/4)\| x^{-1} u \|^2_{L^2(\RNP)} \leq	\| D_x u \|^2_{L^2(\RNP)},
\] 
valid for $u \in C_c^\infty(\RNP)$, and the density of $C_c^\infty(\RNP)$ in both spaces.
	\end{proof}

The basic observation concerning $\Sob^1(\RNP)$ is that the map ${\mathscr{D}}'(\RNP) \rightarrow {\mathscr{D}}'(\RNP)$ given by $u \mapsto x^{\nu-1/2}u$ restricts to an isometric isomorphism
\[
\Sob^1(\RNP) \rightarrow H^1_{1-2\nu}(\RNP).
\]
It follows from Lemma \ref{lem:weightedensity} that $x^{1/2-\nu}C^\infty(\OL{\RNP})$ is dense in $\Sob^1(\RNP)$ if $0 < \nu < 1$, and $\Sob^1(\RNP) = \dot{\Sob}^1(\RNP)$ if $\nu \geq 1$. Using Lemma \ref{lem:trace}, it is also possible to define weighted traces of $\Sob^1(\RNP)$ functions, as will be explained in Section \ref{subsect:traces}.

\subsection{The space $\Sob^2(\RNP)$} Given $\nu > 0$, define
\[
\Sob^2(\RNP) = \{ u \in \Sob^1(\RNP): \TD u \in \mathcal{H}^1_*(\RNP), \text{ and } D_y^\alpha u \in \Sob^1(\RNP) \text{ for } |\alpha| \leq 1 \}.
\] 
Then $\Sob^2(\RNP)$ becomes a Hilbert space when equipped with the norm
\begin{equation} \label{eq:H2nor}
\| u \|^2_{\Sob^2(\RNP)} = \| |\TD|^2 u \|^2_{L^2(\RNP)} + \sum_{|\alpha| \leq 1} \| D_y^\alpha u \|^2_{\Sob^1(\RNP)}.
\end{equation}
Although $x^{1/2-\nu}C_c^\infty(\RNP)$ is dense in $\Sob^1(\RNP)$ when $0 < \nu < 1$, this is not the case for $\Sob^2(\RNP)$. In fact, $x^{1/2-\nu}C_c^\infty(\OL{\RNP})$ is not contained in $\Sob^2(\RNP)$ unless $\nu = 1/2$. An appropriate dense space of smooth functions is defined in Section \ref{subsect:traces}. 

\subsection{Weighted traces} \label{subsect:traces}

It follows from Lemma \ref{lem:trace} that the weighted restriction
\[
u \mapsto x^{\nu- 1/2}u|_{\RN}, \quad u \in x^{1/2-\nu}C_c^\infty(\OL{\RNP})
\]
extends uniquely to a continuous map $\gamma_- : \Sob^1(\RNP) \rightarrow H^\nu(\RN)$, and furthermore, if $0 < \nu < 1$, then $\gamma_-$ admits a continuous right inverse. Similarly, there exists a weighted restricted 
\[
\gamma_-^* : \mathcal{H}^1_*(\RNP) \rightarrow H^{1-\nu}(\RN),
\]
given by $\gamma_-^* u = x^{1/2 -\nu}u|_{\RN}$. However, note that $\gamma_-^*$ is now defined for $\nu < 1$. Indeed, $\Sob^1_*(\RNP)$ is isomorphic to $H^1_{2\nu -1}(\RNP)$, and the trace on the latter space is only defined for $2\nu -1 < 1$. Since $u \in \Sob^2(\RNP)$ implies $\TP u \in \Sob^1_*(\RNP)$, there exists a second trace
\[
\gamma_+ : \mathcal{H}^{2}(\RNP) \rightarrow H^{1-\nu}(\RN)
\]
given by the composition $\gamma_+ = \gamma_-^* \circ \TP$. The trace $\gamma_+$ therefore exists for $0 < \nu < 1$.

\begin{defi} \label{defi:cnu}
Given $\nu > 0$, let $\Cnu$ denote the following spaces of functions.
\begin{enumerate} \itemsep6pt
\item If $0 < \nu < 1$, then $\Cnu$ consists of $u \in C^\infty(\RNP)$ of the form
\begin{equation} \label{eq:Cnu}
u(x,y) = x^{1/2-\nu}u_-(x^2,y) + x^{1/2+\nu}u_+(x^2,y),
\end{equation}
where $u_\pm \in C_c^\infty(\OL{\RNP})$.
\item If $\nu \geq 1$, then $\Cnu = C_c^\infty(\RNP)$.
\end{enumerate}
\end{defi}

Note that $\Cnu$ is contained in $\Sob^s(\RNP)$ for each $s=0,1,2$, but $\Cnu$ is not contained in $x^{1/2-\nu}C_c^\infty(\OL{\RNP})$ unless $\nu = 1/2$. On the other hand, traces of $u\in \Cnu$ are still easily computed from the definitions:

\begin{lem} \label{lem:directtrace}
If $0 < \nu < 1$ and $u \in \Cnu$ satisfies \eqref{eq:Cnu}, then
\begin{equation} \label{eq:tracevalues}
\gamma_- u = u_-(0,\cdot), \quad \gamma_+ u = 2\nu u_+(0,\cdot).
\end{equation}
\end{lem}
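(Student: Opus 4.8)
The plan is to compute both traces by reducing them, via the defining weighted isometries, to the Grisvard trace on a standard weighted Sobolev space, and then to evaluate that trace by literal restriction to $x = 0$. The fact enabling the second step is the Sobolev embedding $H^1_\mu(\RP; L^2(\RN)) \hookrightarrow C^0(\OL{\RP}; L^2(\RN))$ for $\mu < 1$ (\cite[Proposition 1.1']{grisvard:1963}): under it, the abstract trace $\gamma$ of Lemma \ref{lem:trace} coincides with the continuous boundary value $v \mapsto v(0,\cdot)$, and this continuous trace agrees with the $L^2$-valued trace \eqref{eq:L2trace}. Since $u \in \Cnu \subseteq \Sob^2(\RNP)$, one has $u \in \Sob^1(\RNP)$ and $\TP u \in \Sob^1_*(\RNP)$, so that $x^{\nu-1/2}u \in H^1_{1-2\nu}(\RNP)$ and $x^{1/2-\nu}\TP u \in H^1_{2\nu-1}(\RNP)$, with weights $1-2\nu < 1$ and (because $\nu < 1$) $2\nu - 1 < 1$; thus the embedding applies in both cases.

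For $\gamma_-$, I would transform $u$ under the isometry $w \mapsto x^{\nu-1/2}w$ and compute
\[
x^{\nu-1/2}u = u_-(x^2,y) + x^{2\nu}u_+(x^2,y).
\]
The first summand is smooth up to $x = 0$, while the second is continuous up to $x = 0$ and vanishes there because $2\nu > 0$. Hence $x^{\nu-1/2}u$ extends continuously to $\OL{\RNP}$ with boundary value $u_-(0,\cdot)$, and since $\gamma_-$ is identified with this boundary value, $\gamma_- u = u_-(0,\cdot)$.

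For $\gamma_+ = \gamma_-^* \circ \TP$, I would use the factorization $\TP = x^{1/2-\nu}\partial_x x^{\nu-1/2}$ to write
\[
x^{1/2-\nu}\TP u = x^{1-2\nu}\partial_x\!\left(u_-(x^2,y) + x^{2\nu}u_+(x^2,y)\right) = 2x^{2-2\nu}\partial_1 u_-(x^2,y) + 2\nu\, u_+(x^2,y) + 2x^2 \partial_1 u_+(x^2,y),
\]
where $\partial_1$ denotes differentiation in the first slot. Since $\nu < 1$ forces $2 - 2\nu > 0$, the first and third summands vanish as $x \to 0$, leaving the continuous boundary value $2\nu\, u_+(0,\cdot)$. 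Because $\gamma_-^*$ is identified (via $w \mapsto x^{1/2-\nu}w$) with the boundary value of $x^{1/2-\nu}\TP u$, this gives $\gamma_+ u = 2\nu\, u_+(0,\cdot)$.

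The only genuine subtlety --- and the step I would be most careful about --- is that $\Cnu$ is \emph{not} contained in the spaces $x^{1/2-\nu}C_c^\infty(\OL{\RNP})$, respectively $x^{\nu-1/2}C_c^\infty(\OL{\RNP})$, on which $\gamma_-$, respectively $\gamma_-^*$, were originally defined (unless $\nu = 1/2$), so the defining formulas cannot be applied verbatim. What legitimizes the computation is exactly the identification of the continuously-extended trace with the $L^2$-valued trace \eqref{eq:L2trace}, together with the embedding into $C^0$: once the transformed functions $x^{\nu-1/2}u$ and $x^{1/2-\nu}\TP u$ are exhibited as continuous up to the boundary and as genuine elements of the relevant weighted spaces (guaranteed by $\Cnu \subseteq \Sob^2(\RNP)$), their abstract traces must equal their pointwise restrictions, and the computation above closes the argument.
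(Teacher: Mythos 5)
Your proof is correct and follows essentially the same route as the paper: compute $x^{\nu-1/2}u$ and $x^{1/2-\nu}\TP u$ explicitly, observe their continuity up to $x=0$, and invoke the identification of the abstract trace with the $L^2$-valued restriction (the remark after Lemma \ref{lem:trace}) to read off the boundary values. You merely spell out the $\gamma_+$ computation and the membership/weight checks ($1-2\nu<1$, $2\nu-1<1$) that the paper compresses into ``a similar argument.''
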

\begin{proof}
If $u \in \Cnu$ satisfies \eqref{eq:Cnu}, then $x^{\nu - 1/2}u(x,y) = u_-(x^2,y) + x^{2\nu}u_+(x^2,y)$. Since this function is continuous on $\RP$ with values in $C^\infty(\RN)$, it follows that $\gamma_- u = u_-(0,\cdot)$ (see the remark after Lemma \ref{lem:trace}). A similar argument shows that $\gamma_+ u = 2\nu u_+(0,\cdot)$.
\end{proof}

\begin{lem} \label{lem:traceproperty}
If $\nu > 0$, then $\Cnu$ is dense in $\Sob^s(\RNP)$ for $s = 0,1,2$.
\end{lem}
\begin{proof}
A proof is provided in Appendix \ref{appendix1}.
\end{proof}

\begin{prop}
If $0 < \nu < 1$, then there exist unique continuous maps
\[
\gamma_\mp : \Sob^{s}(\RNP) \rightarrow H^{s-1\pm\nu}(\RN)
\]
such that if $u \in \Cnu$ satisfies \eqref{eq:Cnu}, then $\gamma_- u = u_-(0,\cdot)$ and $\gamma_+ u = 2\nu u_+(0,\cdot)$. Here $\gamma_-$ is defined for $s=1,2$, while $\gamma_+$ is only defined for $s=2$.
\end{prop}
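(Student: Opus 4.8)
The plan is to separate uniqueness, the two traces that have already been built, and the one genuinely new regularity statement. \textbf{Uniqueness} is immediate from density: by Lemma \ref{lem:traceproperty} the space $\Cnu$ is dense in $\Sob^s(\RNP)$, so a continuous map is completely determined by its prescribed values \eqref{eq:tracevalues} on $\Cnu$. For existence, I would first recall that the maps $\gamma_-\colon\Sob^1(\RNP)\to H^\nu(\RN)$ and $\gamma_+=\gamma_-^*\circ\TP\colon\Sob^2(\RNP)\to H^{1-\nu}(\RN)$ were already constructed in Section \ref{subsect:traces}, and that Lemma \ref{lem:directtrace} shows they take the prescribed values on $\Cnu$; these settle the cases $(\gamma_-,s=1)$ and $(\gamma_+,s=2)$, noting $H^{s-1-\nu}=H^{1-\nu}$ at $s=2$. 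Thus the only new assertion is the sharpened mapping property $\gamma_-\colon\Sob^2(\RNP)\to H^{1+\nu}(\RN)$, which improves the target of the $\Sob^1$-trace on the subspace $\Sob^2(\RNP)\subseteq\Sob^1(\RNP)$.

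To prove this refinement, the key point is that $\gamma_-$ commutes with tangential derivatives. First I would check directly from \eqref{eq:Cnu} that for $u\in\Cnu$ and each index $j$ one has $\partial_{y_j}u\in\Cnu$ with components $\partial_{y_j}u_\pm$, so that Lemma \ref{lem:directtrace} gives
\[
\gamma_-(\partial_{y_j}u)=(\partial_{y_j}u_-)(0,\cdot)=\partial_{y_j}\bigl(\gamma_- u\bigr).
\]
Both sides define continuous maps from $\Sob^2(\RNP)$ into ${\mathscr{D}}'(\RN)$ --- the left since $\partial_{y_j}\colon\Sob^2\to\Sob^1$ and $\gamma_-\colon\Sob^1\to H^\nu$ are bounded, the right since $\gamma_-$ followed by $\partial_{y_j}$ is bounded into $H^{\nu-1}$ --- so the identity extends from the dense set $\Cnu$ to all of $\Sob^2(\RNP)$. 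Now, for $u\in\Sob^2(\RNP)$ the definition of $\Sob^2$ gives $\partial_{y_j}u\in\Sob^1(\RNP)$, whence $\partial_{y_j}(\gamma_- u)=\gamma_-(\partial_{y_j}u)\in H^\nu(\RN)$ for every $j$, while $\gamma_- u\in H^\nu(\RN)$ itself. Since $\RN$ is a torus, the Fourier-series characterization of Sobolev norms shows that lying in $H^\nu(\RN)$ together with all first-order tangential derivatives is equivalent to lying in $H^{1+\nu}(\RN)$; this yields $\gamma_- u\in H^{1+\nu}(\RN)$ together with the bound
\[
\|\gamma_- u\|_{H^{1+\nu}(\RN)}\le C\Bigl(\|\gamma_- u\|_{H^\nu(\RN)}+\sum_j\|\gamma_-(\partial_{y_j}u)\|_{H^\nu(\RN)}\Bigr)\le C'\|u\|_{\Sob^2(\RNP)},
\]
which is the required continuity.

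Consistency with \eqref{eq:tracevalues} is then automatic, since this $\gamma_-$ on $\Sob^2(\RNP)$ is merely the restriction of the $\Sob^1$-trace and so takes the same values on $\Cnu$. The hard part --- though it is mild --- is precisely the commutation identity and the ensuing regularity bootstrap: everything rests on promoting the pointwise computation of Lemma \ref{lem:directtrace} on smooth functions to an operator identity, which the density in Lemma \ref{lem:traceproperty} permits. The torus structure of $\RN$ is what makes the concluding characterization of $H^{1+\nu}(\RN)$ in terms of $H^\nu$-membership of first derivatives entirely transparent through Fourier coefficients.
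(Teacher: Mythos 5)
Your proof is correct and takes essentially the same route as the paper: uniqueness and the cases $(\gamma_-,s=1)$, $(\gamma_+,s=2)$ come from the density of $\Cnu$ (Lemma \ref{lem:traceproperty}) together with the weighted traces of Section \ref{subsect:traces} and Lemma \ref{lem:directtrace}, and the improved regularity $\gamma_-\colon\Sob^2(\RNP)\to H^{1+\nu}(\RN)$ is deduced, exactly as in the paper, from the commutation identity $\gamma_\pm\partial_y^\alpha u=\partial_y^\alpha\gamma_\pm u$ on $\Cnu$. Your write-up simply makes explicit the continuity-extension and Fourier-series details that the paper's one-line argument leaves implicit.
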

\begin{proof}
Combining Lemma \ref{lem:traceproperty} with Lemma \ref{lem:directtrace} shows that the map 
\[
u \mapsto u_-(0,\cdot), \quad u \in \Cnu
\] 
admits a unique extension $\Sob^{s}(\RNP) \rightarrow H^{\nu}(\RN)$ for $s = 1,2$. The additional regularity $\gamma_- u \in H^{1+\nu}(\RN)$ for $u \in \Sob^2(\RNP)$ follows from the equality $\gamma_\pm \partial_y^\alpha u = \partial_y^\alpha \gamma_\pm u$ for $u\in \Cnu$ and each multiindex $\alpha$. Similarly, the map 
\[
u \mapsto 2\nu u_+(0,\cdot), \quad u \in \Cnu
\] 
admits a unique extension $\Sob^2(\RNP) \rightarrow H^{1-\nu}(\RN)$.
\end{proof}

\subsection{Dual spaces}
Throughout, $\Sob^0(\RNP) = L^2(\RNP)$ is identified with its own antidual $\Sob^0(\RNP)'$ via the Riesz representation. Given $s = 1,2$, let 
\[
\Sob^{-s}(\RNP) = \Sob^{s}(\RNP)'
\]
denote the corresponding antiduals. Since the inclusion $\iota : \Sob^{s}(\RNP) \hookrightarrow \Sob^0(\RNP)$ is dense, $\Sob^0(\RNP)$ is identified with a dense subspace of $\Sob^{-s}(\RNP)$ via the map $\iota^* : \Sob^{0}(\RNP) \hookrightarrow \Sob^{-s}(\RNP)$. Thus if $s \geq 0$ and $u, v \in \Sob^{s}(\RNP)$, then the image $\iota^* u$ in $\Sob^{-s}(\RNP)$ acts on $v$ via the $\Sob^0(\RNP)$ pairing
\[
\iota^*u(v) = \langle u, v\rangle_{\RNP}.
\]
Because $\Sob^{s}(\RNP)$ is dense in $\Sob^{-s}(\RNP)$, there is no ambiguity in using the notation     
\[
\langle f, v \rangle_{\RNP}:= f(v), \quad f \in \Sob^{-s}(\RNP),\;v \in \Sob^{s}(\RNP)
\]
in general.

\subsection{A Fourier characterization} \label{subsect:fourier} Given $s = 0,1,2$, any $u \in \Sob^{s}(\RNP)$ has well defined Fourier coefficients 
\[
\hat{u}(q) = (2\pi)^{-(n-1)/2} \int_{[-\pi,\pi]^{n-1}} e^{-i\left<q,y\right>} u(\cdot,y) \, dy, \quad q \in \mathbb{Z}^{n-1}.
\]
It is easily seen $\hat{u}(q) \in \Sob^{s}(\RP)$ for each fixed $q \in \mathbb{Z}^{n-1}$.

This may be extended uniquely by duality: given $f \in \mathcal{H}^{-s}(\RNP)$, let $\hat{f}(q) \in \Sob^{-s}(\RP)$ denote the functional
\begin{equation} \label{eq:dualfourier}
\langle \hat{f}(q) , v \rangle_{\RNP} = (2\pi)^{-(n-1)/2} \langle f, e^{i\left<q,y\right>} v \rangle_{\RNP},
\end{equation}
where $v \in \Sob^s(\RP)$. Given $\tau > 0$ and $u \in \Cnu$, let
\begin{equation} \label{eq:dilationdef}
(S_\tau u)(x,y) = u(\tau x,y)
\end{equation}
denote the action of dilation in the normal variable. This clearly extends to a bounded map $S_{\tau} : \Sob^{s}(\RNP) \rightarrow \Sob^{s}(\RNP)$ for $s = 0,1,2$. Furthermore, $S_{\tau}$ may be extended uniquely to $\Sob^{-s}(\RNP)$ by duality: given $f \in \mathcal{H}^{-s}(\RNP)$, define
\[
\left< S_\tau f ,v \right>_{\RNP} = \tau^{-1} \left<f , S_{\tau^{-1}} v \right>_{\RNP}
\]  
for $v \in \Sob^{s}(\RNP)$.
\begin{lem} \label{lem:fouriernorm}
Given $s=0,\pm 1,\pm 2$,
\[
\| u \|^2_{\Sob^{s}(\RNP)} = \sum_{q\in \mathbb{Z}^{n-1}} \left<q\right>^{2s-1} \| S_{\left<q\right>^{-1}} \hat{u}(q) \|^2_{\Sob^{s}(\RP)}.
\]
for each $u \in \Cnu$.
\end{lem}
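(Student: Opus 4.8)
The plan is to diagonalize the $\Sob^s(\RNP)$-norm by Fourier series in $y$, reduce each mode to a one-dimensional problem on $\RP$, and absorb the frequency weight through the dilation $S_\tau$. Throughout I will use two elementary facts. First, Plancherel on $\RN$ gives $\| w\|^2_{L^2(\RNP)} = \sum_q\| \hat w(q)\|^2_{L^2(\RP)}$ for any $w$, and the Fourier coefficient of $\partial_y^\alpha u$ is $(iq)^\alpha\hat u(q)$, while $\widehat{\TP u}(q) = \TP\hat u(q)$ since $\TP$ acts only in $x$. Second, a direct computation shows $\TP S_\tau = \tau S_\tau\TP$ and $\TP^* S_\tau = \tau S_\tau\TP^*$, hence $\TP^*\TP S_\tau = \tau^2 S_\tau\TP^*\TP$; combined with $\| S_\tau v\|^2_{L^2(\RP)} = \tau^{-1}\| v\|^2_{L^2(\RP)}$ these give $\| \TP S_\tau v\|^2_{L^2(\RP)} = \tau\| \TP v\|^2_{L^2(\RP)}$ and $\| \TP^*\TP S_\tau v\|^2_{L^2(\RP)} = \tau^3\| \TP^*\TP v\|^2_{L^2(\RP)}$.

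For $s = 0,1,2$ I would verify the identity by matching terms. Applying Plancherel and the commutation relations to the defining norms, and using $\sum_{|\alpha|\leq 1}|q^\alpha|^2 = \langle q\rangle^2$, yields
\[
\| u\|^2_{\Sob^1(\RNP)} = \sum_q\left(\langle q\rangle^2\| \hat u(q)\|^2_{L^2(\RP)} + \| \TP\hat u(q)\|^2_{L^2(\RP)}\right)
\]
and, analogously,
\[
\| u\|^2_{\Sob^2(\RNP)} = \sum_q\left(\| \TP^*\TP\hat u(q)\|^2_{L^2(\RP)} + \langle q\rangle^4\| \hat u(q)\|^2_{L^2(\RP)} + \langle q\rangle^2\| \TP\hat u(q)\|^2_{L^2(\RP)}\right).
\]
Inserting $\tau = \langle q\rangle^{-1}$ into the scaling relations above and expanding $\| S_{\langle q\rangle^{-1}}\hat u(q)\|^2_{\Sob^s(\RP)}$ into its defining terms produces exactly these expressions after multiplication by $\langle q\rangle^{2s-1}$; the case $s = 0$ is immediate from $\| S_{\langle q\rangle^{-1}}\hat u(q)\|^2_{L^2(\RP)} = \langle q\rangle\| \hat u(q)\|^2_{L^2(\RP)}$.

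For $s = -1,-2$ I would argue by duality. The positive-order identities exhibit $u\mapsto(\hat u(q))_q$ as an isometry of $\Sob^{|s|}(\RNP)$ onto the weighted Hilbert direct sum of the spaces $(\Sob^{|s|}(\RP), N_q)$, where $N_q(w)^2 = \langle q\rangle^{2|s|-1}\| S_{\langle q\rangle^{-1}}w\|^2_{\Sob^{|s|}(\RP)}$; density of $\Cnu$ (Lemma \ref{lem:traceproperty}) together with standard Fourier theory gives surjectivity. The pairing decomposes as $\langle f, v\rangle_{\RNP} = \sum_q\langle\hat f(q), \hat v(q)\rangle_{\RP}$, consistent with the definition \eqref{eq:dualfourier} of $\hat f(q)$. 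Taking antiduals mode by mode, the dual norm on the $q$-th factor is $N_q^*(g)^2 = \| (T_q^{-1})^* g\|^2_{\Sob^{-|s|}(\RP)}$ with $T_q = \langle q\rangle^{(2|s|-1)/2}S_{\langle q\rangle^{-1}}$; using the dual action $\langle S_\tau g, w\rangle_{\RP} = \tau^{-1}\langle g, S_{\tau^{-1}}w\rangle_{\RP}$ one finds $(T_q^{-1})^* g = \langle q\rangle^{-(2|s|+1)/2}S_{\langle q\rangle^{-1}}g$, so that $N_q^*(g)^2 = \langle q\rangle^{-2|s|-1}\| S_{\langle q\rangle^{-1}}g\|^2_{\Sob^{-|s|}(\RP)}$. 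Since $-2|s|-1 = 2s-1$, this is precisely the asserted weight, and summing over $q$ yields the claim for all $f\in\Sob^{s}(\RNP)$, in particular for $u\in\Cnu$.

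The routine part is the term-matching for $s = 0,1,2$, where the arithmetic of the scaling exponents is the only place errors can creep in. The main obstacle is the negative-order case: one must justify that the Fourier decomposition realizes $\Sob^{|s|}(\RNP)$ as a genuine (surjective) direct sum, so that its antidual splits mode by mode, and one must track the conjugations in the sesquilinear pairing carefully to confirm $\langle f, v\rangle_{\RNP} = \sum_q\langle\hat f(q), \hat v(q)\rangle_{\RP}$ with the correct indexing. Once these are in place, the saturation of the supremum defining the dual norm is the standard weighted Cauchy--Schwarz argument.
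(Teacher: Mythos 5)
Your proposal is correct and follows essentially the same route as the paper: for $s \geq 0$ the paper's proof is exactly your Parseval/Fubini term-matching (with the dilation scaling $\| S_\tau v \|^2_{L^2(\RP)} = \tau^{-1}\| v \|^2_{L^2(\RP)}$, $\TP S_\tau = \tau S_\tau \TP$ absorbing the weights), and for $s < 0$ the paper simply cites the duality argument of Kozlov--Mazya--Rossman, which is precisely the mode-by-mode dualization you spell out. Your computation of the dual weight, using the extension $\left< S_\tau f, v\right>_{\RP} = \tau^{-1}\left< f, S_{\tau^{-1}} v\right>_{\RP}$ to obtain $\left<q\right>^{-2|s|-1} = \left<q\right>^{2s-1}$, correctly supplies the detail the paper outsources to the citation.
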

\begin{proof}
When $s\geq 0$ this follows from Parseval and Fubini's theorems. When $s < 0$, the proof is a simple modification of the argument in \cite[Lemma 2.3.1]{kozlov:1997}.
\end{proof}

\subsection{The space $\TSob^{s}(\RNP)$} \label{subsect:tildespace} If $\UL t = (t_1,\ldots, t_k)$, define
\[
H^{\UL t}(\RN) := \prod_{j = 1}^k H^{t_k}(\RN).
\]
Keeping this notation in mind, let $\UL\nu = (1 - \nu, 1+\nu)$ and then set
\begin{equation} \label{eq:gamma}
\UL\gamma = \colvec{\gamma_-}{\gamma_+}.
\end{equation}
Following \cite{kozlov:1997,roitberg:1996} in the smooth setting, define the following spaces for $0 < \nu < 1$. Given $s = 0,\,\pm 1,\,\pm 2$, let $\TSob^{s}(\RNP)$ denote the set of all
\[
(u, \phi_-, \phi_+ ) \in \Sob^{s}(\RNP) \times H^{s - \UL\nu}(\RN)
\]
such that
\begin{enumerate} \itemsep6pt
\item $\phi_- = \gamma_- u$ and  $\phi_+ = \gamma_+ u$ if $s =2$,
\item $\phi_- = \gamma_- u$ and $\phi_+$ is arbitrary if $s = 1$,
\item $\phi_\pm$ are arbitrary if $s \leq 0$.
\end{enumerate}
A typical element of $\TSob^{s}(\RNP)$ will be denoted $(u, \underline{\phi})$, where $\underline{\phi} = (\phi_-, \phi_+)$. The norm of $(u,\UL{\phi})$ is given by 
\[
\| (u,\UL{\phi}) \|^2_{\TSob^{s}(\RNP)} = \| u \|^2_{\Sob^s(\RNP)} + \| \UL{\phi} \|^2_{H^{s-\UL{\nu}}(\RN)}.
\]
If $s = 2$, then $u \mapsto (u, \UL{\gamma} u)$ provides an isomorphism 
\[
\Sob^{2}(\RNP) \rightarrow \TSob^{2}(\RNP).
\]
On the other hand, if $s \leq 1$, then the two spaces $\Sob^{s}(\RNP),\, \TSob^{s}(\RNP)$ are fundamentally different.

\begin{lem} \label{lem:tildedensity} Let $0 < \nu < 1$. For each $s=0,\pm 1, \pm 2$, the set  
\[
\{ (u, \UL{\gamma}u) : u \in \Cnu\}
\]
is dense in $\TSob^{s}(\RNP)$. 
\end{lem}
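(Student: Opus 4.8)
The plan is to use the linear structure of the problem. Since $\Cnu$ is a vector space and $\gamma_\mp$ are linear, the set $\{(u,\UL\gamma u):u\in\Cnu\}$ is a linear subspace of $\TSob^s(\RNP)$, so its closure $V$ is a closed subspace, and it suffices to show that $V$ contains a total family of elements. The case $s=2$ is immediate: under the isomorphism $\Sob^2(\RNP)\to\TSob^2(\RNP)$, $u\mapsto(u,\UL\gamma u)$, the set in question is exactly the image of $\Cnu\subseteq\Sob^2(\RNP)$, whose density is Lemma \ref{lem:traceproperty}. For $s\le 1$ I would reduce matters to two ingredients: (A) \emph{boundary correctors}, asserting that $(0,0,h_+)\in V$ for every admissible $h_+$ when $s\le 1$, and $(0,h_-,0)\in V$ for every admissible $h_-$ when $s\le 0$; and (B) the density of $\Cnu$ in $\Sob^s(\RNP)$, which holds for $s=0,1$ by Lemma \ref{lem:traceproperty} and for $s=-1,-2$ by composing that density at $s=0$ with the dense inclusion $\Sob^0(\RNP)\hookrightarrow\Sob^s(\RNP)$ from the discussion of dual spaces.

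The heart of the argument, and the step I expect to be the main obstacle, is constructing the correctors in (A): smooth functions concentrating at the boundary whose $\Sob^s$-norm vanishes while they carry a prescribed, \emph{fixed} nonzero trace. For $g\in C^\infty(\RN)$ and a cutoff $\chi\in C_c^\infty([0,\infty))$ with $\chi(0)=1$, I would set
\[
w^\pm_\epsilon(x,y)=x^{1/2\pm\nu}\,\chi(x^2/\epsilon^2)\,g(y)\in\Cnu .
\]
By Lemma \ref{lem:directtrace} the traces are independent of $\epsilon$: $\gamma_- w^-_\epsilon=g$ and $\gamma_+ w^-_\epsilon=0$, while $\gamma_- w^+_\epsilon=0$ and $\gamma_+ w^+_\epsilon=2\nu g$. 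The substitution $x=\epsilon t$ yields $\|w^\pm_\epsilon\|_{L^2(\RNP)}^2=\epsilon^{2\pm 2\nu}\bigl(\int_0^\infty t^{1\pm 2\nu}|\chi(t^2)|^2\,dt\bigr)\|g\|_{L^2(\RN)}^2$, and both the prefactor and the integral are controlled precisely because $0<\nu<1$ (the $-$ profile needs $1-2\nu>-1$, i.e. $\nu<1$; the $+$ profile is harmless). Hence $\|w^\pm_\epsilon\|_{\Sob^0}\to 0$, and for $s\le 0$ the embedding $\Sob^0(\RNP)\hookrightarrow\Sob^s(\RNP)$ gives $\|w^\pm_\epsilon\|_{\Sob^s}\to 0$; for the $+$ profile at $s=1$ one instead computes $\TP w^+_\epsilon$ directly and finds its $L^2$-norm is $O(\epsilon^{\nu})$, using $\nu>0$. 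Thus $(0,g,0)\in V$ for $s\le 0$ and $(0,0,2\nu g)\in V$ for $s\le 1$; letting $g$ range over $C^\infty(\RN)$ and using density of $C^\infty(\RN)$ in every Sobolev space on the torus, the full claim (A) follows because $V$ is closed.

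Finally I would assemble the pieces by linearity. Given a target $(v,\psi_-,\psi_+)\in\TSob^s(\RNP)$, choose by (B) a sequence $u_n\in\Cnu$ with $u_n\to v$ in $\Sob^s(\RNP)$; each triple $(u_n,\gamma_- u_n,\gamma_+ u_n)$ lies in $V$. Subtracting the correctors $(0,\gamma_- u_n-\psi_-,0)$ and $(0,0,\gamma_+ u_n-\psi_+)$, which belong to $V$ whenever the corresponding boundary component is unconstrained, produces $(u_n,\psi_-,\psi_+)\in V$, and this converges to $(v,\psi_-,\psi_+)$ since only the $\Sob^s$-component varies. In the borderline case $s=1$ the component $\phi_-=\gamma_- v$ is constrained rather than free; there I would use only the $\gamma_+$-corrector and invoke the continuity of $\gamma_-\colon\Sob^1(\RNP)\to H^\nu(\RN)$, which guarantees $\gamma_- u_n\to\gamma_- v$ automatically. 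The delicate point throughout is that the two profiles $x^{1/2\mp\nu}$ are simultaneously admissible, trace-carrying yet boundary-concentrating with vanishing norm, exactly in the range $0<\nu<1$ assumed in the statement.
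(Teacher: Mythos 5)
Your proof is correct and follows essentially the same route as the paper: the paper likewise takes $u_n\in\Cnu$ converging in $\Sob^s(\RNP)$ and subtracts the single corrector $\left(x^{1/2-\nu}(\gamma_-u_n-\phi_-)+(2\nu)^{-1}x^{1/2+\nu}(\gamma_+u_n-\phi_+)\right)\chi(\varepsilon^{-1}x)$, whose $\Sob^s$-norm vanishes as $\varepsilon\to0$ (handling $s<0$ by density of $\TSob^0$ in $\TSob^s$, then diagonalizing in $(n,\varepsilon)$), so your closed-subspace formulation with separate $\gamma_\mp$-correctors is the same mechanism, merely reorganized. If anything your write-up is slightly more scrupulous, since the cutoff $\chi(x^2/\epsilon^2)$ visibly keeps the correctors inside $\Cnu$ and the boundary data are smoothed before correcting, two small points the paper's one-line corrector glosses over.
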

\begin{proof}
It suffices to prove this for $s\geq 0$, since $\TSob^0(\RNP)$ is dense in $\TSob^{s}(\RNP)$ if $s < 0$. Given $(u,\UL{\phi})\in \TSob^{s}(\RNP)$, choose $u_n \in \Cnu$ and $\phi_{\pm,n} \in C^\infty(\mathbb{R}^n)$ such that 
\[
u_n \rightarrow u \text{ in }\Sob^s(\RNP), \quad {\phi_{\pm,n}} \rightarrow \phi_{\pm} \text{ in } H^{s-\UL{\nu}}(\RN).
\] 
Let $\chi \in C_c^\infty(\OL{\RP})$ satisfy $\chi = 1$ near $x=0$, and define
\[
u_{n,\varepsilon} = u_n - \left( x^{1/2-\nu}(\gamma_- u_n - \phi_{-,n}) + (2\nu)^{-1} x^{1/2+\nu}(\gamma_+ u_n - \phi_{+,n}) \right)\chi(\varepsilon^{-1}x).
\]
Clearly $u_{n,\varepsilon} \in \Cnu$ and $\gamma_\pm u_{n,\varepsilon} = \phi_{\pm,n}$. Furthermore, since $s\geq 0$, it is easy to check that $u_{n,\varepsilon} \rightarrow u_n$ in $\Sob^{s}(\RNP)$ for $n$ fixed and $\varepsilon \rightarrow 0$. Thus it is possible find a sequence $\varepsilon_n \rightarrow 0$ such that $u_{n,\varepsilon_n} \rightarrow u$ in $\Sob^s(\RNP)$ as $n \rightarrow \infty$, and hence
\[
(u_{n,\varepsilon_n},\UL{\gamma} u_{n,\varepsilon_n}) \rightarrow (u,\UL{\phi})
\]
in $\TSob^{s}(\RNP)$.
\end{proof}

Recall from Section \ref{subsect:fourier} the dilation $S_\tau$ given by \eqref{eq:dilationdef}. Note that
\[
(\gamma_- \circ S_\tau) u = \tau^{1/2-\nu} \gamma_- u, \quad (\gamma_+ \circ S_\tau) u = \tau^{1/2+\nu} \gamma_+ u
\]
for each $\tau > 0$ and $u\in \Cnu$. Thus $S_\tau$ may be extended uniquely to $\TSob^{s}(\RNP)$ by defining
\[
S_\tau (u,\UL{\phi}) = (S_\tau u , \tau^{1/2-\nu} \phi_-, \tau^{1/2+\nu} \phi_+ ).
\]
It follows from Lemma \ref{lem:fouriernorm} and the usual Fourier characterization of $H^m(\RN)$ that
\[
\| (u,\UL{\phi}) \|^2_{\TSob^{s}(\RNP)} = \sum_{q\in \mathbb{Z}^{n-1}} \left<q\right>^{2s-1} \| S_{\left<q\right>^{-1}} (\hat{u}(q),\UL{\hat{\phi}}(q)) \|^2_{\TSob^{s}(\RP)}
\]
for each $(u,\UL{\phi}) \in \TSob^s(\RNP)$.

\subsection{Parameter-dependent norms} \label{subsect:semiclassicalnorms}

When considering the action of parameter dependent Bessel operators, one must consider modified norms on the spaces defined so far. Given $s = 0,1,2$ and $u \in \Sob^{s}(\RNP)$, let
\[
\VERT u \VERT^2_{\Sob^{s}(\RNP)} = \sum_{j=0}^s |\lambda|^{2(s-j)} \| u \|^2_{\Sob^{j}(\RNP)}.
\]
Furthermore, if $f \in \Sob^{-s}(\RNP)$, let
\[
\VERT f \VERT_{\Sob^{-s}(\RNP)} = \sup \{ |\left< f, u \right>_{\RNP}|: \VERT v \VERT_{\Sob^s(\RNP)} = 1\}.
\]
Recall the standard parameter-dependent norms $\VERT v \VERT_{\Sob^m(\RN)}$ on $H^m(\RN)$ as in Section \ref{subsect:conventions}. Given $(u,\UL{\phi}) \in \TSob^{s}(\RNP)$, set
\[
\VERT (u,\UL{\phi}) \VERT^2_{\TSob^{s}(\RNP)} = \VERT u \VERT^2_{\Sob^s(\RNP)} + \VERT \UL{\phi} \VERT^2_{H^{s-\UL{\nu}}(\RN)}. 
\]
These parameter-dependent norms have the property that there exists $C>0$ independent of $\lambda$ such that
\[
\VERT u \VERT_{\Sob^{s-1}(\RNP)} \leq C |\lambda|^{-1} \VERT u \VERT_{\Sob^s(\RNP)}, \quad \VERT (u,\UL{\phi}) \VERT_{\TSob^{s-1}(\RNP)} \leq C |\lambda|^{-1} \VERT (u,\UL{\phi}) \VERT_{\TSob^s(\RNP)}
\]
for $u \in \Sob^s(\RNP)$ and $(u,\UL{\phi}) \in \TSob^s(\RNP)$, respectively.

\subsection{Mapping properties} \label{subsect:mappingpropertiesonRNP}

In this section, mapping properties of Bessel operators on $\RNP$ are examined. The analogues of Green's formulas are established, which allow the extension of $P$ to spaces with low regularity. Recall from Section \ref{subsect:bessel} that $P \in \mathrm{Bess}_\nu(\RNP)$ means that
\begin{equation} \label{eq:besselmappingrnp}
P = |\TD|^2 + B(x,y,D_y)\TD + A(x,y,D_y),
\end{equation}
where $B \in \mathrm{Diff}^1(\RN)$ and  $A \in \mathrm{Diff}^2(\RN)$ depend smoothly on $x \in \OL{\RP}$ and $B(0,y,D_y)= 0$. Throughout this section, assume that the coefficients of $A,B$ are constant outside a compact subset of $\OL{\RNP}$. The boundedness of each term in $P$ will be examined individually. 

Before proceeding, it is necessary to consider certain multipliers of $\Sob^s(\RNP)$ when $s\geq 0$. The commutation relations
\begin{equation} \label{eq:commutatorformulas}
[ \TP, \varphi ] = \partial_x \varphi = [ \TP^*, \varphi ], \quad [|\TD|^2, \varphi] = -\partial_x^2 \varphi - 2(\partial_x \varphi)\partial_x
\end{equation}
will be used throughout the following lemma.

\begin{lem} \label{lem:leibniz}
	Suppose that $\varphi \in C^\infty(\overline{\RNP})$ is bounded along with all of its derivatives, and consider multiplication by $\varphi$ as a continuous map ${\mathscr{D}}'(\RNP) \rightarrow {\mathscr{D}}'(\RNP)$. 
	\begin{enumerate} \itemsep6pt
		\item For $s=0,1$, multiplication by $\varphi$ restricts to a continuous map 
		\[
		\Sob^{s}(\RNP) \rightarrow \Sob^{s}(\RNP).
		\]
		\item If $\partial_x \varphi|_{\RN}= 0$, then multiplication by $\varphi$ restricts to a continuous map
		\[
		\Sob^{2}(\RNP) \rightarrow \Sob^{2}(\RNP).
		\]
	\end{enumerate}
	In either of these two cases,
	\begin{equation} \label{eq:multiplierestimate}
	\| \varphi u \|_{\Sob^s(\RNP)} \leq \| \varphi \|_{C^0(\OL{\RNP})} \| u \|_{\mathcal{H}^{s}} + C_s\| u \|_{\Sob^{s-1}(\RNP)},
	\end{equation}
	where $C_s \geq 0 $ depends on the first $s$ derivatives of $\varphi$, and $C_0 = 0$.
\end{lem}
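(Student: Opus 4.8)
The plan is to prove the estimate \eqref{eq:multiplierestimate} first for $u \in \Cnu$, where every manipulation below is justified termwise, and then pass to all of $\Sob^s(\RNP)$ by density. Concretely, Lemma \ref{lem:traceproperty} gives a sequence $u_n \in \Cnu$ with $u_n \to u$ in $\Sob^s(\RNP)$; the estimate shows $\varphi u_n$ is Cauchy in $\Sob^s(\RNP)$, and its limit must coincide with $\varphi u$ since multiplication by $\varphi$ is continuous on ${\mathscr{D}}'(\RNP)$ and the two agree on the dense set. Thus multiplication by $\varphi$ restricts to $\Sob^s(\RNP) \to \Sob^s(\RNP)$ and \eqref{eq:multiplierestimate} persists in the limit.

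The case $s = 0$ is immediate from $\|\varphi u\|_{L^2} \leq \|\varphi\|_{C^0}\|u\|_{L^2}$, so $C_0 = 0$. For $s = 1$, I would apply $\TP^j\partial_y^\alpha$ with $j + |\alpha| \leq 1$ to $\varphi u$ and commute $\varphi$ to the left using \eqref{eq:commutatorformulas}, writing each component as $\varphi\,\TP^j\partial_y^\alpha u$ plus a remainder in which one derivative falls on $\varphi$ and the surviving derivative of $u$ has order zero, hence lies in $L^2$. Bounding the leading part by $\|\varphi\|_{C^0}$ times the corresponding component of $\|u\|_{\Sob^1}$ and collecting the remainders (controlled by $\|u\|_{\Sob^0}$ and the first derivatives of $\varphi$), Minkowski's inequality in the $\ell^2$ sum defining the norm yields \eqref{eq:multiplierestimate}.

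The substantive case is $s = 2$, where the norm \eqref{eq:H2nor} forces control of $\TP^*\TP(\varphi u)$ and of $\partial_y^\alpha(\varphi u)$ in $\Sob^1(\RNP)$ for $|\alpha| \leq 1$. The tangential terms reduce to part (1): writing $\partial_y^\alpha(\varphi u) = \varphi\,\partial_y^\alpha u + (\partial_y^\alpha\varphi)u$ and applying the $s=1$ estimate to each summand (the second with $\partial_y^\alpha\varphi$ in place of $\varphi$) produces the leading contribution $\|\varphi\|_{C^0}\|\partial_y^\alpha u\|_{\Sob^1}$ plus remainders controlled by $\|u\|_{\Sob^1} = \|u\|_{\Sob^{s-1}}$. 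The main obstacle is the normal term $\TP^*\TP(\varphi u) = |\TD|^2(\varphi u)$. Using $[|\TD|^2,\varphi] = -\partial_x^2\varphi - 2(\partial_x\varphi)\partial_x$ one finds
\[
|\TD|^2(\varphi u) = \varphi\,|\TD|^2 u - (\partial_x^2\varphi)u - 2(\partial_x\varphi)\partial_x u,
\]
and the genuine difficulty is that $\partial_x u$ need not lie in $L^2(\RNP)$ for $u \in \Sob^2(\RNP)$: for $u \in \Cnu$ the leading singularity $x^{1/2-\nu}u_-(x^2,\cdot)$ contributes $\partial_x u \sim x^{-1/2-\nu}$, which fails to be square integrable when $\nu > 0$.

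This is precisely where the hypothesis $\partial_x\varphi|_{\RN} = 0$ enters. I would rewrite $\partial_x = \TP - (\nu - 1/2)x^{-1}$, so that
\[
(\partial_x\varphi)\partial_x u = (\partial_x\varphi)\TP u - (\nu - 1/2)\,\big((\partial_x\varphi)x^{-1}\big)u.
\]
Since $\partial_x\varphi$ is bounded, the first summand is controlled by $\|\partial_x\varphi\|_{C^0}\|\TP u\|_{L^2} \leq C\|u\|_{\Sob^1}$; and since $\partial_x\varphi$ vanishes at $x = 0$, Taylor's theorem makes $(\partial_x\varphi)x^{-1}$ bounded, with $C^0$ norm controlled by the first two derivatives of $\varphi$, so the second summand is controlled by $C\|u\|_{L^2} \leq C\|u\|_{\Sob^1}$. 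Combined with $\|(\partial_x^2\varphi)u\|_{L^2} \leq \|\partial_x^2\varphi\|_{C^0}\|u\|_{L^2}$ and $\|\varphi\,|\TD|^2 u\|_{L^2} \leq \|\varphi\|_{C^0}\|\TP^*\TP u\|_{L^2}$, this yields $\|\TP^*\TP(\varphi u)\|_{L^2} \leq \|\varphi\|_{C^0}\|\TP^*\TP u\|_{L^2} + C_2\|u\|_{\Sob^1}$ with $C_2$ depending on the first two derivatives of $\varphi$. Assembling the normal and tangential estimates through Minkowski's inequality in \eqref{eq:H2nor} then gives \eqref{eq:multiplierestimate} for $s = 2$.
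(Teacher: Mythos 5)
Your proof is correct and follows essentially the same route as the paper: the commutator formulas \eqref{eq:commutatorformulas} handle $s=0,1$, and for $s=2$ your rewriting $(\partial_x\varphi)\partial_x u = (\partial_x\varphi)\TP u - (\nu-\tfrac12)\bigl((\partial_x\varphi)x^{-1}\bigr)u$, with $(\partial_x\varphi)x^{-1}$ bounded because $\partial_x\varphi|_{\RN}=0$, is exactly the paper's observation that $(\partial_x\varphi)\partial_x = (\partial_x\varphi)\TP$ modulo a smooth multiplier. Your preliminary reduction to $u \in \Cnu$ by density is harmless but unnecessary, since $\Sob^s(\RNP)$ is defined distributionally and all the commutations are valid directly in ${\mathscr{D}}'(\RNP)$.
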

\begin{proof}
	The continuity statement is obvious for $s=0$. For $s = 1$ it follows from the first commutator formula \eqref{eq:commutatorformulas}. When $s=2$, the additional condition $\partial_x \phi |_{\RN} = 0$ is needed to ensure that
	\[
	u \mapsto (\partial_x \varphi)\partial_x u
	\]
	is bounded $\Sob^{1}(\RNP) \rightarrow \Sob^0(\RNP)$: the vanishing of $\partial_x \phi$ at the boundary implies $(\partial_x \varphi)\partial_x =  (\partial_x \varphi)\TP$ modulo multiplication by a smooth function, which acts continuously by the first part. The estimate \eqref{eq:multiplierestimate} follows as well from \eqref{eq:commutatorformulas}.
\end{proof}

\begin{rem}
	Lemma \ref{lem:leibniz} result may also be extended to $\TSob^s(\RNP)$ by defining 
	\[
	\varphi(u,\UL{\phi}) := (\varphi u, \varphi|_{\RN}\UL{\phi}),
	\]
	and using that standard Sobolev spaces on $\RN$ are closed under multiplication by smooth functions.
\end{rem}
\begin{rem}
	The hypotheses of Lemma \ref{lem:leibniz} can not be improved when $s=2$, in the sense that $\Sob^2(\RNP)$ is not closed under multiplication by arbitrary $C_c^\infty(\OL{\RNP})$ functions. On the other hand, as a special case of Lemma \ref{lem:leibniz}, if $\varphi \in C_c^\infty(\OL{\RNP})$ is constant in a neighborhood of $\RN$, then $\Sob^s(\RNP)$ is closed under multiplication by $\varphi$ for each $s=0,1,2$. 
\end{rem}

Now consider the term $|\TD|^2$ in \eqref{eq:besselmappingrnp} which is clearly bounded
\[
|\TD|^2 : \Sob^2(\RNP) \rightarrow \Sob^0(\RNP).
\]
The distinction between $0< \nu < 1$ and $\nu \geq 1$ plays an important role when extending this action.  Suppose that $0 < \nu < 1$, and let $J$ denote the usual symplectic matrix,
\[ 
J = \left( \begin{array}{cc}
0 & 1 \\
-1 & 0 \\
\end{array} \right).
\]
Then the following formulae are valid for each $u,v \in \mathcal{F}_\nu$:
\begin{align}
\left< |\TD|^2 u,v \right>_{\RNP} &= \left< u, |\TD|^2 v \right>_{\RNP} + \left< \UL{\gamma}u,J\UL{\gamma}v \right>_{\RN}, \label{eq:TD1} \\
\left< |\TD|^2 u,v \right>_{\RNP} &= \left< \TD u,\TD v \right>_{\RNP} - \left< \gamma_+ u, \gamma_- v \right>_{\RN} \label{eq:TD2}.
\end{align}
Since $\Cnu$ is dense, \eqref{eq:TD1} is valid for $v \in \Sob^2(\RNP)$, and \eqref{eq:TD2} is valid for $v \in \Sob^1(\RNP)$.
\begin{lem} \label{lem:TDgreen1}
	Let $0 < \nu < 1$ and $s=0,1,2$. Then there exists $C>0$ such that
	\[
	\| |\TD|^2u\|_{\Sob^{s-2}(\RNP)} \leq C \| (u,\UL{\gamma}u) \|_{\TSob^s(\RNP)}
	\]
	for each $u \in \Cnu$.
\end{lem}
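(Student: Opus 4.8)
The plan is to treat the top case $s=2$ directly and to reduce the two remaining cases $s=0,1$ to the $L^2$-based Green's formulas \eqref{eq:TD1} and \eqref{eq:TD2} by duality, exploiting that $\Sob^{s-2}(\RNP)$ is by definition the antidual of $\Sob^{2-s}(\RNP)$. Recall that in the $\TSob^s$-norm the trace datum $\UL\gamma u = (\gamma_- u,\gamma_+ u)$ is measured in $H^{s-\UL\nu}(\RN) = H^{s-1+\nu}(\RN)\times H^{s-1-\nu}(\RN)$. For $s=2$ there is no boundary contribution: since $|\TD|^2:\Sob^2(\RNP)\to\Sob^0(\RNP)$ is bounded and the trace maps $\gamma_\mp:\Sob^2(\RNP)\to H^{1\pm\nu}(\RN)$ are continuous, the quantity $\|(u,\UL\gamma u)\|_{\TSob^2(\RNP)}$ is equivalent to $\|u\|_{\Sob^2(\RNP)}$, so the estimate is immediate from the boundedness of $|\TD|^2$.

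For $s=1$ I would compute $\||\TD|^2 u\|_{\Sob^{-1}(\RNP)}$ as the supremum of $|\langle |\TD|^2 u,v\rangle_{\RNP}|$ over $v\in\Sob^1(\RNP)$ with $\|v\|_{\Sob^1(\RNP)}=1$, and apply \eqref{eq:TD2}, which gives $\langle |\TD|^2 u,v\rangle_{\RNP} = \langle \TD u,\TD v\rangle_{\RNP} + \langle \gamma_+ u,\gamma_- v\rangle_{\RN}$. The interior term is bounded by $\|\TD u\|_{L^2}\|\TD v\|_{L^2}\leq \|u\|_{\Sob^1(\RNP)}\|v\|_{\Sob^1(\RNP)}$. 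For the boundary term, $\gamma_- v\in H^{\nu}(\RN)$ by continuity of $\gamma_-:\Sob^1(\RNP)\to H^{\nu}(\RN)$, while in the $\TSob^1$-norm $\gamma_+ u$ is measured in $H^{-\nu}(\RN)$; since $H^{-\nu}(\RN)$ and $H^{\nu}(\RN)$ are dual, $|\langle\gamma_+ u,\gamma_- v\rangle_{\RN}|\leq \|\gamma_+ u\|_{H^{-\nu}(\RN)}\,C\|v\|_{\Sob^1(\RNP)}$. Taking the supremum yields the bound by $\|(u,\UL\gamma u)\|_{\TSob^1(\RNP)}$. The case $s=0$ is analogous but uses \eqref{eq:TD1}: for $v\in\Sob^2(\RNP)$ with $\|v\|_{\Sob^2(\RNP)}=1$, one has $\langle |\TD|^2 u,v\rangle_{\RNP} = \langle u,|\TD|^2 v\rangle_{\RNP} + \langle\UL\gamma u,J\UL\gamma v\rangle_{\RN}$, where the interior term is controlled by $\|u\|_{\Sob^0(\RNP)}\||\TD|^2 v\|_{L^2}\leq C\|u\|_{\Sob^0(\RNP)}\|v\|_{\Sob^2(\RNP)}$ and, writing out $J\UL\gamma v = (\gamma_+ v,-\gamma_- v)$, the boundary term splits into $\langle\gamma_- u,\gamma_+ v\rangle_{\RN} - \langle\gamma_+ u,\gamma_- v\rangle_{\RN}$.

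The crux of the whole argument, and the step I expect to require the most care, is verifying that in each of these boundary pairings the trace of the test function lands in exactly the dual space to that in which $\UL\gamma u$ is measured in the relevant $\TSob^s$-norm. For $s=0$ this amounts to the two identities $(\nu-1)+(1-\nu)=0$ and $(-\nu-1)+(1+\nu)=0$: the $\TSob^0$-norm measures $\gamma_- u$ in $H^{\nu-1}(\RN)$ and $\gamma_+ u$ in $H^{-\nu-1}(\RN)$, while for $v\in\Sob^2(\RNP)$ one has $\gamma_+ v\in H^{1-\nu}(\RN)$ and $\gamma_- v\in H^{1+\nu}(\RN)$, so both pairings are honest dualities bounded by $\|\UL\gamma u\|_{H^{-\UL\nu}(\RN)}\,C\|v\|_{\Sob^2(\RNP)}$. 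This exact matching is forced by the choice $\UL\nu=(1-\nu,1+\nu)$ together with the mapping properties of $\gamma_\mp$ established earlier, and it is what makes the antidual estimates close. One final point to check is that the Green's formulas, stated for $u,v\in\Cnu$, are legitimately applied with $v$ ranging over $\Sob^2(\RNP)$ or $\Sob^1(\RNP)$; this follows from the density of $\Cnu$ as already noted after \eqref{eq:TD2}, so the supremum defining the antidual norm may be computed over these larger classes.
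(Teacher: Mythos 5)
Your proof is correct and follows essentially the same route as the paper: $s=2$ via the equivalence of $\| u \|_{\Sob^2(\RNP)}$ and $\| (u,\UL{\gamma}u) \|_{\TSob^2(\RNP)}$, $s=1$ via \eqref{eq:TD2}, and $s=0$ via \eqref{eq:TD1}, with the antidual norms estimated by pairing against $v \in \Sob^{2-s}(\RNP)$. The paper leaves the duality bookkeeping implicit, and your verification that the trace exponents match exactly — $\gamma_\pm u$ measured in $H^{s-1\mp\nu}(\RN)$ against $\gamma_\mp v \in H^{1\pm\nu-(2-s)\cdot 0}$, forced by the choice $\UL{\nu}=(1-\nu,1+\nu)$ — together with the density remark justifying $v$ ranging over $\Sob^{2-s}(\RNP)$, is precisely the content it is relying on.
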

\begin{proof}
	For $s=2$ this follows since the norms $\| u \|_{\Sob^2(\RNP)}$ and $\| (u,\UL{\gamma}u)\|_{\TSob^2(\RNP)}$ are equivalent for each $u \in \Cnu$. The case $s=1$ follows from \eqref{eq:TD2}, and the case $s=0$ follows from \eqref{eq:TD1}.
\end{proof}
As a consequence of Lemma \ref{lem:TDgreen1}, the map $(u,\UL\gamma u) \mapsto |\TD|^2 u$ with $u \in \Cnu$ admits a unique extension as a bounded operator
\[
|\TD|^2 : \TSob^s(\RNP) \rightarrow \Sob^{s-2}(\RNP)
\]
for $s=0,1,2$ and $0 < \nu < 1$.
The situation is simpler when $\nu \geq 1$, since in that case $\Cnu = C_c^\infty(\RNP)$ is dense in $\Sob^s(\RNP)$. The analogues of \eqref{eq:TD1}, \eqref{eq:TD2} are given by
\begin{align}
\left< |\TD|^2 u,v \right>_{\RNP} &= \left< u, |\TD|^2 v \right>_{\RNP}, \label{eq:TD3} \\
\left< |\TD|^2 u, v \right>_{\RNP} &= \left< \TD u, \TD v \right>_{\RNP} \label{eq:TD4},
\end{align}
valid for each $u, v\in \Cnu$. The analogue of Lemma \ref{lem:TDgreen1} is the following.
\begin{lem} \label{lem:TDgreen2}
	Let $\nu \geq 1$ and $s=0,1,2$. Then there exists $C>0$ such that
	\[
	\| |\TD|^2 u \|_{\Sob^{s-2}(\RNP)} \leq C \| u \|_{\Sob^s(\RNP)}
	\]
	for each $u \in \Cnu$.
\end{lem}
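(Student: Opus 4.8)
The plan is to mirror the structure of Lemma \ref{lem:TDgreen1}, but using the simpler identities \eqref{eq:TD3}, \eqref{eq:TD4} available in the regime $\nu \geq 1$, where no boundary terms appear because $\Cnu = C_c^\infty(\RNP)$ and the trace maps vanish. The key simplification over the case $0 < \nu < 1$ is that we may estimate directly against $\| u \|_{\Sob^s(\RNP)}$ rather than the larger twisted norm $\| (u,\UL{\gamma}u) \|_{\TSob^s(\RNP)}$, since the boundary data is no longer an independent piece of the function space.

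First I would dispose of the endpoint case $s = 2$. Here the claim is immediate: by the definition \eqref{eq:H2nor} of the $\Sob^2(\RNP)$ norm, $\| |\TD|^2 u \|_{\Sob^0(\RNP)} = \| \TP^*\TP u \|_{L^2(\RNP)} \leq \| u \|_{\Sob^2(\RNP)}$ for each $u \in \Cnu$, recalling that $|\TD|^2 = \TP^*\TP$. Next I would treat $s = 1$. The goal is to bound $\| |\TD|^2 u \|_{\Sob^{-1}(\RNP)}$, which by duality equals the supremum of $|\langle |\TD|^2 u, v\rangle_{\RNP}|$ over $v \in \Sob^1(\RNP)$ with $\| v \|_{\Sob^1(\RNP)} = 1$. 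By density of $\Cnu$ in $\Sob^1(\RNP)$ (Lemma \ref{lem:traceproperty}) and the boundary-term-free identity \eqref{eq:TD4}, we have $\langle |\TD|^2 u, v\rangle_{\RNP} = \langle \TD u, \TD v\rangle_{\RNP}$, so Cauchy--Schwarz gives $|\langle |\TD|^2 u, v\rangle_{\RNP}| \leq \| \TD u \|_{L^2(\RNP)} \| \TD v \|_{L^2(\RNP)} \leq \| u \|_{\Sob^1(\RNP)} \| v \|_{\Sob^1(\RNP)}$, which is the required estimate. Finally, for $s = 0$ I would run the same duality argument against $v \in \Sob^2(\RNP)$ using \eqref{eq:TD3}, obtaining $|\langle |\TD|^2 u, v\rangle_{\RNP}| = |\langle u, |\TD|^2 v\rangle_{\RNP}| \leq \| u \|_{L^2(\RNP)} \| |\TD|^2 v \|_{L^2(\RNP)} \leq \| u \|_{\Sob^0(\RNP)} \| v \|_{\Sob^2(\RNP)}$, and taking the supremum over unit-norm $v$ yields the bound on $\| |\TD|^2 u \|_{\Sob^{-2}(\RNP)}$.

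I do not anticipate a genuine obstacle here, as the whole point of the lemma is that the $\nu \geq 1$ case is strictly easier: the absence of the symplectic boundary pairing from \eqref{eq:TD1} is precisely what lets the right-hand side be the plain $\Sob^s$ norm. The only point requiring mild care is the density justification --- one must confirm that the identities \eqref{eq:TD3}, \eqref{eq:TD4}, initially stated for $u, v \in \Cnu$, extend to the relevant $v$ in $\Sob^1(\RNP)$ or $\Sob^2(\RNP)$, which follows from the density of $\Cnu$ in each $\Sob^s(\RNP)$ together with the continuity of both sides in $v$. With that in hand, the three cases assemble into the stated estimate with a constant $C$ that may in fact be taken equal to one.
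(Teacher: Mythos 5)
Your proof is correct and is exactly the argument the paper intends: the lemma is stated without proof as the analogue of Lemma \ref{lem:TDgreen1}, whose proof you mirror case by case ($s=2$ from the definition \eqref{eq:H2nor} of the $\Sob^2$ norm, $s=1$ from \eqref{eq:TD4}, $s=0$ from \eqref{eq:TD3}), with the duality and density justifications handled properly. Nothing further is needed.
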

From Lemma \ref{lem:TDgreen2}, it follows that the map $u \mapsto |\TD|^2u$ with $u \in \Cnu$ admits a unique continuous extension as a bounded operator $|\TD|^2 : \Sob^{s}(\RNP) \rightarrow \Sob^{s-2}(\RNP)$ for $s=0,1,2$ and $\nu \geq 1$.

Next consider a typical term in $B\TD$. Such a term may be written as $b(x,y) D_y^\beta \TD$, where $b \in x C^\infty(\OL{\RNP})$ is constant for $x$ large and $|\beta| \leq 1$. The following result holds for all $\nu >0$, since there are no boundary terms when integrating by parts.

\begin{lem} \label{lem:Bbounded}
	Suppose that $b \in x C^\infty(\OL{\RNP})$ is constant for $x$ large and $|\beta| \leq 1$. Then
	\[
	b D_y^\beta \TD : \Sob^{s}(\RNP) \rightarrow \Sob^{s-|\beta|-1}(\RNP) 
	\]
	is bounded for each $s =0,1,2$. Furthermore, there exists $c >0$ depending on $s,\beta$ and $C \geq 0$ depending on $b,s,\beta,r$ such that
	\begin{equation} \label{eq:Bbounded}
	\| b D_y^{\beta} \TD u \|_{\Sob^{s-|\beta|-1}(\RNP)} \leq c r\|b\|_{C^1(\OL{\RNP})}\| u \|_{\Sob^{s}(\RNP)} + C\| u \|_{\Sob^{s-1}(\RNP)}.
	\end{equation}
	for each $u\in \Sob^{s}(\RNP)$ such that $\supp u \subseteq \{ 0 \leq x \leq r \}$.	
\end{lem}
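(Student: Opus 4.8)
The plan is to establish the quantitative bound \eqref{eq:Bbounded} for $u$ in the dense subspace $\Cnu$ (Lemma \ref{lem:traceproperty}) and then pass to the limit; the stated boundedness follows. Write $b = x b_1$ with $b_1 \in C^\infty(\OL{\RNP})$ bounded together with all of its derivatives, which is possible since $b \in x C^\infty(\OL{\RNP})$ is constant for $x$ large. This factorization plays two roles. First, on the support $\{0 \le x \le r\}$ it gives the pointwise bound $|b| \le r\,\|b\|_{C^1(\OL{\RNP})}$, which supplies the small coefficient $c r \|b\|_{C^1}$ in front of the top-order term. Second, it lets one absorb the singular factor $x^{-1}$ appearing in $\TD$ and $\TD^*$, because $b x^{-1} = b_1$ is bounded. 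The only algebraic identities used, beyond the commutators \eqref{eq:commutatorformulas}, are $\TD = \TD^* - 2i(\nu - 1/2) x^{-1}$ and $[\TD, b] = -i\,\partial_x b$, both immediate from Section \ref{subsect:bessel}.

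I would first treat the cases with nonnegative target index $s-|\beta|-1$, namely $(s,|\beta|) = (2,0),\,(2,1),\,(1,0)$, working directly from the definition of the $\Sob^{s-|\beta|-1}(\RNP)$ norm. For $u \in \Cnu$ one has $\TD u \in \mathcal{H}^1_*(\RNP)$ when $s=2$ and $\TD u \in \Sob^0(\RNP)$ when $s=1$, and $D_y^\beta$ preserves this. The heart of the matter is that multiplication by $b$ carries these starred spaces into the unstarred $\Sob^{s-|\beta|-1}(\RNP)$: when one applies a factor of $\TD$ to $b\,D_y^\beta \TD u$, one rewrites that $\TD$ via $\TD = \TD^* - 2i(\nu-1/2)x^{-1}$ and $[\TD,b] = -i\,\partial_x b$, so that $\TD^*$ falls on $D_y^\beta \TD u$ (producing a genuine $L^2$ function such as $|\TD|^2 u$), the singular term becomes the bounded multiplier $-2i(\nu-1/2)b_1\,D_y^\beta \TD u$, and the commutator produces $-i(\partial_x b)\,D_y^\beta\TD u$. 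The first piece retains the factor $b$ and so carries the coefficient $c r \|b\|_{C^1}$, while the remaining two lose a derivative on $u$ and are controlled by $C \|u\|_{\Sob^{s-1}(\RNP)}$; tangential derivatives $\partial_y$ commute with the radial structure and are handled identically.

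For the cases with negative target index, $(s,|\beta|) = (0,0),\,(0,1),\,(1,1)$, I would argue by duality, pairing $b\,D_y^\beta\TD u$ against a test function $v \in \Sob^{|\beta|+1-s}(\RNP)$ and integrating both $D_y^\beta$ onto $v$ (harmless on the closed manifold $\RN$) and the twisted derivative $\TD$ by parts. The single fully-leading term, in which no derivative falls on $b$, keeps the factor $\bar b$ and is bounded using $|\bar b| \le r\|b\|_{C^1}$ on the support, contributing $c r\|b\|_{C^1}\|u\|_{\Sob^s(\RNP)}$. Every term in which $b$ is differentiated (by $D_y^\beta$, or by $\partial_x$ after invoking $\TD^* = \TD + 2i(\nu-1/2)x^{-1}$) is a remainder $\langle u, g\rangle_{\RNP}$ with $g$ a smooth multiple of $v$ of one lower order; these I would estimate by the dual pairing $|\langle u, g\rangle_{\RNP}| \le \|u\|_{\Sob^{s-1}(\RNP)}\,\|g\|_{\Sob^{1-s}(\RNP)}$ together with the multiplier Lemma \ref{lem:leibniz}, which yields $\|g\|_{\Sob^{1-s}(\RNP)}\le C\|v\|_{\Sob^{|\beta|+1-s}(\RNP)}$. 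Throughout, $bx^{-1} = b_1$ is what prevents an uncontrolled $x^{-1}v$ (or $x^{-1}u$) from appearing.

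The main obstacle is exactly this bookkeeping: forcing every error term to lie in $\Sob^{s-1}(\RNP)$ rather than merely $\Sob^s(\RNP)$. In the direct cases this is the drop in derivative order whenever $b$ is differentiated; in the duality cases it is the split into a top-order pairing against $\|u\|_{\Sob^s(\RNP)}$ and a remainder pairing against the weaker norm $\|u\|_{\Sob^{s-1}(\RNP)}$. The integration by parts of $\TD$ in the duality cases generates a boundary term proportional to $\langle \gamma_- u, \gamma_-^*(\,\cdots\,)\rangle_{\RN}$, and this is the one genuinely delicate point: it vanishes because the argument of $\gamma_-^*$ carries the extra factor of $x$ from $b = x b_1$, and $\gamma_-^*(x\,w) = x^{1/2-\nu}(xw)|_{\RN} = 0$. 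This is precisely the content of the hypothesis $B(0,y,D_y) = 0$ that makes $B\TD$ a genuine lower-order perturbation near $\partial X$. Finally, the case $\nu \ge 1$ is simpler: there $\Cnu = C_c^\infty(\RNP)$, Hardy's inequality (as in Lemma \ref{lem:usualsobolev}) controls $x^{-1}u$ outright, and all boundary terms vanish, so the same computation applies verbatim with the spaces $\mathring{\Sob}^s$.
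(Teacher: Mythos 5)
Your proposal is correct and takes essentially the same route as the paper's proof: density of $\Cnu$, the factorization $b = xb_1$ with $|b| \leq r\|b\|_{C^1}$ on the support, the commutators \eqref{eq:commutatorformulas} together with $\TD^* = \TD + 2i(\nu-1/2)x^{-1}$ to absorb the singular factor, direct estimates when $s-|\beta|-1 \geq 0$ and duality against $v \in \Sob^{|\beta|+1-s}(\RNP)$ otherwise (with no boundary contribution, for the reason you give), the undifferentiated-$b$ term carrying $cr\|b\|_{C^1}\|u\|_{\Sob^s}$ and every term in which $b$ is differentiated dropping to $\|u\|_{\Sob^{s-1}}$. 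The one spot you compress is the bound $\| \TD^*(\OL{q}\,v) \|_{\Sob^1(\RNP)} \leq C \|v\|_{\Sob^2(\RNP)}$ in the $(s,|\beta|)=(0,1)$ case, where Lemma \ref{lem:leibniz} alone does not suffice and the paper computes $\TD\TD^*\, x\OL{q_1} = x|\TD|^2 \OL{q_1} - i(3-2\nu)\TD\,\OL{q_1}$ before commuting $|\TD|^2$ past $\OL{q_1}$ --- but this is exactly the conversion mechanism you already name, so nothing essential is missing.
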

\begin{proof}
	The boundedness result is clear for $s=2$. For $s=0,1$, it follows from the same considerations as in Lemma \ref{lem:closedunderadjoint}: define $B = bD_y^\beta$, and note that $B = B_1 x = xB_1$ where $B_1$ is smooth up to $x=0$. Thus
	\[
	B\TD = \TD^* B + i(1-2\nu)B_1 + [B,D_x]. 
	\]
	Then for each $u,v \in \Cnu$,
	\[
	\left< B\TD u, v \right>_{\RN} = \left< \TD u, B^* v \right>_{\RN} = \left< u, B^* \TD v - i(1-2\nu)B_1^* v + [B,D_x]^* v  \right>_{\RNP}.
	\]
	The first equality implies boundedness for $s=1$, while the second implies boundedness for $s=0$.
	
	Similarly, \eqref{eq:Bbounded} clearly holds for $s=2$. To prove the other cases, begin by writing $b = xb_1$, where $b_1$ is smooth up to $x=0$. Also define $q = [D_y^\beta,b]$ and $q = xq_1$, so that $q_1$ is smooth up to $x=0$ (and vanishes if $|\beta|=0$). To avoid a distracting proliferation of complex conjugates, assume that $q$ is real-valued.

	\begin{inparaenum}
		\item If $s=1$, then for $u, v \in \Cnu$,
	\[
	\left< bD_y^\beta D_\nu u, v \right>_{\RNP} = \left< b \TD u, D_y^\beta v \right>_{\RNP} - \left< u,  \TD ({q}v) + i(2\nu-1) q_1v \right>_{\RNP}.
	\]
	Thus
	\[
	\| bD_y^\beta \TD u \|_{\Sob^{s-|\beta|-1}(\RNP)} \leq \| b D_\nu u \|_{\Sob^0(\RNP)} + C \| u \|_{\Sob^0(\RNP)},
	\]
	whence the result follows by Lemma \ref{lem:leibniz}.
	
	\item Similarly for $s=0$, if $u, v \in \mathcal{F}_\nu$, then
	\begin{align*}
	\left< bD_y^\beta \TD  u, v \right>_{\RNP} = \left< b u, \TD D_y^\beta v \right>_{\RNP} &+ \left< [b,\TD]u -i(2\nu-1) b_1 u, D_y^\beta v \right> \\ &- \left< u, \TD(qv) + i(2\nu-1)q_1v \right>_{\RNP}.
	\end{align*}
	The first term gives
	\[
	|\left< bu , \TD D_y^\beta v \right>_{\RNP}| \leq \| bu\|_{\Sob^0(\RNP)}\| v \|_{\Sob^{1+|\beta|}(\RNP)},
	\]
	as desired. Now $[b,\TD] = i(\partial_x b)$, and the second term can be written as
	\[
	\left< u, i(\partial_x b - (2\nu-1)b_1 ) D_y^\beta v \right>,
	\]	
	which is bounded in absolute value by a constant times $\|u\|_{\Sob^{-1}(\RNP)}\| v \|_{\Sob^{1+|\beta|}(\RNP)}$ according to Lemma \ref{lem:leibniz}. Similarly, 
	\[
	|\left< u, i(2\nu-1)q_1v \right>_{\RNP}| \leq C \|u\|_{\Sob^{-1}(\RNP)}\| v \|_{\Sob^{1}(\RNP)}.
	\]
	Now $|\left< u, \TD(qv) \right>| \leq \| u\|_{\Sob^{-1}(\RNP)}\| \TD(qv) \|_{\Sob^1(\RNP)}$, so it remains to bound the term $\| \TD(qv) \|_{\Sob^1(\RNP)}$. For this, write
	\[
	\TD \TD q = (\TD x - i)\TD q_1 = (\TD^* x -2i\nu)\TD q_1  = x |\TD|^2q_1 - i(2\nu+1)\TD q_1.
	\]
	Using \eqref{eq:commutatorformulas}, 
	\[
	x|\TD|^2 {q_1} = xq_1|\TD|^2 - 2i x(\partial_x q_1)\TD- x(\partial_x^2 q_1)  + (2\nu-1)(\partial_x q_1),
	\]
	which is bounded $\Sob^2(\RNP) \rightarrow \Sob^0(\RNP)$. Terms of the form $D_{y}^\alpha \TD(qv)$ with $|\alpha| \leq 1$ are bounded similarly. Since $q$ vanishes for $|\beta| = 0$, this shows that
	\[
	\| \TD (q v) \|_{\Sob^{1}(\RNP)} \leq C\| v\|_{\Sob^{1+|\beta|}(\RNP)},
	\]
	thus completing the proof.
		\end{inparaenum}
	\end{proof}

\begin{rem} Lemma \ref{lem:Bbounded} implies that $bD_y^\beta \TD$ is also bounded 
	\[
	\TSob^s(\RNP) \rightarrow \Sob^{s-|\beta|-1}(\RNP)
	\]
	 since the projection $\TSob^s(\RNP) \rightarrow \Sob^s(\RNP)$ onto the first factor is continuous. 
\end{rem}

Finally, a typical term in the operator $A$ can be written as $a(x,y)D_y^\alpha$, where $|\alpha| \leq 2$ and $a \in C^\infty(\OL{\RNP})$ is constant outside a compact subset of $\OL{\RNP}$.
\begin{lem} \label{lem:Abounded}
	Suppose that $a \in C^\infty(\OL{\RNP})$ is constant for $x$ large. 
	\begin{enumerate} \itemsep6pt
		\item 	If $s=0,1$ and $|\alpha| \leq 2$, then $a D_y^\alpha : \Sob^{s}(\RNP) \rightarrow \Sob^{s-|\alpha|}(\RNP)$ is bounded. 
		\item If $s=0,1,2$ and $0 < |\alpha| \leq 2$, then $a D_y^\alpha : \Sob^{s}(\RNP) \rightarrow \Sob^{s-|\alpha|}(\RNP)$ is bounded.
	\end{enumerate}
	Furthermore, suppose that $a(0,p) = 0$ for $p\in\RN$. Then there exists $c>0$ depending on $s,\alpha$ and $C \geq 0$ depending on $a,s,\alpha,r$ such that in each of the above cases,
	\begin{equation} \label{eq:Abounded}
	\| a D_y^{\alpha} u \|_{\Sob^{s-|\alpha|}(\RNP)} \leq cr\|a\|_{C^1(\OL{\RNP})}\| u \|_{\Sob^{s}(\RNP)} + C \| u \|_{\Sob^{s-1}(\RNP)}
	\end{equation}
	for each $u\in \Sob^{s}(\RNP)$ such that $\supp u \subseteq \{ (x,y) \in \OL{\RNP}: |x| + |y-p| < r   \}$.	
\end{lem}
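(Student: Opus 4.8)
The plan is to factor $aD_y^\alpha$ as tangential differentiation, which shifts the $\Sob$-index, followed by multiplication by $a$, which is governed by Lemma \ref{lem:leibniz}. The structural point I would exploit is that $D_y$ commutes with $\TP$, so each $D_y^{e_j}$ is bounded $\Sob^{s'} \to \Sob^{s'-1}$ for $s'=1,2$ (every defining seminorm of $D_y^{e_j}u$ in $\Sob^{s'-1}$ is among those of $u$ in $\Sob^{s'}$), with operator norm a combinatorial constant independent of $a$. All identities are first proved for $u,v \in \Cnu$ and then extended using the density of $\Cnu$ (Lemma \ref{lem:traceproperty}). I would split the argument by the sign of the target index $s-|\alpha|$.

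When $s-|\alpha| \geq 0$ (the \emph{direct} cases $(s,|\alpha|) \in \{(0,0),(1,0),(1,1),(2,1),(2,2)\}$), I peel off the $|\alpha|$ tangential derivatives one at a time to land $D_y^\alpha u$ in $\Sob^{s-|\alpha|}$, and then multiply by $a$. Since $s-|\alpha| \in \{0,1\}$ in every case that occurs, multiplication by $a$ is bounded on $\Sob^{s-|\alpha|}$ with no extra hypothesis on $a$. The excluded pair $(s,|\alpha|)=(2,0)$ is precisely the one that would force multiplication on $\Sob^2$, where Lemma \ref{lem:leibniz} requires $\partial_x a|_{\RN}=0$; this is exactly why case (1) restricts to $s \leq 1$ while case (2) insists on at least one tangential derivative.

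When $s-|\alpha| < 0$ (the cases $(0,1),(0,2),(1,2)$), I peel $s$ derivatives to reduce to the base case $aD_y^{\alpha'} : \Sob^0 \to \Sob^{-|\alpha'|}$ with $|\alpha'| = |\alpha|-s \in \{1,2\}$, and argue by duality against $v \in \Cnu \subseteq \Sob^{|\alpha'|}$. Integration by parts in $y$ (no boundary terms on $\RN$) gives $\langle aD_y^{\alpha'}u, v\rangle_{\RNP} = \langle u, D_y^{\alpha'}(\bar a v)\rangle_{\RNP}$, and Leibniz writes $D_y^{\alpha'}(\bar a v) = \sum_{|\gamma|\leq|\alpha'|} c_\gamma D_y^\gamma v$ with each $c_\gamma$ smooth and constant for $x$ large. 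Each term $c_\gamma D_y^\gamma$ maps $\Sob^{|\alpha'|} \to \Sob^{|\alpha'|-|\gamma|} \hookrightarrow \Sob^0$ by the direct cases already treated (for $\gamma=0$, simply by the embedding $\Sob^{|\alpha'|}\hookrightarrow \Sob^0$ followed by $L^2$-multiplication), whence $|\langle aD_y^{\alpha'}u,v\rangle| \leq C\|u\|_{\Sob^0}\|v\|_{\Sob^{|\alpha'|}}$; density then identifies $aD_y^{\alpha'}u$ with an element of $(\Sob^{|\alpha'|})' = \Sob^{-|\alpha'|}$ of the right norm. The key point to verify, and the main obstacle, is that this duality does not loop back on itself: the adjoint invokes only the \emph{direct} (non-negative target) cases, which were established independently, so there is no circularity, and one is never forced to place more derivatives on the low-regularity test function $v$ than it can absorb.

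For the quantitative estimate, assume $a(0,p)=0$ and $\supp u \subseteq \{|x|+|y-p|<r\}$; the mean value inequality gives $\|a\|_{C^0(\supp u)} \leq r\|a\|_{C^1(\OL{\RNP})}$. In the direct cases I insert this into \eqref{eq:multiplierestimate} with $\varphi=a$ and $w=D_y^\alpha u$, together with $\|D_y^\alpha u\|_{\Sob^{s-|\alpha|}} \leq c\|u\|_{\Sob^s}$ and $\|D_y^\alpha u\|_{\Sob^{s-|\alpha|-1}} \leq c\|u\|_{\Sob^{s-1}}$, which reproduces the stated form at once. In the duality cases I isolate the top-order term $\bar a D_y^{\alpha'}v$ of $D_y^{\alpha'}(\bar a v)$: its pairing is $\langle au, D_y^{\alpha'}v\rangle$, bounded by $\|au\|_{L^2}\|v\|_{\Sob^{|\alpha'|}} \leq r\|a\|_{C^1}\|u\|_{\Sob^0}\|v\|_{\Sob^{|\alpha'|}}$, while each lower-order term $c_\gamma D_y^\gamma$ with $|\gamma|<|\alpha'|$ maps $\Sob^{|\alpha'|}\to\Sob^1$ and so pairs with $u$ to give $C\|u\|_{\Sob^{-1}}\|v\|_{\Sob^{|\alpha'|}}$, which is absorbed into the remainder. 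Finally, reinstating the peeled derivative $w=D_y^{e_j}u$, for which $\supp w \subseteq \supp u$ and $\|w\|_{\Sob^t}\leq\|u\|_{\Sob^{t+1}}$, propagates the estimate from the base case back to the original $(s,\alpha)$, yielding $\|aD_y^\alpha u\|_{\Sob^{s-|\alpha|}} \leq cr\|a\|_{C^1}\|u\|_{\Sob^s} + C\|u\|_{\Sob^{s-1}}$ with $c$ depending only on $s,\alpha$ and $C$ on $a,s,\alpha,r$.
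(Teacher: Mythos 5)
Your proposal is correct and takes essentially the same route as the paper's proof: the nonnegative-target cases are handled by differentiating tangentially and then invoking Lemma \ref{lem:leibniz} (with the pair $(s,|\alpha|)=(2,0)$ excluded for exactly the reason you identify), the negative-target cases are handled by duality against $v\in\Cnu$ with integration by parts in $y$, and the estimate \eqref{eq:Abounded} follows from the vanishing of $a$ at the boundary point via the multiplier estimate \eqref{eq:multiplierestimate} together with isolating the top-order pairing. The only cosmetic difference is the case $(s,|\alpha|)=(1,2)$, where the paper factors $aD_y^\alpha=\sum_{|\gamma|=1}D_y^\gamma A_\gamma$ and moves one derivative onto the test function, whereas you peel $D_y^{e_j}$ onto $u$ and reduce to the $s=0$ base case --- the same integration-by-parts idea, with your version trading the commutator corrections in $A_\gamma$ for the lower-order Leibniz terms you absorb into $C\|u\|_{\Sob^{s-1}(\RNP)}$.
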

\begin{proof}
	\begin{inparaenum}
\item  First suppose that $s=0,1$. The boundedness result is clear if $s=1$ and $|\alpha| \leq 1$ or $s=0$ and $|\alpha| =0$. Otherwise, suppose that $s=1$ and $|\alpha| = 2$. Write $aD_y^\alpha = \sum_{|\gamma| = 1} D_y^\gamma A_{\gamma}$ for smooth tangential operators $A_{\gamma}(x,y,D_y)$ of order at most one. Then for each $u,v \in \Cnu$ and $|\gamma| = 1$,
	\[
	|\left< D_y^\gamma A_\gamma u, v \right>_{\RNP}| = | \left< A_{\gamma}u, D_y^\gamma v \right>_{\RNP} | \leq C\| u \|_{\Sob^1(\RNP)} \| v \|_{\Sob^1(\RNP)}.
	\]
	On the other hand, suppose that $s=0$. Then
	\[
	|\left< aD_y^\alpha u, v \right>_{\RNP} | = | \left< u, D_y^\alpha av \right>_{\RNP}| \leq C\| u \|_{\Sob^0(\RNP)} \| v \|_{\Sob^{|\alpha|}(\RNP)}
	\]
	for $1 \leq |\alpha| \leq 2$.
	
\item  The only case not handled above is $s=2$, in which case it follows from Lemma \ref{lem:leibniz} that $aD_y^\alpha$ is bounded $\Sob^2(\RNP)\rightarrow \Sob^{2-|\alpha|}(\RNP)$ provided $|\alpha| \neq 0$.
	
\item  The estimate \eqref{eq:Abounded} follows from the same arguments as in the first two parts of the proof.
\end{inparaenum}
	\end{proof}

To summarize the above discussion, write
$A = \sum_{|\alpha| \leq 2} a_{\alpha} D_y^\alpha$ (non uniquely) in the form
\[
A = \sum_{|\alpha| \leq 1} D_y^\alpha A_\alpha
\]
for some $A_\alpha \in \mathrm{Diff}^1(\RN)$ which depends smoothly on $x \in \RP$. Recall that $P^*$ is also a Bessel operator, according to Lemma \ref{lem:closedunderadjoint}. Then there are the two Green's formulas
\begin{equation}
\langle Pu , v \rangle_{\RNP} = \langle u, P^* v \rangle_{\RNP} + \langle \UL{\gamma} u, J\UL{\gamma}v \rangle_{\RN} \label{eq:green2} 
\end{equation} 
and
\begin{multline}
\langle Pu , v \rangle_{\RNP} =  \left< \TD u, \TD v \right>_{\RNP} + \left<  \TD u, B^*v \right>_{\RNP} \\ + \sum_{|\alpha| \leq 1} \left< A_\alpha u, D_y^\alpha v \right>_{\RNP}  - \left< \gamma_+ u, \gamma_- v \right>_{\RN} \label{eq:green1},
\end{multline}
valid for each $u, v \in \Cnu$.
\begin{lem} \label{lem:besselextension1} Let $0 < \nu < 1$ and $s=0,1,2$. Then there exists $C>0$ depending on $s$ such that
\[
\| Pu \|_{\Sob^{s-2}(\RNP)} \leq C \| (u,\UL{\gamma}u) \|_{\TSob^{s}(\RNP)}
\]
for each $u \in \Cnu$. Thus the map $(u,\UL{\gamma}u) \mapsto Pu$ with $u \in \Cnu$ admits a unique extension as a bounded operator
\[
P : \TSob^{s}(\RNP) \rightarrow \Sob^{s-2}(\RNP)
\]
for $s=0,1,2$ and $0 < \nu < 1$. When $s=0,1$, this extension is determined by \eqref{eq:green2}, \eqref{eq:green1}.
\end{lem}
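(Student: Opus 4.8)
The plan is to split $P = |\TD|^2 + B\TD + A$ into its three constituent pieces, to bound each as a map into $\Sob^{s-2}(\RNP)$ against $\|(u,\UL{\gamma}u)\|_{\TSob^{s}(\RNP)}$, and then to pass to the extension using the density of $\{(u,\UL{\gamma}u) : u\in\Cnu\}$ in $\TSob^{s}(\RNP)$ from Lemma~\ref{lem:tildedensity}. The structural point to exploit is that only the leading term $|\TD|^2$ genuinely depends on the twisted boundary data $\UL{\gamma}u$; the lower-order pieces $B\TD$ and $A$ will be controlled by $\|u\|_{\Sob^{s}(\RNP)}$ alone, which is dominated by $\|(u,\UL{\gamma}u)\|_{\TSob^{s}(\RNP)}$ straight from the definition of the twisted norm.

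For $|\TD|^2$ there is nothing to prove: Lemma~\ref{lem:TDgreen1} gives exactly $\||\TD|^2u\|_{\Sob^{s-2}(\RNP)} \le C\|(u,\UL{\gamma}u)\|_{\TSob^{s}(\RNP)}$. For $B\TD = \sum_{|\beta|\le 1} b_\beta D_y^\beta\TD$, Lemma~\ref{lem:Bbounded} bounds each summand $\Sob^{s}(\RNP)\to\Sob^{s-|\beta|-1}(\RNP)$, and since $|\beta|\le 1$ the continuous inclusions $\Sob^{2}(\RNP)\hookrightarrow\Sob^{1}(\RNP)\hookrightarrow\Sob^{0}(\RNP)\hookrightarrow\Sob^{-1}(\RNP)\hookrightarrow\Sob^{-2}(\RNP)$ (definitional for nonnegative order and dual for negative order) land the output in $\Sob^{s-2}(\RNP)$. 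For $A = \sum_{|\alpha|\le 2} a_\alpha D_y^\alpha$, the summands with $1\le|\alpha|\le 2$ are covered by Lemma~\ref{lem:Abounded}, which maps $\Sob^{s}(\RNP)\to\Sob^{s-|\alpha|}(\RNP)\hookrightarrow\Sob^{s-2}(\RNP)$; the one case excluded there, $s=2$ and $|\alpha|=0$ (pure multiplication by $a_0$), is handled trivially by $\Sob^{2}(\RNP)\hookrightarrow\Sob^{0}(\RNP)$ together with the fact that multiplication by the bounded function $a_0$ preserves $\Sob^{0}(\RNP)=\Sob^{s-2}(\RNP)$. Summing the three bounds, and using $\|u\|_{\Sob^{s}(\RNP)}\le\|(u,\UL{\gamma}u)\|_{\TSob^{s}(\RNP)}$ for the last two, yields the stated estimate on $\Cnu$, whence Lemma~\ref{lem:tildedensity} produces the unique bounded extension $P:\TSob^{s}(\RNP)\to\Sob^{s-2}(\RNP)$.

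It remains to identify this extension for $s=0,1$, where the target is an antidual space so that $Pu$ is pinned down by its pairing against test functions: \eqref{eq:green2} (with $v\in\Sob^{2}(\RNP)$) serves for $s=0$ and \eqref{eq:green1} (with $v\in\Sob^{1}(\RNP)$) for $s=1$, with the now-undefined symbol $\gamma_+u$ read off from the boundary component $\phi_+$. For fixed $(u,\UL{\phi})\in\TSob^{s}(\RNP)$ the right-hand side of each formula is a bounded functional of $v$ that depends continuously on $(u,\UL{\phi})$ --- here one uses that $\gamma_\pm$ are bounded on the relevant $\Sob^{k}(\RNP)$, that $P^*$ is again a Bessel operator by Lemma~\ref{lem:closedunderadjoint}, and that the boundary pairings are between $H^{s-\UL{\nu}}(\RN)$ and its dual --- and it agrees with $\langle Pu,v\rangle_{\RNP}$ on the dense set $\Cnu$, so it represents the extension everywhere. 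I expect the only genuine bookkeeping to lie in this last step, namely checking that the regularities of $\UL{\phi}$ and of $\UL{\gamma}v$ are dual so that the boundary pairings in \eqref{eq:green2}, \eqref{eq:green1} make sense in the low-regularity cases; the interior estimate itself is a routine assembly of the mapping lemmas already established.
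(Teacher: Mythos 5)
Your proposal is correct and takes essentially the same route as the paper, whose proof of this lemma is simply the one-line observation that it follows from Lemmas \ref{lem:TDgreen1}, \ref{lem:Bbounded}, and \ref{lem:Abounded} together with the density of $\{(u,\UL{\gamma}u):u\in\Cnu\}$ from Lemma \ref{lem:tildedensity}. The extra bookkeeping you supply --- the downward embeddings into $\Sob^{s-2}(\RNP)$, the excluded case $s=2$, $|\alpha|=0$, and the identification of the extension for $s=0,1$ via \eqref{eq:green2} and \eqref{eq:green1} --- correctly makes explicit what the paper leaves implicit.
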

\begin{proof}This is a direct consequence of Lemmas \ref{lem:TDgreen1}, \ref{lem:Bbounded}, and \ref{lem:Abounded}.
\end{proof}

The situation is simpler when $\nu \geq 1$: the analogues of \eqref{eq:green2}, \eqref{eq:green1} are given by
\begin{align}
&\langle Pu , v \rangle_{\RNP} = \langle u, P^* v \rangle_{\RNP}  \label{eq:green4} \\
&\langle Pu , v \rangle_{\RNP} =  \left< \TD u, \TD v \right>_{\RNP} + \left< \TD, B^*v \right>_{\RNP} + \sum_{|\alpha| \leq 1} \left< A_\alpha u, D_y^\alpha v \right>_{\RNP}\label{eq:green3},
\end{align}
valid for each $u, v \in \Cnu$. As before, \eqref{eq:green4} is in fact valid for $v \in \Sob^2(\RNP)$, while $\eqref{eq:green3}$ is valid for $v \in \Sob^1(\RNP)$.

\begin{lem} \label{lem:besselextension2}
Let $\nu \geq 1$ and $s = 0,1,2$. Then there exists $C>0$ such that
\[
\| Pu \|_{\Sob^{s-2}(\RNP)} \leq C \| u \|_{\Sob^{s}(\RNP)}
\]
for each $u \in \Cnu$. Thus the map $u \mapsto Pu$ with $u \in \Cnu$ admits a unique extension as a bounded operator
\[
P : \Sob^{s}(\RNP) \rightarrow \Sob^{s-2}(\RNP)
\]
for $s=0,1,2$ and $\nu \geq 1$. When $s=0,1$ this extension is determined by \eqref{eq:green4}, \eqref{eq:green3}. The action of $P$ on $\Sob^s(\RNP)$ is simply the restriction of $P : {\mathscr{D}}'(\RNP) \rightarrow {\mathscr{D}}'(\RNP)$ to $\Sob^s(\RNP)$. 
\end{lem}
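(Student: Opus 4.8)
The plan is to mirror the proof of Lemma \ref{lem:besselextension1}, substituting the Dirichlet estimate of Lemma \ref{lem:TDgreen2} for the trace-dependent Lemma \ref{lem:TDgreen1}, and then to add one verification that the resulting abstract extension coincides with the distributional action of $P$. First I would prove the norm estimate by decomposing $P = |\TD|^2 + B\TD + A$ and bounding each summand into $\Sob^{s-2}(\RNP)$. The term $|\TD|^2$ is controlled by $C\|u\|_{\Sob^s(\RNP)}$ directly from Lemma \ref{lem:TDgreen2}. Writing $B\TD = \sum_{|\beta|\leq 1} b_\beta D_y^\beta \TD$, each summand maps $\Sob^s(\RNP) \to \Sob^{s-|\beta|-1}(\RNP) \hookrightarrow \Sob^{s-2}(\RNP)$ by Lemma \ref{lem:Bbounded} (which holds for all $\nu>0$), and writing $A = \sum_{|\alpha|\leq 2} a_\alpha D_y^\alpha$, each term with $|\alpha|\geq 1$ maps $\Sob^s(\RNP) \to \Sob^{s-|\alpha|}(\RNP)\hookrightarrow \Sob^{s-2}(\RNP)$ by Lemma \ref{lem:Abounded}, using the continuous inclusions $\Sob^t(\RNP) \hookrightarrow \Sob^{t'}(\RNP)$ for $t\geq t'$.

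The one contribution not handled directly is the zeroth-order term of $A$ at $s=2$, since multiplication by a general $a \in C^\infty(\OL{\RNP})$ need not preserve $\Sob^2(\RNP)$ (compare Lemma \ref{lem:leibniz}). Here, however, the target is $\Sob^{s-2}(\RNP)=\Sob^0(\RNP)=L^2(\RNP)$, and multiplication by the bounded function $a$ is trivially bounded on $L^2$; composing with the inclusion $\Sob^2(\RNP)\hookrightarrow L^2(\RNP)$ gives the required bound. For $s=0,1$ the zeroth-order term is covered by Lemma \ref{lem:Abounded} directly. Summing the contributions yields $\|Pu\|_{\Sob^{s-2}(\RNP)} \leq C\|u\|_{\Sob^s(\RNP)}$ for all $u \in \Cnu$.

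Since $\nu \geq 1$ we have $\Cnu = C_c^\infty(\RNP)$, which is dense in $\Sob^s(\RNP)$ for each $s=0,1,2$ by Lemma \ref{lem:traceproperty}, so the estimate furnishes a unique bounded extension $P : \Sob^s(\RNP) \to \Sob^{s-2}(\RNP)$. For $s=0,1$ I would check that this extension is the one specified by \eqref{eq:green4}, \eqref{eq:green3}: for fixed $u \in \Cnu$ the right-hand sides are continuous antilinear functionals of $v$ over $\Sob^2(\RNP)$ and $\Sob^1(\RNP)$ respectively (using that $P^*$ is again a Bessel operator by Lemma \ref{lem:closedunderadjoint}, so $P^*:\Sob^2(\RNP)\to\Sob^0(\RNP)$ is bounded), and their dependence on $u$ is continuous in the $\Sob^s(\RNP)$ norm by the same term-by-term bounds; passing to the limit in $u$ identifies the extension with the functional defined by these formulas.

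For the final identification with the distributional operator, I would approximate $u \in \Sob^s(\RNP)$ by $u_n \in C_c^\infty(\RNP)$ in $\Sob^s(\RNP)$. On one hand $Pu_n \to Pu$ in $\Sob^{s-2}(\RNP)$ by the extension just constructed; on the other hand $u_n \to u$ in $\Sob^0(\RNP)$ and hence in ${\mathscr{D}}'(\RNP)$, so by continuity of $P : {\mathscr{D}}'(\RNP) \to {\mathscr{D}}'(\RNP)$ the sequence $Pu_n$ converges to the distributional image of $u$. Because the inclusion $\Sob^{s-2}(\RNP) \hookrightarrow {\mathscr{D}}'(\RNP)$ is continuous, with the antidual pairing restricting to the distributional pairing on test functions, the two limits must agree, giving the claim. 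I expect this last step to be the main conceptual point, and it is precisely where $\nu \geq 1$ is essential: the boundary terms present in \eqref{eq:green2}, \eqref{eq:green1} are now absent (equivalently the symplectic pairing drops out in \eqref{eq:TD3}, \eqref{eq:TD4}), so the extension is the naive distributional restriction rather than an operator carrying extra boundary contributions.
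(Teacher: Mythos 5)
Your proposal is correct and follows essentially the same route as the paper, whose proof consists of citing exactly the lemmas you invoke (Lemmas \ref{lem:TDgreen2}, \ref{lem:Bbounded}, \ref{lem:Abounded}) together with the density of $\Cnu = C_c^\infty(\RNP)$. Your additional observations---handling the zeroth-order term of $A$ at $s=2$ via boundedness of multiplication on $L^2(\RNP)$ (a case Lemma \ref{lem:Abounded} formally omits), and the limiting argument identifying the extension with the distributional restriction, which works precisely because for $\nu \geq 1$ test functions are dense in $\Sob^{2-s}(\RNP)$ so no boundary terms arise---correctly fill in details the paper leaves implicit.
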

\begin{proof}
	This is a direct consequence of Lemmas \ref{lem:TDgreen2}, \ref{lem:Abounded}, and \ref{lem:Bbounded}.
	\end{proof}
	
Suppose that $0< \nu < 1$. If $s=0,1$, then an element $f \in \Sob^{s-2}(\RNP)$ is not uniquely determined by a distribution in $\mathscr{D}'(\RNP)$. On the other hand, $f$ may certainly be restricted to a functional on $\dot{\Sob}^{s}(\RNP)$, which is determined uniquely by a distribution since $C_c^\infty(\RNP)$ is dense in this space by definition. Given $s=0,1,2$ and $u \in \Sob^s(\RNP), \,f \in \Sob^{s-2}(\RNP)$, the equation $Pu = f$ can be interpreted in this weak sense, namely
	\[
	\left< u, P^*v \right>_{X} = \left<f, v \right>_X
	\]
	for all $v \in C_c^\infty(\RNP) \subseteq \dot{\Sob}^{2-s}(\RNP)$. For $s=2$ this is just the statement that $Pu = f$ in distributions. Furthermore, if $(u,\UL{\phi}) \in \TSob^s(\RNP)$ and $P(u,\UL{\phi}) = f$, then $Pu = f$ weakly.

Now suppose that $P \in \mathrm{Bess}_{\nu}^{(\lambda)}(\RNP)$ is a parameter-dependent Bessel operator. Recalling the definition of the parameter-dependent norms as in Section \ref{subsect:semiclassicalnorms}, it is straightforward to show that the following hold:
\begin{enumerate} \itemsep6pt
\item If $0 < \nu < 1$ and $s=0,1,2$, then there exists $C>0$ such that 
\[
\VERT P(\lambda)(u,\UL{\phi}) \VERT_{\Sob^{s-2}(\RNP)} \leq C \VERT (u,\UL{\phi}) \VERT_{\TSob^{s}(\RNP)}
\]
for each $(u,\UL{\phi}) \in \TSob^s(\RNP)$.
\item If $\nu \geq 1$ and $s=0,1,2$, then there exists $C>0$ such that 
\[
\VERT P(\lambda)u \VERT_{\Sob^{s-2}(\RNP)} \leq C \VERT u \VERT_{\Sob^{s}(\RNP)}
\]
for each $u \in \Sob^s(\RNP)$.
\end{enumerate}
There are also straightforward extensions of Lemmas \ref{lem:TDgreen1}, \ref{lem:TDgreen2}, \ref{lem:Bbounded}, and \ref{lem:Abounded} for parameter-dependent norms.

\subsection{Function spaces on a manifold}

 \label{subsect:spaceonmanifold}
Consider a compact manifold with boundary $\OL{X}$, equipped with a distinguished boundary defining function $x$ and collar diffeomorphism $\phi$ as in Section \ref{subsect:manifoldwithboundary}.

\begin{defi} \label{defi:cnuonmanifold}
Given $\nu > 0$, let $\mathcal{F}_\nu(X)$ denote the following spaces of functions.
\begin{enumerate} \itemsep6pt
\item If $0 < \nu < 1$, then $\mathcal{F}_\nu(X)$ consists of $u \in C^\infty(X)$ such that
\begin{equation} \label{eq:cnuonmanifold}
(u \circ \phi)(x,y) = x^{1/2-\nu}u_-(x^2,y) + x^{1/2+\nu}u_+(x^2,y)
\end{equation}
for some $u_\pm \in C^\infty([0,\sqrt{\varepsilon}) \times \partial X)$.
\item If $\nu \geq 1$, then $\mathcal{F}_\nu = C_c^\infty(X)$.
\end{enumerate}
\end{defi}

Thus $\Cnu = \mathcal{F}_\nu(\RNP)$.
Fix a finite open cover $\OL X = \bigcup_i U_i$ by coordinate charts $(U_i, \psi_i)$, such that either
\[
U_i \cap \partial X = \emptyset, \quad \psi_i : U_i \rightarrow \psi_i(U_i) \subseteq \mathbb{T}^n_+,
\]
or if $U_i \cap \partial X \neq \emptyset$, then
\[
U_i = \phi([0,\varepsilon) \times Y_i), \quad  \psi_i = (1 \times \theta_i)\circ \phi^{-1}
\]
for a coordinate chart $(Y_i, \theta_i)$ on $\partial X$. This of course implies that $\partial X = \bigcup_i Y_i$, where the union is taken over all $i$ such that $U_i \cap \partial X \neq \emptyset$. Now take a partition of unity of the form
\[
\sum_{i} \chi_i^2 = 1, \quad \chi_i \in C_c^\infty(U_i),
\]
with the additional property that if $U_i \cap \partial X \neq \emptyset$, then $\chi_i$ has the form
\[
\chi_i = (\alpha \beta_i) \circ \phi^{-1},
\]
for functions $\alpha \in C_c^\infty([0,\varepsilon)), \, \beta_i \in C_c^\infty(Y_i)$, where $\alpha = 1$ near $x=0$. Note that if $u \in \mathcal{F}_\nu(X)$ then $\chi_i u$ may be identified with an element of $\Cnu$ via the coordinate map $\psi_i$. Keeping this in mind, define
\[
\| u \|_{i,\Sob^{s}(X)} := \| (\chi_i u) \circ \psi_i^{-1} \|_{\Sob^{s}(\RNP)}
\]
for $s=0,\pm 1,\pm 2$ and $u \in \mathcal{F}_\nu(X)$.

\begin{defi} \label{defi:normonmanifold} Given $s=0,\pm 1,\pm 2$, let
\[
\| u \|^2_{\Sob^s(X)} = \sum_i \| u \|^2_{i,\Sob^{s}(X)}.
\]
Then define
\[
\Sob^s(X) = \text{closure of } \mathcal{F}_\nu(X) \text{ in the $\Sob^s(X)$ norm}.
\]
\end{defi}

To prove that $\Sob^s(X)$ is independent of the choice of covering $U_i$ and partition of unity $\chi_i$, the following two elementary results are needed; their proofs are left to the reader.

\begin{lem} \label{lem:invariance1}
	Let $Y,Y'$ be open subsets of $\RN$, and suppose that $\Phi : Y \rightarrow Y'$ is a diffeomorphism between them. Suppose that $K\subseteq \OL{\RP}\times Y$ is compact. Then for each $s=0,\pm1,\pm2$ there exists $C>0$ such that
	\[
	C^{-1} \| u \|_{\Sob^s(\RNP)} \leq \| u \circ (1\times \Phi) \|_{\Sob^s(\RNP)} \leq C\| u\|_{\Sob^s(\RNP)}
	\]
	for each $u \in \Cnu$ with $\supp u \subseteq K' := (1\times \Phi)(K)$,
\end{lem}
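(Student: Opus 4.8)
The plan is to establish the single inequality $\|u\circ(1\times\Phi)\|_{\Sob^s(\RNP)}\le C\|u\|_{\Sob^s(\RNP)}$ for the pullback operator $T_\Phi : u\mapsto u\circ(1\times\Phi)$; the reverse inequality then follows by applying the same bound to $\Phi^{-1}$ (exchanging the roles of $K$ and $K'$), and the two together give the claimed equivalence. The structural observation driving everything is that $1\times\Phi$ acts only in the tangential variable $y$, so it commutes with the normal operators, $\TP(T_\Phi u) = T_\Phi(\TP u)$ and likewise for $\TP^*$ and $\TP^*\TP$, whereas a tangential derivative transforms by the chain rule, $\partial_{y_i}(T_\Phi u) = \sum_j \varphi_{ij}\,T_\Phi(\partial_{y_j}u)$, where the coefficients $\varphi_{ij}=\partial_{y_i}\Phi_j$ depend on $y$ alone and are bounded together with all their derivatives on the compact projection of $K$ to $Y$. (Note that $T_\Phi u\in\Cnu$: the form \eqref{eq:Cnu} is preserved, with $u_\pm$ replaced by $u_\pm(\cdot,\Phi(\cdot))$, which lies in $C_c^\infty(\OL{\RNP})$ after a $y$-cutoff since $\supp u\subseteq K'$.)

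For $s=0$ the estimate is the change of variables $z=\Phi(y)$: since $\supp u\subseteq K'$ is compact in $\OL{\RP}\times Y'$, the Jacobian $|\det D\Phi^{-1}|$ is bounded above and below on the relevant set, yielding two-sided $L^2$ equivalence directly. For $s=1,2$ I would expand the defining norms and estimate each term. For $s=1$, whose norm is a sum of $L^2$ norms of $\TP^j\partial_y^\alpha$ with $j+|\alpha|\le 1$, commuting $\TP$ past $T_\Phi$ and applying the chain rule writes each term as a finite sum $\varphi\,T_\Phi(\TP^j\partial_y^\beta u)$ with $|\beta|\le|\alpha|$, controlled by the $s=0$ estimate together with the $L^\infty$ bounds on the coefficients. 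For $s=2$ the term $\TP^*\TP(T_\Phi u)=T_\Phi(\TP^*\TP u)$ involves no chain rule and is handled by $s=0$; the remaining part $\sum_{|\alpha|\le 1}\|\partial_y^\alpha T_\Phi u\|_{\Sob^1(\RNP)}^2$ is estimated by writing $\partial_y^\alpha T_\Phi u=\sum_\beta\varphi_{\alpha\beta}T_\Phi(\partial_y^\beta u)$, invoking boundedness of multiplication on $\Sob^1(\RNP)$ (Lemma \ref{lem:leibniz}(1)) and then the already-proved $s=1$ bound on $T_\Phi(\partial_y^\beta u)$, and finally using $\sum_{|\beta|\le 1}\|\partial_y^\beta u\|_{\Sob^1(\RNP)}^2\le\|u\|_{\Sob^2(\RNP)}^2$ from \eqref{eq:H2nor}.

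The main obstacle, and the reason for the preceding bookkeeping, is that $\Sob^2(\RNP)$ is not closed under multiplication by arbitrary smooth functions: Lemma \ref{lem:leibniz}(2) permits only multipliers $\varphi$ with $\partial_x\varphi|_{\RN}=0$. The crucial saving feature is that every coefficient $\varphi_{\alpha\beta}$ produced by the chain rule for $1\times\Phi$ depends on $y$ only, hence is $x$-independent and automatically satisfies $\partial_x\varphi_{\alpha\beta}=0$. This is precisely what makes Lemma \ref{lem:leibniz}(2) applicable and is the reason the normal variable must be left untouched by the diffeomorphism.

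Finally, for the negative orders $s=-1,-2$ I would argue by duality, using $\Sob^{s}(\RNP)=\Sob^{-s}(\RNP)'$. For $u\in\Cnu$ with $\supp u\subseteq K'$, the change of variables computed above gives $\langle T_\Phi u,v\rangle_{\RNP}=\langle u,\,T_\Phi^\ast v\rangle_{\RNP}$ with $T_\Phi^\ast v=|J|\,(v\circ(1\times\Phi^{-1}))$ and $J$ the ($y$-dependent) Jacobian of $\Phi^{-1}$; inserting a $y$-cutoff supported near the projection of $K'$ makes $T_\Phi^\ast$ act on a fixed compact set. Then $\|T_\Phi u\|_{\Sob^s(\RNP)}=\sup_{\|v\|_{\Sob^{-s}(\RNP)}=1}|\langle u,T_\Phi^\ast v\rangle_{\RNP}|\le\|u\|_{\Sob^s(\RNP)}\,\sup_v\|T_\Phi^\ast v\|_{\Sob^{-s}(\RNP)}$. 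Since $-s\in\{1,2\}$ is positive, the positive-order result applied to $\Phi^{-1}$ bounds $\|v\circ(1\times\Phi^{-1})\|_{\Sob^{-s}(\RNP)}\le C\|v\|_{\Sob^{-s}(\RNP)}$, and multiplication by the $x$-independent factor $|J|$ is bounded on $\Sob^{-s}(\RNP)$ by Lemma \ref{lem:leibniz} (the condition $\partial_x|J|=0$ again being automatic). This yields $\|T_\Phi u\|_{\Sob^s(\RNP)}\le C\|u\|_{\Sob^s(\RNP)}$, and the reverse inequality follows by symmetry, completing all cases $s=0,\pm1,\pm2$.
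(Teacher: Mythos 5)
Your proof is correct and follows essentially the same route as the paper: the cases $s=0,1,2$ by the tangential change of variables (your commutation of $\TP$ with the pullback and the observation that the chain-rule coefficients are $x$-independent, so Lemma \ref{lem:leibniz} applies, is just a fleshed-out version of what the paper leaves implicit), and $s=-1,-2$ by the identical duality argument with a cutoff and the fact that the Jacobian factor depends on $y$ alone. No gaps.
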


\begin{lem} \label{lem:invariance2}
	Let $K$ be a compact subset of $\RNP$. Then for each $s=0,\pm1,\pm2$ there exists $C>0$ such that
	\[
	C^{-1} \| u \|_{H^s(\RNP)} \leq \| u \|_{\Sob^s(\RNP)} \leq C\| u \|_{H^s(\RNP)}
	\]
	for each $u \in C_c^\infty(\RNP)$ such that $\supp u \subseteq K$.
\end{lem}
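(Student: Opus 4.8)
The plan is to exploit that $K$, being compact in the \emph{open} manifold $\RNP = \RP \times \RN$, is bounded away from the boundary $\{x = 0\}$: there exists $c > 0$ with $K \subseteq \{x \geq c\}$. Consequently the coefficients $x^{-1}$ and $x^{-2}$ occurring in $\TP = \partial_x + (\nu - 1/2)x^{-1}$ and in $\TP^*\TP = |\TD|^2 = -\partial_x^2 + (\nu^2 - 1/4)x^{-2}$ are smooth and bounded, together with all their derivatives, on a neighborhood of $K$. Fixing a cutoff $\chi \in C_c^\infty(\RNP)$ with $\chi = 1$ on $K$ and $\supp \chi \subseteq K'$ for a slightly larger compact $K' \subseteq \RNP$, the products $\chi x^{-1}$ and $\chi x^{-2}$ become genuine bounded multipliers of both $\Sob^s(\RNP)$ and $H^s(\RNP)$. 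This is the mechanism making the twisted and untwisted norms interchangeable on functions supported in $K$, in sharp contrast to the situation near $\partial X$ where the twisting is essential.

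For $s = 0$ there is nothing to prove, since $\Sob^0(\RNP) = L^2(\RNP) = H^0(\RNP)$. For $s = 1, 2$ I would compare the defining sums term by term. On $\supp u \subseteq K$ one has $\TP u = \partial_x u + (\nu - 1/2)(\chi x^{-1})u$ and $\TP^*\TP u = -\partial_x^2 u + (\nu^2 - 1/4)(\chi x^{-2})u$, so each twisted derivative entering $\|u\|_{\Sob^s(\RNP)}$ equals the corresponding ordinary derivative plus a term of strictly lower order carrying a smooth bounded coefficient, while the tangential derivatives $\partial_y^\alpha$ are common to both norms. For $s = 1$ this directly gives $\|\TP u\|_{L^2(\RNP)} \leq \|\partial_x u\|_{L^2(\RNP)} + C\|u\|_{L^2(\RNP)}$ and the reverse, hence $\|u\|_{\Sob^1(\RNP)} \leq C\|u\|_{H^1(\RNP)}$ and conversely. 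For $s = 2$ the pure second normal derivative comes from $\|\TP^*\TP u\|_{L^2(\RNP)}$ and the mixed and pure tangential second derivatives from $\|\partial_y^\alpha u\|_{\Sob^1(\RNP)}$ with $|\alpha| = 1$; applying the $s=1$ equivalence to $\partial_y^\alpha u$ (again supported in $K$) and trading $\TP^*\TP$ for $-\partial_x^2$ modulo the bounded multiplier $\chi x^{-2}$ yields the two-sided bound, solving the same relations for the ordinary derivatives in terms of the twisted ones for the reverse direction.

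The negative orders $s = -1, -2$ I would treat by duality, following the pattern in the proof of Lemma \ref{lem:invariance1}. Writing $\|u\|_{\Sob^s(\RNP)}$ as the supremum of $|\langle u, v\rangle_{\RNP}|/\|v\|_{\Sob^{-s}(\RNP)}$ over $v \in \Cnu$, and using $\langle u, v\rangle_{\RNP} = \langle u, \chi v\rangle_{\RNP}$ together with the multiplier bound $\|\chi v\|_{\Sob^{-s}(\RNP)} \leq C\|v\|_{\Sob^{-s}(\RNP)}$ (the dual form of Lemma \ref{lem:leibniz}), one restricts the supremum to test functions supported in $K'$. On such functions the already-established equivalence in the positive order $-s$ replaces $\|\cdot\|_{\Sob^{-s}(\RNP)}$ by $\|\cdot\|_{H^{-s}(\RNP)}$, and enlarging the supremum back to all of $H^{-s}(\RNP)$ produces $\|u\|_{H^s(\RNP)}$; the reverse inequality follows from the same chain with the roles of $\Sob$ and $H$ exchanged, both scales enjoying the required multiplier estimates. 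The only genuine bookkeeping lies in this duality step --- verifying that localization by $\chi$ costs at most a constant in each scale and that restricting to compactly supported test functions does not shrink the dual norm by more than a constant factor --- whereas the positive-order comparison is entirely elementary once the boundedness of $x^{-1}, x^{-2}$ on $K$ is noted.
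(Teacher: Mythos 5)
Your argument is correct and is exactly the route the paper takes: its proof of Lemma \ref{lem:invariance2} simply asserts that the cases $s=0,1,2$ are straightforward (because $K$ is bounded away from $\{x=0\}$, so $x^{-1}$ and $x^{-2}$ are bounded multipliers there) and that $s=-1,-2$ follow by duality, mirroring the localization-by-cutoff scheme used in the proof of Lemma \ref{lem:invariance1}. You have merely supplied the details the paper leaves implicit, and they check out, including the applicability of Lemma \ref{lem:leibniz} at $s=2$ since your cutoff $\chi$ vanishes identically near $\RN$.
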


The combination of Lemmas \ref{lem:invariance1} and \ref{lem:invariance2} show that the spaces $\Sob^s(X)$ do not depend on any of the choices used to define them. 

\begin{lem}
Fix a density on $\OL X$ of product type near $\partial X$. Let $\left< \cdot, \cdot \right>_X$ denote the inner product on $L^2(X;\mu)$. For each $s=0,\pm 1,\pm2$,
\[
| \left< u , v \right>_X | \leq C \| u \|_{\Sob^{s}(X)} \| v \|_{\Sob^{-s}(X)},
\]
where $u,v \in \mathcal{F}_\nu(X)$. Furthermore, $\left< \cdot, \cdot \right>_X$ extends to a nondegenerate pairing $\Sob^s(X) \times \Sob^{-s}(X) \rightarrow \mathbb{C}$.
\end{lem}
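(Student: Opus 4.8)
The plan is to reduce everything to the corresponding statement on $\RNP$, where $\Sob^{-s}(\RNP)$ is by construction the antidual of $\Sob^s(\RNP)$, and then to patch the local estimates together using the partition of unity $\sum_i \chi_i^2 = 1$ fixed in Definition \ref{defi:normonmanifold}. Write $\tilde u_i = (\chi_i u)\circ\psi_i^{-1} \in \Cnu$ for $u \in \mathcal{F}_\nu(X)$. Because $\mu$ is of product type near $\partial X$, in each boundary chart its pushforward under $\psi_i$ is $J_i(y)\,|dx|\,|dy|$ for a smooth positive $J_i$ depending only on the tangential variable $y$ (interior charts are handled identically with the classical spaces $H^{\pm s}$, comparing to $\Sob^{\pm s}$ via Lemma \ref{lem:invariance2}). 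Consequently, for $u,v \in \mathcal{F}_\nu(X)$,
\[
\langle u,v\rangle_X = \sum_i \langle \chi_i u, \chi_i v\rangle_X = \sum_i \langle \tilde u_i, J_i \tilde v_i\rangle_{\RNP}.
\]

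For the boundedness estimate I would use that $\Sob^{-s}(\RNP) = \Sob^s(\RNP)'$, so $|\langle a,b\rangle_{\RNP}| \leq \|a\|_{\Sob^s(\RNP)}\|b\|_{\Sob^{-s}(\RNP)}$ for $a,b \in \Cnu$. Since $J_i$ depends only on $y$, it is constant in $x$ near $\partial X$; by Lemma \ref{lem:leibniz} and the remark following it, multiplication by $J_i$ is then bounded on $\Sob^{-s}(\RNP)$ for every $s = 0,\pm1,\pm2$. This is precisely where the product-type hypothesis enters: it is what makes $J_i$ an admissible multiplier in the delicate cases $s = \pm 2$. Bounding each summand by $C\|\tilde u_i\|_{\Sob^s(\RNP)}\|\tilde v_i\|_{\Sob^{-s}(\RNP)}$ and applying Cauchy--Schwarz in $i$ gives $|\langle u,v\rangle_X| \leq C\|u\|_{\Sob^s(X)}\|v\|_{\Sob^{-s}(X)}$. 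The pairing then extends continuously to $\Sob^s(X)\times\Sob^{-s}(X)$ since $\mathcal{F}_\nu(X)$ is dense in each factor by definition.

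For nondegeneracy I would prove the matching lower bound
\[
\|u\|_{\Sob^s(X)} \leq C \sup_{0 \neq v \in \mathcal{F}_\nu(X)} \frac{|\langle u,v\rangle_X|}{\|v\|_{\Sob^{-s}(X)}}, \quad u \in \mathcal{F}_\nu(X),
\]
for every $s = 0,\pm1,\pm2$; applied with $s$ it yields nondegeneracy in the first slot, and applied with $-s$ (using $|\langle u,v\rangle_X| = |\langle v,u\rangle_X|$) it yields nondegeneracy in the second. To build the test function, for each $i$ I use that $\Cnu$ is dense in $\Sob^{-s}(\RNP)$ (Lemma \ref{lem:traceproperty} together with the density of $\Sob^s(\RNP)$ in its antidual) and that, $\Sob^s(\RNP)$ being a Hilbert space, its norm is recovered by duality against $\Sob^{-s}(\RNP)$, to choose $g_i \in \Cnu$ with $\|g_i\|_{\Sob^{-s}(\RNP)} \leq 1$ and, after adjusting the phase, $\langle \tilde u_i, g_i\rangle_{\RNP} \geq (1-\varepsilon)\|\tilde u_i\|_{\Sob^s(\RNP)}$; multiplying by a tangential cutoff I may assume $g_i$ is supported in the chart. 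Setting $v = \sum_i \chi_i\, \big((J_i^{-1} g_i)\circ\psi_i\big) \in \mathcal{F}_\nu(X)$, the factor $J_i^{-1}$ cancels the density when the $i$-th term is transferred back to $\RNP$, so $\langle u,v\rangle_X = \sum_i \langle \tilde u_i, g_i\rangle_{\RNP} \geq (1-\varepsilon)\sum_i \|\tilde u_i\|_{\Sob^s(\RNP)} \geq (1-\varepsilon)\|u\|_{\Sob^s(X)}$, using $\sum_i a_i \geq (\sum_i a_i^2)^{1/2}$ for $a_i \geq 0$.

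The main obstacle is bounding $\|v\|_{\Sob^{-s}(X)}$ by a constant independent of $u$. Expanding $\chi_j v$ produces a diagonal term, which is a fixed $\Sob^{-s}(\RNP)$-multiplier applied to $g_j$, together with off-diagonal overlap terms of the form $g_i \circ (\psi_i\circ\psi_j^{-1})$ times a smooth cutoff supported in $U_i \cap U_j$. Here the transition maps between boundary charts have the product form $1\times(\theta_j\circ\theta_i^{-1})$, so Lemma \ref{lem:invariance1} bounds the $\Sob^{-s}(\RNP)$-norm of each transferred $g_i$ by $C\|g_i\|_{\Sob^{-s}(\RNP)} \leq C$; moreover the cutoffs $\chi_i\chi_j J_i^{-1}$ are constant in $x$ near $\partial X$, hence admissible $\Sob^{-s}$-multipliers for all $s$. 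Since the cover is finite, summing the finitely many bounded contributions gives $\|v\|_{\Sob^{-s}(X)} \leq C'$ with $C'$ depending only on the fixed geometric data. This establishes the lower bound on $\mathcal{F}_\nu(X)$; since both sides are continuous in the $\Sob^s(X)$-norm, density of $\mathcal{F}_\nu(X)$ promotes the resulting norm equivalence, and hence the nondegeneracy of the pairing, to all of $\Sob^s(X)\times\Sob^{-s}(X)$.
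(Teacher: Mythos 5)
Your proposal is correct and follows the same route as the paper, whose proof is only a one-line sketch: reduce to $\RNP$ via the charts $(U_i,\psi_i)$ and the quadratic partition of unity $\sum_i \chi_i^2 = 1$ (the identity $\langle u,v\rangle_X = \sum_i \langle \chi_i u, \chi_i v\rangle_X$ being exactly why the quadratic form is convenient), then invoke the duality $\Sob^{-s}(\RNP) = \Sob^s(\RNP)'$ together with the multiplier and invariance lemmas. Your writeup in fact supplies the details the paper leaves implicit --- notably the near-optimal dual elements $g_i$, the Jacobian cancellation via $J_i^{-1}$ (legitimate since the product-type hypothesis makes $J_i$ an $x$-independent, hence admissible, multiplier), and the uniform bound on $\|v\|_{\Sob^{-s}(X)}$ using the product-form transition maps --- with only trivial omissions (one should also cut $g_i$ off in $x$, not just tangentially, which is harmless as such cutoffs are admissible multipliers).
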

\begin{proof}
	This can be reduced to the case on $\RNP$ via the coordinate charts $(U_i,\psi_i)$ and partition of unity $\chi_i$ used to define $\Sob^s(X)$.
	\end{proof}

Thus $\Sob^{-s}(X)$ is naturally identified with the antidual of $\Sob^s(X)$ via the inner product induced by $\mu$ on $\Sob^0(X)$. When $0 < \nu < 1$, it is also possible to show that the maps
\[
u \mapsto u_-(0,\cdot), \quad u \mapsto 2\nu u_+(0,\cdot)
\]
for $u \in \mathcal{F}_\nu(X)$ satisfying \eqref{eq:cnuonmanifold} admit continuous extensions $\gamma_\mp$ such that
\[
\gamma_\mp : \Sob^s(X) \rightarrow H^{s-1\pm\nu}(\partial X).
\]
It is understood that $\gamma_-$ exists for $s=1,2$, while $\gamma_+$ exists for $s=2$. 
The spaces $\TSob^{s}(X)$ are then defined exactly as in Section \ref{subsect:tildespace}.

\begin{lem} If $P \in \mathrm{Bess}_\nu(X)$, then the following hold.
\begin{enumerate} \itemsep6pt
\item If $0 < \nu <1$ and $s=0,1,2$, then there exists $C>0$ such that
\[
\| Pu \|_{\Sob^{s-2}(X)} \leq C \| (u,\UL{\gamma}u) \|_{\TSob^{s}(X)}.
\]
for each $u \in \mathcal{F}_\nu(X)$.
\item If $\nu \geq 1$ and $s=0,1,2$, then there exists $C>0$ such that 
\[
\| Pu \|_{\Sob^{s-2}(X)} \leq C \| u \|_{\Sob^{s}(X)}.
\]
for each $u \in \mathcal{F}_\nu(X)$.
\end{enumerate} 
\end{lem}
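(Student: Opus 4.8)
The plan is to reduce the asserted global estimates to the local ones already proved on $\RNP$ --- namely Lemma \ref{lem:besselextension1} when $0<\nu<1$ and Lemma \ref{lem:besselextension2} when $\nu\geq 1$ --- using the finite quadratic partition of unity $\sum_i\chi_i^2=1$ fixed in the definition of the norms (Definition \ref{defi:normonmanifold}). By that definition $\|Pu\|_{\Sob^{s-2}(X)}^2=\sum_i\|(\chi_i Pu)\circ\psi_i^{-1}\|_{\Sob^{s-2}(\RNP)}^2$, so because the cover is finite it is enough to estimate each summand, and I would split the charts into the interior charts, with $U_i\cap\partial X=\emptyset$, and the boundary charts, with $U_i=\phi([0,\varepsilon)\times Y_i)$.

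On an interior chart the support of $\chi_i u$ stays in a fixed compact subset of the interior, so that $P$ restricts there to a genuine smooth second-order operator and, by Lemmas \ref{lem:usualsobolev} and \ref{lem:invariance2}, the twisted norms $\Sob^s$ agree with the ordinary Sobolev norms $H^s$. The bound $\|(\chi_i Pu)\circ\psi_i^{-1}\|_{H^{s-2}}\le C\|u\|_{H^s}$ is then just the standard mapping property of a differential operator. No trace enters away from $\partial X$, so this summand is controlled by $C\|u\|_{\Sob^s(X)}\le C\|(u,\UL{\gamma}u)\|_{\TSob^s(X)}$, which already covers the $\nu\geq1$ case in these charts as well.

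For a boundary chart I would pass to $\RNP$ via $\psi_i$, where $P$ is intertwined with some $P_{U_i}\in\mathrm{Bess}_\nu(\RNP)$. Writing $v_i=(\chi_i u)\circ\psi_i^{-1}\in\mathcal{F}_\nu(\RNP)$ and $\tilde\chi_i=\chi_i\circ\psi_i^{-1}$, locality of $P_{U_i}$ gives $(\chi_i Pu)\circ\psi_i^{-1}=P_{U_i}v_i+[\tilde\chi_i,P_{U_i}](u\circ\psi_i^{-1})$, where only the values of $u$ near $\supp\chi_i$ enter the commutator. The first term is handled directly by Lemma \ref{lem:besselextension1} (resp. Lemma \ref{lem:besselextension2} when $\nu\ge1$), yielding $\|P_{U_i}v_i\|_{\Sob^{s-2}(\RNP)}\le C\|(v_i,\UL{\gamma}v_i)\|_{\TSob^s(\RNP)}$. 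Since $\gamma_\pm(\chi_i u)=(\chi_i|_{\partial X})\gamma_\pm u$ --- the trace of a product, as in the Remark following Lemma \ref{lem:leibniz} --- summing these contributions over $i$ reproduces precisely $\|(u,\UL{\gamma}u)\|_{\TSob^s(X)}$.

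The step requiring genuine care, and the main obstacle, is the commutator $[\tilde\chi_i,P_{U_i}]$. A priori it is only first order, mapping $\Sob^s\to\Sob^{s-1}$, and the danger in the twisted setting is that it can produce a term of the form $(\partial_x\tilde\chi_i)\,\partial_x$ that fails to be bounded on $\Sob^2(\RNP)$ --- exactly the obstruction isolated in Lemma \ref{lem:leibniz}(2). Here the special structure of the partition rescues the argument: because $\chi_i=(\alpha\beta_i)\circ\phi^{-1}$ with $\alpha\equiv1$ near $x=0$, one has $\partial_x\tilde\chi_i=0$ in a neighborhood of $\RN$, so near the boundary the commutator involves only tangential derivatives of $\beta_i$ acting through $A$ and $B\TD$, while the normal contribution $[D_x^2,\tilde\chi_i]$ is supported in $\{x\ge c>0\}$ where the twisted and ordinary norms coincide. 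Combining the commutator formulas \eqref{eq:commutatorformulas} with Lemmas \ref{lem:Bbounded}, \ref{lem:Abounded}, and \ref{lem:leibniz} then gives $\|[\tilde\chi_i,P_{U_i}](u\circ\psi_i^{-1})\|_{\Sob^{s-2}(\RNP)}\le C\|u\|_{\Sob^s(X)}$, again dominated by $\|(u,\UL{\gamma}u)\|_{\TSob^s(X)}$. Summing the two terms over the finitely many charts yields the stated estimate, the case $\nu\geq1$ being identical save that traces never appear.
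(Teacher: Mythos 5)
Your proof is correct and is essentially the argument the paper intends: the lemma is stated without an explicit proof precisely because it reduces, via the boundary charts $(U_i,\psi_i)$ and the quadratic partition of unity used in Definition \ref{defi:normonmanifold}, to the local estimates of Lemmas \ref{lem:besselextension1} and \ref{lem:besselextension2} on $\RNP$, with interior charts handled by the norm comparison of Lemma \ref{lem:invariance2}. You also correctly isolate and resolve the only delicate point, namely that the commutator $[\chi_i,P]$ is harmless because $\chi_i=(\alpha\beta_i)\circ\phi^{-1}$ with $\alpha\equiv 1$ near $x=0$, so the dangerous normal term $(\partial_x\chi_i)\partial_x$ from \eqref{eq:commutatorformulas} is supported away from $\partial X$ (which likewise gives $\gamma_\pm(\chi_i u)=(\chi_i|_{\partial X})\gamma_\pm u$), while the tangential commutator terms fall under Lemmas \ref{lem:Bbounded} and \ref{lem:Abounded}.
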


As in Section \ref{subsect:mappingpropertiesonRNP}, it follows that $(u,\UL{\gamma}u) \mapsto Pu$ admits a unique extension to $\TSob^{s}(X)$ for $0 < \nu <1$, and $u \mapsto Pu$ has a unique continuous extension to $\Sob^s(X)$ for $\nu \geq 1$.

The parameter-dependent norms on $\Sob^s(X)$ are defined by replacing $\| \cdot \|_{\Sob^s(X)}$ with $\VERT \cdot \VERT_{\Sob^s(X)}$ in Definition \ref{defi:normonmanifold}, and similarly for $\TSob^s(X)$. Then $P$ is uniformly bounded in $\lambda$ with respect to these norms.

A proof of the following can be found in \cite[Section 6]{holzegel:2012wt}. It is used in Section \ref{sect:fredholm} to prove the Fredholm property for certain boundary value problems.
\begin{lem}[{\cite[Section 6]{holzegel:2012wt}}] \label{lem:compact}  Let $\nu >0$ and $\mu$ be a density of product type near $\partial X$.
	\begin{enumerate} \itemsep6pt
		\item The inclusion $\Sob^1(X) \hookrightarrow \Sob^0(X)$ is compact.
		\item The injection $\Sob^0(X) \hookrightarrow \Sob^{-1}(X)$ induced by the $L^2(X;\mu)$ inner product is compact.
		\item If $0< \nu < 1$, then $\TSob^{1}(X)\hookrightarrow  \TSob^0(\RNP)$ and the injection $\TSob^0(X)\hookrightarrow \TSob^1(X)'$ induced by the $L^2(X;\mu)$ and $L^2(\partial X; \mu_{\partial X})$ inner products are compact.
		\end{enumerate}
\end{lem}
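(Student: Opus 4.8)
The plan is to reduce all three statements to a single one-dimensional weighted compactness fact; granting it, (2) and the second assertion of (3) follow formally from (1) and the first assertion of (3). For (1), I would use the finite partition of unity $\chi_i$ and the charts $(U_i,\psi_i)$ of Definition \ref{defi:normonmanifold}: the map $u\mapsto\big((\chi_i u)\circ\psi_i^{-1}\big)_i$ embeds $\Sob^s(X)$ isometrically onto a closed subspace of the finite direct sum $\bigoplus_i\Sob^s(\RNP)$, with each summand supported in a fixed compact $K_i\subseteq\OL{\RNP}$. It therefore suffices to show that, for fixed compact $K\subseteq\OL{\RNP}$, the inclusion $\{u\in\Sob^1(\RNP):\supp u\subseteq K\}\hookrightarrow\Sob^0(\RNP)$ is compact. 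For interior charts the support avoids $\{x=0\}$, so Lemma \ref{lem:invariance2} equates the $\Sob^s$ and $H^s$ norms and ordinary Rellich compactness applies; the boundary model is the remaining content.

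\emph{Fourier reduction to one dimension.} Expanding in the torus variable $y$, Parseval gives, for $u\in\Cnu$,
\[
\|u\|_{\Sob^1(\RNP)}^2=\sum_{q\in\mathbb{Z}^{n-1}}\Big(\|\TP\hat u(q)\|_{L^2(\RP)}^2+\langle q\rangle^2\|\hat u(q)\|_{L^2(\RP)}^2\Big),\qquad\|u\|_{\Sob^0(\RNP)}^2=\sum_{q}\|\hat u(q)\|_{L^2(\RP)}^2.
\]
Given a bounded sequence in $\Sob^1(\RNP)$, the high-frequency tail is uniformly small since $\sum_{|q|>N}\|\hat u(q)\|_{L^2(\RP)}^2\le N^{-2}\|u\|_{\Sob^1(\RNP)}^2$, while for the finitely many frequencies $|q|\le N$ one extracts, by a diagonal argument, a subsequence along which each $\hat u(q)$ converges in $\Sob^0(\RP)=L^2(\RP)$. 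This reduces (1) to compactness of $\{f\in\Sob^1(\RP):\supp f\subseteq[0,R]\}\hookrightarrow\Sob^0(\RP)$.

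\emph{The one-dimensional heart.} If $\nu\ge1$ then $\Sob^1(\RP)=\mathring{\Sob}^1(\RP)=\mathring H^1(\RP)$ by Lemma \ref{lem:usualsobolev} and the density statements of Section \ref{subsect:sob1}, so the claim is ordinary Rellich. If $0<\nu<1$, the isometry $f\mapsto x^{\nu-1/2}f$ of Section \ref{subsect:sob1} identifies the inclusion with $H^1_\mu((0,R))\hookrightarrow L^2_\mu((0,R))$, where $\mu=1-2\nu\in(-1,1)$ and $L^2_\mu$ carries the weight $x^\mu$. For a difference $g$ of two sequence elements I would split $(0,R)=(0,\delta)\cup[\delta,R]$: on $[\delta,R]$ the weight is bounded above and below, so a subsequence converges uniformly by the compact embedding $H^1([\delta,R])\hookrightarrow C([\delta,R])$, and in particular $g(\delta)\to0$; on $(0,\delta)$, writing $g(x)=g(\delta)-\int_x^\delta g'(s)\,ds$ and using $\int_x^\delta s^{-\mu}\,ds\le\delta^{1-\mu}/(1-\mu)$ (finite precisely because $\mu<1$) yields
\[
\int_0^\delta x^\mu\Big|\int_x^\delta g'(s)\,ds\Big|^2dx\le\frac{\delta^2}{(1-\mu)(1+\mu)}\int_0^\delta x^\mu|g'(s)|^2\,ds,
\]
which together with $\int_0^\delta x^\mu|g(\delta)|^2\,dx=|g(\delta)|^2\,\delta^{1+\mu}/(1+\mu)$ bounds the tail uniformly in the sequence. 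Choosing $\delta$ small and then the subsequence index large shows the sequence is Cauchy in $L^2_\mu$, giving compactness.

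\emph{Duality and the main difficulty.} Finally, (2) is the transpose of (1) under the identifications $\Sob^0(X)'=\Sob^0(X)$ and $\Sob^{-1}(X)=\Sob^1(X)'$, hence compact by Schauder's theorem. For (3), the inclusion $\TSob^1(X)\hookrightarrow\TSob^0(X)$ is the restriction to a closed subspace of the product map $\Sob^1(X)\times H^{1-\UL\nu}(\partial X)\to\Sob^0(X)\times H^{-\UL\nu}(\partial X)$; its first factor is compact by (1) and its boundary factors $H^\nu\hookrightarrow H^{\nu-1}$, $H^{-\nu}\hookrightarrow H^{-1-\nu}$ are compact by ordinary Rellich on the compact manifold $\partial X$, so the map and its restriction are compact, and the second injection in (3) is compact as the transpose of the first. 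I expect the third step to be the main obstacle: ruling out concentration of mass at $x=0$ is exactly what the displayed weighted tail estimate controls, and it is responsible for the dichotomy between $0<\nu<1$ and $\nu\ge1$.
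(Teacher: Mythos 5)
Your proof is correct, but it follows a genuinely different route from the paper's. The paper (Appendix B) does not Fourier-reduce to a one-dimensional weighted Rellich lemma; instead it works on the slab $\TSHARP = \RN\times(0,1)$, splits $\Sob^1(\TSHARP) = \mathring{\Sob}^1(\TSHARP)\oplus\mathcal{X}$ orthogonally, disposes of $\mathring{\Sob}^1$ via Lemma \ref{lem:usualsobolev} ($\mathring{\Sob}^1 = \mathring{H}^1$, ordinary Rellich), and handles the complement $\mathcal{X}$ by constructing explicit Poisson operators $\mathcal{K}_0,\mathcal{K}_1$ for $(\Delta_\nu+1)$ out of the modified Bessel functions $K_\nu, I_\nu$ (Lemma \ref{lem:poisson}), so that the inclusion $\mathcal{X}\hookrightarrow L^2(\TSHARP)$ factors as $\mathcal{X}\xrightarrow{\Gamma} H^\nu(\RN)\times H^{1/2}(\RN)\hookrightarrow H^{\nu-1}(\RN)\times H^{-1/2}(\RN)\xrightarrow{\mathcal{K}_0+\mathcal{K}_1}L^2(\TSHARP)$, with compactness coming from the middle boundary-Sobolev inclusion; for $\nu\geq 1$ the paper instead uses the functional calculus of the Dirichlet operator $L$ of Section \ref{subsect:dirichletlaplacian}. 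The trade-off is explicitly flagged in the paper: the Poisson-operator factorization ``immediately yields the Schatten property,'' i.e.\ the singular-value bounds $s_j\leq Cj^{-1/n}$ of Lemmas \ref{lem:schattenlocal}, \ref{lem:singularvaluesmanifold}, which are what Section \ref{sect:completeness} actually consumes via \cite[Theorem 3.4]{yakubov1993completeness}; your elementary argument (isometric localization, Parseval tail bound in $q$, and the weighted fundamental-theorem-of-calculus estimate on $(0,\delta)$ exploiting $|1-2\nu|<1$, with $\nu\geq1$ reduced to $\Sob^1=\mathring{\Sob}^1=\mathring{H}^1$) proves exactly the compactness asserted in Lemma \ref{lem:compact}, and arguably isolates more transparently why mass cannot concentrate at $x=0$ and where the $0<\nu<1$ versus $\nu\geq1$ dichotomy enters, but it gives no quantitative singular-value decay. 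Your dispatch of (2) and of the second injection in (3) by Schauder duality coincides with the paper's one-line remark, and your treatment of the first injection in (3) as the restriction of the compact product map $\Sob^1(X)\times H^{\nu}(\partial X)\times H^{-\nu}(\partial X)\to \Sob^0(X)\times H^{\nu-1}(\partial X)\times H^{-1-\nu}(\partial X)$ to the closed subspace $\{\phi_-=\gamma_-u\}$ is a clean direct substitute for what the paper leaves implicit.
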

\begin{proof}
 For a proof of the first statement, see \cite[Section 6]{holzegel:2012wt}. The other two cases follow by duality.
\end{proof}

\subsection{Graph norms} \label{subsect:graphnorm}

Throughout this section, assume that $0 < \nu < 1$. Following \cite[Chapter 6.1]{roitberg:1996}, an alternative characterization of the spaces $\TSob^s(X)$ is given. Given $s=0,1,2$ and a Bessel operator $P$, define the norm
\[
\| u \|_{\Sob^s_P(X)} = \| u \|_{\Sob^s(X)} + \| Pu \|_{\Sob^{s-2}(X)}
\]
for $u \in \mathcal{F}_\nu(X)$.
\begin{lem} \label{lem:graphnorm}
	Give $s=0,1,2$ there exists $C>0$ such that
	\[
	C^{-1} \| u \|_{\Sob^s_P(X)} \leq \| (u, \UL{\gamma}u) \|_{\TSob^s(X)} \leq C\| u \|_{\Sob^s_P(X)}
	\]
	for each $u \in \mathcal{F}_\nu(X)$.
\end{lem}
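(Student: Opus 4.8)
The plan is to prove the two bounds separately. The inequality $\|u\|_{\Sob^s_P(X)}\le C\|(u,\UL\gamma u)\|_{\TSob^s(X)}$ is immediate: the manifold analogue of Lemma \ref{lem:besselextension1} gives $\|Pu\|_{\Sob^{s-2}(X)}\le C\|(u,\UL\gamma u)\|_{\TSob^s(X)}$, and $\|u\|_{\Sob^s(X)}\le\|(u,\UL\gamma u)\|_{\TSob^s(X)}$ trivially. For $s=2$ the reverse bound is equally immediate, since $u\mapsto(u,\UL\gamma u)$ is an isomorphism $\Sob^2(X)\to\TSob^2(X)$; so I would assume $s\in\{0,1\}$ henceforth. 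Since $\|u\|_{\Sob^s(X)}$ occurs on both sides, the reverse inequality reduces to the trace estimate $\|\UL\gamma u\|_{H^{s-\UL\nu}(\partial X)}\le C\|u\|_{\Sob^s_P(X)}$, which I would establish by duality against the dual space $H^{\UL\nu-s}(\partial X)$.

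The core tool is the manifold Green's formula (the analogue of \eqref{eq:green2}),
\[
\langle\UL\gamma u,J\UL\gamma v\rangle_{\partial X}=\langle Pu,v\rangle_X-\langle u,P^*v\rangle_X,
\]
valid for $u,v\in\mathcal{F}_\nu(X)$. First I would extend it, holding $u\in\mathcal{F}_\nu(X)$ fixed and letting $v$ range over $\TSob^{2-s}(X)$ with a formal trace datum $\UL\phi$ in place of $\UL\gamma v$: approximating any $(v,\UL\phi)\in\TSob^{2-s}(X)$ by $(v_n,\UL\gamma v_n)$ with $v_n\in\mathcal{F}_\nu(X)$ (the manifold analogue of Lemma \ref{lem:tildedensity}) and using that $P^*\in\mathrm{Bess}_\nu(X)$ (Lemma \ref{lem:closedunderadjoint}) is bounded $\TSob^{2-s}(X)\to\Sob^{-s}(X)$, each term converges and yields
\[
\langle\UL\gamma u,J\UL\phi\rangle_{\partial X}=\langle Pu,v\rangle_X-\langle u,P^*v\rangle_X
\]
for all $(v,\UL\phi)\in\TSob^{2-s}(X)$. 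One checks the left-hand side is the legitimate duality pairing between $H^{s-\UL\nu}(\partial X)$ and $H^{\UL\nu-s}(\partial X)$, since the exponents in each component of $\UL\gamma u$ and $J\UL\phi$ sum to zero.

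Bounding the right-hand side gives
\[
|\langle\UL\gamma u,J\UL\phi\rangle_{\partial X}|\le\|Pu\|_{\Sob^{s-2}(X)}\|v\|_{\Sob^{2-s}(X)}+\|u\|_{\Sob^s(X)}\|P^*v\|_{\Sob^{-s}(X)}\le C\|u\|_{\Sob^s_P(X)}\|(v,\UL\phi)\|_{\TSob^{2-s}(X)}.
\]
Then, for $\UL\chi$ in a dense smooth subset of $H^{\UL\nu-s}(\partial X)=(H^{s-\UL\nu}(\partial X))'$, I would produce a lift $(v,\UL\phi)\in\TSob^{2-s}(X)$ with $J\UL\phi=\UL\chi$ (that is, $\UL\phi=-J\UL\chi$) and $\|(v,\UL\phi)\|_{\TSob^{2-s}(X)}\le C\|\UL\chi\|_{H^{\UL\nu-s}(\partial X)}$, using the continuous right inverses of $\gamma_\mp$ (transported to the collar from the $\RNP$ constructions of Section \ref{subsect:traces}) together with the freedom to prescribe the formal component of $\UL\phi$ when $2-s=1$. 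Since $\langle\UL\gamma u,\UL\chi\rangle_{\partial X}=\langle\UL\gamma u,J\UL\phi\rangle_{\partial X}$, taking the supremum over $\UL\chi$ yields $\|\UL\gamma u\|_{H^{s-\UL\nu}(\partial X)}\le C\|u\|_{\Sob^s_P(X)}$, and adding $\|u\|_{\Sob^s(X)}\le\|u\|_{\Sob^s_P(X)}$ completes the reverse inequality.

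The main obstacle is the rigorous extension of Green's formula to formal trace data: when $s=1$ the component $\phi_+$ is not an actual trace of $v$, so the identity cannot be read off directly and genuinely relies on both the density lemma and the boundedness of $P^*$ across the whole $\TSob$ scale. A secondary but necessary check is that the lift realizes the prescribed boundary data with the stated norm control, which amounts to verifying that the right inverses of $\gamma_\mp$ combine (for $s=0$) or coexist with a freely prescribed formal trace (for $s=1$) without loss.
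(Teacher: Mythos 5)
Your proof is correct and takes essentially the same route as the paper: the only nontrivial direction is the trace bound $\|\UL{\gamma}u\|_{H^{s-\UL{\nu}}(\partial X)} \leq C\|u\|_{\Sob^s_P(X)}$, which both you and the paper obtain by duality, pairing against boundary data lifted with norm control by the right inverse $\mathcal{K}$ of Lemma \ref{lem:tracelift} and estimating $\left<Pu,v\right>_X - \left<u,P^*v\right>_X$ via Green's formula and the boundedness of $P^*$ (Lemmas \ref{lem:closedunderadjoint}, \ref{lem:besselextension1}). The paper packages this as a well-defined functional $\ell(\UL{\psi})$ on smooth data, represented by Hahn--Banach/Riesz and identified with $-J\UL{\gamma}u$, whereas you first extend Green's formula to formal trace data on $\TSob^{2-s}(X)$ by density (Lemma \ref{lem:tildedensity}) and then take a supremum directly---the same mechanism in slightly different clothing, with all exponent bookkeeping in your version checking out.
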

\begin{proof}
	The first inequality above holds according to Lemma \ref{lem:besselextension1}. For the converse, it clearly suffices to bound 
	\[
	\| \UL{\gamma}u \|_{H^{s-\UL \nu}(\partial X)} \leq C\| u \|_{\Sob^s_P(X)}.
	\]
 Fix $u \in \mathcal{F}_\nu(X)$, and note that 
	\[
\| \UL{\gamma} u \|_{H^{s-\UL{\nu}}(\partial X)} = \| J\UL \gamma u \|_{H^{s-2+\UL{\nu}}(\partial X)}.
\]
On the other hand if $\psi \in H^{2-s-\UL{\nu}}(\partial X)$ has norm one, let $(v,\UL\psi) = \widetilde{\mathcal K}\UL\psi \in \TSob^{2-s}(X)$, where $\widetilde{\mathcal{K}} : H^{2-s-\UL{\nu}}(\partial X) \rightarrow \TSob^{2-s}(X)$ is the map defined in Lemma \ref{lem:tracelift}. Applying Green's formula,
	\begin{align*}
|\left< J \UL{\gamma} u, \UL{\psi} \right>_{\partial X}| &= |\left< Pu, v \right>_X - \left< u, P^*(v,\UL{\psi}) \right>_X| \\
&\leq  C_1 (\| Pu \|_{\Sob^{s-2}(X)} + \|u \|_{\Sob^s(X)}) \| \widetilde{\mathcal{K}}\UL{\psi} \|_{\TSob^{2-s}(X)} \leq C_2 \| u \|_{\Sob^s_P(X)},
\end{align*}
whence it follows that $\| (u, \UL{\gamma}u) \|_{\TSob^s(X)} \leq C\| u \|_{\Sob^s_P(X)}$ for some $C>0$.
\end{proof}

Let $\Sob^s_P(X)$ denote the closure of $\mathcal{F}_\nu(X)$ in the norm $\| \cdot \|_{\Sob^s_P(X)}$. Since $(u,\UL{\gamma }u),\, u \in \mathcal{F}_\nu(X)$ is dense in $\TSob^s(X)$, it follows from Lemma \ref{lem:graphnorm} that $\Sob^s_P(X)$ is naturally isomorphic to $\TSob^s(X)$ via the closure of the map $u \mapsto (u,\UL{\gamma}u)$. Moreover, any element of $\Sob^s_P(X)$ can be identified with a unique pair $(u,f)$, where $u \in \Sob^s(X), \, f \in \Sob^{s-2}(X)$, and $Pu = f$ in the weak sense (described at the end of Section \ref{subsect:mappingpropertiesonRNP}).

\section{Elliptic boundary value problems} \label{sect:ellipticBVP}

This section concerns boundary value problems for Bessel operators on a compact manifold with boundary $\OL{X}$ as in Section \ref{subsect:manifoldwithboundary}. When $0 < \nu < 1$, these are of thee form
\begin{equation}\label{eq:BVP}
\begin{cases} Pu = f & \text{ on } X,\\
Tu = g & \text{ on } \partial X.
\end{cases}
\end{equation}
Here $P \in \mathrm{Bess}_\nu(X)$ is Bessel operator which is elliptic in the sense of Section \ref{subsect:ellipticity} on $\partial X$, and 
\[
T = T^+ \gamma_+  + T^- \gamma_- 
\] 
for some differential operators $T^\pm$ on the boundary, to be specified in the next section. The boundary operator $T$ is only relevant when $0 < \nu < 1$. When $\nu \geq 1$, one considers the simpler equation
\[
Pu = f \text{ on } X.
\] 

To highlight the difference between the cases $0 < \nu < 1$ and $\nu \geq 1$, fix $p \in \partial X$ and consider the model equation on $\RP$ determined by the boundary symbol operator,
\begin{equation} \label{eq:ellipticitymodel}
\widehat{P}_{(p,\eta)}u = f,
\end{equation}
referring to Section \ref{subsect:ellipticity} for notation. Suppose that $P$ is elliptic at $p\in \partial X$. Any two solutions to the equation \eqref{eq:ellipticitymodel} differ by an element of the kernel of $\widehat{P}_{(p,\eta)}$. If $u \in \ker \widehat{P}_{(p,\eta)}$ satisfies $u \in L^2((1,\infty))$, then necessarily $u \in \mathcal{M}_+(p,\eta)$. On the other hand, if $\nu$ is not an integer, then 
\begin{equation} \label{eq:besselorigin}
K_\nu(s) = \frac{\pi}{2}\frac{I_{-\nu}(s) - I_{\nu}(s)}{\sin(\nu\pi)},
\end{equation}
where $I_\nu$ is the modified Bessel function of the first kind \cite[Chapter 7.8]{olver:2014} (if $\nu$ is an integer, equality holds in the sense of limits). In particular, if $0 < \nu < 1$, then $I_{\pm \nu}(s) = \mathcal{O}(s^{\pm \nu})$. Consequently $\ker \widehat{P}_{(p,\eta)} \cap L^2(\RP) = \mathcal{M}_+(p,\eta)$, and hence $\widehat{P}_{(p,\eta)}$ cannot be an isomorphism between any $L^2$ based spaces: in general, \eqref{eq:ellipticitymodel} must be augmented by boundary conditions so that the $L^2$ kernel is trivial. Of course, all of these observations are classical when $\nu = \frac{1}{2}$ (boundary value problems in the smooth setting).

This is in contrast to the situation when $\nu \geq 1$. In that case, $\sqrt{x}K_\nu(i\xi(p,\eta) x)$ is not square integrable near the origin, and so the $L^2$ kernel of $\widehat{P}_{(p,\eta)}$ is always trivial. Hence specifying $f$ on the right hand side of \eqref{eq:ellipticitymodel} (in an appropriate function space) will uniquely determine a solution $u$. Thus in the case $\nu \geq 1$, it is not necessary to impose any boundary conditions apart from the square integrability requirement.

In the self-adjoint setting, the heuristic above is the limit point/limit circle criterion of Weyl on self-adjoint extensions of symmetric ordinary differential operators with regular singular points; see \cite{zettl:2010} for an exhaustive modern treatment, and \cite{bachelot:2011:jmpa,ishibashi:2004:cqg} for discussions in the context of aAdS spacetimes

\subsection{Boundary conditions} \label{subsect:BC}

This section is only relevant in the case $0 < \nu < 1$. Choose differential operators
\[
T^- \in \mathrm{Diff}^1(\partial X), \quad T^+ \in \mathrm{Diff}^0(\partial  X),
\]
noting that $T_+$ is just multiplication by a smooth function on $\partial X$. Then set 
\[
T = T^- \gamma_- + T^+ \gamma_+.
\] 
A natural question is how to define the ``leading order'' term in $T$. Suppose that $\mu \in \{1-\nu,2-\nu,1+\nu\}$ and
\begin{equation} \label{eq:orderlessthanm}
\mathrm{ord}(T^-)   \leq \mu - 1 +\nu, \quad \mathrm{ord}(T^+) \leq \mu - 1 - \nu.
\end{equation}
Then $T$ is said to have $\nu$-order less than or equal to $\mu$, written as $\mathrm{ord}_\nu(T) \leq \mu$. Note that if $\mathrm{ord}_\nu(T) \leq \mu$, then $T: \mathcal{H}^{2}(X) \rightarrow H^{2 - \mu}(\partial  X)$ is continuous. If $\mathrm{ord}_\nu(T) \leq \mu$, define the family of operators 
\[
\widehat{T}_{(p,\eta)} = \sigma_{\lceil \mu-1+\nu \rceil}(T^-)(p,\eta) \gamma_-  +  \sigma_{\lceil\mu-1-\nu\rceil}(T^+)(p,\eta)\gamma_+,
\]
indexed by $(p,\eta) \in T^* \partial X$. Thus each $(p,\eta) \in T^*\partial X$ gives rise to a one-dimensional boundary operator $\widehat{T}_{(p,\eta)}$.

\subsection{The boundary value problem} \label{subsect:theBVP}

Although boundary value problems of the form $\eqref{eq:BVP}$ are ultimately of interest, for duality purposes it is convenient to consider a more general type of problem. Fix $J \in \mathbb{N}$, and choose
\begin{itemize} \itemsep6pt
\item $\mu_k \in \{1-\nu,2-\nu,1+\nu\}$ for $k \in \{1,\ldots, J+1\}$,
\item numbers $\tau_j \in \mathbb{R}$ for $j \in \{1,\ldots J\}$, not necessarily integers.
\end{itemize}
Let $T = (T_1,\ldots, T_{J+1})^\top$ denote a $(J+1) \times 1$ matrix of boundary operators, such that $\mathrm{ord}_\nu(T_k) \leq \mu_k$. Furthermore, for each $k \in \{1,\ldots,J+1\}$ and $j \in \{1,\ldots,J\}$, suppose $C_{k,j} \in \mathrm{Diff}^*(\partial X)$ is a differential operator on $\partial X$ such that
\[
\mathrm{ord}(C_{k,j}) \leq \tau_j + \mu_k.
\]
Let $C$ denote the $(J+1) \times J$ matrix with entries $C_{k,j}$. Given these prerequisites, consider the modified boundary value problem
\begin{equation} \label{eq:modifiedBVP}
\begin{cases}
P u  = f & \text{ on $X$} \\ 
T u + C\underline{u} = \underline{g} & \text{ on $\partial X$},
\end{cases}
\end{equation}
where $\underline{u} = (u_1,\ldots u_J), \, \underline{g} = (g_1,\ldots, g_{J+1})$ are collections of functions on $\partial X$. In order to associate an operator to this problem, note that $Tu$ may be written in the form
\[
Tu = G \UL{\gamma} u,
\]
where $G$ is the $(J+1) \times 2$ matrix
\[
G =  \left( \begin{array}{cc}
T^{-}_1 & T^+_1 \\
\vdots & \vdots \\
T^-_{J+1} & T^+_{J+1}
\end{array} \right).
\]
Throughout, it is always understood that $G$ is associated with $T$ in this way. Finally, set $\UL{\mu} = (\mu_1,\ldots,\mu_{J+1})$ and $\UL{\tau} = (\tau_1, \ldots, \tau_{J})$. Then let $\mathscr{P}$ denote the map
\[
\mathscr{P}(u, \underline{\phi}, \underline{u}) = (P(u,\underline{\phi}), G\underline{\phi} + C \underline{u}).
\]
This is also written as $\mathscr{P} = \{P,T,C\}$.

\begin{lem} \label{lem:BVPcontinuous}
The map $\mathscr{P} = \{P,T,C\}$ is bounded
\[
\mathscr{P} :
 \TSob^{s}(X) \times H^{s+\underline{\tau}}(\partial X) \rightarrow \Sob^{s-2}(X) \times H^{s-\underline{\mu}}(\partial X)
\]
for each $s=0,1,2$.
\end{lem}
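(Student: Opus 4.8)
The plan is to exploit the product structure of both the domain and the target: since $\mathscr{P}(u,\UL{\phi},\UL{u}) = (P(u,\UL{\phi}), G\UL{\phi} + C\UL{u})$ maps into $\Sob^{s-2}(X) \times H^{s-\UL{\mu}}(\partial X)$, I would establish boundedness of the two components independently. The first component depends only on $(u,\UL{\phi}) \in \TSob^s(X)$, and the required bound $P : \TSob^s(X) \to \Sob^{s-2}(X)$ is nothing but the mapping property of $P$ already proved for the manifold in Section~\ref{subsect:spaceonmanifold} (the manifold analogue of Lemma~\ref{lem:besselextension1}). So for that piece I would simply cite the established extension; no new work is needed.

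The substance lies in the second component, the affine map $(\UL{\phi},\UL{u}) \mapsto G\UL{\phi} + C\UL{u}$, which I would show is bounded
\[
H^{s-\UL{\nu}}(\partial X) \times H^{s+\UL{\tau}}(\partial X) \to H^{s-\UL{\mu}}(\partial X)
\]
by checking it componentwise on the closed manifold $\partial X$. Its $k$-th component is $T_k^- \phi_- + T_k^+ \phi_+ + \sum_{j=1}^J C_{k,j} u_j$, and I would verify that each summand lands continuously in $H^{s-\mu_k}(\partial X)$. The only two facts invoked are standard on a compact boundaryless manifold: a differential operator of order $m$ is continuous $H^t(\partial X) \to H^{t-m}(\partial X)$, and $H^a(\partial X) \hookrightarrow H^b(\partial X)$ continuously whenever $a \geq b$.

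What remains is purely order bookkeeping. Recalling $\UL\nu = (1-\nu,1+\nu)$, the hypotheses give $\phi_- \in H^{s-1+\nu}(\partial X)$, $\phi_+ \in H^{s-1-\nu}(\partial X)$, and $u_j \in H^{s+\tau_j}(\partial X)$. Unpacking $\mathrm{ord}_\nu(T_k) \leq \mu_k$ as $\mathrm{ord}(T_k^-) \leq \mu_k - 1 + \nu$ and $\mathrm{ord}(T_k^+) \leq \mu_k - 1 - \nu$, one finds $T_k^- \phi_- \in H^{s-1+\nu-\mathrm{ord}(T_k^-)}(\partial X) \subseteq H^{s-\mu_k}(\partial X)$ and likewise $T_k^+ \phi_+ \in H^{s-1-\nu-\mathrm{ord}(T_k^+)}(\partial X) \subseteq H^{s-\mu_k}(\partial X)$; since $\mathrm{ord}(C_{k,j}) \leq \tau_j + \mu_k$, each $C_{k,j} u_j \in H^{s+\tau_j-\mathrm{ord}(C_{k,j})}(\partial X) \subseteq H^{s-\mu_k}(\partial X)$. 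Summing the finitely many continuous contributions controls the $k$-th component, and combining with the first-component bound yields the claim.

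I do not expect a genuine obstacle here; every estimate is a direct consequence of order counting on $\partial X$. The single point requiring care is to unpack $\mathrm{ord}_\nu$ correctly and to keep track of the sign convention $H^{s-\UL{\nu}} = H^{s-1+\nu} \times H^{s-1-\nu}$, so that all the Sobolev-order inequalities point in the direction that makes the embeddings valid.
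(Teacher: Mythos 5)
Your proof is correct and follows essentially the same route as the paper, whose entire proof is a citation of the mapping properties from Section \ref{subsect:spaceonmanifold}: the bound $P:\TSob^s(X)\rightarrow\Sob^{s-2}(X)$ is exactly the extension established there, and the boundary component reduces to standard Sobolev mapping on the closed manifold $\partial X$. Your order bookkeeping for $T_k^\pm$ and $C_{k,j}$ correctly unpacks \eqref{eq:orderlessthanm} and the condition $\mathrm{ord}(C_{k,j})\leq\tau_j+\mu_k$, merely making explicit what the paper leaves implicit (note only that the map $(\UL{\phi},\UL{u})\mapsto G\UL{\phi}+C\UL{u}$ is linear, not affine, and that when a prescribed order such as $\mu_k-1-\nu$ is negative the corresponding operator vanishes by the paper's convention, so those terms drop out harmlessly).
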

\begin{proof}
The mapping properties follows from the results of Section \ref{subsect:spaceonmanifold}.
\end{proof}

\subsection{The adjoint boundary value problem} \label{subsect:adjointproblem}
Fix a density $\mu$ which is of product type near $\partial X$. Let $P^*$ denote the formal $L^2(X;\mu)$ adjoint of $P$; then $P^*$ is also a Bessel operator in light of Lemma \ref{lem:closedunderadjoint}. Let $C^*, \, G^*$ denote the formal $L^2(\partial X;\mu_{\partial X})$ adjoints of $C, \, G$. Define the problem
\begin{equation} \label{eq:modifiedBVPadjoint}
\begin{cases}
P^* v  = f & \text{ on $X$}, \\ 
J \UL{\gamma} v + G^* \underline{v} = \underline{g} & \text{ on $\partial X$}, \\
C^* \underline{v} = \underline{h} & \text{ on $\partial X$},
\end{cases}
\end{equation}
where $\underline{v} = (v_1,\ldots,v_{J+1}), \, (\underline{g},\underline{h}) = (g_1,g_2,h_1,\ldots h_J)$ are functions on $\partial X$. 

Although Green's formula \eqref{eq:green2} was previously only established for the formal adjoint of a Bessel operator on $\RNP$, it is clear that \eqref{eq:green2} also holds here when the appropriate $\mu$ and $\mu_{\partial X}$ inner products are substituted on $X$ and $\partial X$:
\[
\left<Pu, v \right>_X + \left< Tu + C\UL{u}, \UL{v} \right>_{(\partial X)^{J+1}} = \left< u, P^* v \right>_X + \left< \UL{\gamma}u, G^* \UL{v} + J\UL{\gamma} v \right>_{(\partial X)^2} + \left<\UL{u}, C^* \UL{v}\right>_{(\partial X)^J}.
\]
In light of this, the problem \eqref{eq:modifiedBVPadjoint} is said to be the formal adjoint of \eqref{eq:modifiedBVP}. Also notice that \eqref{eq:modifiedBVPadjoint} has the same form as \eqref{eq:modifiedBVP}. The corresponding operator is denoted by $\mathscr{P}^*$.

\subsection{The Lopatinski\v{\i} condition} \label{subsect:lopatinskii}

The standard Lopatinski\v{\i} condition for smooth elliptic boundary value problems (see \cite{lions:1968,roitberg:1996}) has a natural generalization to the situation here. Begin by choosing $c_{k,j} \in \mathbb{Z}$ (not necessarily nonnegative) such that 
\[
\mathrm{ord}(C_{k,j}) \leq c_{k,j} \leq \tau_j + \mu_k,
\]
and then define the matrix $\widehat{C}_{(y,\eta)}$ with entries 
\[
(\widehat{C}_{(p,\eta)})_{k,j} = \sigma_{c_{k,j}}(C_{k,j})(p,\eta).
\] 
Thus $(p,\eta) \mapsto \widehat{C}_{(p,\eta)}$ is a function on $T^*\partial X$ with values in matrices over $\mathbb{C}$. Furthermore, define $\widehat{G}_{(p,\eta)}$ by the equality
\[
\widehat{G}_{(p,\eta)}\UL{\gamma}u = \widehat{T}_{(p,\eta)}u.
\]

\begin{defi} \label{defi:lopatinskii} Suppose $P$ is elliptic on $\partial X$. The boundary operators $(T,C)$ are said to satisfy the Lopatinski\v{\i} condition with respect to $P$ if for each fixed $p \in \partial M$ and $\eta \in T^*_p\partial X \setminus 0$, the only element $(u,\underline{u}) \in \mathcal{M}_+(p,\eta) \times \mathbb{C}^J$ satisfying
\[ 
\widehat{T}_{(p,\eta)}u + \widehat{C}_{(p,\eta)}\underline{u} = 0 \\
\]
is the trivial solution $(u,\underline{u}) = 0$. The boundary value problem \eqref{eq:modifiedBVP}, or equivalently the operator $\mathscr{P} = \{P,T,C\}$, is said to be elliptic on $\partial X$ if $P$ is elliptic on $\partial X$ in the sense of Definition \ref{def:elliptic} and $(T,C)$ satisfy the Lopatinski\v{\i} condition on $\partial X$ with respect to $P$.
\end{defi}
It is easy to see that the generalized Dirichlet condition $T = \gamma_-$ and Neumann condition $T = \gamma_+$ satisfy the Lopatinski{\v\i} condition with respect to any elliptic Bessel operator. The same is therefore true for the Robin condition $T = \gamma_+ + T^- \gamma_-$, where $T^- \in \mathrm{Diff}^0(\partial X)$. On the other hand, when $T^-$ is allowed to be a first order operator, there are phenomena not present for smooth boundary value problems; the following two examples illustrate some possibilities.

	\begin{example} Consider a boundary condition $T = \gamma_+ + T^- \gamma_-$,
		where $T^-$ is a nonzero vector field on $\partial X$.
		\begin{enumerate} \itemsep6pt
			\item 	If $1/2 < \nu < 1$, then $\widehat{T}_{(p,\eta)} = \gamma_+$ for arbitrary $T^-$. Thus $T$ satisfies the Lopatinski{\v\i} conditions with respect to any elliptic Bessel operator.
			\item If $ \nu = 1/2$, then $T$ is a classical oblique boundary condition. The Lopatinski{\v\i} condition is satisfied if $T^-$ is a real vector field for example, but can otherwise fail. 
			
			\item If $0 < \nu < 1/2$, then
			\[
			\widehat{T}_{(p,\eta)} = \sigma_1(T^-)(p,\eta) \gamma_-.
			\]
			Since $\sigma_1(T^-)(p,\eta)$ is linear in $\eta$, it must have a nontrivial zero at each $p \in \partial X$ provided the dimension of the underlying manifold $X$ is at least three (or two if $T^-$ is real). In that case the Lopatinski{\v\i} condition necessarily fails at every point on the boundary.
		\end{enumerate} 
		
\end{example}

\begin{example}
			Consider the operator $\Delta_\nu = |\TD|^2 + D_y^2 + D_z^2$ acting on $(0,1) \times \mathbb{T}^2$, where $(y,z)$ are standard coordinates on $\mathbb{T}^2 = (R/2\pi\mathbb{Z})^2$. Clearly $\Delta_\nu$ is an elliptic Bessel operator. Consider the boundary value problem
			\[
			\begin{cases} \Delta_\nu u = f, \\ T u = g, \quad  u|_{x=1} = 0, \end{cases}
			\]
			where  $T = (\partial_y - \partial_z)\gamma_-$. 
This is not a Fredholm problem, since there is an infinite dimensional kernel: for each $n \geq 0$, consider the function
			\[
			u_n(x,y,z) =\left(\sqrt{x}K_\nu(nx) - \frac{K_\nu(n)}{K_\nu(-n)}\sqrt{x}K_\nu(-nx) \right)e^{in(y+z)}.
			\]
			The family $\{u_n\}$ is linearly independent and each $u_n$ solves the boundary value problem. If $0 < \nu < 1/2$, then $T' = \gamma_+ + T$ is a compact perturbation of the original problem; thus the problem with $T'$ replacing $T$ is not Fredholm either. If $1/2 \leq \nu < 1$ then the problem with $T'$ satisfies the Lopatinski{\v\i} condition, so is indeed Fredholm by the arguments in Section \ref{sect:fredholm}.
	\end{example}

Before proceeding with the next lemma, suppose that $P \in \mathrm{Bess}_\nu(X)$ and $\mu$ is a density of product type near $\partial X$. If $P$ is elliptic at $p \in \partial X$, then so is $P^*$, since the function \eqref{eq:properlyellipticpolynomial} is simply replaced by its complex conjugate.

\begin{lem} \label{lem:adjointiselliptic}
Suppose that $\mathscr{P} = \{P,T,C\}$ is elliptic. If $\mu$ is a density of product type near $\partial X$ and $\mathscr{P}^*$ is the corresponding adjoint boundary value problem, then $\mathscr{P}^*$ is also elliptic.
\end{lem}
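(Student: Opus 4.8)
The plan is to show that $\mathscr{P}^*$ is again an operator of the form $\{P^*,T^{\mathrm{adj}},C^{\mathrm{adj}}\}$ to which Definition \ref{defi:lopatinskii} applies, and then to verify the Lopatinski\v{\i} condition for it by reducing everything, at each fixed boundary covector, to a finite‑dimensional duality. First I would read off the structure of $\mathscr{P}^*$ from \eqref{eq:modifiedBVPadjoint}: it is governed by $P^*$ together with the boundary data $G^{\mathrm{adj}} = \left(\begin{smallmatrix} J \\ 0\end{smallmatrix}\right)$ and $C^{\mathrm{adj}} = \left(\begin{smallmatrix} G^* \\ C^*\end{smallmatrix}\right)$, with $J+1$ auxiliary boundary unknowns $\UL{v}$ replacing the $J$ unknowns $\UL{u}$. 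Since $P^*$ is elliptic on $\partial X$ whenever $P$ is (the polynomial \eqref{eq:properlyellipticpolynomial} is merely replaced by its complex conjugate, as remarked before the lemma), only the Lopatinski\v{\i} condition for $(T^{\mathrm{adj}},C^{\mathrm{adj}})$ relative to $P^*$ remains. The bookkeeping point here is to fix the $\nu$‑orders $\UL{\mu}^{\mathrm{adj}},\UL{\tau}^{\mathrm{adj}}$ and the integers $c_{k,j}^{\mathrm{adj}}$ so that the associated symbols are $\widehat{G}^{\mathrm{adj}}_{(p,\eta)} = J$ (the constant symplectic matrix, since $J\UL{\gamma}$ has order zero and $\gamma_\pm$ carry the admissible $\nu$‑orders $1\pm\nu$) and $\widehat{C}^{\mathrm{adj}}_{(p,\eta)} = \big(\begin{smallmatrix}(\widehat{G}_{(p,\eta)})^* \\ (\widehat{C}_{(p,\eta)})^*\end{smallmatrix}\big)$, the conjugate transposes of the original symbols, using $\sigma_m(Q^*) = \overline{\sigma_m(Q)}$.

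Next I would establish the symbol‑level Green's formula. For fixed $(p,\eta)$ the boundary symbol operator of $P^*$ is exactly the formal $L^2(\RP)$‑adjoint $(\widehat{P}_{(p,\eta)})^*$ of $\widehat{P}_{(p,\eta)}$, because $\sigma_2(A_0)(p,\eta)$ is a scalar that merely passes to its conjugate. Applying the one‑dimensional version of \eqref{eq:TD1} to $u\in\mathcal{M}_+(p,\eta)$ and $v\in\mathcal{M}_+^*(p,\eta)$ (the latter the bounded solutions of $(\widehat{P}_{(p,\eta)})^*v=0$), and using that both are exponentially decaying modified Bessel functions so that the contributions at $x=\infty$ vanish, gives $0 = \langle \widehat{P}u,v\rangle_{\RP} - \langle u,(\widehat{P})^*v\rangle_{\RP} = \langle \UL{\gamma}u,J\UL{\gamma}v\rangle_{\mathbb{C}^2}$. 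Choosing generators $u_0,v_0$ of these one‑dimensional spaces and setting $a=\UL{\gamma}u_0$, $b=\UL{\gamma}v_0$, both nonzero since $\UL{\gamma}$ is injective there by Lemma \ref{lem:scatteringmatrix}, this is the orthogonality relation $a^*Jb=0$ (the convention ambiguity washing out since $J$ is real).

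Then comes the linear algebra. With $u_0$ as a basis of $\mathcal{M}_+(p,\eta)$, the original Lopatinski\v{\i} condition is equivalent to invertibility of the square matrix $\mathcal{L} = [\,\widehat{G}a \mid \widehat{C}\,]$ of size $(J+1)\times(J+1)$, hence also of $\mathcal{L}^* = \big(\begin{smallmatrix}(\widehat{G}a)^* \\ (\widehat{C})^*\end{smallmatrix}\big)$. Now take $(c,\UL{v})\in\mathbb{C}\times\mathbb{C}^{J+1}$ in the kernel of the adjoint boundary symbol map, that is $c\,Jb + (\widehat{G})^*\UL{v}=0$ and $(\widehat{C})^*\UL{v}=0$. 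Left‑multiplying the first relation by $a^*$ and using $(\widehat{G}a)^* = a^*(\widehat{G})^*$ gives $(\widehat{G}a)^*\UL{v} = -c\,a^*Jb = 0$ by the orthogonality above; combined with $(\widehat{C})^*\UL{v}=0$ this means $\UL{v}\in\ker\mathcal{L}^* = 0$, so $\UL{v}=0$, and then $cJb=0$ forces $c=0$ because $J$ is invertible and $b\neq 0$. Since the domain $\mathcal{M}_+^*(p,\eta)\times\mathbb{C}^{J+1}$ and codomain $\mathbb{C}^{J+2}$ both have dimension $J+2$, this injectivity is precisely the Lopatinski\v{\i} condition for $\mathscr{P}^*$.

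The main obstacle is the order bookkeeping in the first step: one must check that the admissible $\nu$‑orders in Definition \ref{defi:lopatinskii} together with the freedom in the integers $c_{k,j}$ are compatible with the $\mu\leftrightarrow\tau$ interchange induced by duality, so that the symbols of $G^*$ and $C^*$ entering $\widehat{C}^{\mathrm{adj}}$ are genuinely the full conjugate transposes $(\widehat{G})^*,(\widehat{C})^*$, with no entry lost or spuriously promoted in order. Once this normalization is pinned down, the remaining two steps are purely formal.
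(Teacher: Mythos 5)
Your proof is correct, but it takes a genuinely different route from the paper's. The paper proves the lemma by invoking the solvability theory for the one-dimensional direct problem: starting from $(v,\UL{v})$ in the adjoint symbol kernel, it first uses the Lopatinski\v{\i} isomorphism $(u,\UL{u})\mapsto \widehat{T}u+\widehat{C}\UL{u}$ from $\mathcal{M}_+\times\mathbb{C}^J$ onto $\mathbb{C}^{J+1}$ to hit $\UL{v}$ and conclude $\UL{v}=0$ from Green's formula, and then solves the \emph{inhomogeneous} problem $\widehat{P}u=v$, $\widehat{T}u+\widehat{C}\UL{u}=0$ (surjectivity from Lemma \ref{lem:1daprioris=2}, which the paper must flag as logically independent since it appears later) to get $\langle v,v\rangle_{\RP}=0$, hence $v=0$. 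You never solve an inhomogeneous equation: you extract from Green's formula only the symplectic orthogonality $a^*Jb=0$ between the traces $a=\UL{\gamma}u_0$, $b=\UL{\gamma}v_0$ of the generators of $\mathcal{M}_+$ and $\mathcal{M}_+^*$, and then run a finite-dimensional duality with the square matrix $\mathcal{L}=[\,\widehat{G}a\mid\widehat{C}\,]$, whose invertibility is exactly the original Lopatinski\v{\i} condition, concluding $\UL{v}\in\ker\mathcal{L}^*=0$ and then $c=0$ from $Jb\neq 0$. Your route is more elementary and self-contained at the symbol level, and it makes the mechanism transparent (the Lopatinski\v{\i} condition dualizes through the symplectic pairing of Cauchy data); the price is that you must explicitly use $\dim\mathcal{M}_+=\dim\mathcal{M}_+^*=1$ and the nondegeneracy $b\neq 0$ (which indeed follows from the $K_\nu$ asymptotics as in Lemma \ref{lem:scatteringmatrix}), and you should note that Green's formula \eqref{eq:TD1}, stated for $\Cnu$, extends to the exponentially decaying Bessel solutions since $\sqrt{x}K_\nu$ has precisely the $x^{1/2\pm\nu}$ expansion at $x=0$ and the boundary term at infinity vanishes --- a point you correctly flag. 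The paper's argument, by contrast, needs none of these nondegeneracy checks and would survive in settings where $\mathcal{M}_+$ has higher dimension, at the cost of importing the Lax--Milgram-based solvability of Lemma \ref{lem:1daprioris=2}. Your worry about the order bookkeeping (that $\widehat{G^*}$, $\widehat{C^*}$ are the full conjugate transposes $\widehat{G}^*$, $\widehat{C}^*$ under the $\UL{\mu}\leftrightarrow\UL{\tau}$ interchange) is legitimate but is exactly the step the paper itself dispatches with ``it is easy to see,'' so your treatment is at the same level of rigor as the original on that point.
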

\begin{proof}
Since ellipticity only depends on various ``principal symbols'', it is easy to see that
\[
\widehat{P^*}_{(p,\eta)} = \widehat{P}^*_{(p,\eta)},
\]
where the latter adjoint is calculated with respect to the standard $L^2(\RP)$ inner product. Similarly
\[
\widehat{T^*}_{(p,\eta)} = \widehat{T}^*_{(p,\eta)}, \quad \widehat{C^*}_{(p,\eta)} = \widehat{C}^*_{(p,\eta)},
\]
where the latter adjoints are taken in the sense of matrices over $\mathbb{C}$. 
 
Suppressing the dependence on $(p,\eta)$, Green's formula \eqref{eq:green2} implies that
\begin{multline*}
\langle \widehat{P}u, v \rangle_{\RP} + \langle \widehat{T} u + \widehat{C} \UL{u}, \UL{v} \rangle_{\mathbb{C}^{J+1}} = \langle u,\widehat{P}^* v \rangle_{X} + \langle \UL{\gamma}u, \widehat{G}^* \UL{v} + J\UL{\gamma} v \rangle_{\mathbb{C}^2} + \langle \UL{u}, \widehat{C}^* \UL{v} \rangle_{\mathbb{C}^J}.
\end{multline*}
The goal is to prove that if $v \in L^2(\RP)$ and the right hand side vanishes, then $(v,\underline{v}) = 0$. The proof relies on Lemma \ref{lem:1daprioris=2} below (whose proof is of course independent of the present lemma). As in the proof of Lemma \ref{lem:1daprioris=2}, the Lopatinski\v{\i} condition implies that
\[
(u,\underline{u}) \mapsto \widehat{T}u + \widehat{C}\UL{u}
\]
is an isomorphism between the spaces $\mathcal{M}_+ \times \mathbb{C}^J \rightarrow \mathbb{C}^{J+1}$. So choose $(u,\underline{u}) \in \mathcal{M}_+ \times \mathbb{C}^J$ such that
\[
\widehat{T}u + \widehat{C}\UL{u} = \UL{v}.
\]
Since $\widehat{P}u = 0$, it follows from Green's formula that $\underline{v} = 0$. On the other hand, from Lemma \ref{lem:1daprioris=2} it is always possible to solve the inhomogeneous equation
\[
\begin{cases} \widehat{P} u = v, \\
\widehat{T} u + \widehat{C} \UL{u} = 0,
\end{cases}
\]
whence Green's formula implies that $v = 0$ as well. 
\end{proof}

\subsection{The Dirichlet Laplacian} \label{subsect:dirichletlaplacian} The results of this section are applied in one dimension to Section \ref{subsect:ccestimate}. Define the Bessel operator $\Delta_\nu \in \mathrm{Bess}_\nu(\RNP)$ by
\[
\Delta_\nu = |\TD|^2 + \Delta_{\RN}, 
\]
where $\Delta_{\RN} = \sum_{i=1}^{n-1} D_{y^{i}}^2$ is the non-negative Laplacian on $\RN$. Consider the continuous, non-negative Hermitian form
\begin{equation} \label{eq:sesquilinearform}
\ell(u,v) := \langle \TD u, \TD v\rangle_{\RNP} + \sum_{i=1}^{n-1} \left<D_{y^i} u, D_{y^i}u \right>_{\RNP}
\end{equation}
on $\dot{\Sob}^1(\RNP)$. Associated to this form is the unbounded self-adjoint operator $L$ on $L^2(\RNP)$ with domain
\begin{equation} \label{eq:D(L)}
D(L) = \dot{\Sob}^{1}(\RNP) \cap \{ u \in L^2(\RNP): \Delta_\nu u \in L^2(\RNP) \},
\end{equation}
and $Lu = \Delta_\nu u$ in the sense of distributions for each $u \in D(L)$. The domain $D(L)$ is equipped with the graph norm. 
\begin{rem}
	In one dimension it is obvious that $D(L) = \Sob^2(\RP)\cap \dot{\Sob}^1(\RP)$, with an equivalence of norms via the open mapping theorem. This is also true in higher dimensions, but is not immediate from the definition.
\end{rem}

 The next lemma follows from the Lax--Milgram theorem.
\begin{lem} \label{lem:laxmilgram}
Let $\nu >0$. For each $a \in \mathbb{C} \setminus (-\infty,0]$ the inverse $(L + a)^{-1}$ exists, and maps 
\[
(L + a)^{-1}: \begin{cases} \dot{\Sob}^1(\RNP)' \rightarrow \dot{\Sob}^1(\RNP),\\
L^2(\RNP) \rightarrow D(L). \end{cases}
\]
\end{lem}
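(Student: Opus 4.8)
The plan is to apply the Lax--Milgram theorem to the shifted sesquilinear form
\[
\ell_a(u,v) = \ell(u,v) + a\langle u,v\rangle_{\RNP}
\]
on the Hilbert space $\mathring{\Sob}^1(\RNP)$, where $\ell$ is the form \eqref{eq:sesquilinearform}. Boundedness of $\ell_a$ is immediate from $|\ell_a(u,v)| \leq (1+|a|)\|u\|_{\Sob^1(\RNP)}\|v\|_{\Sob^1(\RNP)}$. The starting observation is the identity $\|u\|_{\Sob^1(\RNP)}^2 = \ell(u,u) + \|u\|_{L^2(\RNP)}^2$, valid on $\mathring{\Sob}^1(\RNP)$ because $\|\TD u\| = \|\TP u\|$ and $\|D_{y^i}u\| = \|\partial_{y^i}u\|$. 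Writing $X = \ell(u,u) \geq 0$ and $Y = \|u\|_{L^2(\RNP)}^2 \geq 0$, one then has $X + Y = \|u\|_{\Sob^1(\RNP)}^2$ and $\ell_a(u,u) = X + aY$.

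The crucial step is coercivity, and this is where the hypothesis $a \notin (-\infty,0]$ enters. Since $a \notin (-\infty,0]$ its argument satisfies $|\arg a| < \pi$, so the intervals $(-\pi/2,\pi/2)$ and $(-\pi/2 - \arg a,\, \pi/2 - \arg a)$ overlap; I would fix $\phi$ in their intersection, so that both $\Re(e^{i\phi}) > 0$ and $\Re(e^{i\phi}a) > 0$. Because $X$ and $Y$ are nonnegative reals, the rotated form satisfies
\[
\Re\left(e^{i\phi}\ell_a(u,u)\right) = \Re(e^{i\phi})\,X + \Re(e^{i\phi}a)\,Y \geq c_0\,(X+Y) = c_0\,\|u\|_{\Sob^1(\RNP)}^2,
\]
with $c_0 = \min(\Re e^{i\phi},\, \Re(e^{i\phi}a)) > 0$. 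Thus $e^{i\phi}\ell_a$ is bounded and coercive on $\mathring{\Sob}^1(\RNP)$ in the standard sense.

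By Lax--Milgram, for each functional $F \in \mathring{\Sob}^1(\RNP)'$ there is a unique $u \in \mathring{\Sob}^1(\RNP)$ with $e^{i\phi}\ell_a(u,v) = F(v)$ for all $v$, equivalently $\ell_a(u,v) = e^{-i\phi}F(v)$. Since $F \mapsto e^{-i\phi}F$ is a bijection of the antidual, this produces for every $F$ a unique $u$ solving $\ell_a(u,v) = F(v)$ for all $v$, and the resulting bounded map $F \mapsto u$ is by definition $(L+a)^{-1} : \mathring{\Sob}^1(\RNP)' \to \mathring{\Sob}^1(\RNP)$. To obtain the second mapping property, given $f \in L^2(\RNP)$ I would apply this to the functional $v \mapsto \langle f,v\rangle_{\RNP}$ and set $u = (L+a)^{-1}f$. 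Then $\ell(u,v) = \langle f - au, v\rangle_{\RNP}$ for all $v$, so $v \mapsto \ell(u,v)$ extends continuously to $L^2(\RNP)$; by the characterization \eqref{eq:D(L)} of $D(L)$ this gives $u \in D(L)$ with $Lu = f - au$, that is $(L+a)u = f$. Hence $(L+a)^{-1}$ maps $L^2(\RNP)$ into $D(L)$.

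The main obstacle is precisely the coercivity estimate for complex $a$: the naive bound $\Re \ell_a(u,u) \gtrsim \|u\|^2$ fails once $\arg a$ is large, and it is the nonnegativity of \emph{both} $\ell(u,u)$ and $\|u\|_{L^2(\RNP)}^2$, combined with the rotation by $e^{i\phi}$, that salvages it — geometrically, $\ell_a(u,u)$ traces the segment from $X$-weighted positive reals to $a$-weighted ones, which avoids the origin exactly when $a \notin (-\infty,0]$. Everything else is routine bookkeeping and the standard identification of the form domain with $D(L)$.
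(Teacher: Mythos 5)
Your proof is correct and takes essentially the same route as the paper: apply the Lax--Milgram theorem to the shifted form $\ell_a$ on $\mathring{\Sob}^1(\RNP)$, then recover the mapping property $L^2(\RNP) \rightarrow D(L)$ from the fact that $v \mapsto \ell(u,v) = \langle f - au, v\rangle_{\RNP}$ is $L^2$-continuous, so $u$ lies in the form domain's operator domain. The only difference is that you spell out the coercivity of $\ell_a$ for complex $a \notin (-\infty,0]$ via the rotation by $e^{i\phi}$ — a detail the paper simply asserts — and that detail is carried out correctly.
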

\begin{proof}
Since $a \notin \ (-\infty,0]$ the form $\ell_a(u,v) = \ell(u,v) + a\left<u,v\right>_{\RNP}$ is coercive on $\dot{\Sob}^1(\RNP)$, so $\ell_a(u,v)$ defines an inner product on $\dot{\Sob}^1$ equivalent to the usual one. The Lax--Milgram theorem guarantees that for each $f \in \dot{\Sob}^1(\RNP)'$ there exists a unique $u \in \dot{\Sob}^1(\RNP)$ such that $\ell_a(u,v) = \left<f,v\right>$, and the mapping $u \mapsto f$ is continuous $\dot{\Sob}^1(\RNP)' \rightarrow \dot{\Sob}^1(\RNP)$. 

Furthermore, the unbounded operator associated to $\ell_a$ is clearly $L + a$ (acting in the distributional sense) so $L + a : D(L) \rightarrow L^2(\RNP)$ is bijective. Since this map is continuous when $D(L)$ is equipped with the graph norm, it is an isomorphism by the open mapping theorem.
\end{proof}

\subsection{Elliptic Bessel operators on $\RP$} \label{subsect:ellipticbesselonRP}

In this section, fix an operator $P$ on $\RP$ of the form
\begin{equation} \label{eq:1dbessel}
P = |\TD|^2 + a, \quad a \in \mathbb{C}.
\end{equation}
Thus $\xi \mapsto \xi^2 + a$ has no real roots precisely when $a \notin (-\infty,0]$. In that case, $P$ is said to be \emph{regular}. This is distinguished from ellipticity of $P$ since the principal symbol of multiplication by $a$ as a second order operator is zero (in other words, the boundary symbol operator is $|\TD|^2$ and not $|\TD|^2 + a$). Furthermore, if $0 < \nu < 1$, fix boundary conditions $(T,C)$. Thus $T$ is just a column vector of $J$ boundary operators $T_k =  \sum_{\pm} T_{k}^\pm \gamma_\pm$ with $T^\pm_k \in \mathbb{C}$, and $C$ is a $(J+1) \times J$ matrix with $\mathbb{C}$-valued entries.

Regularity of the operator $\mathscr{P} = \{P,T,C\}$ is defined as just the Lopatinski{\v\i} condition: let $\mathcal{M}_+$ denote the space of bounded solutions to the equation $Pu = 0$. Then $\mathscr{P}$ is regular if the only element $(u, \UL{u}) \in \mathcal{M}_+ \times \mathbb{C}^J$ satisfying $Tu + C\UL{u}=0$ is the trivial solution.

\begin{prop} \label{prop:1dapriori}
Suppose that $P$ given by \eqref{eq:1dbessel} is regular, and that $\mathscr{P} = \{P,T,C\}$ is regular if $0 < \nu < 1$.

\begin{enumerate} \itemsep6pt
\item If $0 < \nu < 1$, then $\mathscr{P}$ is an isomorphism
\[
\TSob^{s}(\RP) \times \mathbb{C}^J \rightarrow \Sob^{s-2}(\RP) \times \mathbb{C}^{1+J}
\]
for each $s = 0,1,2$. The operator norm of $\mathscr{P}^{-1}$ depends continuously on $a$ and the coefficients of $G$ and $C$

\item If $\nu \geq 1$, then $P$ is an isomorphism
\[
\Sob^s(\RP) \rightarrow \Sob^{s-2}(\RP)
\]
for each $s=0,1,2$. The operator norm of $P^{-1}$ depends continuously on $a$.
\end{enumerate}
\end{prop}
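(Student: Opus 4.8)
The continuity of $\mathscr{P}$ is Lemma \ref{lem:BVPcontinuous}, so the plan is to prove that $\mathscr{P}$ is bijective at each level $s=0,1,2$, with inverse depending continuously on the data. The backbone is the explicit solvability of the one-dimensional equation: since $a \notin (-\infty,0]$, write $\xi = \sqrt{-a}$ with $\Im \xi < 0$, so that $\mathcal{M}_+$ is the line spanned by $u_+ = \sqrt{x}\, K_\nu(i\xi x)$, whose traces $\gamma_\pm u_+$ are given explicitly by Lemma \ref{lem:scatteringmatrix}. The regularity hypothesis on $\mathscr{P}$ says exactly that the boundary map $(w,\underline{u}) \mapsto T(w u_+) + C\underline{u}$ is injective $\mathcal{M}_+ \times \mathbb{C}^J \to \mathbb{C}^{J+1}$, and since both sides have dimension $J+1$ it is an isomorphism; call its inverse $R_\partial$. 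This finite-dimensional isomorphism is the only place the Lopatinski\v{\i} condition enters, and it depends continuously (indeed rationally) on the entries of $G$, $C$ and, through $\gamma_\pm u_+$, analytically on $\xi$, hence continuously on $a$.

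First I would settle $s=2$, where $\TSob^{2}(\RP) \cong \Sob^{2}(\RP)$ via $u \mapsto (u,\underline{\gamma}u)$ and $Tu = G\underline{\gamma} u$. For surjectivity, given $(f,\underline{g})$ use the Dirichlet realization $L$ of Section \ref{subsect:dirichletlaplacian}: by Lemma \ref{lem:laxmilgram} the operator $(L+a)^{-1}$ produces a particular solution $u_0 = (L+a)^{-1}f \in \Sob^{2}(\RP)$ of $Pu_0 = f$ with $\gamma_- u_0 = 0$. Writing the general solution as $u = u_0 + w\,u_+$ and applying $R_\partial$ to $\underline{g} - T u_0$ determines $(w,\underline{u})$ so that $Tu + C\underline{u} = \underline{g}$. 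Injectivity is immediate: if $Pu = 0$ with $u \in \Sob^{2}(\RP) \subset L^2(\RP)$ then $u \in \mathcal{M}_+$, and the boundary equation together with $R_\partial$ forces $u=0$ and $\underline{u} = 0$. Continuous dependence of $\mathscr{P}^{-1}$ follows by tracking the three ingredients: $(L+a)^{-1}$ (continuous in $a$ by the coercivity estimate in Lemma \ref{lem:laxmilgram}), $u_+$ (analytic in $\xi$), and $R_\partial$.

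For $s=0$ I would argue by duality. The formal adjoint $\mathscr{P}^* = \{P^*,\dots\}$ of Section \ref{subsect:adjointproblem} has $P^* = |\TD|^2 + \bar{a}$ with $\bar{a} \notin (-\infty,0]$, and is again regular (the adjoint Lopatinski\v{\i} condition, as in Lemma \ref{lem:adjointiselliptic}); hence $\mathscr{P}^*$ is an isomorphism at level $s=2$ by the previous paragraph. Green's formula \eqref{eq:green2}, together with the duality pairings $\Sob^{-2}(\RP) = \Sob^{2}(\RP)'$ and the self-duality of the finite-dimensional boundary factors, identifies $\mathscr{P}$ at $s=0$ with the adjoint of $\mathscr{P}^*$ at $s=2$; the adjoint of a bijective bounded operator between Hilbert spaces is bijective, giving the claim at $s=0$ with the corresponding continuous dependence. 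The intermediate level $s=1$ I would handle by first noting injectivity: if $\mathscr{P}(u,\gamma_- u,\phi_+,\underline{u})=0$, then testing the weak equation $P(u,\underline{\phi})=0$ against $v \in \mathring{\Sob}^{1}(\RP)$ shows $Pu=0$ distributionally, so $u \in \mathcal{M}_+ \subset \Sob^{2}(\RP)$; reading off the remaining boundary term then forces $\phi_+ = \gamma_+ u$, and the problem collapses to the (trivial) $s=2$ kernel. Surjectivity at $s=1$ follows either by duality from the identically proved injectivity of $\mathscr{P}^*$ at the self-dual level $s=1$, or by verifying directly that the explicit inverse assembled above --- the Dirichlet Green's operator applied to $f$ plus $R_\partial$ applied to the boundary data --- is bounded $\Sob^{-1}(\RP) \times \mathbb{C}^{J+1} \to \TSob^{1}(\RP) \times \mathbb{C}^J$, the same kernel estimates giving $s=1$ as give $s=0,2$.

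The case $\nu \geq 1$ is simpler and requires no boundary conditions: here $\Sob^{1}(\RP) = \mathring{\Sob}^{1}(\RP)$, so Lemma \ref{lem:laxmilgram} directly gives that $P = |\TD|^2 + a$ is an isomorphism $\Sob^{2}(\RP) \to \Sob^{0}(\RP)$ (since $D(L) = \Sob^{2}(\RP)$) and $\Sob^{1}(\RP) \to \Sob^{-1}(\RP)$, with $s=0$ following by the same duality using $P^* = |\TD|^2 + \bar{a}$; continuity in $a$ is again inherited from Lemma \ref{lem:laxmilgram}. I expect the main obstacle to be the bookkeeping at the non-endpoint level $s=1$ and, more generally, the careful treatment of the extended spaces $\TSob^{s}(\RP)$ in which the trace components $\phi_\pm$ are free: one must check that the weak (Green's-formula) interpretation of $P(u,\underline{\phi})$ is compatible with the explicit solution operator, and that the duality identification of $\mathscr{P}$ with the adjoint of $\mathscr{P}^*$ respects the prescribed constraints on $\underline{\phi}$ at each $s$. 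Once these identifications are pinned down, every assertion reduces to the single finite-dimensional isomorphism $R_\partial$ and the Lax--Milgram inverse $(L+a)^{-1}$.
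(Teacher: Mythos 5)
Your proposal is correct and follows the paper's proof in all essentials: $s=2$ via the Lax--Milgram solution of Lemma \ref{lem:laxmilgram} with zero Dirichlet trace corrected by the finite-dimensional Lopatinski\v{\i} isomorphism on $\mathcal{M}_+\times\mathbb{C}^J$ (this is Lemma \ref{lem:1daprioris=2}), $s=0$ by identifying $\mathscr{P}$ with the Hilbert-space adjoint of the regular problem $\mathscr{P}^*$ at $s=2$ (Lemma \ref{lem:1daprioris=0}), and $\nu\geq 1$ via $\Sob^s(\RP)=\mathring{\Sob}^s(\RP)$ and Lax--Milgram directly (Lemma \ref{lem:1dapriorinugeq1}). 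The only organizational divergence is at $s=1$: the paper proves a regularity lemma (Lemma \ref{lem:1dregularity}: a $\TSob^0(\RP)$ solution with data in $\Sob^{-1}(\RP)$ lies in $\TSob^1(\RP)$, using $u-\tilde u\in\mathcal{M}_+$ for the Lax--Milgram solution $\tilde u$ and then forcing $\phi_-=\gamma_- u$ because the residual boundary functional extends continuously to $\Sob^1(\RP)$ only in that case) and then concludes bijectivity at $s=1$ from the $s=0$ isomorphism plus the open mapping theorem; your injectivity argument at $s=1$ is precisely this trace-identification run in the kernel case. One caution: of your two suggested routes to surjectivity at $s=1$, the first --- duality from injectivity of $\mathscr{P}^*$ at the self-dual level --- is insufficient as stated, since an injective adjoint yields only dense range and you would still need a closed-range (a priori) estimate; it is your second route that closes the argument, and carrying it out amounts to re-deriving Lemma \ref{lem:1dregularity}, namely checking that the functional $v\mapsto \langle f,v\rangle_{\RP}-\langle \TD u,\TD v\rangle_{\RP}-a\langle u,v\rangle_{\RP}$ vanishes on $\mathring{\Sob}^1(\RP)=\ker\gamma_-$ and hence factors through $\gamma_-$, which pins down $\phi_+$ and gives the needed bound.
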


\noindent The proof of this proposition is split up across several Lemmas. 
\begin{lem} \label{lem:1daprioris=2}
Proposition \ref{prop:1dapriori} holds when $0 < \nu < 1$ and $s=2$.
\end{lem}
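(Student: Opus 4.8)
The plan is to exploit the $s=2$ isomorphism $u \mapsto (u,\UL{\gamma}u)$ from $\Sob^2(\RP)$ onto $\TSob^2(\RP)$ in order to rewrite $\mathscr{P}$ as the map $(u,\UL{u}) \mapsto (Pu,\, Tu + C\UL{u})$ acting on $\Sob^2(\RP)\times\mathbb{C}^J$, noting that in one dimension $H^{s+\UL{\tau}}(\partial X)=\mathbb{C}^J$ and $H^{s-\UL{\mu}}(\partial X)=\mathbb{C}^{J+1}$ regardless of $s$. Boundedness of this map is already Lemma \ref{lem:BVPcontinuous}, so it remains only to prove that it is a bijection onto $\Sob^0(\RP)\times\mathbb{C}^{J+1}$; its inverse is then automatically bounded by the open mapping theorem, and I will read off the continuous dependence of the inverse norm from the explicit construction.

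First I would analyze $P:\Sob^2(\RP)\to\Sob^0(\RP)$ on its own. By Lemma \ref{lem:laxmilgram} the Dirichlet realization $L$ of $|\TD|^2$ gives $(L+a)^{-1}:L^2(\RP)\to D(L)$, and since $D(L)=\Sob^2(\RP)\cap\mathring{\Sob}^1(\RP)$ with $(L+a)u=Pu$ there, the operator $P$ is surjective onto $\Sob^0(\RP)$: every $f$ has the distinguished preimage $u_0=(L+a)^{-1}f$ satisfying $\gamma_-u_0=0$. For the kernel, any $u\in\Sob^2(\RP)$ with $Pu=0$ solves the second-order ODE $|\TD|^2u+au=0$, whose two-dimensional solution space is spanned by a decaying and an exponentially growing branch; square-integrability at infinity kills the growing one, so $\ker(P|_{\Sob^2(\RP)})=\mathcal{M}_+$ is exactly the line spanned by $u_*$ from \eqref{eq:u_+}. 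Hence the full $\Sob^2$-solution set of $Pu=f$ is $\{u_0+cu_*:c\in\mathbb{C}\}$.

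The heart of the argument is then finite-dimensional linear algebra. Regularity of $\mathscr{P}$ is precisely the statement that $(u,\UL{u})\mapsto Tu+C\UL{u}$ is injective on $\mathcal{M}_+\times\mathbb{C}^J$; since $\dim(\mathcal{M}_+\times\mathbb{C}^J)=1+J=\dim\mathbb{C}^{J+1}$, this injective linear map is automatically a bijection onto $\mathbb{C}^{J+1}$. Injectivity of $\mathscr{P}$ follows immediately, because $Pu=0$ forces $u\in\mathcal{M}_+$ and then $Tu+C\UL{u}=0$ forces $(u,\UL{u})=0$. For surjectivity, given $(f,\UL{g})$ I set $u_0=(L+a)^{-1}f$ and invoke the bijection above to solve $c\,Tu_*+C\UL{u}=\UL{g}-G\UL{\gamma}u_0$ for the unique $(c,\UL{u})$; then $u=u_0+cu_*$ together with $\UL{u}$ solves $\mathscr{P}(u,\UL{u})=(f,\UL{g})$.

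Finally, the norm of $\mathscr{P}^{-1}$ is controlled by the norm of $(L+a)^{-1}$, which depends continuously on $a$ through the Lax--Milgram bound, together with the inverse of the $(J+1)\times(J+1)$ matrix representing $(c,\UL{u})\mapsto c\,Tu_*+C\UL{u}$; its entries are the coefficients of $C$ and the scalars $T^-\gamma_-u_*+T^+\gamma_+u_*$, which Lemma \ref{lem:scatteringmatrix} exhibits as explicit continuous functions of $a$. Continuity of matrix inversion on the set of invertible matrices then yields the asserted continuous dependence on $a$ and on the coefficients of $G$ and $C$. I expect the only genuine friction to be the bookkeeping in the second step --- verifying that $u_*$ really lies in $\Sob^2(\RP)$ and spans the kernel --- together with the small but decisive observation that the dimension count $1+J=J+1$ upgrades the one-sided Lopatinski\v{\i} injectivity into the two-sided invertibility needed for both surjectivity and the inverse bound.
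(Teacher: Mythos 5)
Your proposal is correct and takes essentially the same route as the paper: reduce via the isomorphism $u \mapsto (u,\UL{\gamma}u)$, produce a particular solution from the Dirichlet realization of Lemma \ref{lem:laxmilgram}, and use the dimension count $1+J = J+1$ to upgrade the Lopatinski\v{\i} injectivity of $(u,\UL{u}) \mapsto Tu + C\UL{u}$ on $\mathcal{M}_+ \times \mathbb{C}^J$ to an isomorphism, yielding injectivity and surjectivity of $\mathscr{P}$ at once. Your explicit tracking of the continuous dependence on $a$ and the coefficients of $G$ and $C$ via Lemma \ref{lem:scatteringmatrix} simply spells out what the paper leaves as ``easy to see.''
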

\begin{proof}
Since $\TSob^{2}(\RP)$ is isomorphic to $\Sob^{2}(\RP)$ via the map $v \mapsto (v,\UL{\gamma}v)$, it is sufficient to prove the lemma with $\Sob^{2}(\RP)$ replacing $\TSob^{2}(\RP)$. By the regularity condition, $P$ is injective. Indeed any solution in $\Sob^{2}(\RP)$ to the equation $P u = 0$ must lie in $\mathcal{M}_+$, and the Lopatinski\v{\i} condition implies that such a solution is unique. It  remains to show surjectivity.

Fix $(f,g) \in \Sob^0(\RP) \times \mathbb{C}^{J+1}$. From Lemma \ref{lem:laxmilgram}, it follows that that the equation
\[
P u = f
\]
has a solution $u_1 \in \mathcal{H}^2(\RP)\cap \dot{\Sob}^1(\RP)$. It then suffices to let $(u_2, \UL{u}) \in \mathcal{M}_+ \times \mathbb{C}^J$ solve
\[
\begin{cases}
P u_2 = 0,\\
T u_2 + C\UL{u} = g - T u_1. \\
\end{cases}
\] 
This is possible since
\[
(u, \underline{u} ) \mapsto T u + C \underline{u}
\]
as a map between the finite dimensional vector spaces $\mathcal{M}_+ \times \mathbb{C}^J \rightarrow \mathbb{C}^{J+1}$ is injective, hence an isomorphism. Setting $u = u_1 + u_2$ shows that $\mathscr{P}(u,\UL{u}) = (f,g)$. It is also easy to see that the operator norm of $\mathscr{P}$ depends continuously on $a$ and the coefficients of $G$ and $C$, which implies the same for the operator norm of $\mathscr{P}^{-1}$ via the resolvent identity.
\end{proof}

\begin{lem} \label{lem:1daprioris=0}
Proposition \ref{prop:1dapriori} holds when $0 < \nu < 1$ and $s=0$.
\end{lem}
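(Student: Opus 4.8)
The plan is to obtain the case $s=0$ from the case $s=2$ (Lemma \ref{lem:1daprioris=2}) by a duality argument, applying the latter not to $\mathscr{P}$ but to its formal adjoint $\mathscr{P}^*$. First I would record that $\mathscr{P}^*$ is again regular. The one–dimensional analogue of Lemma \ref{lem:adjointiselliptic} applies: since $P^* = |\TD|^2 + \bar a$ is regular whenever $P$ is, and since the regularity of $\mathscr{P}$ makes $(u,\UL{u})\mapsto Tu + C\UL{u}$ an isomorphism $\mathcal{M}_+\times\mathbb{C}^J\to\mathbb{C}^{J+1}$, the same Green's formula computation as in Lemma \ref{lem:adjointiselliptic} (now with the boundary symbol operators replaced by the operators themselves, as there is no $\eta$-dependence on $\RP$) shows that the map $\mathcal{M}_+\times\mathbb{C}^{J+1}\to\mathbb{C}^{J+2}$ attached to the adjoint boundary conditions $(J\UL{\gamma}\,\cdot + G^*\,\cdot,\ C^*\,\cdot)$ is an isomorphism. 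Thus $\mathscr{P}^*$ is a regular problem with $J+1$ auxiliary unknowns and $J+2$ boundary equations, and Lemma \ref{lem:1daprioris=2} gives that
\[
\mathscr{P}^* : \TSob^{2}(\RP) \times \mathbb{C}^{J+1} \longrightarrow \Sob^{0}(\RP) \times \mathbb{C}^{J+2}
\]
is an isomorphism whose inverse norm depends continuously on $a$ and the coefficients of $G$ and $C$.

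Next I would identify $\mathscr{P}$ at $s=0$ as the antidual of this isomorphism. The relevant identifications are: the domain $\TSob^{0}(\RP)\times\mathbb{C}^J = \Sob^0(\RP)\times\mathbb{C}^2\times\mathbb{C}^J$ (recall that for $s\leq 0$ the boundary components are unconstrained) is, under the $L^2$ and Euclidean inner products, the antidual of the range $\Sob^0(\RP)\times\mathbb{C}^{J+2}$ of $\mathscr{P}^*$; and the range $\Sob^{-2}(\RP)\times\mathbb{C}^{1+J}$ of $\mathscr{P}$ is, via $\Sob^{-2}(\RP)=\Sob^{2}(\RP)'$ together with $\TSob^2(\RP)\cong\Sob^2(\RP)$, the antidual of the domain $\TSob^2(\RP)\times\mathbb{C}^{J+1}$ of $\mathscr{P}^*$. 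With these identifications in place, the Green's formula
\[
\left<Pu, v \right>_{\RP} + \left< Tu + C\UL{u}, \UL{v} \right> = \left< u, P^* v \right>_{\RP} + \left< \UL{\gamma}u, G^* \UL{v} + J\UL{\gamma} v \right> + \left<\UL{u}, C^* \UL{v}\right>,
\]
valid for $u,v$ in the dense subspace $\mathcal{F}_\nu(\RP)$, reads exactly as $\langle \mathscr{P}(u,\UL{\gamma}u,\UL{u}),(v,\UL{v})\rangle = \langle (u,\UL{\gamma}u,\UL{u}),\mathscr{P}^*(v,\UL{\gamma}v,\UL{v})\rangle$. Since both $\mathscr{P}$ and $\mathscr{P}^*$ are bounded on the relevant spaces (the mapping properties of Section \ref{sect:functionmapping}, in particular Lemma \ref{lem:besselextension1} and the continuity of $\gamma_\pm$) and the graphs $\{(u,\UL{\gamma}u): u\in\mathcal{F}_\nu(\RP)\}$ are dense in each $\TSob^{s}(\RP)$ by Lemma \ref{lem:tildedensity}, this identity persists to the full spaces and exhibits $\mathscr{P}$ on $\TSob^0(\RP)\times\mathbb{C}^J$ as the antidual of $\mathscr{P}^*$ on $\TSob^2(\RP)\times\mathbb{C}^{J+1}$.

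Finally, the antidual of a Banach-space isomorphism is again an isomorphism whose inverse has the same operator norm. Hence
\[
\mathscr{P}: \TSob^0(\RP)\times\mathbb{C}^J \longrightarrow \Sob^{-2}(\RP)\times\mathbb{C}^{1+J}
\]
is an isomorphism, and $\|\mathscr{P}^{-1}\| = \|(\mathscr{P}^*)^{-1}\|$ depends continuously on $a$ and the coefficients of $G$ and $C$ by the statement already obtained for $\mathscr{P}^*$ (passing from $a,G,C$ to $\bar a$ and the adjoints $G^*,C^*$ is continuous). I expect the main obstacle to be the bookkeeping in the second paragraph: matching the boundary dimensions ($2+J$ against $J+1$) and checking that the natural pairings are perfect, so that $\Sob^{-2}(\RP)\times\mathbb{C}^{1+J}$ is \emph{genuinely} the full antidual of $\TSob^2(\RP)\times\mathbb{C}^{J+1}$ and the abstract ``antidual of an isomorphism'' step applies without gap. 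The analytic content — the actual construction of solutions — is entirely absorbed into the $s=2$ result (and the Lax--Milgram Lemma \ref{lem:laxmilgram} behind it); the present lemma is a purely functional-analytic transcription.
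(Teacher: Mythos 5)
Your proof is correct and takes essentially the same route as the paper: both arguments pass to the formal adjoint $\mathscr{P}^*$, observe that it is again regular (the one-dimensional instance of Lemma \ref{lem:adjointiselliptic}), apply the $s=2$ case (Lemma \ref{lem:1daprioris=2}) to the adjoint problem with its $J+1$ auxiliary unknowns and $J+2$ boundary equations, and then identify $\mathscr{P}$ on $\TSob^0(\RP)\times\mathbb{C}^J$ with the Hilbert-space adjoint of $\mathscr{P}^*$ via Green's formula and density, concluding by duality. Your additional bookkeeping on the pairings and the continuity of $\|\mathscr{P}^{-1}\|$ merely spells out what the paper compresses into ``a direct calculation.''
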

\begin{proof}
Since the formal adjoint operator $\mathscr{P}^*$ is also regular according to Lemma \ref{lem:adjointiselliptic}, the map
\[
\Sob^{2}(\RP) \times \mathbb{C}^{1+J}  \rightarrow \Sob^{0}(\RP) \times \mathbb{C}^2 \times \mathbb{C}^{J} 
\]
given by
\[
(v,\underline{v}) \mapsto (P^* v, \, J\UL{\gamma} v + G^*\underline{v}, \, C^*\underline{v})
\]
is an isomorphism according to Lemma \ref{lem:1daprioris=2}. But in that case, a direct calculation shows that $\mathscr{P}^*$ agrees with the Hilbert space adjoint $\mathscr{P}'$ of
\[
\mathscr{P} : \TSob^{0}(\RP) \times \mathbb{C}^J \rightarrow \mathcal{H}^{-2}(\RP) \times \mathbb{C}^{1+J}.
\]
Since $\mathscr{P}'$ is an isomorphism, $\mathscr{P}$ is an isomorphism on the stated spaces as well.
\end{proof}

\noindent To prove Proposition \ref{prop:1dapriori} for $s=1$, the following regularity result is needed.

\begin{lem} \label{lem:1dregularity} Let $0 < \nu < 1$. Suppose that $(u,\underline{\phi}) \in \TSob^0(\RP)$ satisfies $P(u,\underline{\phi}) \in \Sob^{-1}(\RP)$. Then $(u,\underline{\phi}) \in \TSob^{1}(\RP)$.
\end{lem}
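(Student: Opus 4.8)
The plan is to prove the statement by an approximation argument anchored to the Dirichlet problem, for which an isomorphism is already available at the levels $s=0$ and $s=2$. First I would reduce to the case that $P$ is \emph{regular}: replacing $P$ by $P+\sigma$ for a large real $\sigma$ with $a+\sigma\notin(-\infty,0]$ only changes the right-hand side to $f+\sigma u$, which still lies in $\Sob^{-1}(\RP)$ since $u\in\Sob^0(\RP)\hookrightarrow\Sob^{-1}(\RP)$, and it leaves the conclusion unaffected. With $P$ regular, the Dirichlet operator $\mathscr{P}_D=\{P,\gamma_-\}$ satisfies the Lopatinski\v{\i} condition (Lemma \ref{lem:scatteringmatrix}, since $\gamma_-e_\ast=1$ on $\mathcal{M}_+$), so Proposition \ref{prop:1dapriori} in the already-established cases $s=0,2$ gives that $\mathscr{P}_D$ is an isomorphism $\TSob^s(\RP)\to\Sob^{s-2}(\RP)\times\mathbb{C}$ for $s=0,2$; equivalently, the Dirichlet realization $(L+a)^{-1}$ of Lemma \ref{lem:laxmilgram} maps $\Sob^0(\RP)$ into $\Sob^2(\RP)\cap\mathring{\Sob}^1(\RP)$. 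Setting $g=\phi_-$, the hypothesis reads $\mathscr{P}_D(u,\UL\phi)=(f,g)\in\Sob^{-1}(\RP)\times\mathbb{C}$. Since $\Sob^0(\RP)$ is dense in $\Sob^{-1}(\RP)$, I choose $f_n\in\Sob^0(\RP)$ with $f_n\to f$ in $\Sob^{-1}(\RP)$ and solve $\mathscr{P}_D(v_n,\UL\psi_n)=(f_n,g)$ with $(v_n,\UL\psi_n)\in\TSob^2(\RP)$ by the $s=2$ isomorphism.

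The heart of the argument is a pair of a priori bounds showing that $(v_n,\UL\psi_n)$ is Cauchy in $\TSob^1(\RP)$. Writing $v_n=(L+a)^{-1}f_n+g\,e_\ast$, where $e_\ast\in\Sob^2(\RP)$ spans $\mathcal{M}_+$ with $\gamma_-e_\ast=1$, the $g$-term cancels in differences, so it suffices to estimate the Dirichlet solution $v=(L+a)^{-1}h$ for $h=f_n-f_m$. For the $\Sob^1$-component I would use Green's formula \eqref{eq:green1}: since $\gamma_-v=0$ it reduces to $\langle h,v\rangle_{\RP}=\|\TD v\|_{\Sob^0}^2+a\|v\|_{\Sob^0}^2$, and because $a\notin(-\infty,0]$ there is $c_a>0$ with $|s+at|\ge c_a(s+t)$ for all $s,t\ge0$; applying this with $s=\|\TD v\|_{\Sob^0}^2$ and $t=\|v\|_{\Sob^0}^2$ yields $\|v\|_{\Sob^1}\le C\|h\|_{\Sob^{-1}}$. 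The subtle point is the $\gamma_+$-component, since $\gamma_+$ is not even defined on $\Sob^1(\RP)$; I would extract it by duality against an explicit homogeneous solution of the adjoint. Let $e_\ast^\sharp\in\Sob^2(\RP)$ solve $P^\ast e_\ast^\sharp=0$ with $\gamma_-e_\ast^\sharp=1$ (the analogue of $e_\ast$ for the regular operator $P^\ast$). Applying \eqref{eq:green2} to $v$ and $e_\ast^\sharp$ and using $\gamma_-v=0$ together with $P^\ast e_\ast^\sharp=0$ gives $\gamma_+v=-\langle h,e_\ast^\sharp\rangle_{\RP}$, whence $|\gamma_+v|\le\|e_\ast^\sharp\|_{\Sob^1}\,\|h\|_{\Sob^{-1}}$ with a fixed constant.

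Combining the two estimates, and recalling that $\psi_-^n=g$ is constant while $\psi_+^n=\gamma_+v_n$, gives $\|(v_n-v_m,\UL\psi_n-\UL\psi_m)\|_{\TSob^1}\le C\|f_n-f_m\|_{\Sob^{-1}}\to0$, so $(v_n,\UL\psi_n)$ converges to some $(v_\infty,\UL\psi_\infty)\in\TSob^1(\RP)$. By continuity of $\mathscr{P}_D$ at the level $s=1$ (Lemma \ref{lem:besselextension1} for $P$, together with continuity of $\gamma_-$) the limit satisfies $\mathscr{P}_D(v_\infty,\UL\psi_\infty)=(f,g)$. Finally, viewing both $(u,\UL\phi)$ and $(v_\infty,\UL\psi_\infty)$ in $\TSob^0(\RP)$, they have the same image $(f,g)$ under $\mathscr{P}_D$, so the injectivity furnished by the $s=0$ isomorphism (Lemma \ref{lem:1daprioris=0}) forces $(u,\UL\phi)=(v_\infty,\UL\psi_\infty)\in\TSob^1(\RP)$. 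The main obstacle is exactly the control of the free component $\phi_+$ at the intermediate level $s=1$: because $\gamma_+$ has no meaning on $\Sob^1(\RP)$, its values along the approximating sequence must be captured indirectly, and it is the duality identity $\gamma_+v=-\langle h,e_\ast^\sharp\rangle_{\RP}$ against a fixed homogeneous solution of the adjoint that makes the $\Sob^{-1}$-level bound available.
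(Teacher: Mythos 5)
Your proposal is correct, but it takes a genuinely different route from the paper's. The paper works directly with the given pair: it restricts the functional $P(u,\UL{\phi})$ to $\mathring{\Sob}^1(\RP)$, so that $f = Pu$ in the sense of distributions, solves $P\tilde u = f$ with $\tilde u \in \mathring{\Sob}^1(\RP)$ by Lax--Milgram (Lemma \ref{lem:laxmilgram}), and observes that $u - \tilde u$ is an $L^2$ solution of the homogeneous equation, hence lies in $\mathcal{M}_+ \subset \Sob^1(\RP)$ because for $0<\nu<1$ the $L^2$ kernel coincides with $\mathcal{M}_+$; this yields $u \in \Sob^1(\RP)$ in one stroke. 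The compatibility $\phi_- = \gamma_- u$ is then extracted qualitatively: for $v \in \Sob^2(\RP)$ the identity $\langle f,v\rangle_{\RP} - \langle \TD u, \TD v\rangle_{\RP} - a\langle u,v\rangle_{\RP} + \phi_+(\gamma_- v) = (\phi_- - \gamma_- u)\,\gamma_+ v$ has a left-hand side extending continuously to $v \in \Sob^1(\RP)$, while $v \mapsto \gamma_+ v$ does not, forcing the coefficient to vanish. You instead regularize the data, solve at the $s=2$ level, and prove the approximants Cauchy in $\TSob^1(\RP)$ via two quantitative a priori bounds --- the coercivity estimate (essentially re-deriving the $\mathring{\Sob}^1(\RP)' \rightarrow \mathring{\Sob}^1(\RP)$ boundedness already contained in Lemma \ref{lem:laxmilgram}) and the duality identity $\gamma_+ v = -\langle h, e_\ast^\sharp\rangle_{\RP}$ against a kernel element of the adjoint --- then identify the limit with $(u,\UL{\phi})$ by the $s=0$ injectivity of Lemma \ref{lem:1daprioris=0}. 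Your route is longer and leans on the already-established cases $s=0,2$ of Proposition \ref{prop:1dapriori}, which is harmless since Lemma \ref{lem:1dregularity} is only invoked afterwards for $s=1$, so there is no circularity; the paper's proof, by contrast, is self-contained. What your version buys is an explicit bound $|\gamma_+ v| \leq \|e_\ast^\sharp\|_{\Sob^1(\RP)}\|h\|_{\Sob^{-1}(\RP)}$ for the Dirichlet solution, i.e.\ a quantitative form of the $s=1$ estimate that the paper only obtains indirectly from bijectivity in Lemma \ref{lem:1daprioris=1}, and your preliminary shift $P \mapsto P + \sigma$ removes the tacit regularity hypothesis ($a \notin (-\infty,0]$) that the paper's argument uses through Lax--Milgram and the characterization of the $L^2$ kernel.
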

\begin{proof}
Let $f \in \dot{\Sob}^1(\RP)'$ denote the restriction of the functional $P(u,\underline{\phi})$ to $\dot{\Sob}^1(\RP)$. This implies that $f = Pu$ in the sense of distributions. By Lemma \ref{lem:laxmilgram}, there exists a unique $\tilde u \in \dot{\Sob}^1(\RP)$ such that $P \tilde u = f$ in the distributional sense. Thus in sense of distributions on $\RP$,
\[
P(u - \tilde u) = 0.
\]
Since $u$ and $\tilde{u}$ are square integrable, it follows that $u - \tilde{u} \in \mathcal{M}_+$. Thus it is certainly true that
\[
u = (u - \tilde{u}) + \tilde{u} \in \Sob^1(\RP).
\]
It remains remains to prove that $\phi_- = \gamma_- u$. A priori $(u,\underline{\phi}) \in \TSob^0(\RP)$, so for each $v\in \Sob^2(\RP)$,
\[
\langle f , v \rangle_{\RP} = \langle u , P^* v \rangle_{\RP} - \phi_+ (\gamma_- v) + \phi_- (\gamma_+ v).
\]
Using that $u \in \Sob^1(\RP)$, this may be rewritten as
\[
\langle f , v \rangle_{\RP} - \langle \TD u, \TD v \rangle_{\RP} - a \langle  u , v \rangle_{\RP} + \phi_+(\gamma_-v) = (\phi_- - \gamma_- u)\gamma_+ v  
\]
for each $v\in \Sob^2(\RP)$. But the left hand side extends to a continuous functional on $\Sob^1(\RP)$, which is not true of the right hand side unless $\phi_- = \gamma_- u$, thus completing the proof. 
\end{proof}

\begin{lem} \label{lem:1daprioris=1}
Proposition \ref{prop:1dapriori} holds when $0 < \nu < 1$ and $s=1$.
\end{lem}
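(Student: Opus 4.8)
The plan is to deduce the $s=1$ statement from the already-established cases $s=0$ (Lemma \ref{lem:1daprioris=0}) and $s=2$ (Lemma \ref{lem:1daprioris=2}) together with the regularity result of Lemma \ref{lem:1dregularity}, treating the claim as an elliptic bootstrap rather than proving it from scratch. Boundedness of $\mathscr{P} : \TSob^1(\RP) \times \mathbb{C}^J \to \Sob^{-1}(\RP) \times \mathbb{C}^{1+J}$ is already known (the one-dimensional instance of Lemma \ref{lem:BVPcontinuous}), so only bijectivity and the dependence of $\|\mathscr{P}^{-1}\|$ on the data remain. Throughout I would use the antidual inclusions $\Sob^0(\RP) \hookrightarrow \Sob^{-1}(\RP) \hookrightarrow \Sob^{-2}(\RP)$ and the inclusion $\TSob^1(\RP) \hookrightarrow \TSob^0(\RP)$, together with the fact that the corresponding extensions of $P$ are compatible: both are obtained by continuity from the common dense set $\mathcal{F}_\nu(\RP)$, on which each sends $u$ to $Pu$, so they agree under the inclusions.

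For injectivity, suppose $(u,\UL\phi,\UL u) \in \TSob^1(\RP) \times \mathbb{C}^J$ lies in the kernel, so that $P(u,\UL\phi)=0$ and $G\UL\phi + C\UL u = 0$. Viewing $(u,\UL\phi)$ as an element of $\TSob^0(\RP)$ and $0$ as an element of $\Sob^{-2}(\RP)$, the triple lies in the kernel of the $s=0$ map, which is trivial by Lemma \ref{lem:1daprioris=0}; hence $(u,\UL\phi,\UL u)=0$. For surjectivity, I would fix $(f,\UL g) \in \Sob^{-1}(\RP) \times \mathbb{C}^{1+J}$ and regard $f$ as an element of $\Sob^{-2}(\RP)$ under the inclusion above. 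By the $s=0$ isomorphism there is a unique $(u,\UL\phi,\UL u) \in \TSob^0(\RP) \times \mathbb{C}^J$ with $P(u,\UL\phi)=f$ and $G\UL\phi + C\UL u = \UL g$. Since the functional $P(u,\UL\phi)$ equals $f$, which by assumption lies in $\Sob^{-1}(\RP)$, the hypothesis of Lemma \ref{lem:1dregularity} is met, and that lemma upgrades $(u,\UL\phi)$ to $\TSob^1(\RP)$. By the compatibility of the two extensions of $P$, the identity $P(u,\UL\phi)=f$ then persists as an equality in $\Sob^{-1}(\RP)$, so $(u,\UL\phi,\UL u)$ solves $\mathscr{P}(u,\UL\phi,\UL u)=(f,\UL g)$ in the $s=1$ sense. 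Thus $\mathscr{P}$ is a continuous bijection between the Banach spaces $\TSob^1(\RP) \times \mathbb{C}^J$ and $\Sob^{-1}(\RP) \times \mathbb{C}^{1+J}$, and the open mapping theorem supplies a bounded inverse.

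For the continuous dependence of $\|\mathscr{P}^{-1}\|$, I would note that $(a, G, C) \mapsto \mathscr{P}$ is a continuous (indeed polynomial) map into the bounded operators between these two fixed spaces, and that each such $\mathscr{P}$ has just been shown to be invertible; since inversion is continuous on the open set of invertible operators, the operator norm of $\mathscr{P}^{-1}$ depends continuously on $a$ and on the coefficients of $G$ and $C$. The main obstacle I anticipate is purely one of bookkeeping in the surjectivity step: one must verify that the element produced by the $s=0$ isomorphism, once regularized via Lemma \ref{lem:1dregularity}, genuinely solves the $s=1$ problem with the prescribed right-hand side $f \in \Sob^{-1}(\RP)$ rather than merely its image in $\Sob^{-2}(\RP)$. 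This is exactly the compatibility of the $\TSob^1$- and $\TSob^0$-extensions of $P$ recorded at the outset, which holds because both agree on $\mathcal{F}_\nu(\RP)$ and hence everywhere by density.
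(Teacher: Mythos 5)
Your proposal is correct and follows the same route as the paper: the paper's proof of this lemma is precisely the combination of the $s=0$ isomorphism (Lemma \ref{lem:1daprioris=0}) with the regularity result (Lemma \ref{lem:1dregularity}) to obtain a continuous bijection $\TSob^1(\RP)\times\mathbb{C}^J \rightarrow \Sob^{-1}(\RP)\times\mathbb{C}^{J+1}$, hence an isomorphism by the open mapping theorem. Your expanded bookkeeping — the injectivity of the inclusion $\Sob^{-1}(\RP)\hookrightarrow\Sob^{-2}(\RP)$ and the compatibility of the extensions of $P$ on the common dense set $\mathcal{F}_\nu(\RP)$, plus continuity of inversion for the dependence of $\|\mathscr{P}^{-1}\|$ on the data — simply makes explicit what the paper leaves implicit.
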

\begin{proof}
The regularity result of Lemma \ref{lem:1dregularity} combined with Lemma \ref{lem:1daprioris=0} shows that $\mathscr{P}$ defines a continuous bijection, hence an isomorphism
\[
\TSob^1(\RP) \times \mathbb{C}^J \rightarrow \Sob^{-1}(\RP) \times \mathbb{C}^{J+1}
\]
as stated.
\end{proof}

\begin{lem} \label{lem:1dapriorinugeq1}
Proposition \ref{prop:1dapriori} holds when $\nu \geq 1$.
\end{lem}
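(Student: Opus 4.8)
The plan is to follow the same three-step pattern as Lemmas \ref{lem:1daprioris=2}, \ref{lem:1daprioris=0}, \ref{lem:1daprioris=1}, exploiting that no boundary conditions occur when $\nu \geq 1$, so that $\Sob^s(\RP)$ replaces $\TSob^s(\RP)$ throughout and every step shortens considerably. Two facts specific to $\nu \geq 1$ will be used repeatedly: first, $\Sob^1(\RP) = \mathring{\Sob}^1(\RP)$ (Section \ref{subsect:sob1}); and second, the $L^2$ kernel of $P$ is trivial, since by the discussion opening Section \ref{sect:ellipticBVP} any distributional solution of $Pu = 0$ is a combination of $\sqrt{x}K_\nu(i\xi x)$, which fails to be square integrable near $x = 0$ precisely when $\nu \geq 1$, and $\sqrt{x}I_\nu(i\xi x)$, which grows exponentially.

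First I would dispatch $s = 2$. Because $\Sob^1(\RP) = \mathring{\Sob}^1(\RP)$, the characterization \eqref{eq:D(L)} together with the remark following Lemma \ref{lem:laxmilgram} gives $D(L) = \Sob^2(\RP)$ in one dimension. Regularity of $P$ means $a \notin (-\infty,0]$, so $P = L + a$ and Lemma \ref{lem:laxmilgram} immediately yields that $P : \Sob^2(\RP) \to \Sob^0(\RP)$ is a continuous bijection, hence an isomorphism by the open mapping theorem; continuous dependence of $P^{-1}$ on $a$ follows from the resolvent identity, exactly as in Lemma \ref{lem:1daprioris=2}.

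The case $s = 0$ then follows by duality. The formal adjoint $P^* = |\TD|^2 + \bar a$ is again regular, so the $s = 2$ step applied to $P^*$ shows $P^* : \Sob^2(\RP) \to \Sob^0(\RP)$ is an isomorphism; Green's formula \eqref{eq:green4} identifies $P : \Sob^0(\RP) \to \Sob^{-2}(\RP)$ with the Hilbert space adjoint of $P^*$, which is therefore an isomorphism. For $s = 1$ I would argue as in Lemma \ref{lem:1dregularity}, but without any boundary terms. Given $f \in \Sob^{-1}(\RP) \subseteq \Sob^{-2}(\RP)$, the $s = 0$ isomorphism produces a unique $u \in \Sob^0(\RP)$ with $Pu = f$; restricting $f$ to a functional on $\mathring{\Sob}^1(\RP)$ and inverting via Lemma \ref{lem:laxmilgram} gives $\tilde u \in \mathring{\Sob}^1(\RP) = \Sob^1(\RP)$ with $P\tilde u = f$ in the distributional sense. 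Then $P(u - \tilde u) = 0$ in $\mathscr{D}'(\RP)$, so triviality of the $L^2$ kernel forces $u = \tilde u \in \Sob^1(\RP)$. Together with injectivity (again from $\Sob^1 \subseteq L^2$ and the trivial kernel), this makes $P : \Sob^1(\RP) \to \Sob^{-1}(\RP)$ a continuous bijection, hence an isomorphism.

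I do not expect a substantive obstacle here: the preceding lemmas already supply all the analytic input, and the argument is a streamlined version of the $0 < \nu < 1$ case. The one point demanding care is the bookkeeping underlying triviality of the $L^2$ kernel — verifying that $\sqrt{x}K_\nu(i\xi x)$ genuinely leaves $L^2$ near the origin exactly in the regime $\nu \geq 1$ — since this single fact drives both the surjectivity in the $s=1$ regularity step and the injectivity at every value of $s$.
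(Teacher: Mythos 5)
Your proposal is correct and is essentially the paper's argument: since $\Sob^s(\RP) = \mathring{\Sob}^s(\RP)$ for $\nu \geq 1$, the paper simply applies Lemma \ref{lem:laxmilgram} directly for $s=1,2$ (using $D(L) = \Sob^2(\RP)\cap\mathring{\Sob}^1(\RP) = \Sob^2(\RP)$ in one dimension) and handles $s=0$ by duality as in Lemma \ref{lem:1daprioris=0}. Your $s=1$ step is slightly more roundabout than necessary — because $\Sob^{-1}(\RP) = \mathring{\Sob}^1(\RP)'$ here, the isomorphism $\Sob^1(\RP)\to\Sob^{-1}(\RP)$ can be read off the first mapping statement of Lemma \ref{lem:laxmilgram} directly, without the detour through the $s=0$ case and the $L^2$-kernel regularity argument — but your version is also valid.
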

\begin{proof}
If $\nu \geq 1$, then $\Sob^{s}(\RP) = \dot{\Sob}^s(\RP)$ for $s\geq 0$. Thus it suffices to apply Lemma \ref{lem:laxmilgram} directly when $s=1,2$. The case $s=0$ is handled by duality, similar to \ref{lem:1daprioris=0}.
\end{proof}

\begin{proof}[Proof of Proposition \ref{prop:1dapriori}]
The combination of Lemmas \ref{lem:1daprioris=2}, \ref{lem:1daprioris=1}, \ref{lem:1daprioris=0}, and \ref{lem:1dapriorinugeq1} establishes Proposition \ref{prop:1dapriori}
\end{proof}

\subsection{Elliptic Bessel operators on $\RNP$ with constant coefficients} \label{subsect:ccestimate}

Throughout this section, $P$ denotes a constant coefficient Bessel operator on $\RNP$,
\begin{equation} \label{eq:ccbessel}
P(\TD,D_y) = |\TD|^2 + A(D_y).
\end{equation}
If $0 < \nu < 1$, then $P$ is also augmented by boundary conditions $(T,C)$ with constant coefficients: thus each boundary operator is of the form $T_k(D_y) = \sum_{\pm} T_k^\pm(D_y) \gamma_\pm$, and each entry of $C(D_y)$ has constant coefficients.

The principal part of $P$ is the operator
\[
P^\circ(\TD,D_y) = |\TD|^2 + A^\circ(D_y),
\]
where $A^\circ(D_y)$ is the usual principal part of $A$. The principal parts of $(T(D_y),C(D_y))$ are defined to be the unique boundary operators $(T^\circ(D_y),C^\circ(D_y))$ satisfying
\[
T^\circ(\eta) = \widehat{T}_\eta, \quad C^\circ(\eta) = \widehat{C}_\eta
\]
for each $\eta \in \mathbb{R}^{n-1}$. Finally, define $\mathscr{P}^\circ(\TD,D_y) = \{ P^\circ(\TD,D_y),T^\circ(D_y),C^\circ(D_y)\}$. Ellipticity of either $P$ or $\mathscr{P}$ depends only on these principal parts.

\begin{lem} \label{lem:higherdimapriori}
Assume that $P$ and $\mathscr{P}$ are elliptic. Furthermore, assume that the one dimensional operators $P(\TD,q)$ (if $\nu \geq 1$) and $\mathscr{P}(\TD,q)$ (if $0< \nu < 1$) are regular for each $q\in\mathbb{Z}^{n-1}$.
\begin{enumerate} \itemsep6pt
	\item If $0 < \nu < 1$, then
	\[
	\mathscr{P}: \Sob^2(\RNP) \times H^{2+\UL{\tau}}(\RN) \rightarrow \Sob^0(\RNP) \times \Sob^{2-\UL{\mu}}(\RN)
	\]
	is an isomorphism.
	\item If $\nu \geq 1$, then
	\[
	P: \Sob^2(\RNP) \rightarrow \Sob^0(\RNP)
	\]
	is an isomorphism.
	\end{enumerate}
\end{lem}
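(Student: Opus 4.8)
The plan is to diagonalize $\mathscr{P}$ in the tangential Fourier variable and reduce everything to the one-dimensional Proposition \ref{prop:1dapriori}, the only real work being uniform control in the mode as $|q|\to\infty$. Since the coefficients of $P$, $T$ and $C$ are constant in $y$, passing to Fourier coefficients $\hat u(q)$, $q\in\mathbb{Z}^{n-1}$, the operator acts mode by mode: on the $q$-th mode it is the one-dimensional problem $\mathscr{P}(\TD,q)$ with normal operator $\hat P(q) = |\TD|^2 + A(q)$, where $A(q)$ is the (constant) symbol of $A$ at $q$. The hypotheses say precisely that each $\mathscr{P}(\TD,q)$ is regular, so Proposition \ref{prop:1dapriori} already gives a bounded inverse for every fixed $q$; the isomorphism will follow once these inverses are shown to be uniformly bounded and to assemble continuously in the Fourier-characterized norms.

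To normalize each mode I would conjugate by the dilation $S_\tau$ of \eqref{eq:dilationdef}. A direct computation gives $S_{\tau^{-1}}|\TD|^2 S_\tau = \tau^2 |\TD|^2$, and the trace identities $\gamma_-\circ S_\tau = \tau^{1/2-\nu}\gamma_-$, $\gamma_+\circ S_\tau = \tau^{1/2+\nu}\gamma_+$ recorded before Lemma \ref{lem:fouriernorm} control the boundary behaviour. Taking $\tau = \langle q\rangle$ and writing $v_q = S_{\langle q\rangle^{-1}}\hat u(q)$, one finds
\[
S_{\langle q\rangle^{-1}}\hat P(q) S_{\langle q\rangle} = \langle q\rangle^2\big(|\TD|^2 + a(q)\big), \qquad a(q) := \langle q\rangle^{-2}A(q),
\]
so the rescaled normal operator is $P_q := |\TD|^2 + a(q)$, of the form \eqref{eq:1dbessel}, and the transformed equation reads $P_q v_q = \langle q\rangle^{-2}S_{\langle q\rangle^{-1}}\hat f(q)$. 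When $0<\nu<1$ I would likewise divide the $k$-th boundary equation by $\langle q\rangle^{\mu_k-1/2}$ and rescale the auxiliary unknowns by $\tilde u_{q,j} := \langle q\rangle^{\tau_j+1/2}\hat u_j(q)$; the order constraints $\mathrm{ord}(T_k^\mp)\mp\nu\le\mu_k-1$ and $\mathrm{ord}(C_{k,j})\le\tau_j+\mu_k$ then guarantee that the rescaled boundary matrices $G_q,C_q$ have coefficients bounded uniformly in $q$. This produces a normalized problem $\mathscr{P}_q = \{P_q,T_q,C_q\}$.

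Next I would establish $\sup_q\|\mathscr{P}_q^{-1}\| =: M < \infty$. As $|q|\to\infty$ along a direction $\omega = q/|q|$, the symbol $a(q)$ converges to the value $A^\circ(\omega)$ of the principal symbol, and $G_q,C_q$ converge to the principal-part data $\widehat{G}_\omega,\widehat{C}_\omega$; hence the closure of the data set $\{(a(q),G_q,C_q)\}_q$ is compact, its points at infinity being exactly the principal-symbol data over the unit sphere. Ellipticity of $P$ (Definition \ref{def:elliptic}) gives $A^\circ(\omega)\notin(-\infty,0]$, so every $P_q$ is regular, while the Lopatinski\v{\i} condition (Definition \ref{defi:lopatinskii}) makes the limiting problems $\mathscr{P}^\circ_\omega$ regular, and the finitely many bounded-$|q|$ problems are regular by hypothesis. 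Since Proposition \ref{prop:1dapriori} asserts that $\|\mathscr{P}_q^{-1}\|$ depends continuously on these data, it is a continuous function on a compact set of regular data and is therefore bounded by some $M$.

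Finally I would reassemble using Lemma \ref{lem:fouriernorm}, its $\TSob$ analogue, and the standard Fourier characterization of $H^m(\RN)$. The normalization was arranged so that every rescaled quantity carries the single weight $\langle q\rangle^{3}$ coming from the factor $\langle q\rangle^{2s-1}$ at $s=2$, and the weights cancel exactly. For $\nu\ge1$ this is transparent: from $\|v_q\|_{\Sob^2(\RP)}\le M\langle q\rangle^{-2}\|S_{\langle q\rangle^{-1}}\hat f(q)\|_{\Sob^0(\RP)}$ one multiplies by $\langle q\rangle^{3}$ and sums, and $\langle q\rangle^{3}\langle q\rangle^{-4}=\langle q\rangle^{-1}$ yields $\|u\|_{\Sob^2(\RNP)}\le M\|f\|_{\Sob^0(\RNP)}$. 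For $0<\nu<1$ the same uniform bound, together with $\langle q\rangle^{3}\langle q\rangle^{-(2\mu_k-1)} = \langle q\rangle^{2(2-\mu_k)}$ and $\langle q\rangle^{2(2+\tau_j)}\langle q\rangle^{-(2\tau_j+1)} = \langle q\rangle^{3}$, shows $\mathscr{P}^{-1}$ is bounded $\Sob^0(\RNP)\times H^{2-\UL\mu}(\RN)\to\TSob^2(\RNP)\times H^{2+\UL\tau}(\RN)$; combined with the continuity from Lemma \ref{lem:BVPcontinuous} and the isomorphism $\Sob^2(\RNP)\cong\TSob^2(\RNP)$, this gives the asserted isomorphism. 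I expect the main obstacle to be exactly this exponent bookkeeping in the case $0<\nu<1$: checking that the powers of $\langle q\rangle$ generated by the trace scalings $\tau^{1/2\pm\nu}$, by the row normalizations $\langle q\rangle^{\mu_k-1/2}$ and $\langle q\rangle^{\tau_j+1/2}$, and by the boundary norm weights all conspire to leave the uniformly summable weight $\langle q\rangle^{3}$.
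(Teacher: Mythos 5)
Your proposal is correct and follows essentially the same route as the paper's proof: Fourier decomposition in $y$, conjugation by the dilation $S_{\langle q\rangle}$ with the row and column normalizations $\langle q\rangle^{1/2-\mu_k}$ and $\langle q\rangle^{\tau_j+1/2}$, uniform invertibility obtained from the continuous dependence of the inverse norm in Proposition \ref{prop:1dapriori} together with convergence of the rescaled data to the principal-symbol problems $\mathscr{P}^\circ(\TD,\langle q\rangle^{-1}D_y)$ as $|q|\to\infty$ (your compactness-of-the-data-set phrasing is a repackaging of exactly this step), and reassembly via Lemma \ref{lem:fouriernorm} with the weight $\langle q\rangle^{2s-1}=\langle q\rangle^{3}$. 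The exponent bookkeeping you flag as the main risk is precisely the computation recorded in the paper's estimate \eqref{eq:fourierestimate}, and your checks agree with it.
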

\begin{proof}
\begin{inparaenum}
	\item  Let $0 < \nu <1$. By ellipticity,  
	\[
	\mathscr{P}^\circ(\TD,\left<q\right>^{-1}q): \Sob^2(\RP) \times \mathbb{C}^J \rightarrow \mathbb{C}^{1+J}
	\]
	is an isomorphism for each $q \in \mathbb{Z}^{n-1} \setminus 0$, as in the proof of Lemma \ref{lem:1daprioris=2}. Since $\left<q\right>^{-1}q$ ranges over a compact subset of $\mathbb{R}^{n-1}$, the operator norm of $\mathscr{P}^\circ(\TD,\left<q\right>^{-1}q)^{-1}$ is bounded uniformly with respect to $q \in \mathbb{Z}^{n-1}\setminus 0$. On the other hand, the homogeneity of $P^\circ$ implies
	\[
	\tau^{-2}S_{-\tau}P^\circ(\TD,D_y)S_\tau = P^\circ(\TD,\tau^{-1}D_y), \quad \tau^{-\UL{\mu}+1/2} T^\circ(D_y)S_\tau = T^\circ(\tau^{-1} D_y).
	\]
	Using $\tau = \left<q\right>$, this implies that the operator norm corresponding to the problem
	\[
	\begin{cases}
	\left<q\right>^{-2}S_{\left<q\right>^{-1}}{P}(\TD,q)S_{\left<q\right>}v = \phi, \\
	\left<q\right>^{-{\mu}_k + 1/2}T(q) S_{\left<q\right>}v + \sum_{i=1}^J \left<q\right>^{-\tau_j - \mu_k}C_{k,j}(q)\UL{v} = \UL{\psi}
	\end{cases}
	\]
	tends to that of $\mathscr{P}^\circ(\TD,\left<q\right>^{-1} q)$ as $|q| \rightarrow \infty$. Combined with the regularity assumption, the former problem is invertible for all $q \in \mathbb{Z}^{n-1}$ with an inverse whose operator norm is uniformly bounded in $q$. Apply this invertibility result to the functions
	\begin{equation*} 
	v = S_{\left<q\right>^{-1}} \hat{u}(q), \quad \underline{v} = (\left<q\right>^{\tau_1 + 1/2} \hat{\UL u}_1(q), \ldots, \left<q\right>^{\tau_J + 1/2} \hat{\UL u}_J(q)).
	\end{equation*}
	This implies that
	\begin{multline}
	\| S_{\left<q\right>^{-1}} \hat{u}(q) \|^2_{\Sob^2(\RNP)} +  \left< q \right>^{1+2\UL\tau} \| \hat{\UL{u}}(q) \|^2_{\mathbb{C}^J} \\ \leq C ( \left< q \right>^{-4} \| S_{\left<q\right>^{-1}} P(\TD,q)\hat{u}(q) \|_{\Sob^0(\RNP)}^2 \\ + \left<q\right>^{1-2\UL{\mu}} \|T(q)\hat{u}(q) + C_{k,j}(q) \hat{u}(q) \|^2_{\mathbb{C}^{J+1}} ). \label{eq:fourierestimate}
	\end{multline}
	From \eqref{eq:fourierestimate} it follows that $\mathscr{P}$ is injective. Now multiply this equation by $\left<q\right>^{2s-1} = \left<q\right>^{3}$ and sum over $q\in \mathbb{Z}^{n-1}$. Then Lemma \ref{lem:fouriernorm} shows that the Fourier series for $(u,\UL{u})$ converges in $\Sob^2(\RNP)\times H^{2+\UL{\tau}}(\RN)$. Combined with the fact that $\mathscr{P}(\TD,q)$ is invertible for each $q\in\mathbb{Z}^{n-1}$, this shows that $\mathscr{P}$ is surjective.
	
	\item The proof when $\nu \geq 1$ follows as above, disregarding the boundary operators.
\end{inparaenum}
	\end{proof}
	
\begin{cor}
	Assume that $P$ and $\mathscr{P}$ are elliptic. Furthermore, assume that the one dimensional operators $P(\TD,q)$ (if $\nu \geq 1$) and $\mathscr{P}(\TD,q)$ (if $0< \nu < 1$) are regular for each $q\in\mathbb{Z}^{n-1}$.
	\begin{enumerate} \itemsep6pt
		\item If $0 < \nu < 1$, then
		\[
		\mathscr{P}: \TSob^s(\RNP) \times H^{s+\UL{\tau}}(\RN) \rightarrow \Sob^{s-2}(\RNP) \times \Sob^{s-\UL{\mu}}(\RN)
		\]
		is an isomorphism for $s=0,1,2$.
		\item If $\nu \geq 1$, then
		\[
		P: \Sob^s(\RNP) \rightarrow \Sob^{s-2}(\RNP)
		\]
		is an isomorphism for $s=0,1,2$.
	\end{enumerate}
\end{cor}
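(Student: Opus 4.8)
The plan is to repeat the proof of Lemma~\ref{lem:higherdimapriori} almost verbatim, keeping the regularity index $s$ general and invoking the \emph{full} strength of Proposition~\ref{prop:1dapriori}, which furnishes one-dimensional isomorphisms at every level $s=0,1,2$ and not merely at $s=2$. As there, the whole argument is a Fourier decomposition in $q\in\mathbb{Z}^{n-1}$, a reduction to rescaled one-dimensional problems, and a reassembly through Lemma~\ref{lem:fouriernorm}.

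First I would record the uniform-in-$q$ invertibility of the rescaled one-dimensional operators at level $s$. For $0<\nu<1$, ellipticity of $\mathscr{P}$ says that the principal-part operator $\mathscr{P}^\circ(\TD,\langle q\rangle^{-1}q)$ is regular for every $q$, hence an isomorphism by Proposition~\ref{prop:1dapriori}, with the norm of its inverse depending continuously on its coefficients. Because $\langle q\rangle^{-1}q$ ranges over the compact unit sphere, these inverse norms are bounded uniformly in $q$. The homogeneity identities for $P^\circ$ and $T^\circ$ recorded in the proof of Lemma~\ref{lem:higherdimapriori} show that the fully rescaled operator $\langle q\rangle^{-2}S_{\langle q\rangle^{-1}}P(\TD,q)S_{\langle q\rangle}$, together with the correspondingly normalized boundary operators, differs from $\mathscr{P}^\circ(\TD,\langle q\rangle^{-1}D_y)$ by terms of lower order that vanish as $|q|\to\infty$. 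Hence for $|q|$ large the rescaled problem is invertible with inverse norm bounded uniformly in $q$, now measured between $\TSob^s(\RP)\times\mathbb{C}^J$ and $\Sob^{s-2}(\RP)\times\mathbb{C}^{1+J}$; for the finitely many remaining $q$, the hypothesis that $\mathscr{P}(\TD,q)$ is regular gives invertibility via Proposition~\ref{prop:1dapriori}, and finiteness makes uniformity automatic.

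Next I would globalize exactly as before. The level-$s$ per-mode isomorphism gives the analogue of estimate~\eqref{eq:fourierestimate} in which the $\Sob^2(\RP)$ norm of $S_{\langle q\rangle^{-1}}\hat u(q)$ is replaced by its $\Sob^s(\RP)$ norm and the data on the right-hand side is measured in $\Sob^{s-2}(\RP)$, the constant $C$ remaining independent of $q$. Multiplying by $\langle q\rangle^{2s-1}$ in place of $\langle q\rangle^{3}$, summing over $q\in\mathbb{Z}^{n-1}$, and applying Lemma~\ref{lem:fouriernorm} in its $\TSob^s$ form --- together with the usual Fourier characterization of $H^m(\RN)$ --- simultaneously shows that $\mathscr{P}$ is bounded below, hence injective, and that for given data the mode-wise solutions reassemble into an element of $\TSob^s(\RNP)\times H^{s+\UL\tau}(\RN)$ solving the problem, which yields surjectivity. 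The case $\nu\ge1$ is identical with the boundary operators discarded.

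The main obstacle is the uniform-in-$q$ bound at general $s$: one must verify that the dilation $S_{\langle q\rangle}$ interacts consistently with both the $\Sob^s(\RP)$ and $\Sob^{s-2}(\RP)$ norms --- precisely what Lemma~\ref{lem:fouriernorm} encodes --- and that the lower-order remainder in the rescaled operator is genuinely $\mathcal{O}(\langle q\rangle^{-1})$ relative to its leading part, so that it can be absorbed for $|q|$ large. Once this scaling bookkeeping is in place, the transition from $s=2$ to $s\in\{0,1,2\}$ amounts to nothing more than inserting the correct weight $\langle q\rangle^{2s-1}$ before summing.
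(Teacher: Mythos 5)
Your proposal is correct, and for $s=1,2$ it is essentially the paper's own argument: per-mode rescaling by $S_{\left<q\right>}$, uniform invertibility of $\mathscr{P}^\circ(\TD,\left<q\right>^{-1}q)$ from Proposition \ref{prop:1dapriori} together with compactness of the set $\{\left<q\right>^{-1}q\}$, absorption of the $\mathcal{O}(\left<q\right>^{-1})$ remainder, and reassembly via Lemma \ref{lem:fouriernorm} with weight $\left<q\right>^{2s-1}$ (the paper runs exactly this, i.e.\ the analogue of \eqref{eq:fourierestimate}, for the $s=1$ surjectivity). The one genuine divergence is at $s=0$: the paper does not rerun the Fourier argument there, but argues by duality --- since the formal adjoint $\mathscr{P}^*$ satisfies the same hypotheses, Lemma \ref{lem:higherdimapriori} makes it an isomorphism at the $s=2$ level, and identifying $\mathscr{P}^*$ with the Hilbert-space adjoint of $\mathscr{P}:\TSob^0(\RNP)\times H^{\UL{\tau}}(\RN)\rightarrow\Sob^{-2}(\RNP)\times H^{-\UL{\mu}}(\RN)$ yields the $s=0$ isomorphism at once (injectivity at $s=1$ is then inherited from $s=0$, whereas in your scheme it comes directly from the summed per-mode lower bound --- equally valid). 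Your direct route at $s=0$ does go through, but be aware of what it consumes: the $s=0$ case of Proposition \ref{prop:1dapriori} (which the paper itself proves by duality in Lemma \ref{lem:1daprioris=0}) and the negative-order cases of Lemma \ref{lem:fouriernorm}, so you are in effect pushing the duality down to the one-dimensional level and then redoing the uniformity bookkeeping in the dual norms, where the action of $\mathscr{P}$ on $\TSob^0(\RP)$ is only defined through the Green's formulas \eqref{eq:green2}, \eqref{eq:green1}. Both routes are sound; the paper's is slightly more economical, yours has the virtue of a single uniform template for all three values of $s$.
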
 \label{cor:higherdimapriori}
\begin{proof}
	$(1)$ It remains to handle the cases $s=0,1$. First consider $s=0$. As in the proof of Lemma \ref{lem:1daprioris=0}, the formal adjoint
	\[
	\mathscr{P}^*: \Sob^2(\RNP) \times H^{\UL{\mu}}(\RN) \rightarrow \Sob^0(\RNP) \times H^{\UL{\nu}}(\RN) \times H^{-\UL{\tau}}(\RN)
	\] 
	agrees with the adjoint of 
	\[
	\mathscr{P}: \TSob^0(\RNP) \times H^{\UL{\tau}}(\RN) \rightarrow \Sob^{-2}(\RNP) \times H^{-\UL{\mu}}(\RN).
	\]
	Now $\mathscr{P}^*$ satisfies the same hypotheses as $\mathscr{P}$ in regards to the application of Lemma \ref{lem:higherdimapriori}, so is an isomorphism. This implies that $\mathscr{P}'$ is an isomorphism, hence so is $\mathscr{P}$ on the stated spaces.
	
	The case $s=1$ follows from \eqref{eq:fourierestimate} combined with Lemma \ref{lem:1daprioris=1}: indeed, multiplying the analogue of \eqref{eq:fourierestimate} by $\left<q\right>^{2s-1} = \left<q\right>$ and using the invertibility result from Lemma \ref{lem:1daprioris=1} shows that $\mathscr{P}$ is surjective on $\TSob^1(\RNP)\times H^{1+\UL{\tau}}(\RNP)$ (as well as injective by the $s=0$ case).
	
	$(2)$ As usual, when $\nu \geq 1$ the proof follows by dropping the boundary terms.
	\end{proof}
	
\begin{rem}
If $P(\TD,D_y)$ is elliptic, then $P^\circ(\TD,D_y + \tfrac{1}{2})$ satisfies the hypotheses of Lemma \ref{lem:higherdimapriori}. Similarly, if $0< \nu < 1$ and $\mathscr{P}(\TD,D_y)$ is elliptic, then $\mathscr{P}^\circ(\TD,D_y+\tfrac{1}{2})$ also satisfies the hypotheses of Lemma \ref{lem:higherdimapriori}.
\end{rem}

\subsection{Elliptic Bessel operators on $\RNP$ with variable coefficients}

In this section, let $P$ be a Bessel operator on $\RNP$ of the form
\[
P(x,y,\TD,D_y) = |\TD|^2 + B(x,y,D_y)\TD + A(x,y,D_y),
\]
where the coefficients of $A, B$ are constant outside a compact subset of $\OL{\RNP}$. If $0 < \nu < 1$, then $P$ is also augmented by boundary conditions $(T(y,D_y),C(y,D_y))$. Introduce the notation 
\begin{gather*}
P^{(0)}(\TD,D_y) := P^\circ(0,0,\TD,D_y+\tfrac{1}{2}),\\ T^{(0)}(D_y) = T^\circ(0,D_y+\tfrac{1}{2}), \quad C^{(0)}(D_y) = C^\circ(0,D_y+ \tfrac{1}{2}).
\end{gather*}
According to Lemma \ref{lem:higherdimapriori}, if $P$ and $\mathscr{P}$ are elliptic, then $P^{(0)}$ (if $\nu \geq 1$) and $\mathscr{P}^{(0)}$ (if $0 < \nu < 1$) are isomorphisms on the appropriate spaces. 

Given $\rho > 0$, define the Fourier multiplier $K_\rho = {1}_{|x| \geq \rho}(D_y)$. This operator acts both on Sobolev spaces $H^m(\RN)$, as well as on $\Sob^s(\RNP)$ (or $\TSob^s(\RNP)$) via the results of Section \ref{subsect:fourier}. If $m > m'$ then clearly
\begin{equation} \label{eq:SrhoH}
\| K_\rho \phi \|_{H^{m'}(\RN)} \leq \left<\rho\right>^{m'-m} \| \phi \|_{H^m(\RN)}
\end{equation}
for $\phi \in H^m(\RN)$. Similarly,
\begin{equation} \label{eq:SrhoSob}
\| K_\rho u \|_{\Sob^s(\RNP)} \leq C\left<\rho \right>^{-|\alpha|} \| D_y^\alpha u \|_{\Sob^s(\RNP)} \leq C\left<\rho \right>^{-|\alpha|} \| u \|_{\Sob^{s+|\alpha|}(\RNP)}
\end{equation}
for $u \in \Sob^s(\RNP)$, provided $s+|\alpha| \leq 2$. A similar statement holds for $(u,\UL{\phi}) \in \TSob^s(\RNP)$.

\begin{lem} \label{lem:localizedvariableRNP}
Assume that $P$ and $\mathscr{P}$ are elliptic at $\RN$. Then there exists $\delta >0$ such that the following hold.

	\begin{enumerate} \itemsep6pt
		\item Let $0 < \nu < 1$ and $s=0,1,2$. Suppose that $(u,\UL{\phi},\UL{u}) \in \TSob^{s}(\RNP) \times H^{s+\UL{\tau}}(\RN)$ satisfies
			\begin{gather*}
			\supp u \subseteq \{(x,y)\in\OL{\RNP}: |x| + |y| < \delta  \}, \\ \supp \UL{\phi} \subseteq \{y \in \RN: |y| < \delta \}, \quad \supp \UL{u} \subseteq \{ y \in \RN: |y| < \delta\}.
			\end{gather*}
			Then 
			\begin{align} \label{eq:variablecoefficientestimateRNP}
			\| (u, \UL{\phi}, \UL{u}) \|_{\TSob^{s}(\RNP) \times H^{s+\UL{\tau}}(\RN)} &\leq C( \| \mathscr{P}(u,\UL{\phi},\UL{u}) \|_{\Ran{s}{\RNP}{\RN}} \notag \\ &+ \| (u, \UL{\phi}, \UL{u}) \|_{\TSob^{s-1}(X) \times H^{s-1+\UL{\tau}}(\RN)} ),
			\end{align}
			where $C>0$ does not depend on $(u,\UL{\phi},\UL{u})$. In addition, if $s=0,1$ and 
			\[\mathscr{P}(u,\UL{\phi},\UL{u}) \in \Sob^{s-1}(\RNP) \times \Sob^{s-\UL{\mu}+1}(\RN),
			\]
			then $(u,\UL{\phi},\UL{u}) \in \TSob^{s+1}(\RNP) \times H^{s+\UL{\tau}+1}(\RNP)$.
			
	\item Let $\nu \geq 1$ and $s=0,1,2$. Suppose that $u \in \Sob^2(\RNP)$ satisfies
	\[
	\supp u \subseteq \{(x,y)\in\OL{\RNP}: |x| + |y| < \delta  \}
	\]
	Then 
	\[
	\| u \|_{\Sob^{s}(\RNP)} \leq C ( \| Pu \|_{\Sob^{s-2}(\RNP)} + \| u \|_{\Sob^{s-1}(\RNP)}),
	\]
		where $C>0$ does not depend on $u$. In addition, if $s=0,1$ and $Pu \in \Sob^{s-1}(\RNP)$, then $u \in \Sob^{s+1}(\RNP)$.
	\end{enumerate}	
	\end{lem}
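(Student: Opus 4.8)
The plan is to treat $\mathscr{P}$ as a small perturbation of the constant-coefficient model $\mathscr{P}^{(0)}$ (resp.\ $P^{(0)}$) obtained by freezing coefficients at the origin. By the Corollary to Lemma \ref{lem:higherdimapriori}, together with the remark on the shift $D_y \mapsto D_y + \tfrac{1}{2}$ which guarantees regularity of the one-dimensional problems, the frozen operators are isomorphisms $\TSob^{\sigma}(\RNP)\times H^{\sigma+\UL\tau}(\RN) \to \Sob^{\sigma-2}(\RNP)\times H^{\sigma-\UL\mu}(\RN)$ for each $\sigma=0,1,2$, with a uniform two-sided bound; let $C_0$ denote the relevant operator norm. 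The case $\nu\geq 1$ is the same argument with all boundary data suppressed, so I describe $0<\nu<1$.

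First I would establish the a priori estimate \eqref{eq:variablecoefficientestimateRNP}. Write $\mathscr{P}=\mathscr{P}^{(0)}+\mathscr{Q}$ and split $\mathscr{Q}=\mathscr{Q}_{\mathrm{top}}+\mathscr{Q}_{\mathrm{l.o.}}$, where $\mathscr{Q}_{\mathrm{top}}=A^\circ(x,y,D_y)-A^\circ(0,0,D_y)$ collects the genuinely second-order terms in $D_y$, and $\mathscr{Q}_{\mathrm{l.o.}}$ collects everything of strictly lower order: the difference $A-A^\circ$, the shift $A^\circ(0,0,D_y)-A^\circ(0,0,D_y+\tfrac12)$, the term $B\TD$, and the corresponding differences of $T$ and $C$ on the boundary. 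The coefficients of $\mathscr{Q}_{\mathrm{top}}$ vanish at the origin, so by \eqref{eq:Abounded} (and by \eqref{eq:Bbounded} for the $B\TD$ contribution) their norm into $\Sob^{s-2}(\RNP)$, restricted to data supported in $\{|x|+|y|<\delta\}$, is bounded by $c\delta$ modulo a term controlled by $\|\cdot\|_{\TSob^{s-1}}$; the analogous smallness holds for the boundary entries since $H^m(\RN)$ is a module over $C^\infty$ and the relevant symbols vanish at the origin. Applying the isomorphism bound to $\mathscr{P}^{(0)}=\mathscr{P}-\mathscr{Q}$ gives
\[
\|(u,\UL\phi,\UL u)\| \leq C_0\bigl(\|\mathscr{P}(u,\UL\phi,\UL u)\| + c\delta\,\|(u,\UL\phi,\UL u)\| + C_\delta\,\|(u,\UL\phi,\UL u)\|_{s-1}\bigr),
\]
and choosing $\delta$ so small that $C_0 c\delta\leq\tfrac12$ lets me absorb the middle term on the left, yielding \eqref{eq:variablecoefficientestimateRNP}.

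For the regularity assertion I would argue by tangential difference quotients, adapting the classical scheme to the twisted calculus. Assume $s\in\{0,1\}$ and $g:=\mathscr{P}(u,\UL\phi,\UL u)\in\Sob^{s-1}(\RNP)\times H^{s+1-\UL\mu}(\RN)$. For each tangential direction $y^j$ and step $h$, apply \eqref{eq:variablecoefficientestimateRNP} at level $s$ to the difference quotient $\Delta_h^{y^j}(u,\UL\phi,\UL u)\in\TSob^s$. Commuting through $\mathscr{P}$ gives $\mathscr{P}\,\Delta_h^{y^j}(u,\UL\phi,\UL u)=\Delta_h^{y^j}g-[\mathscr{P},\Delta_h^{y^j}](u,\UL\phi,\UL u)$; since $\Delta_h^{y^j}$ commutes with $\TD$ (which acts only in $x$), the commutator is a first-order tangential operator on a $\TSob^s$ function, hence bounded into $\Sob^{s-2}(\RNP)$ uniformly in $h$, while $\Delta_h^{y^j}g$ is uniformly bounded in $\Sob^{s-2}(\RNP)\times H^{s-\UL\mu}(\RN)$ because $g$ has one extra tangential derivative. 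Uniform boundedness of $\|\Delta_h^{y^j}(u,\UL\phi,\UL u)\|_{\TSob^s}$ then yields $\partial_{y^j}(u,\UL\phi,\UL u)\in\TSob^s$ for every $j$. Finally I recover the normal regularity from the equation: rewriting $|\TD|^2 u=g-B\TD u-Au$ and using that $B\TD$ already maps into $\Sob^{s-1}(\RNP)$ by Lemma \ref{lem:Bbounded} while $Au\in\Sob^{s-1}(\RNP)$ by the tangential gain, one gets $|\TD|^2u\in\Sob^{s-1}(\RNP)$; by the fibrewise one-dimensional regularity of Lemma \ref{lem:1dregularity}, combined with the Fourier characterization of Lemma \ref{lem:fouriernorm}, this upgrades $(u,\UL\phi,\UL u)$ to $\TSob^{s+1}(\RNP)\times H^{s+1+\UL\tau}(\RN)$.

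I expect the regularity step to be the main obstacle. The delicate points are that the difference-quotient commutators must be estimated in the \emph{twisted} norms of $\Sob^{s-1}(\RNP)$ rather than in ordinary Sobolev norms, which is precisely where the support hypothesis and the mapping lemmas of Section \ref{subsect:mappingpropertiesonRNP} are essential, and that the trace components $\UL\phi,\UL u$ must gain regularity in lockstep with $u$, so the difference quotients have to be applied to the full triple and the commutator analysis carried out uniformly for the boundary entries as well. The second subtle point is the passage from tangential regularity to the full $\TSob^{s+1}$ bound, i.e.\ recovering the purely normal twisted derivative $\TD^{s+1}u$ from control of $|\TD|^2 u$; this is cleanest to carry out on the Fourier side via Lemma \ref{lem:fouriernorm}, reducing it to the one-dimensional statement already in hand.
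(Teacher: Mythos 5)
Your proposal is correct in outline but takes a genuinely different route from the paper's proof, in both halves. The paper establishes the estimate \emph{and} the regularity statement simultaneously from the single identity \eqref{eq:perturbidentity}: writing the equation as
\[
\bigl(\mathscr{P}^{(0)} + (\mathscr{P}-\mathscr{P}^{(0)})K_\rho\bigr)(u,\UL{\phi},\UL{u}) = \mathscr{P}(u,\UL{\phi},\UL{u}) - (\mathscr{P}-\mathscr{P}^{(0)})\bigl((1-K_\rho)(u,\UL{\phi}),(1-K_\rho)\UL{u}\bigr),
\]
with the high-frequency cutoff $K_\rho$, it uses \eqref{eq:Bbounded}, \eqref{eq:Abounded} and \eqref{eq:SrhoSob} to make the perturbed term on the left small, of size $O(\delta + \langle\rho\rangle^{-1})$, so the left-hand operator is invertible, while the low-frequency remainder on the right maps $\TSob^{s-1}\times H^{s-1+\UL{\tau}}$ into $\Sob^{s-2}\times H^{s-\UL{\mu}}$, i.e.\ gains one order; inverting then yields the estimate and the $\TSob^{s+1}$ upgrade at once, with no difference quotients anywhere. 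You instead absorb the frozen-coefficient error directly and prove regularity by Nirenberg translations plus fibrewise one-dimensional regularity via Lemmas \ref{lem:1dregularity} and \ref{lem:fouriernorm}. Your scheme can be made to work, but be aware of two points. First, dumping all of $\mathscr{Q}_{\mathrm{l.o.}}$ into the $\TSob^{s-1}$ error at $s=0$ requires bounds such as $aD_y : \Sob^{-1}(\RNP)\to\Sob^{-2}(\RNP)$ that the paper never states (Lemmas \ref{lem:Bbounded}, \ref{lem:Abounded} only cover domains $s=0,1,2$); these do follow by duality, but only after a Leibniz splitting, since multiplication by a general smooth function is \emph{not} bounded on $\Sob^2(\RNP)$ (remark after Lemma \ref{lem:leibniz}) --- this is exactly the pitfall the $K_\rho$ device is designed to sidestep, as it converts ``lower order'' into the small factor $\langle\rho\rangle^{-1}$ while keeping every operator bound inside the established range. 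Second, your commutator $[\mathscr{P},\Delta_h^{y^j}]$ is second-order tangential, not first-order (terms of the form $(\Delta_h a)\,\tau_h D_y^2$ survive), though the uniform bound $\TSob^s\to\Sob^{s-2}$ that you actually invoke still holds, so this misstatement is harmless. What your approach buys is the classical, transparent translation method; what the paper's buys is economy --- one identity delivers the estimate and the regularity together, with the trace components $\UL{\phi},\UL{u}$ gaining smoothness automatically rather than having to be tracked through the difference quotients and reassembled on the Fourier side.
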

	\begin{proof}
\begin{inparaenum}
\item For concreteness, assume that $s=1$ and $\mathscr{P}(u,\UL{\phi},\UL{u}) \in \Sob^0(\RNP) \times H^{2-\UL{\mu}}(\RN)$. If $(f,\UL{g}) = \mathscr{P}(u,\UL{\phi},\UL{u})$, consider the identity
		\begin{multline} \label{eq:perturbidentity}
		\mathscr{P}^{(0)}(u,\UL{\phi},\UL{u}) + (\mathscr{P}-\mathscr{P}^{(0)})(K_\rho (u,\UL{\phi}) ,K_\rho \UL{u}) \\  = (f,\UL{g}) - (\mathscr{P}-\mathscr{P}^{(0)})((1-K_\rho)(u,\UL{\phi}), (1-K_\rho)\UL{u}).
		\end{multline}
		Noting that the term $(P-P^{(0)})(u,\UL{\phi})$ depends only on $u$ (and not on $\UL{\phi})$, it follows from Lemmas \ref{lem:Bbounded}, \ref{lem:Abounded} and \eqref{eq:SrhoSob} that
		\begin{align*}
		\|(P-P^{(0)})K_\rho u\|_{\Sob^0(\RNP)} &\leq C_1 \delta \| u\|_{\Sob^2(\RNP)} + C_2 \| K_\rho u \|_{\Sob^1(\RNP)}\\ &\leq (C_1\delta + C_2 \left< \rho \right>^{-1}) \| (u,\UL{\phi}) \|_{\TSob^2(\RNP)}
		\end{align*}
		for positive constants $C_1, C_2$ independent of $\rho$. By standard interpolation inequalities on $H^m(\RN)$,
		\begin{align*}
		\| (T_k - T^{(0)}_k) K_\rho \UL{\phi} \|_{H^{2-\mu_k}(\RN)} &\leq C_3\delta \| \UL{\phi} \|_{H^{2-\UL{\nu}}(\RN)} + C_4 \| K_\rho \UL{\phi} \|_{H^{1-\UL{\nu}}(\RN)} \\ &\leq (C_3 \delta + C_4 \left<\rho \right>^{-1}) C_5 \| (u,\UL{\phi}) \|_{\TSob^2(\RNP)}.
		\end{align*}
		For this, one should consider the cases $0< \nu < 1/2, \, \nu =1/2$, and $1/2 < \nu < 1$ separately, but they all yield the same type of the estimate. Similarly,
		\[
		\| (C - C^{(0)})K_\rho \UL{u} \|_{H^{2-\UL{\mu}}(\RN)} \leq ( C_6 \delta + C_7\left< \rho \right>^{-1}) \| \UL{u} \|_{H^{s+\UL{\tau}}(\RN)}.
		\]
		These inequalities imply that the operator norm of 
		\[
		(u,\UL{\phi},\UL{u}) \mapsto (\mathscr{P} - \mathscr{P}^{(0)})(K_\rho(u,\UL\phi),K_\rho \UL{u})
		\]
		can be made arbitrarily small by choosing $\delta >0$ small and $\rho > 0$ large. Since $\mathscr{P}^{(0)}$ is invertible with domain $\TSob^2(\RNP) \times H^{2+\UL{\tau}}(\RN)$, it follows that the operator on the left hand side of \eqref{eq:perturbidentity} is invertible for $\delta$ small and $\rho$ large. 
		
		On the other hand, the map
		\[
		(u,\UL{\phi},\UL{u}) \mapsto (\mathscr{P}-\mathscr{P}^{(0)})((1-K_\rho)(u,\UL{\phi}), (1-K_\rho)\UL{u})
		\] 
		is bounded $\TSob^1(\RNP) \times H^{1+\UL{\tau}}(\RN) \rightarrow \Sob^0(\RNP) \times H^{2-\UL{\mu}}(\RN)$. In particular, $(u,\UL{\phi},\UL{u}) \in \TSob^2(\RNP) \times H^{2+\UL{\tau}}(\RN)$, and the estimate \eqref{eq:variablecoefficientestimateRNP} holds. Of course this also implies that \eqref{eq:variablecoefficientestimateRNP} holds for arbitrary $(u,\UL{\phi},\UL{u}) \in \TSob^2(\RNP)\times H^{2+\tau}(\RN)$ as well. The exact same argument establishes the regularity result for $s=0$, as well as \eqref{eq:variablecoefficientestimateRNP} for $s=0,1$.
		
\item As usual, the case $\nu \geq 1$ can be handled by a simpler argument not involving the boundary operators.
	\end{inparaenum}
				\end{proof}

Lemma \ref{lem:localizedvariableRNP} can be semi-globalized via a partition of unity argument.

\begin{cor} \label{cor:localizedvariableRNP}
	Assume that $P$ and $\mathscr{P}$ are elliptic at $\RN$. There exists $\delta > 0$ such that if $\varphi, \chi \in C_c^\infty([0,\delta))$ satisfy $\varphi = 1$ near $x=0$ and $\chi = 1$ near $\supp \varphi$, then the following hold.
	
		\begin{enumerate} \itemsep6pt
			\item Let $0 < \nu < 1$ and $s=0,1,2$. Then 
			\begin{align} \label{eq:nuleq1RNPsemiglobal}
			\| \varphi(u, \UL{\phi}, \UL{u}) \|_{\TSob^{s}(\RNP) \times H^{s+\UL{\tau}}(\RN)} &\leq C( \| \varphi \mathscr{P}(u,\UL{\phi},\UL{u}) \|_{\Ran{s}{\RNP}{\RN}} \notag \\ &+ \|\chi (u, \UL{\phi}, \UL{u}) \|_{\TSob^{s-1}(\RNP) \times H^{s-1+\UL{\tau}}(\RN)} )
			\end{align}
			 for each $(u,\UL{\phi},\UL{u})\in \TSob^{s}(\RNP) \times \Sob^{s+\UL{\tau}}(\RN)$. In addition, if $s=0,1$ and 
			\[
			\varphi \mathscr{P}(u,\UL{\phi},\UL{u}) \in \Sob^{s-1}(\RNP) \times \Sob^{s-\UL{\mu}+1}(\RN),
			\]
			then $\varphi (u,\UL{\phi},\UL{u}) \in \TSob^{s+1}(\RNP) \times H^{s+\UL{\tau}+1}(\RN)$.
			
			\item Let $\nu \geq 1$ and $s=0,1,2$. Then 
			\begin{equation} \label{eq:nugeq1RNPsemiglobal}
			\| \varphi u \|_{\Sob^{s}(\RNP)} \leq C ( \| \varphi Pu \|_{\Sob^{s-2}(\RNP)} + \| \chi u \|_{\Sob^{s-1}(\RNP)})
			\end{equation}
			for each $u \in \Sob^s(\RNP)$. In addition, if $s=0,1$ and $\varphi Pu \in \Sob^{s-1}(\RNP)$, then $\varphi u \in \Sob^{s+1}(\RNP)$.
		\end{enumerate}	
	\end{cor}
	\begin{proof} [Sketch of proof for $0< \nu <1$]
		By compactness of $\RN$ it is possible to choose $\delta$ and a finite cover $\RN = \bigcup_i U_i$ such that Lemma \ref{lem:localizedvariableRNP} is valid for $(u,\UL{\phi},\UL{u})$ supported in $[0,\delta) \times U_i$. Fix a partition of unity $\beta_i$ subordinate to $U_i$, and choose $\gamma_i$ supported in $U_i$ that $\gamma_i = 1$ on $\supp \beta_i$. For $\varphi, \chi$ as in the statement of the corollary,
		\begin{multline*}
		\| \varphi(u, \UL{\phi}, \UL{u}) \|_{\TSob^{s}(\RNP) \times H^{s+\UL{\tau}}(\RN)} \leq  C_1\| \varphi \mathscr{P}(u,\UL{\phi},\UL{u}) \|_{\Sob^s(\RNP)\times H^{s-\UL{\mu}(\RN)}}\\
		 + \sum_i \| [\mathscr{P},\beta_i \varphi] \gamma_i \chi (u,\UL{\phi},\UL{\mu}) \|_{\Sob^s(\RNP)\times H^{s-\UL{\mu}(\RN)}} \\
		 + C_2 \| \varphi (u,\UL{\phi},\UL{u}) \|_{\Sob^{s-1}(\RNP) \times H^{s-1+\UL{\tau}}(\RN)}.
		\end{multline*}
Writing $\varphi_i = \beta_i \varphi$, the commutator $[\mathscr{P},\beta_i\varphi]$ is given by
\[
(u,\UL{\phi},\UL{u}) \mapsto (P(\varphi_i u, \beta_i \UL{\phi}) - \varphi_i P(u,\UL{\phi}), [G,\beta_i] \UL{\phi} + [C,\beta_i]\UL{u} ).
\]
It is then straightforward to check that this operator has the requisite mapping properties. The regularity statement is established in the same way.
	\end{proof}

\begin{rem} As usual, the norms of the lower order terms on the right hand sides of \eqref{eq:nuleq1RNPsemiglobal}, \eqref{eq:nugeq1RNPsemiglobal} can be taken in less regular Sobolev spaces by iterating Corollary \ref{cor:localizedvariableRNP}. Similarly, the regularity result can also be iterated.
	\end{rem}

\subsection{Elliptic Bessel operators on a compact manifold with boundary} \label{subsect:variable}

The main theorem in this section establishes elliptic estimates and elliptic regularity for elliptic Bessel operators on a compact manifold with boundary $\OL{X}$.

\begin{theo} \label{theo:bvptheo}
	Let $\OL{X}$ be a compact manifold with boundary as in Section \ref{subsect:manifoldwithboundary}. Assume that 
	\[
	P \in \mathrm{Bess}_\nu(X)
	\]
	is elliptic at $\partial X$ in the sense of Section \ref{subsect:ellipticity}. If $0 < \nu < 1$, then assume $P$ is augmented by boundary conditions $(T,C)$ such that 	$\mathscr{P} = \{P,T,C\}$
is elliptic at $\partial X$. There exists $\delta > 0$ such that if $\varphi, \chi \in C_c^{\infty}(\{ 0 \leq x < \delta\})$ satisfy $\varphi = 1$ near $\partial X$ and $\chi = 1$ near $\supp \varphi$, then the following hold.
	
		\begin{enumerate} \itemsep6pt
			\item Let $0 < \nu < 1$ and $s=0,1,2$. Then 
			\begin{align} \label{eq:nuleq1RNPgloba}
			\| \varphi(u, \UL{\phi}, \UL{u}) \|_{\TSob^{s}(X) \times H^{s+\UL{\tau}}(\partial X)} &\leq C( \| \varphi \mathscr{P}(u,\UL{\phi},\UL{u}) \|_{\Ran{s}{X}{\partial X}} \notag \\ &+ \|\chi (u, \UL{\phi}, \UL{u}) \|_{\TSob^{s-1}(X) \times H^{s-1+\UL{\tau}}(\partial X)} )
			\end{align}
			for each $(u,\UL{\phi},\UL{u})\in \TSob^{s}(X) \times \Sob^{s+\UL{\tau}}(\partial X)$. In addition, if $s=0,1$ and 
			\[
			\varphi \mathscr{P}(u,\UL{\phi},\UL{u}) \in \Sob^{s-1}(X) \times \Sob^{s-\UL{\mu}+1}(\partial X),
			\]
			then $\varphi (u,\UL{\phi},\UL{u}) \in \TSob^{s+1}(X) \times H^{s+\UL{\tau}+1}(\partial X)$.
			
			\item Let $\nu \geq 1$ and $s=0,1,2$. Then 
			\begin{equation} \label{eq:nugeq1RNPglobal}
			\| \varphi u \|_{\Sob^{s}(X)} \leq C ( \| \varphi Pu \|_{\Sob^{s-2}(X)} + \| \chi u \|_{\Sob^{s-1}(X)})
			\end{equation}
			for each $u \in \Sob^s(X)$. In addition, if $s=0,1$ and $\varphi Pu \in \Sob^{s-1}(X)$, then $\varphi u \in \Sob^{s+1}(X)$.
		\end{enumerate}	
\end{theo}
\begin{proof}
The global problem may be reduced to a local problem on $\RNP$ via coordinate charts and a partition of unity.
\end{proof}

As in the remark following Corollary \ref{cor:localizedvariableRNP}, the error terms in Theorem \ref{theo:bvptheo} can taken in weaker Sobolev spaces by iteration.

Recall the definition of $\Sob^s_P(X)$ in Section \ref{subsect:graphnorm}. Theorem \ref{theo:bvptheo} can be used to show that $\Sob^s_P(X)$ (or equivalently $\TSob^s(X)$) may be identified with the space of all pairs $(u,f) \in \Sob^s(X)\times\Sob^{s-2}(X)$ such that $Pu =f $ in the weak sense (cf. \cite[Chapter 6.1]{roitberg:1996}).

\begin{lem} \label{lem:graphnorm2}
	Let $0 < \nu < 1$, and suppose that $P$ is elliptic at $\partial X$. Then for $s=0,1,2$,
	\[
	\Sob^s_P(X) = \{ (u,f) \in \Sob^{s}(X) \times \Sob^{s-2}(X): \, Pu =f \text{ weakly} \},
	\]
	where the space on the right hand side is equipped with the $\Sob^s_P(X)$ norm.
	
\end{lem}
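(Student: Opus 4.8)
The plan is to identify $\Sob^s_P(X)$ with the pairs $(u,f)$ through the map $\Phi\colon (u,\UL\phi)\mapsto (u, P(u,\UL\phi))$ and to reduce the lemma to the surjectivity of $\Phi$ onto the right-hand side. The inclusion $\subseteq$ and the fact that the identification is isometric were already recorded in Section \ref{subsect:graphnorm}: an element of $\Sob^s_P(X)$ is a graph-norm limit of $(w_n, Pw_n)$, $w_n\in\mathcal{F}_\nu(X)$, and the limiting pair $(u,f)$ satisfies $Pu=f$ weakly. By Lemma \ref{lem:graphnorm} the closure of $u\mapsto(u,\UL\gamma u)$ furnishes an isomorphism $\Sob^s_P(X)\cong\TSob^s(X)$, and $P\colon\TSob^s(X)\to\Sob^{s-2}(X)$ is bounded, so $\Phi$ is a well-defined isometry for the graph norm whose image is exactly $\Sob^s_P(X)$. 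Thus everything reduces to showing that for every $(u,f)$ with $u\in\Sob^s(X)$, $f\in\Sob^{s-2}(X)$ and $Pu=f$ weakly there is $\UL\phi\in H^{s-\UL\nu}(\partial X)$ with $(u,\UL\phi)\in\TSob^s(X)$ and $P(u,\UL\phi)=f$; the approximating sequence is then recovered from Lemma \ref{lem:tildedensity}.

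For the base case $s=0$ I would construct $\UL\phi$ directly. Since $P^*\in\mathrm{Bess}_\nu(X)$ by Lemma \ref{lem:closedunderadjoint}, the functional $\ell(v)=\langle f,v\rangle_X-\langle u,P^*v\rangle_X$ is bounded on $\mathcal{F}_\nu(X)$ with respect to the $\Sob^2(X)$-norm (using the mapping property $P^*\colon\Sob^2(X)\to\Sob^0(X)$, the manifold analogue of Lemma \ref{lem:besselextension1}), and it vanishes on $C_c^\infty(X)$ precisely because $Pu=f$ weakly. Using the bounded right inverse of $\UL\gamma$ (Lemma \ref{lem:tracelift}) together with $\ker\UL\gamma=\mathring{\Sob}^2(X)$, the functional $\ell$ factors as $\ell(v)=\langle\UL\psi,\UL\gamma v\rangle_{\partial X}$ for a unique $\UL\psi\in H^{-2+\UL\nu}(\partial X)$. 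Setting $\UL\phi$ to be the image of $\UL\psi$ under the invertible matrix $J$ — a choice that also matches the Sobolev orders, since $J$ interchanges the two weights in $\UL\nu$ — the extended Green's formula \eqref{eq:green2} gives $\langle P(u,\UL\phi),v\rangle=\langle u,P^*v\rangle+\ell(v)=\langle f,v\rangle$, i.e.\ $P(u,\UL\phi)=f$. As $\TSob^0(X)$ imposes no trace constraint, $(u,\UL\phi)\in\TSob^0(X)$ and the case $s=0$ is complete.

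The cases $s=1,2$ I would obtain by induction. Given $(u,f)$ at level $s$, the embeddings $\Sob^s(X)\hookrightarrow\Sob^{s-1}(X)$ and $\Sob^{s-2}(X)\hookrightarrow\Sob^{s-3}(X)$ place it in the level-$(s-1)$ problem, so the inductive hypothesis produces $(u,\UL\phi)\in\TSob^{s-1}(X)$ with $P(u,\UL\phi)=f$. But now $f\in\Sob^{s-2}(X)$ is one order more regular than required at level $s-1$, so elliptic regularity for the twisted operator upgrades this same element to $(u,\UL\phi)\in\TSob^s(X)$, and the passage into $\TSob^s(X)$ automatically encodes the missing trace identities ($\phi_-=\gamma_-u$ for $s=1$, and in addition $\phi_+=\gamma_+u$ for $s=2$). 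The regularity statement needed is precisely: if $(u,\UL\phi)\in\TSob^{s'}(X)$ and $P(u,\UL\phi)\in\Sob^{s'-1}(X)$ then $(u,\UL\phi)\in\TSob^{s'+1}(X)$, for $s'=0,1$.

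The main obstacle is this $P$-only regularity bootstrap, since Theorem \ref{theo:bvptheo} is phrased for the full boundary problem $\mathscr{P}$ and its regularity clause would require the boundary data themselves to be more regular. I would prove it independently as the higher-dimensional analogue of Lemma \ref{lem:1dregularity}, following the localization and constant-coefficient perturbation scheme of Lemma \ref{lem:localizedvariableRNP}: freeze coefficients at a boundary point, decompose in Fourier modes via Lemma \ref{lem:fouriernorm}, apply the one-dimensional result Lemma \ref{lem:1dregularity} to each mode $P(\TD,q)$ (legitimate for all but finitely many modes by the ellipticity of $P$, which guarantees regularity of the model operators), sum the resulting estimates, and then handle variable coefficients and the interior by the usual perturbation argument and standard interior elliptic regularity. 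It is here that ellipticity of $P$ is genuinely used. The case $s=2$ can alternatively be seen directly, since $\TSob^2(X)\cong\Sob^2(X)$, so $f=Pu$ is determined by $u$, and $\mathcal{F}_\nu(X)$ is dense in $\Sob^2(X)$ with $P$ continuous into $\Sob^0(X)$.
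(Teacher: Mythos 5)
Your reduction and your base case $s=0$ are sound (the $s=0$ construction is essentially the paper's Hahn--Banach step), but the inductive engine is broken: the ``$P$-only regularity bootstrap'' you isolate as the main obstacle is not merely hard to prove by your mode-by-mode scheme --- it is \emph{false} in dimension $n\geq 2$. Take $P = |\TD|^2 + D_y^2 + 1$ on $\RNP$ with $n=2$, which is an elliptic Bessel operator, fix $\chi \in C_c^\infty(\OL{\RP})$ equal to $1$ near $x=0$, and set
\[
u(x,y) = \sum_{q\in\mathbb{Z}} c_q\, \chi(x)\, \sqrt{x}\, K_\nu\!\left(\left<q\right> x\right) e^{iqy}, \qquad c_q = \left<q\right>^{1/2-\nu-\epsilon}.
\]
Each mode is annihilated by $|\TD|^2 + \left<q\right>^2$ near the boundary, so $P(u,\UL{\gamma}u)$ consists only of commutator terms with $\chi$, supported in $x\geq\delta$ and carrying factors $e^{-\delta\left<q\right>}$; hence $P(u,\UL{\gamma}u)\in\Sob^0(\RNP)\subseteq\Sob^{-1}(\RNP)$. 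By Lemma \ref{lem:fouriernorm}, $\|u\|^2_{\Sob^s(\RNP)} \asymp \sum_q \left<q\right>^{2s-2}|c_q|^2$, and the boundary data have coefficients $\sim c_q$ and $\sim c_q\left<q\right>^{2\nu}$; for small $\epsilon$ one checks $(u,\UL{\gamma}u)\in\TSob^0(\RNP)$ but $u\notin\Sob^1(\RNP)$. So your claimed implication at $s'=0$ fails. This is the classical phenomenon that elliptic regularity up to the boundary requires a boundary condition; it is invisible in Lemma \ref{lem:1dregularity} only because a single mode of $\mathcal{M}_+$ is smooth, and your summation step cannot work because the kernel elements $\sqrt{x}K_\nu(\left<q\right>x)$ have $\TSob^1/\TSob^0$ norm ratio growing like $\left<q\right>$, so the per-mode regularity carries no uniform constant.

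The repair --- and the paper's actual route --- is to notice that the Hahn--Banach construction already works at level $s$, not just $s=0$, and that it hands you exactly the boundary regularity you believed was missing. Bounding $\ell(\UL{\psi}) = \left< u, P^*v\right>_X - \left< f, v\right>_X$ with $v = \mathcal{K}(\UL{\psi})$ gives $|\ell(\UL{\psi})| \leq C\|u\|_{\Sob^s_P(X)}\|\UL{\psi}\|_{H^{2-s-\UL{\nu}}(\partial X)}$, since $\mathcal{K}:H^{2-s-\UL{\nu}}(\partial X)\rightarrow \Sob^{2-s}(X)$ is bounded and $P^*$ maps $\Sob^{2-s}$ into the dual scale; Riesz representation then produces $\UL{\phi}\in H^{s-\UL{\nu}}(\partial X)$ with $P(u,\UL{\phi})=f$, so in particular $\phi_-\in H^{s-1+\nu}(\partial X)$ is known \emph{a priori}. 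One then applies the regularity clause of Theorem \ref{theo:bvptheo} not to $P$ alone but to the Dirichlet problem $\{P,\gamma_-\}$, which satisfies the Lopatinski\v{\i} condition for every elliptic Bessel operator (Lemma \ref{lem:scatteringmatrix} and the first Example of Section \ref{subsect:lopatinskii}), and upgrades $(u,\UL{\phi})$ from $\TSob^0(X)$ to $\TSob^s(X)$ in one stroke --- no induction and no $P$-only bootstrap. Your objection that the $\mathscr{P}$-regularity clause ``would require the boundary data themselves to be more regular'' is exactly what the duality construction circumvents: the Dirichlet datum comes out regular for free.
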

\begin{proof}
	As in the remark following Lemma \ref{lem:graphnorm}, $\Sob^s_P(X)$ is contained in the space on the right hand side. For the converse, suppose that $u \in \Sob^s(X)$ and $f = Pu \in \Sob^{s-2}(X)$ weakly. Consider the functional
	\[
	\ell(\UL{\psi}) = \left< u, P^*(v,\UL{\psi}) \right>_X - \left< f, v \right>_X, \quad \UL{\psi} \in H^{2-s-\UL{\nu}}(\partial X),
	\]
	where $v \in \Sob^{2-s}(X)$ is any element such that  $(v,\UL{\psi}) \in \TSob^{2-s}(X)$.  Since $Pu = f$ weakly, it follows from Lemma \ref{lem:mathring} that $\ell$ is well defined (namely it does not depend on the choice of $v$). In particular, one may take $(v,\UL{\psi}) = \widetilde{\mathcal{K}}\UL{\psi}$, where $\widetilde{\mathcal{K}}$ is a bounded right inverse as in Lemma \ref{lem:tracelift}. Thus
	\[
	\ell(\UL{\psi}) \leq C_1 \| u \|_{\Sob^s_P(X)} (\| \widetilde{ \mathcal{K}}\UL{\psi} \|_{\TSob^{2-s}(X)} + \| \UL{\psi} \|_{H^{2-s-\UL{\nu}}(\partial X)}) \leq C_2 \| u \|_{\Sob^s_P(X)} \| \UL{\psi}\|_{H^{2-s-\UL{\nu}}(\partial X)}.
	\]
	By the Riesz theorem, there exists a unique $\UL{\phi} \in H^{s-\UL{\nu}}(\partial X)$ such that
	\[
	\left< u, P^*(v,\UL{\psi}) \right>_X - \left< f, v \right>_X = \left< J\UL{\phi}, \UL{\psi} \right>_{\partial X}.
	\] 
	for each $(v,\UL{\psi}) \in \TSob^{2-s}(X)$. Consider the pair $(u, \UL{\phi})$; a priori this is an element of $\TSob^0(X)$. On the other hand, for each $v \in \mathcal{F}_\nu(X)$ (taking $\UL{\psi} = \UL{\gamma} v$),
	\begin{align*}
	\left< P(u,\UL{\phi}),v \right>_X &= \left< u, P^*v \right>_X + \left< \UL{\phi}, J\UL{\gamma}v \right>_{\partial X} \\
&= \left<f ,v\right>_X,
	\end{align*}
	so $P(u,\UL{\phi}) = f$. Since $f \in \Sob^{s-2}(X)$ and $\phi_- \in H^{s-1+\nu}(\partial X)$, Theorem \ref{theo:bvptheo} implies that $(u,\UL{\phi}) \in \TSob^s(X)$ since the the boundary value problem $\{P, \gamma_-\}$ is elliptic at $\partial X$. According to Lemma \ref{lem:graphnorm}, this means that the pair $(u,f)$ can be identified with an element of $\Sob^s_P(X)$.
	\end{proof}

	Suppose that $P$ is elliptic at $\partial X$ and let $s = 0,1$. If $u \in \Sob^s(X)$ and $Pu \in \Sob^0(X)$ in distributions, then there is a canonical $f \in \Sob^{s-2}(X)$ such that $Pu = f$ weakly, namely the element $Pu \in \Sob^0(X)\hookrightarrow \Sob^{s-2}(X)$ itself. According to Lemma \ref{lem:graphnorm2}, to this choice of $f$ there is a uniquely associated $\phi \in \Sob^{s-\UL{\nu}}(\partial X)$ such that 
	\[
	P(u,\UL{\phi}) = Pu,
	\] and the norm $\| (u,\UL{\phi}) \|_{\TSob^s(X)}$ is equivalent to $\| u \|_{\Sob^s(X)} + \| Pu \|_{\Sob^{2-s}(X)}$. Adding $\| Pu \|_{\Sob^0(X)}$ to both of these norms shows that the spaces
	\[
	\{u \in \Sob^s(X): Pu \in \Sob^0(X) \} \text{ and } \{ (u,\UL{\phi}) \in \TSob^s(X): P(u,\UL{\phi}) \in \Sob^0(X) \}
	\]
	coincide, with an equivalence between the natural graph norms. This will be exploited in Section \ref{subsect:globalass}.

\subsection{Parameter-elliptic boundary value problems} \label{subsect:semiclassicalvariable} This section concerns elliptic estimates for parameter-dependent Bessel operators. The exposition is deliberately brief, since most of the definitions and facts in this section are straightforward adaptations from the non-parameter-dependent setting. In particular the main theorem of this section, Theorem \ref{theo:bvptheosemiclassical}, is stated without proof. The interested reader is referred to \cite[Chapter 9]{roitberg:1996} for an indication of how the proofs should be modified in the parameter-dependent setting.

Fix a compact manifold with boundary $\OL{X}$ with the usual data of a boundary defining function and collar diffeomorphism. Let $P(\lambda) \in \mathrm{Bess}_\nu^{(\lambda)}(X)$ be a parameter-dependent Bessel operator; if $0 < \nu < 1$, then $P(\lambda)$ is augmented by boundary conditions as in Section \ref{subsect:theBVP}. The boundary conditions themselves may depend on the spectral parameter $\lambda$, namely one considers $(T(\lambda),C(\lambda))$ where
\[
T_k(\lambda) = \sum_\pm T^\pm_k(\lambda) \gamma_\pm, \quad T^-_k(\lambda) \in \mathrm{Diff}^1_{(\lambda)}(\partial X), \quad  T^+_k(\lambda) \in \mathrm{Diff}^0_{(\lambda)}(\partial X),
\]
and $C_{k,j}(\lambda) \in \mathrm{Diff}^*_{(\lambda)}(\partial X)$. It is necessary to formulate a parameter-dependent Lopatinski\v{\i} condition for $(T(\lambda),C(\lambda))$. Suppose that $\mu_k \in \{1-\nu,2-\nu,1+\nu\}$ and 
\[
\mathrm{ord}^{(\lambda)}_\nu(T_k(\lambda)) \leq \mu_k.
\] 
Here the order of $T$ with respect to $\nu$ is defined in the parameter-dependent sense, namely factors of $\lambda$ are given the same weight as a derivative tangent to $\partial X$. Define the family of operators 
\[
(\widehat{T}_{(p,\eta;\lambda)})_k = \sigma^{(\lambda)}_{\lceil \mu_k-1+\nu \rceil}(T^-(\lambda))(p,\eta;\lambda) \gamma_-  +  \sigma^{(\lambda)}_{\lceil\mu_k-1-\nu\rceil}(T^+(\lambda))(p,\eta;\lambda)\gamma_+,
\]
indexed by $(p,\eta,\lambda) \in T^* \partial X \times \mathbb{C}$. Thus each $(p,\eta,\lambda) \in T^*\partial X \times \mathbb{C}$ gives rise to a one-dimensional boundary operator $(\widehat{T}_{(p,\eta;\lambda)})_k$.

Next, choose $c_{k,j} \in \mathbb{Z}$ such that 
\[
\mathrm{ord}^{(\lambda)}(C_{k,j}(\lambda)) \leq c_{k,j} \leq \tau_j + \mu_k,
\]
and then define the matrix $\widehat{C}_{(y,\eta)}$ with entries 
\[
(\widehat{C}_{(p,\eta;\lambda)})_{k,j} = \sigma_{c_{k,j}}^{(\lambda)}(C_{k,j}(\lambda))(p,\eta;\lambda).
\] 
Again the order of $C_{k,j}(\lambda)$ is taken in the parameter-dependent sense.

\begin{defi} Suppose that $P(\lambda)$ is parameter-elliptic on $\partial X$ with respect to an angular sector $\Lambda$. The boundary operators $(T(\lambda),C(\lambda))$ are said to satisfy the parameter-dependent Lopatinski\v{\i} condition with respect to $P$ and $\Lambda$ if for each $p \in \partial X$ and $(\eta,\lambda) \in T^*_p\partial X \times \Lambda \setminus 0$, the only element $(u,\underline{u}) \in \mathcal{M}_+(p,\eta,\lambda) \times \mathbb{C}^J$ satisfying
\[ 
\widehat{T}_{(p,\eta,\lambda)}u + \widehat{C}_{(p,\eta,\lambda)}\underline{u} = 0 \\
\]
is the trivial solution $(u,\underline{u}) = 0$. The operator $\mathscr{P}(\lambda) = \{P(\lambda),T(\lambda),C(\lambda)\}$, is said to be parameter elliptic if $P(\lambda)$ is parameter-elliptic and $(T(\lambda),C(\lambda))$ satisfy the parameter-dependent Lopatinski\v{\i} condition on $\partial X$ with respect to $P(\lambda)$ and $\Lambda$.
\end{defi}

In the notation of Theorem \ref{theo:bvptheo}, the main theorem of this section is the following. As remarked in the introduction to this section, it is provided without proof.

\begin{theo} \label{theo:bvptheosemiclassical}
	Let $\OL{X}$ be a compact manifold with boundary as in Section \ref{subsect:manifoldwithboundary}. Assume that 
	\[
	P(\lambda) \in \mathrm{Bess}_\nu^{(\lambda)}(X)
	\] 
	is parameter-elliptic at $\partial X$ with respect to an angular sector $\Lambda$ in the sense of Section \ref{subsect:semiclassicalbessel}. If $0 < \nu < 1$, then assume $P(\lambda)$ is augmented by parameter-dependent boundary conditions $(T(\lambda),C(\lambda))$ such that $\mathscr{P}(\lambda) = \{P(\lambda),T(\lambda),C(\lambda)\}$
	 is elliptic at $\partial X$ with respect to $\Lambda$. There exists $\delta > 0$ such that if $\varphi, \chi \in C_c^{\infty}(\{ 0 \leq x < \delta\})$ satisfy $\varphi = 1$ near $\partial X$ and $\chi = 1$ near $\supp \varphi$, then the following hold.
	
	\begin{enumerate} \itemsep6pt
		\item Let $0 < \nu < 1$ and $s=0,1,2$. Then 
		\begin{align} \label{eq:variablecoefficientestimateRNPsemiglobalsemiclassical}
		\VERT \varphi(u, \UL{\phi}, \UL{u}) \VERT_{\TSob^{s}(X) \times H^{s+\UL{\tau}}(\partial X)} &\leq C( \VERT \varphi \mathscr{P}(\lambda)(u,\UL{\phi},\UL{u}) \VERT_{\Ran{s}{X}{\partial X}} \notag \\ &+ \VERT \chi (u, \UL{\phi}, \UL{u}) \VERT_{\TSob^{s-1}(X) \times H^{s-1+\UL{\tau}}(\partial X)} )
		\end{align}
		for each $(u,\UL{\phi},\UL{u})\in \TSob^{s}(X) \times \Sob^{s+\UL{\tau}}(\partial X)$ and $\lambda \in \Lambda$.
		
		\item Let $\nu \geq 1$ and $s=0,1,2$. Then 
		\begin{equation} \label{eq:nugeq1RNPglobalsemiclassical}
		\VERT \varphi u \VERT_{\Sob^{s}(X)} \leq C ( \VERT \varphi P(\lambda)u \VERT_{\Sob^{s-2}(X)} + \VERT \chi u \VERT_{\Sob^{s-1}(X)})
		\end{equation}
		for each $u \in \Sob^s(X)$ and $\lambda \in \Lambda$.
	\end{enumerate}	
\end{theo}

\subsection{Conormal regularity} \label{subsect:conormal}

So far only regularity at the $\Sob^2$ level has been discussed. Higher order regularity is defined in terms of a scale of conormal Sobolev spaces relative to $\Sob^s$. Let $\OL{X}$ be a compact manifold with boundary with a fixed boundary defining function $x$ and collar neighborhood $\OL{W}$. Then let $\OL{X}_\mathrm{even}$ denote the manifold $\OL{X}$ equipped with a new smooth structure: on the collar $\OL{W} \simeq [0,\varepsilon)_x\times \partial X$, functions are smooth if in the normal direction they depend on $x^2$ (rather than just $x$).

Define the Lie algebra $\mathcal{V}_b(\OL{X}_\mathrm{even})$ of smooth vector fields on $\OL{X}_\mathrm{even}$ which are tangent to $\partial X$. In local coordinates $x,y_1,\ldots,y_{n-1}$ on the collar, elements of $\mathcal{V}_b(\OL{X}_\mathrm{even})$ are $C^\infty(\OL{X}_\mathrm{even})$ linear combinations of $x\partial_x$ and $\partial_{y_i}$. 
\begin{lem} \label{lem:conormalcommutator}
Let $P \in \mathrm{Bess}_\nu(X)$. If $V \in \mathcal{V}_b(\OL{X}_\mathrm{even})$ and $x^{-1}Vx|_{\partial X}$ is nowhere vanishing, then there exists $f \in C^\infty(\OL{X})$ and $\widetilde{P} \in \mathrm{Bess}_\nu(X)$ such that
\[
[P,V] = f \widetilde P
\]
near $\partial X$.
\end{lem}
\begin{proof}
The hypothesis implies that in local coordinates
\[
V (x,y)=a(x^2,y)x\partial_x + \sum_{i=1}^{n-1} b^i(x^2,y) \partial_{y_i},
\] 
where $a(0,\cdot)$ is nowhere vanishing. Note that 
\[
[ |\TD|^2, x\partial_x ] = 2 |\TD|^2, \quad [\TD,x\partial_x ] = \TD.
\]
Also from \eqref{eq:commutatorformulas}, if $a \in C^\infty(\OL{X}_\mathrm{even})$, then
\[
[|\TD|^2,a] = \hat{a}x\TD + \tilde{a}
\]
for $\hat{a},\tilde{a} \in C^\infty(\OL{X}_\mathrm{even})$, as well as $[x\TD,a] \in  x^2 C^\infty(\OL{X}_\mathrm{even})$. The result follows immediately from these observations.
\end{proof}

Given $k \in \mathbb{N}$ and $s = 0,1,2$, the space $\Sob^{s,k}(X)$ is defined as
\[
\Sob^{s,k}(X) = \{ u \in \Sob^s(X): V_1 \cdots V_k u \in \Sob^s(X) \text{ for any } V_1,\ldots,V_k \in \mathcal{V}_b(\OL{X}_\mathrm{even}) \}.  
\]
Fixing a finite generating set $\mathcal{G}$ for $\mathcal{V}_b(\OL{X}_\mathrm{even})$, this space can be given the topology of a Hilbert space by inductively defining the norms
\[
\| u \|^2_{\Sob^{s,k}(X)} = \sum_{V\in \mathcal{G}} \| Vu \|^2_{\Sob^{s,k-1}(X)}.
\]
A different choice of generating set yields an equivalent norm. Note that over any compact $K\subseteq X$, there is an equivalence between functions in $\Sob^{s,k}(X)$ and $H^{s+k}(X)$ which are supported on $K$. In addition, all of the density results which hold for $\Sob^{s}(X)$ also hold for $\Sob^{s,k}(X)$.

Also observe that for $s=0,1$, no evenness assumptions are required for the vector fields tangent to $\partial X$ in the sense that there is equality
\[
\Sob^{s,k}(X) = \{ u \in \Sob^s(X): V_1 \cdots V_k u \in \Sob^s(X) \text{ for any } V_1,\ldots,V_k \in \mathcal{V}_b(\OL{X}) \}.  
\]
This is because $\Sob^s(X)$ is closed under multiplication by arbitrary $C^\infty(\OL{X})$ functions when $s=0,1$. Thus only $\Sob^{2,k}(X)$ necessitates the introduction of a new smooth structure on $\OL{X}$.

 A convenient generating set $\mathcal{G} = \{V_0,V_1,\ldots,V_N\}$ for $\mathcal{V}_b(\OL{W}_\mathrm{even})$ (at least near the boundary) is as follows: set $V_0 = x\partial_x$, and then choose a collection of vector fields $V_1,\ldots,V_N$ on $\partial X$ which span $T\partial X$. Then $V_0,\ldots ,V_N$ may be considered as vector fields on $[0,\varepsilon)_x\times\partial X$, hence on $\OL{W}$.

\begin{lem}
	Let $P \in \mathrm{Bess}_\nu(X)$ and $k \in \mathbb{N}$.
	\begin{enumerate}\itemsep6pt
		\item If $\nu >0$, then $P: \Sob^{2,k}(X)\rightarrow \Sob^{0,k}(X)$ is bounded.
		\item If $0 < \nu < 1$ and $T$ is a boundary operator such that $\mathrm{ord}_\nu(T) \leq \mu$, then $T : \Sob^{2,k}(X) \rightarrow H^{k+2-\mu}(\partial X)$ is bounded.
	\end{enumerate} 
\end{lem}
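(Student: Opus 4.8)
The plan is to prove both statements by induction on $k$, moving one conormal derivative at a time across $P$ (respectively $T$) and controlling the resulting commutator by Lemma \ref{lem:conormalcommutator}. Throughout it suffices to establish the estimates for $u \in \mathcal{F}_\nu(X)$, since $\mathcal{F}_\nu(X)$ is dense in $\Sob^{2,k}(X)$ (all density results for $\Sob^s(X)$ carry over) and the target spaces are complete. The base case $k=0$ of (1) is the boundedness $P:\Sob^{2}(X)\to\Sob^{0}(X)$ from Section \ref{subsect:spaceonmanifold}, and the base case of (2) is the continuity $T:\Sob^{2}(X)\to H^{2-\mu}(\partial X)$ noted in Section \ref{subsect:BC}. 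It is convenient to use the graded form of the conormal norm, namely that $\|u\|_{\Sob^{s,k}(X)}$ is equivalent to $\|u\|_{\Sob^{s,k-1}(X)}+\sum_{V\in\mathscr{V}}\|Vu\|_{\Sob^{s,k-1}(X)}$ for the fixed generating set $\mathscr{V}$, and similarly that $\|f\|_{H^{m}(\partial X)}$ is equivalent to $\|f\|_{H^{m-1}(\partial X)}+\sum_{W\in\mathscr{W}}\|Wf\|_{H^{m-1}(\partial X)}$ for a finite generating set $\mathscr{W}$ of vector fields on the compact manifold $\partial X$ and any real $m$.

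For (1), I would first arrange that each generator $V\in\mathscr{V}$ satisfies $Vx = x + \mathcal{O}(x^3)$ near $\partial X$. This is possible because near the boundary $\mathcal{V}_b(\OL{X}_{\mathrm{even}})$ is spanned by $x\partial_x$ and the $\partial_{y_i}$, and one may replace each tangential generator $\partial_{y_i}$ by $x\partial_x+\partial_{y_i}$, which generates the same module and satisfies $(x\partial_x+\partial_{y_i})x=x$ (in the interior the condition is vacuous). Since a different generating set yields an equivalent norm, there is no loss. It then suffices to bound each $V(Pu)$ in $\Sob^{0,k-1}(X)$, and I write $V(Pu) = P(Vu) - [P,V]u$. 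Here $Vu\in\Sob^{2,k-1}(X)$ with $\|Vu\|_{\Sob^{2,k-1}(X)}\le\|u\|_{\Sob^{2,k}(X)}$, while $[P,V]\in\mathrm{Bess}_\nu(X)$ by Lemma \ref{lem:conormalcommutator} and $u\in\Sob^{2,k}(X)\subseteq\Sob^{2,k-1}(X)$. Applying the inductive hypothesis — which holds for \emph{every} Bessel operator — to $P$ acting on $Vu$ and to $[P,V]$ acting on $u$, both terms lie in $\Sob^{0,k-1}(X)$ with norm $\le C\|u\|_{\Sob^{2,k}(X)}$. Summing over $\mathscr{V}$ completes the step.

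For (2), I would choose $\mathscr{V}$ to contain the lifts $\tilde W$ of the generators $W\in\mathscr{W}$, and use that tangential differentiation commutes with the traces up to a lower-order boundary operator, i.e.\ $\gamma_\pm(\tilde W u) = W\gamma_\pm u + (\text{zeroth order})$, which follows from the identity $\gamma_\pm\partial_y^\alpha = \partial_y^\alpha\gamma_\pm$ of Section \ref{subsect:traces} together with the explicit action of $x\partial_x$ on the expansion \eqref{eq:cnuonmanifold}. With $m=k+2-\mu$ it suffices to bound each $W(Tu)$ in $H^{k-1+2-\mu}(\partial X)$. Since $T = T^{+}\gamma_+ + T^{-}\gamma_-$, the commutation relation gives $W(Tu) = T(\tilde W u) + R u$, where $R = [W,T^{+}]\gamma_+ + [W,T^{-}]\gamma_-$ (absorbing the zeroth-order trace corrections) is again a boundary operator with $\mathrm{ord}_\nu(R)\le\mu$, because $\mathrm{ord}([W,T^{\pm}])\le\mathrm{ord}(T^{\pm})$. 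The inductive hypothesis for $k-1$, applied to $T$ on $\tilde W u\in\Sob^{2,k-1}(X)$ and to $R$ on $u\in\Sob^{2,k-1}(X)$, places both terms in $H^{k-1+2-\mu}(\partial X)$ with norm $\le C\|u\|_{\Sob^{2,k}(X)}$; summing over $\mathscr{W}$ gives $Tu\in H^{k+2-\mu}(\partial X)$.

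The main obstacle is ensuring each commutator stays in the class to which the inductive hypothesis applies. For (1) this is precisely the content of Lemma \ref{lem:conormalcommutator}, whose hypothesis $Vx=x+\mathcal{O}(x^3)$ forces the careful choice of generating set above; one should also observe that the leading coefficient of $|\TD|^2$ in $[P,V]$ need not equal $1$ (it is a nonzero constant for the model field $x\partial_x$), which is harmless, since a constant rescaling does not affect boundedness and is permitted by the remark following the definition of $\mathrm{Bess}_\nu(X)$. For (2) the analogous point is the exact (up to zeroth order) commutation of $\gamma_\pm$ with tangential fields together with the preservation of $\nu$-order under $W\mapsto[W,T^{\pm}]$; crucially no normal derivative is needed, as the $H^{k+2-\mu}(\partial X)$ norm involves only tangential differentiation on $\partial X$. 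Finally, the reduction to boundary charts — with the interior handled by the classical identification of $\Sob^{s,k}(X)$ with $H^{s+k}$ over compact subsets of $X$ — is routine via a partition of unity subordinate to the coordinate cover.
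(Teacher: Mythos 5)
Your proposal is correct and takes essentially the same approach as the paper: part (1) reduces to Lemma \ref{lem:conormalcommutator} via an inductive commutation argument after arranging that every generator of $\mathcal{V}_b(\OL{X}_\mathrm{even})$ satisfies $Vx = x + \mathcal{O}(x^3)$, and part (2) commutes the traces with tangential lifts, which is exactly the paper's (very brief) proof. If anything, your additive replacement of the tangential generators by $x\partial_x + \partial_{y_i}$ is a cleaner rendering of the paper's factorization $V = aV_1$ (which, as literally stated, fails for purely tangential $V$), and your side remarks — the nonunit leading coefficient of $|\TD|^2$ in $[P,V]$ and the zeroth-order trace corrections coming from $\gamma_\pm(x\partial_x u) = (1/2\mp\nu)\gamma_\pm u$ — are handled correctly.
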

\begin{proof}
	\begin{inparaenum}
		
\item  This follows by Lemma \ref{lem:conormalcommutator} and induction on $k$, since multiplication by $f \in C^\infty(\OL{X})$ is bounded on each $\Sob^{0,k}(X)$.

\item Given a vector field $Z$ on $\partial X$, there exists $V \in \mathcal{V}_b(\OL{X}_\mathrm{even})$ such that $V|_{\partial X} = Z$. Then $Z(\gamma_\pm u) = \gamma_\pm(Vu)$ for each $u\in \Sob^{2,k}(X)$; this is certainly true on $\mathcal{F}_\nu(X)$ and extends by density.
\end{inparaenum}
\end{proof}

Consider the generating set $\mathcal{G}$ as above. Note that the flow of $V_0$ is given by $\exp(hV_0)(x,y) = (e^h x, y)$, where $(x,y) \in [0, \varepsilon)_x\times \partial X$. Given $V \in \mathcal{G}$, let 
\[
\rho_{V,h} u = (u \circ \exp(hV) - u)/h
\]
denote the associated difference quotient. 

Suppose that $u \in \Sob^{2,k}(X)$ is supported in $\{ 0 \leq x < \delta\}$ for $\delta>0$ sufficiently small. Observe that there exists $h_0 > 0$ depending on $\delta$ such that $\rho_{V_0,h}u$ is well defined for $h\in (0,h_0)$; the difference quotients corresponding to $V_1,\ldots,V_N$ are defined for all $h > 0$. The first step is to calculate the commutator of $P$ with $\rho_{V_0,h}$; this is illustrated for $[|\TD|^2,\rho_{V_0,h}]$. First note that
\[
[\partial_x, \rho_{V_0,h}]u = h^{-1}(e^h -1)(\partial_x u)\circ \exp(hV_0).
\]
A short calculation gives
\[
[|\TD|^2,\rho_{V_0,h}]u = h^{-1}(1-e^{2h})(|\TD|^2u)\circ\exp(hV_0),
\]
which shows that 
\[
\| [|\TD|^2,\rho_{V_0,h}] u \|_{\Sob^{0,k}(X)} \leq C \| u \|_{\Sob^{2,k}(X)}
\]
for $h \in (0,h_0)$, where $C>0$ does not depend on $u$ or $h$. Continuing this calculation shows that 
\[
\| [P, \rho_{V,h}] u \|_{\Sob^{0,k}(X)} \leq C \| u \|_{\Sob^{2,k}(X)}
\] 
for any $V \in \mathcal{G}$. As for the boundary operators, one has
\[
\gamma_- (u\circ \exp(hV_0)) = \gamma_- u,\quad \gamma_+ (u\circ \exp(hV_0)) = e^{(1/2+\nu)h}\gamma_+ u,
\]
so $\gamma_- \circ \rho_{V_0,h} = 0$ and $\gamma_+ \circ \rho_{V_0,h} = (e^{(1/2+\nu)h}-1)\gamma_+u$. Similarly, 
\[
\| [T,\rho_{V_i,h}] u \|_{H^{k+2-\mu}(\partial X)} \leq C \| u \|_{\Sob^{2,k}(X)}
\]
for $i = 1,\ldots, N$, uniformly in $h$.

\begin{theo} \label{theo:conormal}
		Let $\OL{X}$ be a compact manifold with boundary as in Section \ref{subsect:manifoldwithboundary}. Assume that 
		\[
		P \in \mathrm{Bess}_\nu(X)
		\]
		is elliptic at $\partial X$ in the sense of Section \ref{subsect:ellipticity}. If $0 < \nu < 1$, then assume $P$ is augmented by a boundary condition $T$ such that $\mathscr{P} = \{P,T\}$ is elliptic at $\partial X$. There exists $\delta>0$ such that if $\varphi, \chi \in C_c^{\infty}(\{ 0 \leq x < \delta\})$ satisfy $\varphi = 1$ near $\partial X$ and $\chi = 1$ near $\supp \varphi$, then the following hold.
		\begin{enumerate} \itemsep6pt
			\item Let $0 < \nu < 1$. If $\chi u \in \Sob^{2}(X)$ and $\chi Pu \in \Sob^{0,k}(X), \, Tu \in H^{k+2-\mu}(\partial X)$ for some $k \in \mathbb{N}$, then $\varphi u \in \Sob^{2,k}(X)$. Furthermore,
			\[
			\| \varphi u \|_{\Sob^{2,k}(X)} \leq C\left(\| \chi \mathscr{P}u \|_{\Sob^{0,k}(X)\times H^{k+2-\mu}(\partial X)} + \| \chi u \|_{\Sob^{0}(X)} \right),
			\]
			where $C>0$ does not depend on $u$.
			\item  Let $\nu \geq 1$. If $\chi u \in \Sob^{2}(X)$  and $\chi Pu \in \Sob^{0,k}(X)$ for some $k \in \mathbb{N}$, then $\varphi u \in \Sob^{2,k}(X)$. Furthermore,
			\[
			\| \varphi u \|_{\Sob^{2,k}(X)} \leq C\left(\| \chi Pu \|_{\Sob^{0,k}(X)} + \| \chi u \|_{\Sob^{0}(X)} \right),
			\]
			where $C>0$ does not depend on $u$.
		\end{enumerate}
	\end{theo}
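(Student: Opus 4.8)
The plan is to establish conormal regularity by an induction on $k$, using the difference-quotient machinery that has just been set up together with the basic $\Sob^2$-estimate of Theorem \ref{theo:bvptheo}. The case $k=0$ is precisely Theorem \ref{theo:bvptheo} (applied with $s=2$, after noting that $\chi Pu \in \Sob^0(X) \subseteq \Sob^0(X)$ and $Tu \in H^{2-\mu}(\partial X)$), so I would assume the statement for $k-1$ and prove it for $k$. First I would fix cutoffs $\varphi,\chi$ and choose an intermediate cutoff $\varphi_1$ with $\varphi = 1$ near $\supp\varphi$, $\varphi_1 = 1$ near $\supp\varphi$, and $\chi = 1$ near $\supp\varphi_1$, so that I have room to absorb commutator errors at each stage of the nested estimates.

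The heart of the argument is to differentiate the equation. For each generator $V \in \mathscr{V} = \{V_0,\dots,V_N\}$ of $\mathcal{V}_b(\OL{X}_\mathrm{even})$ I would apply the difference quotient $\varrho_V^h$ to $u$ and use the commutator identities computed just before the theorem. Writing $\mathscr{P}(\varphi_1 \varrho_V^h u, \UL{\gamma}(\varphi_1\varrho_V^h u)) = \varphi_1\varrho_V^h \mathscr{P}(u,\UL{\gamma}u) + [\mathscr{P},\varphi_1\varrho_V^h](u,\UL{\gamma}u)$, the first term on the right lies in $\Sob^{0,k-1}(X) \times H^{k+1-\mu}(\partial X)$ by hypothesis (since $\varrho_V^h$ shifts conormal order down by one), while the commutator term is controlled by $\| u \|_{\Sob^{2,k}(X)}$ uniformly in $h$, by the bounds $\| [P,\varrho_V^h]u\|_{\Sob^{0,k}(X)} \leq C\|u\|_{\Sob^{2,k}(X)}$ and $\| [T,\varrho_{V_i}^h]u\|_{H^{k+2-\mu}(\partial X)} \leq C\|u\|_{\Sob^{2,k}(X)}$ established above, together with the explicit formulas $\gamma_-\circ\varrho_{V_0}^h = 0$ and $\gamma_+\circ\varrho_{V_0}^h = (e^{(1/2+\nu)h}-1)\gamma_+$. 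Applying the inductive hypothesis (Theorem \ref{theo:conormal} at level $k-1$) to $\varphi_1\varrho_V^h u$ then yields a bound on $\| \varphi\varrho_V^h u\|_{\Sob^{2,k-1}(X)}$ that is uniform in $h$. Since the difference quotients $\varrho_V^h u$ are bounded in $\Sob^{2,k-1}(X)$ uniformly as $h \to 0$, a weak-compactness argument shows $V(\varphi u) \in \Sob^{2,k-1}(X)$ for every $V \in \mathscr{V}$, which is exactly the statement $\varphi u \in \Sob^{2,k}(X)$; passing to the limit in the estimate and summing over $V \in \mathscr{V}$ produces the stated inequality, after reabsorbing the lower-order term via the now-standard device of iterating Theorem \ref{theo:bvptheo} to replace the $\Sob^1$ error by $\Sob^0$.

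The main obstacle I anticipate is the uniform-in-$h$ control of $\varrho_{V_0}^h$ near the boundary, since $V_0 = x\partial_x$ degenerates at $x=0$ and the difference quotient $\varrho_{V_0}^h u = (u\circ\exp(hV_0) - u)/h$ is only defined for $h$ bounded below by some $h_0 > 0$ depending on $\supp\varphi \subseteq \{0 \le x < \delta\}$; one must be careful that the flow $\exp(hV_0)(x,y) = (e^h x, y)$ keeps the support inside the coordinate collar and that the conormal spaces $\Sob^{2,k}$, defined via the even smooth structure $\OL{X}_\mathrm{even}$, are genuinely preserved by this flow. This is exactly why Lemma \ref{lem:conormalcommutator} is stated for vector fields with $Vx = x + \mathcal{O}(x^3)$ and why the even structure is needed only at the $\Sob^2$ level. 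A secondary technical point is the justification of the weak limit: one extracts a weakly convergent subsequence $\varrho_V^{h_n} u \rightharpoonup w$ in $\Sob^{2,k-1}(X)$, identifies $w = V(\varphi u)$ in the distributional sense, and uses lower semicontinuity of the norm under weak convergence to carry the uniform estimate to the limit. Both steps are routine given the commutator bounds already in hand, but they require the support and flow-invariance bookkeeping to be done cleanly.
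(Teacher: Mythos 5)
Your proposal is correct and follows essentially the same route as the paper, whose proof is precisely this induction on $k$: the base case is Theorem \ref{theo:bvptheo}, the inductive step applies the difference quotients $\varrho_V^h$ along the generators of $\mathcal{V}_b(\OL{X}_\mathrm{even})$ together with the commutator bounds computed just before the theorem, and one concludes by extracting a weakly convergent subsequence from the uniformly bounded family $\varrho_V^h(\varphi u)$ to obtain $V(\varphi u)$ with the estimate surviving by weak lower semicontinuity. The only blemish is an off-by-one slip in your inductive step: the commutator term must be controlled by $\| \varphi_1 u \|_{\Sob^{2,k-1}(X)}$, which is finite by the inductive hypothesis, rather than by $\| u \|_{\Sob^{2,k}(X)}$ (which would be circular at that stage); since the bounds $\| [P,\varrho_V^h]u \|_{\Sob^{0,j}(X)} \leq C \| u \|_{\Sob^{2,j}(X)}$ hold at every conormal level $j$, applying them at level $j = k-1$ repairs this immediately, exactly as in the paper.
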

	\begin{proof}
		The proof is by induction; the case $k=0$ is Theorem \ref{theo:bvptheo}. Suppose that the result holds for $k \in \mathbb{N}$; combined with the calculations preceding the theorem, this gives that $\rho_{V,h}(\varphi u) \in \Sob^{2,k}(X)$ is well defined and uniformly bounded for each $V \in \mathcal{G}$ and  $h>0$ sufficiently small. Standard functional analysis (extracting a weakly convergent subsequence, etc.) proves that $V \varphi u \in \Sob^{2,k}(X)$ for every $V \in \mathcal{G}$, with a corresponding estimate. This allows one to conclude the result for $k+1$.
		\end{proof}

\subsection{Asymptotic expansions}

Using Mellin transform techniques and the conormal regularity guaranteed by Theorem \ref{theo:conormal}, it is straightforward to give asymptotic expansions for solutions of certain Bessel equations. This section is a special case of far more general expansions; see \cite[Section 7]{mazzeo:1991} and \cite[Proposition 8.10]{vasy:2012:apde} for example.

\begin{prop}
Suppose that $P$ and $\{P,T\}$ are elliptic at $\partial X$. If $u \in \Sob^0(X)$ and $Pu \in \dot{C}^\infty(X)$, then the following hold.
\begin{enumerate} \itemsep6pt
	\item Let $0 < \nu < 1$.  If $Tu \in C^\infty(\partial X)$, then there exist $u_\pm \in C^\infty(\OL{X})$ such that
	\[
	u = x^{1/2+\nu}u_+ + x^{1/2-\nu}u_-.
	\]
	In addition $u_\pm - g_\pm \in x^2 C^\infty(\OL{X})$, where $g_- = \gamma_- u$ and $2\nu g_+ = \gamma_+ u$.  
	\item Let $\nu \geq 1$. Then there exists $u_+ \in C^\infty(\OL{X})$ such that 
	\[
	u = x^{1/2+\nu}u_+. 
	\]
\end{enumerate}
\end{prop}

\section{The Fredholm alternative and unique solvability} \label{sect:fredholm}

\subsection{Global assumptions} \label{subsect:globalass} Let $\OL{X}$ denote a compact manifold with boundary as in Section \ref{subsect:manifoldwithboundary}. Consider 
\[
P \in \mathrm{Bess}_\nu(X).
\]
Assume that $P$ is elliptic at $\partial X$ in the sense of Section \ref{subsect:ellipticity}. Furthermore, if $0 < \nu < 1$, fix a scalar boundary condition $T$ with $\mathrm{ord}_\nu(T) \leq \mu$; this is just for simplicity, whereas matrix boundary conditions necessarily arise in the adjoint problem. Assume that $\mathscr{P} = \{P,T\}$ is elliptic at $\partial X$ as well. 

Without any assumptions on the behavior of $P$ away from $\partial X$, there is no reason to expect that $P$ or $\mathscr{P}$ are Fredholm. This section outlines some additional global assumptions which guarantee a Fredholm problem. The simplest of these assumptions is that $P$ is everywhere elliptic (in the standard sense) on $X$, but in view of applications to general relativity, this is overly restrictive. Indeed, operators which arise in the study of quasinormal modes on black holes spacetimes have the property that their ellipticity degenerates at the event horizon. Moreover, rotating Kerr--AdS black holes contain an ergoregion, so that the corresponding operator is not everywhere elliptic even in the black hole exterior.

The global assumptions on $P$ presented next are motivated by recent work of Vasy \cite{vasy:2013}, which applies to the setting of rotating black holes. More generally, these assumptions are typical for situations where coercive estimates are proved via propagation results. 
Given $\nu > 0$, define the space  
\[
\mathcal{Y} = 
\begin{cases}
u \in \Sob^1(X): Pu \in \Sob^0(X), \, Tu \in H^{2-\mu}(\partial X) & \text{ if $0< \nu < 1$},\\
 u \in \Sob^1(X): Pu \in \Sob^0(X) & \text{ if $\nu \geq 1$}
\end{cases}
\]
where $Pu$ is taken as a distribution on $X$. That $Tu$ is well defined follows from Lemma \ref{lem:graphnorm2}. Equip $\mathcal{Y}$ with the norm 
\[
\| u \|_{\mathcal{Y}} = 
\begin{cases} 
\| u \|_{\Sob^1(X)} + \| Pu \|_{\Sob^0(X)} + \| Tu \|_{H^{2-\mu}(\partial X)} & \text{ if $0 < \nu < 1$} \\
\| u \|_{\Sob^1(X)} + \| Pu \|_{\Sob^0(X)} & \text{ if $\nu \geq 1$}.
\end{cases}
\]
According to the discussion following Lemma \ref{lem:graphnorm2}, the space $\mathcal{Y}$ is equivalent to 
\begin{equation} \label{eq:alternativespace}
\{(u,\UL{\phi})\in\TSob^1(X): \mathscr{P}(u,\UL{\phi}) \in \Sob^0(X) \times H^{2-\mu}(\partial X)\}
\end{equation}
for $0 < \nu <1$ when the latter space is equipped with the norm $\| (u,\UL{\phi}) \|_{\TSob^1(X)} + \|\mathscr{P}(u,\UL{\phi}) \|_{\Sob^0(X)\times H^{2-\mu}(\partial X)}$. The proof of the following result is left to the reader.

\begin{lem} \label{lem:Yk}
	The space $\mathcal{Y}$ has the following properties.
	\begin{enumerate} \itemsep6pt
		\item $\mathcal{Y}$ is complete, and $\mathcal{F}_\nu(X)$ is dense in $\mathcal{Y}$.
		\item If $0 < \nu < 1$, then $\mathscr{P}:\mathcal{Y} \rightarrow \Sob^0(X) \times H^{2-\mu}(\partial X)$ is bounded.
		\item If $\nu \geq 1$, then $P : \mathcal{Y} \rightarrow \Sob^{0}(X)$ is bounded.
		\item The inclusion $\mathcal{Y} \hookrightarrow \Sob^{0}(X)$ is compact.
	\end{enumerate} 
\end{lem}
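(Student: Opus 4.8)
The plan is to dispose of the two boundedness claims (3) and (4) first, since they are immediate: the $\mathcal{Y}$-norm was defined precisely so as to dominate $\|Pu\|_{\Sob^0(X)}$ and, when $0<\nu<1$, $\|Tu\|_{H^{2-\mu}(\partial X)}$, so $\mathscr{P}=\{P,T\}$ (respectively $P$) maps $\mathcal{Y}$ into the asserted target with operator norm at most one. For completeness (1) I would invoke the identification recorded after Lemma \ref{lem:graphnorm2}, under which $\mathcal{Y}$ coincides with the space \eqref{eq:alternativespace} when $0<\nu<1$ (and with $\{u\in\Sob^1(X):Pu\in\Sob^0(X)\}$ when $\nu\geq1$). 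This realizes $\mathcal{Y}$ as the graph space of the bounded map $\mathscr{P}\colon\TSob^1(X)\to\Sob^{-1}(X)\times H^{1-\mu}(\partial X)$ relative to the continuously embedded subspace $\Sob^0(X)\times H^{2-\mu}(\partial X)$ of its target, where continuity of $\Sob^0(X)\hookrightarrow\Sob^{-1}(X)$ is part of Lemma \ref{lem:compact} and $H^{2-\mu}\hookrightarrow H^{1-\mu}$ is trivial. Such a graph space is automatically complete: a $\mathcal{Y}$-Cauchy sequence converges in $\TSob^1(X)$ and its image converges in $\Sob^0\times H^{2-\mu}$, and continuity of $\mathscr{P}$ into the larger space identifies the two limits.

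The heart of the lemma is the density statement (2), and the key point is that every $u\in\mathcal{Y}$ is automatically of class $\Sob^2$ near $\partial X$. To see this I would choose a collar cutoff $\varphi$ supported in $\{0\leq x<\delta\}$ with $\delta$ so small that this neighborhood is disjoint from the compact set carrying the Schwartz kernel of $P_2$; then $\varphi Pu=\varphi P_1u$, and the regularity half of Theorem \ref{theo:bvptheo} applied to $\mathscr{P}_1=\{P_1,T\}$ with $s=1$ (using $\varphi Pu\in\Sob^0(X)$ and $Tu\in H^{2-\mu}(\partial X)$) gives $\varphi u\in\Sob^2(X)$. I would then split $u=\varphi u+(1-\varphi)u$ and approximate the two summands separately. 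Because $\Sob^2(X)$ is by definition the closure of $\mathcal{F}_\nu(X)$, there exist $v_n\in\mathcal{F}_\nu(X)$ with $v_n\to\varphi u$ in $\Sob^2(X)$; boundedness of $P_1\colon\Sob^2\to\Sob^0$, of $P_2\colon\Sob^2\to\Sob^0$ (Lemma \ref{lem:pseudodifferentialbounded} at $s=2$), and of $T\colon\Sob^2\to H^{2-\mu}$ then upgrades this to convergence $v_n\to\varphi u$ in the $\mathcal{Y}$-norm.

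The interior summand $(1-\varphi)u$ is supported in a compact subset $K_0$ of the open manifold $X$, where the $\Sob^s$ and $H^s$ norms are equivalent (Lemma \ref{lem:invariance2}); since $[P,(1-\varphi)]$ is first order and supported away from $\partial X$, one has $P\bigl((1-\varphi)u\bigr)\in\Sob^0(X)\cong L^2_{K_0}$, so $(1-\varphi)u$ lies in the interior graph space $\{v\in H^1_{K_0}:Pv\in L^2\}$. I would approximate it by a Friedrichs mollification $w_n=\tilde\zeta\,J_{\epsilon_n}(1-\varphi)u$ (with $\tilde\zeta\in C_c^\infty(X)$ equal to one on $K_0$), for which $w_n\to(1-\varphi)u$ in $H^1$ and $Pw_n\to P\bigl((1-\varphi)u\bigr)$ in $L^2$, the latter from the Friedrichs commutator lemma $[P,J_{\epsilon}]\to0$ strongly on $H^1\to L^2$. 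As $w_n\in C_c^\infty(X)\subseteq\mathcal{F}_\nu(X)$ and $Tw_n=0$, the sum $v_n+w_n\in\mathcal{F}_\nu(X)$ converges to $u$ in $\mathcal{Y}$. For the compactness claim (5) I would factor $u\mapsto\zeta u$ as $\mathcal{Y}\hookrightarrow\Sob^1(X)\to H^1_K(X)\hookrightarrow H^m_K(X)$: the inclusion is norm-one, multiplication by $\zeta$ preserves $\Sob^1$ (Lemma \ref{lem:leibniz}) and its interior-supported output has equivalent $\Sob^1$ and $H^1$ norms (Lemma \ref{lem:invariance2}), and the last inclusion is compact by Rellich's theorem for $m<1$; a bounded map followed by a compact one is compact.

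I expect the genuine obstacle to lie in the interior density of (2): establishing $Pw_n\to P\bigl((1-\varphi)u\bigr)$ in $L^2$ requires the Friedrichs commutator lemma for both summands of $P$, and while the differential operator $P_1$ is classical, the order-two pseudodifferential term $P_2$ needs the operator version of the lemma (uniform boundedness of $[P_2,J_\epsilon]$ in $\Psi^1$ together with its strong convergence to zero on $H^1\to L^2$). The only other point requiring care is the bookkeeping that lets the boundary regularity step ignore $P_2$ entirely, which is arranged by shrinking the collar until its support misses the kernel of $P_2$.
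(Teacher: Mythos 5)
Your proposal is correct and follows essentially the same route as the paper's proof: completeness via the graph-space identification \eqref{eq:alternativespace} with limits matched through the continuous injection $\Sob^0(X)\times H^{2-\mu}(\partial X)\hookrightarrow \Sob^{-1}(X)\times H^{1-\mu}(\partial X)$, density via the splitting $u=\varphi u+(1-\varphi)u$ with boundary regularity from Theorem \ref{theo:bvptheo} and interior mollification (the paper cites the argument of \cite[Section 2.6]{vasy:2013}, which is exactly the Friedrichs-commutator scheme you spell out, including for the pseudodifferential part $P_2$), and compactness of $u\mapsto\zeta u$ by factoring through $H^1_K(X)\hookrightarrow H^m_K(X)$. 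Your minor variations --- approximating $\varphi u$ directly in $\Sob^2(X)$ rather than in the graph norm, and adding the two approximating sequences instead of cutting off as $\varphi u_n+(1-\varphi)v_n$ --- are cosmetic and equally valid.
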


The Fredholm properties of $P$ or $\mathscr{P}$ are examined when the following a priori estimates are satisfied. If $0 < \nu < 1$, suppose that the a priori estimate
\begin{equation} \label{eq:AP0} \tag{AP0} 
\| u \|_{\Sob^1(X)} \leq C \left( \| \mathscr{P}u \|_{\Sob^0(X)\times H^{2-\mu}(X)} + \| u \|_{\Sob^0(X)}  \right)  
\end{equation}
holds for each $u \in \mathcal{Y}$. If $\nu \geq 1$, then the a priori estimate is 
\begin{equation} \label{eq:AP1} \tag{AP1} 
\| u \|_{\Sob^1(X)} \leq C \left( \| Pu \|_{\Sob^0(X)} + \| u \|_{\Sob^0(X)} \right) 
\end{equation}
for each $u \in \mathcal{Y}$. In view of the compact embedding statement in Lemma \ref{lem:Yk} (cf. Lemma \ref{lem:compact}), it is standard that \eqref{eq:AP0} or \eqref{eq:AP1} imply $\mathscr{P}:\mathcal{Y} \rightarrow \Sob^0(X) \times H^{2-\mu}(\partial X)$ or $P : \mathcal{Y} \rightarrow \Sob^0(X)$ have finite dimensional kernels (see Lemma \ref{lem:fdkernel} below).

Suppose that $0 < \nu <1$. In order to prove that $\mathscr{P}$ has finite dimensional cokernel, it is necessary to introduce spaces associated with the formal adjoint $\mathscr{P}^*$ and Hilbert space adjoint $\mathscr{P}'$. Fix a density $\mu$ on $\OL X$ of product type near $\partial X$. A priori, $\mathscr{P}^*$ is bounded
\[
\mathscr{P}^*: \TSob^{0}(X) \times H^{\mu-2}(\partial X) \rightarrow \Sob^{-2}(X) \times H^{\UL{\nu}-2}(\partial X).
\]
Recall that if $(f,\UL{g}) = \mathscr{P}^*(v,\UL{\psi},\UL{v})$, then for $u \in \Sob^2(X)$ and $\UL{w} \in H^{2-\UL{\nu}}(\partial X)$,
\begin{equation} \label{eq:adjointvsformal1}
\left< u, f \right>_{X} + \left< \UL{w}, \UL{g} \right>_{\partial X} = \left< Pu, v \right>_X + \left< \UL{w} - \UL{\gamma}u, J\UL{\psi} \right>_{\partial X} +  \left< G\UL{w}, \UL{v} \right>_{\partial X},
\end{equation}
where the dualities on $X$ and $\partial X$ are induced by $\mu$ and $\mu_{\partial X}$. Now define the space
\[
\mathcal{X} = \begin{cases} \!\begin{aligned}
&  (v,\UL{\psi},\UL{v}) \in \TSob^{0}(X) \times H^{\mu -2}(\partial X):  \\
&  \mathscr{P}^*(v,\UL{\psi},\UL{v}) \in \Sob^{-1}(X) \times H^{\UL{\nu}-1}(\partial X), 
\end{aligned}     & \text{ if } 0 < \nu < 1,  
\\  u \in \Sob^0(X): Pu \in \Sob^{-1}(X) & \text{ if } \nu \geq 1.
\end{cases}
\]
The space $\mathcal{X}$ has properties similar to those in Lemma \ref{lem:Yk}. In particular, if $0 < \nu < 1$, then the set of all $(v,\UL{\gamma}v,\UL{v})$ such that $v \in \mathcal{F}_\nu(X)$ and $\UL{v} \in C^\infty(\partial X)$ is dense in $\mathcal{X}$. Similarly, $\mathcal{F}_\nu(X)$ is dense in $\mathcal{X}$ for $\nu \geq 1$.

The analogues of \eqref{eq:AP0} and \eqref{eq:AP1} are formulated next for the adjoint problems. First suppose that $0 < \nu < 1$. The relevant a priori estimate is
\begin{align} 
\| (v,\UL{\psi},\UL{v})  \|_{\TSob^0(X) \times H^{\mu-2}(\partial X)} &\leq C ( \| \mathscr{P}^*(v,\UL\psi,\UL{v}) \|_{\Sob^{-1}(X) \times H^{\UL{\nu}-1}(\partial X)} \notag \\ &+ \| (v, \UL{\psi}, \UL{v}) \|_{\TSob^{-1}(X) \times H^{\mu - 3}(\partial X)} )   \tag{AP0*} \label{eq:AP0*}
\end{align}
for each $(v,\UL{\psi},\UL{v}) \in \mathcal{X}$. When $\nu \geq 1$ the estimate is
\begin{equation} \tag{AP1*} \label{eq:AP1*}
\| v \|_{\Sob^0(X)} \leq C( \| P^*v \|_{\Sob^{-1}(X)} + \| v \|_{\Sob^{-1}(X)})
\end{equation}
for each $v \in \mathcal{X}$.

When $0 < \nu <1$, the formally adjoint operator $\mathscr{P}^*$ should be compared with the Hilbert space adjoint 
\[
\mathscr{P}' : \Sob^{0}(X) \times H^{\mu-2}(\partial X) \rightarrow \TSob^2(X)'
\]
defined by
\[
\left< (u,\UL{\phi}), \mathscr{P}'(v,\UL{v}) \right>_{X} = \left< Pu, v \right>_X + \left< Tu, \UL{v} \right>_{\partial X}.
\] 
 Recall that the inclusion of $\TSob^2(X) \hookrightarrow \TSob^1(X)$ is dense. Consequently $\TSob^{1}(X)'$ may be identified with a dense subspace of $ \TSob^{2}(X)'$, where this identification is induced by the $\mu$--inner product. In order to describe $\TSob^1(X)'$, note that that there is an isomorphism
\[
\Phi: \TSob^1(X) \rightarrow \Sob^1(X) \times H^{-\nu}(\partial X)
\]
given by $\Phi(u,\UL{\phi}) = (u,\phi_+)$; the inverse of $\Phi$ is $\Phi^{-1}(u,\phi_+) = (u,\gamma_-u,\phi_+)$. Thus for each $\alpha \in \TSob^1(X)'$ there exist unique $f \in \Sob^{-1}(X), \, g_+ \in H^{\nu}(\partial X)$ such that
\[
\alpha(u,\UL{\phi}) = \left<f, u \right>_X + \left< g_+, \phi_+ \right>_{\partial X}.
\]
Furthermore, note that if $g_- \in H^{-\nu}(\partial X)$, then the functional given by $u \mapsto \left< g_-, \gamma_- u \right>_{\partial X}$ is an element of $\Sob^1(X)'$. Thus it may be represented in the form $u \mapsto \left<f_-, u \right>_X$ for a unique $f_- \in \Sob^{-1}(X)$. The next lemma summarizes this discussion.

\begin{lem} \label{lem:alpharep}
Each $\alpha \in \TSob^{1}(X)'$ admits a representation
\begin{equation} \label{eq:alpharep}
\alpha(u,\UL{\phi}) = \left< f,u \right>_X + \left< \UL{g}, \UL{\phi} \right>_{\partial X},
\end{equation}
where $f \in \Sob^{-1}(X)$ and $\UL{g} \in H^{\UL{\nu}-1}(\partial X)$. Furthermore, $\| \alpha \|_{\TSob^{1}(X)'}$ is equivalent to the norm
\[
\inf \{ \| f \|_{\Sob^{-1}(X)} + \|\UL{g} \|_{H^{\UL{\nu}-1}(\partial X)} \},
\]
where the infimum is taken over all $f,\UL{g}$ such that \eqref{eq:alpharep} holds.
\end{lem}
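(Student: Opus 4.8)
The plan is to leverage the isomorphism $\Phi : \TSob^1(X) \to \Sob^1(X) \times H^{-\nu}(\partial X)$, $\Phi(u,\UL{\phi}) = (u,\phi_+)$, introduced just above, together with the standard identification of the antidual of a product of Hilbert spaces. First I would transport $\alpha$ through $\Phi$: since $\Phi$ is a topological isomorphism, the adjoint $\Phi'$ identifies $\TSob^1(X)'$ with $\left(\Sob^1(X) \times H^{-\nu}(\partial X)\right)' = \Sob^{-1}(X) \times H^{\nu}(\partial X)$, where $\Sob^{-1}(X)$ is the antidual of $\Sob^1(X)$ and $H^{\nu}(\partial X)$ is dual to $H^{-\nu}(\partial X)$ via the $L^2(\partial X)$ pairing. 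This produces a unique pair $(f_0,g_+) \in \Sob^{-1}(X) \times H^{\nu}(\partial X)$ with $\alpha(u,\UL{\phi}) = \left< f_0,u\right>_X + \left< g_+,\phi_+\right>_{\partial X}$, which is already a representation of the desired form with $g_- = 0$.

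Next I would account for the full (non-unique) family of representations. The non-uniqueness is forced by the constraint $\phi_- = \gamma_- u$ built into $\TSob^1(X)$: any prescribed $g_- \in H^{-\nu}(\partial X)$ may be inserted in front of $\phi_-$ and absorbed into the bulk term. Concretely, since $\gamma_- : \Sob^1(X) \to H^{\nu}(\partial X)$ is bounded, the functional $u \mapsto \left< g_-, \gamma_- u\right>_{\partial X}$ lies in $\Sob^1(X)' = \Sob^{-1}(X)$, hence equals $\left< f_-, u\right>_X$ for a unique $f_- \in \Sob^{-1}(X)$; setting $f = f_0 - f_-$ and $\UL{g} = (g_-, g_+)$ then reproduces $\alpha$, establishing existence of the representation \eqref{eq:alpharep} for every admissible choice of $g_-$.

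For the norm equivalence, one direction is immediate from the pairing estimates: for any representation, the continuous dualities $\Sob^{-1}(X) \times \Sob^1(X) \to \mathbb{C}$ and $H^{\UL{\nu}-1}(\partial X) \times H^{1-\UL{\nu}}(\partial X) \to \mathbb{C}$ give $|\alpha(u,\UL{\phi})| \leq C\left(\|f\|_{\Sob^{-1}(X)} + \|\UL{g}\|_{H^{\UL{\nu}-1}(\partial X)}\right)\|(u,\UL{\phi})\|_{\TSob^1(X)}$, whence $\|\alpha\|_{\TSob^1(X)'}$ is bounded by the infimum up to a constant. The reverse inequality is obtained from the distinguished representation of the first paragraph: the boundedness of $\Phi^{-1}$ (equivalently of $\gamma_-$) shows $\|f_0\|_{\Sob^{-1}(X)} + \|g_+\|_{H^{\nu}(\partial X)} \leq C\|\alpha\|_{\TSob^1(X)'}$, and with the choice $g_- = 0$ this particular admissible pair already realizes the infimum up to a constant. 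Combining the two inequalities yields the asserted equivalence.

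The argument is essentially bookkeeping with dualities, so I do not anticipate a genuine obstacle; the one point requiring care is keeping the regularity indices straight (with $\UL{\nu} = (1-\nu,1+\nu)$ one has $H^{1-\UL{\nu}} = H^{\nu}(\partial X) \times H^{-\nu}(\partial X)$ for the primal traces and $H^{\UL{\nu}-1} = H^{-\nu}(\partial X)\times H^{\nu}(\partial X)$ for the dual data) and verifying that the absorption of $g_-$ into $f$ is consistent with these pairings.
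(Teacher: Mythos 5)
Your proposal is correct and follows essentially the same route as the paper: the paper proves this lemma via exactly the isomorphism $\Phi(u,\UL{\phi}) = (u,\phi_+)$, obtaining the unique pair $(f_0,g_+) \in \Sob^{-1}(X) \times H^{\nu}(\partial X)$, and then absorbing an arbitrary $g_- \in H^{-\nu}(\partial X)$ into the bulk term through the bounded trace $\gamma_- : \Sob^1(X) \to H^{\nu}(\partial X)$, just as you do. Your norm-equivalence argument (pairing estimates for one direction, the distinguished $g_-=0$ representation via boundedness of $\Phi^{-1}$ for the other) matches the intended content, and your bookkeeping of the indices $H^{\UL{\nu}-1} = H^{-\nu}(\partial X) \times H^{\nu}(\partial X)$ is accurate.
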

\noindent Still assuming $0<\nu<1$, define the auxiliary space
\[
\widetilde{\mathcal{X}} = \{ (v,\UL{v}) \in \Sob^0(X) \times H^{\mu-2}(\partial X): \mathscr{P}'(v,\UL{v}) \in \TSob^1(X)' \}.
\]
\begin{lem} \label{lem:propertyBadjoint} Suppose that \eqref{eq:AP0*} holds. Then
\begin{align} 
\| (v,\UL{v}) \|_{\Sob^0(X) \times H^{\mu-2}(\partial X)} &\leq C ( \| \mathscr{P}'(v,\UL{v}) \|_{\TSob^1(X)'} \notag \\ &+ \| (v,\UL{v}) \|_{\Sob^{-1}(X) \times H^{\mu - 3}(\partial X)}) \tag{AP0'}  \label{eq:fdcokernelapriori2}
\end{align}
for each $(v,\UL{v}) \in \widetilde{\mathcal{X}}$.
\end{lem}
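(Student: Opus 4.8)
The plan is to deduce \eqref{eq:fdcokernelapriori2} from \eqref{eq:AP0*} by applying the latter not to the pair coming from $v$ directly, but to a triple $(v,\UL{\psi},\UL{v}) \in \widetilde{\mathcal{X}}$ whose auxiliary trace $\UL{\psi}$ is chosen so that $\mathscr{P}^*(v,\UL{\psi},\UL{v})$ nearly minimizes its norm among all representations of the functional $\mathscr{P}'(v,\UL{v})$. Fix $(v,\UL{v}) \in \widetilde{\mathcal{Z}}$, so that $\mathscr{P}'(v,\UL{v}) \in \TSob^1(X)'$. The freedom to vary $\UL{\psi}$ is exactly the ingredient that is needed, and overlooking it is what makes a naive comparison of $\mathscr{P}^*$ and $\mathscr{P}'$ fail.

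First I would record the algebraic form of the formal adjoint. Substituting the manifold Green's formula for $\langle Pu, v\rangle_X$ into \eqref{eq:adjointvsformal1} identifies
\[
\mathscr{P}^*(v,\UL{\psi},\UL{v}) = \bigl( P^*(v,\UL{\psi}),\; J\UL{\psi} + G^*\UL{v} \bigr),
\]
and, taking $\UL{w} = \UL{\gamma}u$ in \eqref{eq:adjointvsformal1} so that the $J\UL{\psi}$ term drops out, shows that for \emph{every} $\UL{\psi}$ the pair $(f,\UL{g}) := \mathscr{P}^*(v,\UL{\psi},\UL{v})$ satisfies $\mathscr{P}'(v,\UL{v})(u,\UL{\gamma}u) = \langle f, u\rangle_X + \langle \UL{g}, \UL{\gamma}u\rangle_{\partial X}$ on $\TSob^2(X)$, hence represents $\mathscr{P}'(v,\UL{v})$ on $\TSob^1(X)$ by density in the sense of Lemma \ref{lem:alpharep}. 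Conversely, given any such representation $(f,\UL{g})$, setting $\UL{\psi} = J^{-1}(\UL{g} - G^*\UL{v})$ reproduces $\UL{g}$, and comparing the two representations (which now share the same boundary part $\UL{g}$) against all $u \in \Sob^1(X)$ forces $f = P^*(v,\UL{\psi})$. Thus every representation arises as $\mathscr{P}^*(v,\UL{\psi},\UL{v})$ for a suitable $\UL{\psi}$, and Lemma \ref{lem:alpharep} yields the key identity
\[
\| \mathscr{P}'(v,\UL{v}) \|_{\TSob^1(X)'} = \inf_{\UL{\psi}} \| \mathscr{P}^*(v,\UL{\psi},\UL{v}) \|_{\Sob^{-1}(X) \times H^{\UL{\nu}-1}(\partial X)}.
\]

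Next I would fix $\UL{\psi}$ realizing this infimum up to a factor of two; then $(v,\UL{\psi},\UL{v}) \in \widetilde{\mathcal{X}}$ and \eqref{eq:AP0*} applies. Its left-hand side dominates $\|v\|_{\Sob^0(X)} + \|\UL{v}\|_{H^{\mu-2}(\partial X)} = \|(v,\UL{v})\|_{\Sob^0(X) \times H^{\mu-2}(\partial X)}$, while its leading right-hand term is at most $2\|\mathscr{P}'(v,\UL{v})\|_{\TSob^1(X)'}$ by the identity above. The one point requiring care is that the error term $\|(v,\UL{\psi})\|_{\TSob^{-1}(X)}$ in \eqref{eq:AP0*} contains $\UL{\psi}$, which depends on the representation. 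Writing $\UL{\psi} = -J\UL{g} + JG^*\UL{v}$, the component swap induced by $J$ is harmless in the very negative space $H^{-1-\UL{\nu}}(\partial X)$, giving $\|\UL{\psi}\|_{H^{-1-\UL{\nu}}(\partial X)} \leq \|\UL{g}\|_{H^{\UL{\nu}-1}(\partial X)} + C\|\UL{v}\|_{H^{\mu-3}(\partial X)}$; the first summand is already absorbed into the main term, and the second is genuinely lower order precisely because the bounds $\mathrm{ord}(T^-) \leq \mu-1+\nu$ and $\mathrm{ord}(T^+) \leq \mu-1-\nu$ of \eqref{eq:orderlessthanm} place $JG^*\UL{v}$ in $H^{\mu-3}(\partial X)$ after the swap. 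Collecting these bounds in \eqref{eq:AP0*} gives \eqref{eq:fdcokernelapriori2}.

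The main obstacle is the direction of the comparison. Any single representation from $\mathscr{P}^*$ gives only $\| \mathscr{P}'(v,\UL{v}) \|_{\TSob^1(X)'} \leq \| \mathscr{P}^*(v,\UL{\psi},\UL{v}) \|$, which is the wrong inequality for passing from \eqref{eq:AP0*} to \eqref{eq:fdcokernelapriori2}, and one cannot bound $\| \mathscr{P}^*(v,\UL{\psi},\UL{v}) \|$ by $\| \mathscr{P}'(v,\UL{v}) \|_{\TSob^1(X)'}$ for a fixed $\UL{\psi}$. The resolution is to treat $\UL{\psi}$ as a free parameter and apply \eqref{eq:AP0*} along the near-optimal choice supplied by the infimum identity; this is the genuinely new step, while the subsequent estimate of the $\UL{\psi}$-dependent error term is routine Sobolev bookkeeping once \eqref{eq:orderlessthanm} is invoked.
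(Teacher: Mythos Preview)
Your proof is correct and follows essentially the same route as the paper: pick a representation $(f,\UL{g})$ of $\mathscr{P}'(v,\UL{v})$ via Lemma \ref{lem:alpharep}, set $\UL{\psi}=J^{-1}(\UL{g}-G^*\UL{v})$ so that $\mathscr{P}^*(v,\UL{\psi},\UL{v})=(f,\UL{g})$, apply \eqref{eq:AP0*} to this triple, bound $\|\UL{\psi}\|_{H^{-1-\UL{\nu}}}$ by $\|\UL{g}\|_{H^{\UL{\nu}-1}}+C\|\UL{v}\|_{H^{\mu-3}}$, and then take the infimum over representations. The only cosmetic difference is that the paper works with an arbitrary representation and takes the infimum at the end, while you phrase it as an infimum identity and choose a near-minimizer up front; also note that Lemma \ref{lem:alpharep} gives only norm equivalence rather than equality, but this just affects the constant.
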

\begin{proof} Since $\mathscr{P}'(v,\UL{v}) \in \TSob^1(X)'$, there exists $f \in \Sob^{-1}(X)$ and $\UL{g} \in H^{\UL{\nu}-1}(\partial X)$ such that the action of $\mathscr{P}'(v,\UL{v})$ on $(u,\UL{\phi}) \in \TSob^1(X)$ is given by
\begin{equation} \label{eq:alpharep2}
(u, \UL{\phi}) \mapsto \left< f , u \right>_X + \left< \UL g, \UL{\phi} \right>_{\partial X}.
\end{equation}
Now let $\UL{\psi} = JG^*\UL{v} - J\UL{g}$, so that $J\UL{\psi} + G^* \UL{v} = \UL{g}$. Furthermore, note that $\UL{\psi} \in H^{-\UL{\nu}}(\partial X)$, so $(v,\UL{\psi})$ may be considered as an element of $\TSob^0(X)$. Referring back to \eqref{eq:adjointvsformal1}, it follows that $\mathscr{P}^*(v,\UL{\psi},\UL{v}) = (f,g)$. This shows that $(v,\UL{\psi},\UL{v}) \in {\mathcal{X}}$, so
\begin{align*}
\| (v,\UL{v}) \|_{\Sob^0(X) \times H^{\mu-2}(\partial X)} &\leq C ( \| f \|_{\Sob^{-1}(X)} + \| g \|_{H^{\UL{\nu}-1}(\partial X)} \\ &+ \| (v, \UL{v}) \|_{\Sob^{-1}(X) \times H^{\mu - 3}(\partial X)} )
\end{align*}
by \eqref{eq:AP0*}. In the last line, this used the fact that
\[
\| \UL \psi \|_{H^{-1-\UL\nu}(\partial X)} \leq C ( \| \UL v \|_{H^{\mu-3}(\partial X)} + \| g \|_{H^{\UL{\nu}-1}(\partial X)}).
\]
It now suffices to take the infimum over all $f,\UL{g}$ satisfying \eqref{eq:alpharep2}, and then appeal to Lemma \ref{lem:alpharep}. 
\end{proof}

\subsection{The Fredholm property} \label{subsect:thefredholmproperty}

In this section, the Fredholm property is established whenever \eqref{eq:AP0}, \eqref{eq:AP1}, \eqref{eq:AP1}, \eqref{eq:AP1*} hold. The proof is sketched in the more complicated case $0 < \nu < 1$.

\begin{lem} \label{lem:fdkernel} Let $0 < \nu < 1$. 
\begin{enumerate} \itemsep6pt
	\item If \eqref{eq:AP0} holds, then the operator
	\[
	\mathscr{P} : \mathcal{Y} \rightarrow \Sob^{0}(X) \times H^{2-\mu}(\partial X) 
	\]
	has a finite dimensional kernel.
	\item If \eqref{eq:AP0*} holds, then the operator 
	\[
	\mathscr{P}': \Sob^{0}(X) \times H^{\mu-2}(\partial X) \rightarrow  \Sob^{-2}(X) \times H^{\UL{\nu} - 2}(\partial X) 
	\]
	has a finite dimensional kernel
\end{enumerate}	
\end{lem}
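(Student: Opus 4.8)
The plan is to apply the standard functional-analytic criterion: a bounded operator that satisfies an a priori estimate with a compact remainder has finite-dimensional kernel. For part (1), I would restrict \eqref{eq:AP0} to $\ker\mathscr{P}$. By Lemma \ref{lem:Yk}, $\mathcal{F}_\nu(X)$ is dense in $\mathcal{Y}$ and $\mathcal{Y}$ is complete, so \eqref{eq:AP0} extends by continuity to all of $\mathcal{Y}$; on the kernel the leading term $\|\mathscr{P}u\|$ drops out, leaving
\[
\|u\|_{\mathcal{Y}} \le C\left(\|u\|_{\Sob^0(X)} + \|\chi u\|_{H^m(X)}\right), \qquad u \in \ker\mathscr{P}.
\]
Note $\ker\mathscr{P}$ is closed in $\mathcal{Y}$, being the preimage of $0$ under the bounded map $\mathscr{P}$, hence complete.

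First I would verify that the two maps on the right are compact on $\mathcal{Y}$. The inclusion $\mathcal{Y}\to\Sob^0(X)$ factors through the bounded inclusion $\mathcal{Y}\hookrightarrow\Sob^1(X)$ followed by the compact embedding $\Sob^1(X)\hookrightarrow\Sob^0(X)$ of Lemma \ref{lem:compact}(1), and $u\mapsto\chi u$, $\mathcal{Y}\to H^m_K(X)$, is compact for $m<1$ by Lemma \ref{lem:Yk}(5). Then, given a bounded sequence $(u_n)$ in $\ker\mathscr{P}$, I extract a subsequence converging in $\Sob^0(X)$ and with $(\chi u_n)$ converging in $H^m_K(X)$; applying the displayed estimate to the differences $u_n-u_m$, which remain in the kernel since $\mathscr{P}$ is linear, shows that $(u_n)$ is Cauchy in $\mathcal{Y}$ and hence convergent. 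Thus the closed unit ball of $\ker\mathscr{P}$ is compact, and Riesz's theorem forces $\dim\ker\mathscr{P}<\infty$.

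For part (2) the structure is identical but uses the adjoint estimate. I would first invoke Lemma \ref{lem:propertyBadjoint} to pass from the hypothesis \eqref{eq:AP0*} to \eqref{eq:fdcokernelapriori2}, valid on $\widetilde{\mathcal{Z}}$. Any $(v,\UL{v})\in\ker\mathscr{P}'$ satisfies $\mathscr{P}'(v,\UL{v})=0\in\TSob^1(X)'$, hence lies in $\widetilde{\mathcal{Z}}$, and there the leading term vanishes, leaving
\[
\|(v,\UL{v})\|_{\Sob^0(X) \times H^{\mu-2}(\partial X)} \le C\left(\|(v,\UL{v})\|_{\Sob^{-1}(X) \times H^{\mu-3}(\partial X)} + \|\chi v\|_{H^m(X)}\right).
\]
The remainder maps are again compact: $\Sob^0(X)\hookrightarrow\Sob^{-1}(X)$ by Lemma \ref{lem:compact}(2), $H^{\mu-2}(\partial X)\hookrightarrow H^{\mu-3}(\partial X)$ by Rellich on the compact boundary, and $v\mapsto\chi v$, $\Sob^0(X)\to H^m_K(X)$, is compact for $m<0$ since locally on $K$ one has $\Sob^0\simeq H^0$ and $H^0_K\hookrightarrow H^m_K$ compactly; this is precisely why \eqref{eq:AP0*} is required with $m<0$. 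Since $\mathscr{P}'$ is bounded into $\Sob^{-2}(X)\times H^{\UL{\nu}-2}(\partial X)$, its kernel is a closed, hence complete, subspace of $\Sob^0(X)\times H^{\mu-2}(\partial X)$, and the same Cauchy-subsequence extraction shows its unit ball is compact, whence finite-dimensionality.

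The hard part will not be the abstract argument but bookkeeping the regularity indices so that the compactness inputs apply with the correct thresholds ($m<1$ in part (1), $m<0$ in part (2)) and confirming that each remainder term genuinely factors through a compact map out of the relevant space; in particular the reduction via Lemma \ref{lem:propertyBadjoint}, which converts the formal-adjoint estimate \eqref{eq:AP0*} into the Hilbert-space-adjoint estimate \eqref{eq:fdcokernelapriori2} on $\widetilde{\mathcal{Z}}$, is the step that makes part (2) go through in the same way as part (1).
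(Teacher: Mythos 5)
Your proposal is correct and follows essentially the same route as the paper, which disposes of both parts in one line each by combining the a priori estimates \eqref{eq:AP0} and \eqref{eq:fdcokernelapriori2} with the compactness of $\mathcal{Y}\hookrightarrow\Sob^0(X)$, of $\chi:\mathcal{Y}\rightarrow H^m_{\supp\chi}(X)$, and of the corresponding remainder maps in the adjoint setting. Your write-up merely makes explicit the standard Riesz/Peetre-type argument (Cauchy subsequence extraction on the kernel) and the reduction of $\ker\mathscr{P}'$ to $\widetilde{\mathcal{Z}}$ via Lemma \ref{lem:propertyBadjoint}, both of which the paper leaves implicit.
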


\begin{proof}
\begin{inparaenum}
\item This is immediate from the compactness of the inclusion $\mathcal{Y} \hookrightarrow \Sob^0(X)$, combined with \eqref{eq:AP0}.

\item	Clearly the kernel of $\mathscr{P}'$ restricted to $\Sob^{0}(X) \times H^{\mu-2}(\partial X)$ is equal to the kernel of $\mathscr{P}'$ restricted to $\widetilde{\mathcal{X}}$. The result follows from the same type of compactness considerations as in $(1)$, using \eqref{eq:fdcokernelapriori2}.
\end{inparaenum}
\end{proof}

\noindent In light of Lemma \ref{lem:fdkernel}, let $\mathcal{K}$ denote the finite dimensional kernel of $\mathscr{P}'|_{\widetilde{\mathcal{X}}}$. Standard functional analytic arguments (cf. \cite[Section 4.3]{vasy:hyperbolic} in a similar setting) give the following solvability result.

\begin{lem} \label{lem:fdcokerneladjoint} Let $0 < \nu < 1$, and assume that \eqref{eq:fdcokernelapriori2} holds. If
	\[
	(h, \UL{k}) \in \Sob^{0}(X) \times H^{2 -\mu}(\partial X)
	\]
	 lies in the annihilator of $\mathcal{K}$ via the duality between $\Sob^{0}(X) \times H^{\mu-2}(\partial X)$ and $\Sob^{0}(X) \times H^{2 - \mu}(\partial X)$, then there exists $(u,\UL{\phi}) \in \TSob^1(X)$ such that $\mathscr{P}(u,\UL{\phi}) = (h,\UL{k})$.
\end{lem}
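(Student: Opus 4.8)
The plan is to obtain $(u,\UL{\phi})$ by a duality argument, producing it as the vector representing a bounded linear functional built from $(h,\UL{k})$ and the adjoint operator $\mathscr{P}'$. The starting observation is that solving $\mathscr{P}(u,\UL{\phi}) = (h,\UL{k})$ weakly amounts to finding $(u,\UL{\phi}) \in \TSob^1(X)$ with
\[
\langle (u,\UL{\phi}), \mathscr{P}'(v,\UL{v}) \rangle = \langle h, v \rangle_X + \langle \UL{k}, \UL{v} \rangle_{\partial X}
\]
for all $(v,\UL{v})$ in the test space $\widetilde{\mathcal{Z}}$, since by definition of $\mathscr{P}'$ the left side unwinds to $\langle P(u,\UL{\phi}), v \rangle_X + \langle G\UL{\phi}, \UL{v} \rangle_{\partial X}$. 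As $\TSob^1(X)$ is a Hilbert space, hence reflexive, such a vector is furnished by a bounded functional on $\TSob^1(X)'$, and $\mathrm{Ran}(\mathscr{P}'|_{\widetilde{\mathcal{Z}}}) \subseteq \TSob^1(X)'$ is precisely the subspace on which this functional is prescribed.

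First I would define $\ell$ on $\mathrm{Ran}(\mathscr{P}'|_{\widetilde{\mathcal{Z}}})$ by $\ell(\mathscr{P}'(v,\UL{v})) = \langle h, v\rangle_X + \langle \UL{k}, \UL{v}\rangle_{\partial X}$. This is well defined exactly because $(h,\UL{k})$ annihilates $\mathcal{K} = \ker(\mathscr{P}'|_{\widetilde{\mathcal{Z}}})$: if $\mathscr{P}'(v,\UL{v}) = 0$ then $(v,\UL{v}) \in \mathcal{K}$, so the right side vanishes. Boundedness of $\ell$ in the $\TSob^1(X)'$ norm is the inequality
\[
|\langle h, v\rangle_X + \langle \UL{k}, \UL{v}\rangle_{\partial X}| \le C \| \mathscr{P}'(v,\UL{v}) \|_{\TSob^1(X)'},
\]
which, after Cauchy--Schwarz in the $\Sob^0(X) \times H^{\mu-2}(\partial X)$ pairing, reduces to the closed-range estimate
\[
\| (v,\UL{v}) \|_{\Sob^0(X)\times H^{\mu-2}(\partial X)} \le C\| \mathscr{P}'(v,\UL{v}) \|_{\TSob^1(X)'}
\]
on a fixed closed complement of the finite-dimensional space $\mathcal{K}$.

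The main obstacle is establishing this last estimate, i.e. removing the lower-order error terms from \eqref{eq:fdcokernelapriori2}. Here I would combine \eqref{eq:fdcokernelapriori2} with the compactness of the inclusions $\Sob^0(X)\hookrightarrow \Sob^{-1}(X)$ and $H^{\mu-2}(\partial X)\hookrightarrow H^{\mu-3}(\partial X)$, together with the compactness of $\chi: \Sob^0(X)\to H^m_{\supp\chi}(X)$ for $m < 0$ (Lemma \ref{lem:compact} and its analogue for $\widetilde{\mathcal{Z}}$). An a priori estimate of the form $\|w\| \le C(\|Aw\| + \|w\|_{\mathrm{cpt}})$ with $A$ of finite-dimensional kernel self-improves, on any closed complement of $\ker A = \mathcal{K}$, to $\|w\| \le C\|Aw\|$; the usual contradiction argument (a normalized sequence in the complement with $\|Aw_n\| \to 0$ admits a subsequence converging in the compact norms, whose limit is a unit vector of the complement lying in $\mathcal{K}$) delivers the improvement. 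Given a general $\alpha = \mathscr{P}'(v,\UL{v})$, replacing $(v,\UL{v})$ by its component in the complement (the $\mathcal{K}$-part changes neither $\alpha$ nor $\ell(\alpha)$) then yields $|\ell(\alpha)| \le C\|\alpha\|_{\TSob^1(X)'}$.

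Finally, with $\ell$ bounded on $\mathrm{Ran}(\mathscr{P}'|_{\widetilde{\mathcal{Z}}})$, I would extend it by Hahn--Banach to all of $\TSob^1(X)'$ and invoke reflexivity of $\TSob^1(X)$ to represent the extension by some $(u,\UL{\phi}) \in \TSob^1(X)$, giving $\langle (u,\UL{\phi}),\mathscr{P}'(v,\UL{v})\rangle = \langle h, v\rangle_X + \langle \UL{k},\UL{v}\rangle_{\partial X}$ for all $(v,\UL{v}) \in \widetilde{\mathcal{Z}}$. Taking $\UL{v} = 0$ and letting $v$ range over $C_c^\infty(X)$ (so that $(v,0)\in\widetilde{\mathcal{Z}}$) yields $P(u,\UL{\phi}) = h$ weakly, hence in $\Sob^0(X)$; substituting this back and letting $\UL{v}$ range over $C^\infty(\partial X)$ forces $G\UL{\phi} = T(u,\UL{\phi}) = \UL{k}$. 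Thus $\mathscr{P}(u,\UL{\phi}) = (h,\UL{k})$ with $(u,\UL{\phi}) \in \TSob^1(X)$, as required. I expect the only delicate bookkeeping to be verifying that the separating pairs $(v,0)$ and $(0,\UL{v})$ lie in $\widetilde{\mathcal{Z}}$ and form a large enough test class, which follows from the density properties recorded for $\widetilde{\mathcal{X}}$ and $\widetilde{\mathcal{Z}}$.
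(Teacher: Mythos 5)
Your overall architecture coincides with the paper's proof: the closed-range estimate on a closed complement $V$ of $\mathcal{K}$, obtained by contradiction from \eqref{eq:fdcokernelapriori2} together with weak compactness and the compact inclusions; the well-definedness and boundedness of $\ell$ on the range of $\mathscr{P}'|_{\widetilde{\mathcal{Z}}}$, using the annihilator hypothesis and the invariance of both sides under adding elements of $\mathcal{K}$; and the Hahn--Banach extension represented by some $(u,\UL{\phi}) \in \TSob^1(X)$. All of this matches the paper's argument and is correct.

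The gap is in the final identification step. First, your opening claim that $\langle (u,\UL{\phi}),\mathscr{P}'(v,\UL{v})\rangle$ ``unwinds'' to $\langle P(u,\UL{\phi}),v\rangle_X + \langle G\UL{\phi},\UL{v}\rangle_{\partial X}$ for general $(v,\UL{v}) \in \widetilde{\mathcal{Z}}$ is not even well posed: $P(u,\UL{\phi})$ lies in $\Sob^{-1}(X) = \Sob^1(X)'$, while an element of $\widetilde{\mathcal{Z}}$ only has $v \in \Sob^0(X)$, so the pairing is undefined; the unwinding is legitimate only for $(v,\UL{v}) \in \Sob^1(X)\times H^{\mu-1}(\partial X)$, and requires approximating $(u,\UL{\phi})$ in $\TSob^1(X)$ by $\TSob^2(X)$ elements. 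Second, and more seriously, the test class you actually use at the end --- pairs $(v,0)$ with $v \in C_c^\infty(X)$ and $(0,\UL{v})$ with $\UL{v}\in C^\infty(\partial X)$ --- is too small. Testing against $C_c^\infty(X)$ gives only $P(u,\UL{\phi}) = h$ as distributions on $X$, but when $0<\nu<1$ an element of $\Sob^{-1}(X)$ is \emph{not} determined by its distributional restriction: $C_c^\infty(X)$ is dense only in $\mathring{\Sob}^1(X) = \ker\gamma_-$, a proper closed subspace of $\Sob^1(X)$, so the difference $P(u,\UL{\phi})-h$ could be a nonzero boundary functional $v \mapsto \langle g,\gamma_- v\rangle_{\partial X}$. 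Concretely, for the Dirichlet condition $T=\gamma_-$ (so $T^+=0$), replacing $\phi_+$ by $\phi_+ + \psi$ changes neither the distributional equation nor $G\UL{\phi}$, yet shifts $P(u,\UL{\phi})$ by exactly such a functional (this is the term $\langle \phi_+,\gamma_- v\rangle$ in \eqref{eq:green1}); hence your two conclusions together genuinely do not imply $\mathscr{P}(u,\UL{\phi})=(h,\UL{k})$. The repair is what the paper does: approximate $(u,\UL{\phi})$ by $(u_n,\UL{\phi_n}) \in \TSob^2(X)$, pass to the limit in $\langle Pu_n,v\rangle_X + \langle Tu_n,\UL{v}\rangle_{\partial X}$ for all $(v,\UL{v}) \in \mathcal{F}_\nu(X)\times C^\infty(\partial X) \subseteq \widetilde{\mathcal{Z}}$ --- crucially including $v$ with $\gamma_- v \neq 0$ --- and then invoke the density of $\mathcal{F}_\nu(X)\times C^\infty(\partial X)$ in $\Sob^1(X)\times H^{\mu-1}(\partial X)$ to identify both components of $\mathscr{P}(u,\UL{\phi})$ simultaneously.
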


The combination of Lemmas \ref{lem:fdkernel} and \ref{lem:fdcokerneladjoint} imply the Fredholm property:

\begin{theo} \label{theo:fredholm}
Let $\nu > 0$ and $P$ as in Section \ref{subsect:globalass} be elliptic at $\partial X$. If $0 < \nu < 1$, then let $T$ denote a scalar boundary operator satisfying $\mathrm{ord}_\nu(T) \leq \mu$, such that $\mathscr{P} = \{P,T\}$ is elliptic at $\partial X$.

\begin{enumerate} \itemsep6pt
\item Suppose that $0 < \nu < 1$. If $\mathscr{P}$ satisfies \eqref{eq:AP0} and \eqref{eq:AP0*}, then
\[
\mathscr{P}: \mathcal{Y} \rightarrow \Sob^0(X) \times H^{2- \mu}(\partial X)
\]
is Fredholm.
\item Suppose that $\nu \geq 1$. If $P$ satisfies \eqref{eq:AP1} and \eqref{eq:AP1*}, then
\[
P: \mathcal{Y} \rightarrow \Sob^0(X)
\]
is Fredholm.
\end{enumerate}
\end{theo}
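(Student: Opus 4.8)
The plan is to assemble the Fredholm property of $\mathscr{P} : \mathcal{Y} \rightarrow \Sob^0(X) \times H^{2-\mu}(\partial X)$ directly from the three preparatory results already in hand, giving the argument in full for $0 < \nu < 1$ and indicating the simplification for $\nu \geq 1$. Recall that a bounded operator between Banach spaces is Fredholm exactly when its kernel is finite-dimensional and its range is closed of finite codimension; I would verify these two properties in turn.

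The finite-dimensionality of $\ker \mathscr{P}$ is immediate from Lemma \ref{lem:fdkernel}(1), which applies since \eqref{eq:AP0} is assumed with $m < 1$. For the range, I would first extract from the hypothesis \eqref{eq:AP0*} (with $m < 0$) two consequences: by Lemma \ref{lem:fdkernel}(2) the kernel $\mathcal{K}$ of the Hilbert space adjoint $\mathscr{P}'$ is finite-dimensional, and by Lemma \ref{lem:propertyBadjoint} the adjoint a priori estimate \eqref{eq:fdcokernelapriori2} holds.

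With \eqref{eq:fdcokernelapriori2} available, Lemma \ref{lem:fdcokerneladjoint} asserts precisely that every element of the annihilator $\mathcal{K}^\circ \subseteq \Sob^0(X) \times H^{2-\mu}(\partial X)$ lies in the range of $\mathscr{P}$, that is, $\mathcal{K}^\circ \subseteq \mathrm{Ran}\,\mathscr{P}$. Since $\mathcal{K}$ is a finite-dimensional subspace of the predual $\Sob^0(X) \times H^{\mu-2}(\partial X)$, its annihilator $\mathcal{K}^\circ$ is closed of codimension $\dim \mathcal{K}$. A subspace containing a closed subspace of finite codimension is itself closed of at most that codimension; hence $\mathrm{Ran}\,\mathscr{P}$ is closed with $\operatorname{codim} \mathrm{Ran}\,\mathscr{P} \leq \dim \mathcal{K} < \infty$. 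Together with the finite-dimensional kernel this shows $\mathscr{P}$ is Fredholm, completing the case $0 < \nu < 1$.

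For $\nu \geq 1$ the same scheme applies verbatim with the boundary data suppressed: one works with $P : \mathcal{Y} \rightarrow \Sob^0(X)$, its formal adjoint acting on the space $\mathcal{Z}$, and the finite-dimensional kernel $\ker P^*$, invoking the $\nu \geq 1$ analogues of Lemmas \ref{lem:fdkernel} and \ref{lem:fdcokerneladjoint} which follow from \eqref{eq:AP1} and \eqref{eq:AP1*} by the identical compactness and Hahn--Banach arguments. No genuine obstacle remains at the level of the theorem itself, since all the analytic content --- the cokernel estimate \eqref{eq:adjointglobalestimate2} obtained by weak compactness, and the Hahn--Banach identification of the range with $\mathcal{K}^\circ$ --- is already carried out in Lemmas \ref{lem:propertyBadjoint} and \ref{lem:fdcokerneladjoint}; the only point requiring care here is the elementary codimension and closedness bookkeeping for the annihilator.
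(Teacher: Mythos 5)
Your proposal is correct and follows essentially the same route as the paper: finite-dimensional kernel via Lemma \ref{lem:fdkernel}, then the chain \eqref{eq:AP0*} $\Rightarrow$ Lemma \ref{lem:propertyBadjoint} $\Rightarrow$ Lemma \ref{lem:fdcokerneladjoint} to place the annihilator $\mathcal{K}^\circ$ inside the range, with the simpler boundary-free analogue for $\nu \geq 1$. Your explicit closedness bookkeeping (a subspace containing a closed subspace of finite codimension is itself closed, and the solution $(u,\UL{\phi}) \in \TSob^1(X)$ produced by Lemma \ref{lem:fdcokerneladjoint} is identified with an element of $\mathcal{Y}$ via \eqref{eq:alternativespace}) is a correct spelling-out of details the paper leaves implicit.
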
 
\begin{proof}
\begin{inparaenum}
	\item  Lemma \ref{lem:fdkernel} shows the kernel is finite dimensional. On the other hand, Lemma \ref{lem:fdcokerneladjoint} shows that the equation $\mathscr{P}(u,\UL{\phi}) = (h,\UL{k}) $ has a solution $(u,\UL{\phi}) \in \TSob^1(X)$ for $(h,\UL{k})$ in a space of finite codimension in $\Sob^0(X)\times H^{2-\mu}(X)$; clearly this $(u,\UL{\phi})$ can be identified with a unique element of $\mathcal{Y}$, namely $u$.   

\item  When $\nu \geq 1$, there is a natural analogue of Lemma \ref{lem:fdcokerneladjoint}. Since the arguments are simpler when there is no boundary operator, the proofs are omitted.
\end{inparaenum}
\end{proof}

\subsection{Unique solvability}

In this section, again let $\OL{X}$ denote a compact manifold with boundary as in Section \ref{subsect:manifoldwithboundary}. This time, consider 
\[
P(\lambda) \in \mathrm{Bess}^{(\lambda)}_\nu(X).
\]
Assume that $P(\lambda)$ is parameter-elliptic at $\partial X$ with respect to an angular sector $\Lambda$ in the sense of Section \ref{subsect:semiclassicalbessel}. If $0 < \nu < 1$, fix a scalar boundary condition $T(\lambda)$ with $\mathrm{ord}^{(\lambda)}_\nu(T(\lambda)) \leq \mu$, and assume that $\mathscr{P}(\lambda) = \{P(\lambda),T(\lambda)\}$ is parameter-elliptic at $\partial X$ with respect to $\Lambda$. It is also assumed that the ``principal parts'' of $P(\lambda), T(\lambda)$ do not depend on $\lambda$, so the spaces $\mathcal{Y}$ defined in the previous section are independent of $\lambda$.

The parameter-dependent versions of \eqref{eq:AP0}, \eqref{eq:AP1}, \eqref{eq:AP0*}, \eqref{eq:AP1*} are obtained by replacing the norms $\| \cdot \|$ with their uniform counterparts $\VERT \cdot \VERT$, and insisting that the estimates hold for all $\lambda \in \Lambda$.

\begin{theo} \label{theo:Lambdafredholm}
Let $\nu > 0$ and $P(\lambda), \mathscr{P}(\lambda), \Lambda$ be as above. Suppose that the parameter-dependent versions of \eqref{eq:AP0}, \eqref{eq:AP1}, \eqref{eq:AP0*}, \eqref{eq:AP1*} hold. 

\begin{enumerate} \itemsep6pt
\item Let $0 < \nu < 1$. There exists $R> 0$ such that
\[
\mathscr{P}(\lambda): \mathcal{Y} \rightarrow \Sob^0(X) \times H^{2- \mu}(\partial X)
\]
is an isomorphism for all $\lambda \in \Lambda$ satisfying $|\lambda| > R$.
\item Let $\nu \geq 1$. Then there exists $R> 0$ such that
\[
P(\lambda): \mathcal{Y} \rightarrow \Sob^0(X)
\]
is an isomorphism for all $\lambda \in \Lambda$ satisfying $|\lambda| > R$.
\end{enumerate}
\end{theo}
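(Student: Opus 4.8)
The plan is to promote the Fredholm conclusion of Theorem~\ref{theo:fredholm} to invertibility by showing that, in the parameter-elliptic regime, both the kernel and the cokernel of $\mathscr{P}(\lambda)$ become trivial once $|\lambda|$ is large. For each fixed $\lambda\in\Lambda$ the parameter-dependent estimates reduce (at that value of $\lambda$) to the ordinary estimates \eqref{eq:AP0}, \eqref{eq:AP0*} (resp.\ \eqref{eq:AP1}, \eqref{eq:AP1*}), so Theorem~\ref{theo:fredholm} already gives that $\mathscr{P}(\lambda)\colon\mathcal{Y}\to\Sob^0(X)\times H^{2-\mu}(\partial X)$ is Fredholm; it remains to kill kernel and cokernel for $|\lambda|>R$.

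First I would treat injectivity for $0<\nu<1$. Using the scaling property of the parameter-dependent norms from Section~\ref{subsect:semiclassicalnorms}, the term $\VERT u\VERT_{\Sob^0(X)}$ on the right of the parameter version of \eqref{eq:AP0} is $\le C|\lambda|^{-1}\VERT u\VERT_{\Sob^1(X)}$, and the compactly supported remainder obeys $\VERT\chi u\VERT_{H^m(X)}\le C|\lambda|^{-(1-m)}\VERT u\VERT_{\Sob^1(X)}$: indeed $\chi u$ lives in a fixed compact subset of $X$ where $\Sob^1$ is equivalent to $H^1$, and interpolation between $H^1$ and $L^2$ combined with $\|\chi u\|_{L^2}\le C|\lambda|^{-1}\VERT u\VERT_{\Sob^1(X)}$ produces the gain precisely because $m<1$. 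Absorbing both terms yields $\VERT u\VERT_{\Sob^1(X)}\le C\VERT\mathscr{P}(\lambda)u\VERT_{\Sob^0(X)\times H^{2-\mu}(\partial X)}$ for $|\lambda|>R_1$, which controls the full $\mathcal{Y}$-norm and forces $\ker\mathscr{P}(\lambda)=0$ with closed range.

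For the cokernel it suffices to show $\mathcal{K}=\ker\mathscr{P}'(\lambda)|_{\widetilde{\mathcal{Z}}}=\{0\}$ (equivalently the kernel of $\mathscr{P}^*(\lambda)$ on $\widetilde{\mathcal{X}}$, via Lemma~\ref{lem:propertyBadjoint}), after which Lemma~\ref{lem:fdcokerneladjoint} gives surjectivity onto all of $\Sob^0(X)\times H^{2-\mu}(\partial X)$. Here pure absorption in \eqref{eq:fdcokernelapriori2} is not available, since with $m<0$ the genuine Sobolev part $\|\chi v\|_{H^m}$ is only comparable to $\VERT v\VERT_{\Sob^0(X)}$, so I would instead argue by contradiction. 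Given $|\lambda_n|\to\infty$ and unit-norm kernel elements $(v_n,\UL{v_n})$, the estimate \eqref{eq:fdcokernelapriori2} forces $\VERT\chi v_n\VERT_{H^m(X)}$ to stay bounded below, while $\VERT(v_n,\UL{v_n})\VERT_{\Sob^{-1}(X)\times H^{\mu-3}(\partial X)}\to 0$. Extracting weak limits $v_n\rightharpoonup v$, $\UL{v_n}\rightharpoonup\UL{v}$ together with a strong limit $\chi v_n\to\chi v$ in $H^m_K(X)$ (Rellich, $m<0$), one finds $v\neq 0$. Testing the identity $\mathscr{P}'(\lambda_n)(v_n,\UL{v_n})=0$ against a fixed element of $\TSob^2(X)$ and dividing by $|\lambda_n|^2$, the leading parameter symbol of $P_1(\lambda)$ survives in the limit while all lower-order contributions vanish; parameter-ellipticity of this leading part then forces $v=0$, a contradiction. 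Hence $\mathcal{K}=\{0\}$ for $|\lambda|>R_2$.

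Setting $R=\max(R_1,R_2)$, for $|\lambda|>R$ the map $\mathscr{P}(\lambda)$ is a continuous bijection of Banach spaces (completeness and boundedness from Lemma~\ref{lem:Yk}), hence an isomorphism by the open mapping theorem. The case $\nu\ge 1$ follows verbatim with \eqref{eq:AP1}, \eqref{eq:AP1*} and all boundary terms deleted. The main obstacle is the cokernel step: unlike the direct estimate, the adjoint remainder cannot simply be absorbed, so one must run the rescaling and compactness argument and verify that the weak limit is annihilated by the leading parameter-elliptic symbol. This is where the hypothesis $m<0$ (for the compactness) and the parameter-ellipticity of $P_1(\lambda)$ (to eliminate the limit) are used decisively, and where the varying operators $\mathscr{P}'(\lambda_n)$ demand the most care.
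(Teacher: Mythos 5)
Your injectivity half is exactly the paper's argument: the paper's entire proof consists of absorbing the error terms in the parameter-dependent versions of \eqref{eq:AP0} \emph{and} \eqref{eq:AP0*}, concluding that $\mathscr{P}(\lambda)$ and $\mathscr{P}(\lambda)'$ are both injective for $|\lambda|$ large, and then invoking the Fredholm machinery of Section \ref{subsect:thefredholmproperty} (Lemma \ref{lem:fdcokerneladjoint} with $\mathcal{K}=0$ gives surjectivity) together with the open mapping theorem. The divergence, and the genuine gap, is in your cokernel step. Your premise that ``pure absorption in \eqref{eq:fdcokernelapriori2} is not available'' is false: for $m<0$ the parameter-dependent norm on $H^m$ is dual to that on $H^{-m}$, and since $\VERT \phi \VERT_{H^{-m}(X)} \geq |\lambda|^{-m}\| \phi \|_{H^0(X)}$ one gets $\VERT \chi v \VERT_{H^m(X)} \leq C|\lambda|^{m} \VERT v \VERT_{\Sob^0(X)}$, which tends to zero as $|\lambda| \rightarrow \infty$ precisely because $m<0$. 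This is the actual role of the hypothesis $m<0$ in the parameter-dependent setting: in Theorem \ref{theo:fredholm} it served compactness of $\Sob^0(X) \hookrightarrow H^m$, while here it makes the compactly supported remainder absorbable, so the adjoint estimate closes by the same one-line absorption as the direct one.

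Worse, your substitute compactness argument is not merely unnecessary — it would fail. Its final step asserts that after dividing $\mathscr{P}'(\lambda_n)(v_n,\UL{v_n})=0$ by $|\lambda_n|^2$ and passing to the weak limit, ``parameter-ellipticity of this leading part then forces $v=0$.'' But the theorem's hypotheses only require parameter-ellipticity of $P_1(\lambda)$ at $\partial X$; in the interior $P(\lambda)=P_1(\lambda)+P_2(\lambda)$ with $P_2(\lambda) \in \Psi^{2,(\lambda)}_{\mathrm{comp}}(X)$ is explicitly allowed to be non-elliptic — this is the entire motivation of Section \ref{sect:fredholm} (Kerr--AdS ergoregions, horizons), where interior control enters \emph{only} through the assumed estimates \eqref{eq:AP0*}, \eqref{eq:AP1*}, never through a symbol hypothesis away from the boundary. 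In particular the coefficient of $\lambda^2$ (including the $\lambda^2$-part of $P_2(\lambda)$, which survives your rescaling) need be neither elliptic nor invertible on the support of $\chi$, so the limiting identity does not force $v=0$ there, and your contradiction cannot be reached in exactly the degenerate situations the theorem is designed to cover. The fix is to discard the weak-limit route and prove injectivity of $\mathscr{P}(\lambda)'$ by absorption, as above.
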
 
\begin{proof}
The parameter-dependent versions of \eqref{eq:AP0}, \eqref{eq:AP0*} show that $\mathscr{P}(\lambda)$ and $\mathscr{P}(\lambda)'$ are injective on the appropriate spaces (for $\lambda \in \Lambda$ with $|\lambda|$ sufficiently large). This implies that $\mathscr{P}(\lambda)$ is an isomorphism for $|\lambda|$ sufficiently large. Similar remarks hold for $P$ when $\nu \geq 1$.
\end{proof}

\section{Completeness of generalized eigenfunctions} \label{sect:completeness}

In this section, sufficient conditions are given which guarantee that an elliptic parameter-dependent Bessel operator has a complete set of generalized eigenvectors. Completeness of eigenvectors for non-self adjoint boundary value problems has a long history, going back to classical works of Keldysh \cite{keldyvs1951characteristic}, Browder \cite{browder1953eigenfunctions}, Schechter \cite{schechter1959remarks}, Agmon \cite{agmon1962eigenfunctions}, among many others. The results of this section apply to large classes of Bessel operator pencils with a spectral parameter in the boundary condition, and two-fold completeness is established (a condition stronger than completeness, described below).

One application of this section is to describe a class of boundary conditions for which linearized scalar perturbations of global anti-de Sitter space have complete sets of normal modes. Recent numerical and perturbative studies have hinted at a relationship between the linear spectra of such perturbations and possible nonlinear instability mechanisms \cite{balasubramanian2014holographic,bizon:2014:grg,bizon:2011:prl,bizon2015resonant,buchel2015conserved,dias:2012:cqg:b,dias:2012:cqg}. These normal modes have been studied by separation of variables techniques, but there has not appeared a general criterion guaranteeing completeness of normal modes (nor even the discreteness of normal frquencies) for general boundary conditions. The results of this section also apply to more general stationary aAdS spacetimes with compact time slices where $\partial_t$ is Killing but the spacetime is not necessarily static.

\subsection{Two-fold completeness}

The main reference for this section is \cite[Chapter II]{markus2012introduction}.
Let $\OL{X}$ be a manifold with boundary, and let $P(\lambda) \in \mathrm{Bess}^{(\lambda)}_\nu(X)$ be a  parameter-dependent Bessel operator. If $0< \nu < 1$, let $T(\lambda)$ be a scalar parameter-dependent boundary operator, written in the form $T(\lambda) = T_1 + \lambda T_0$.

If $0 < \nu < 1$, a complex number $\lambda_0 \in \mathbb{C}$ is said to be an eigenvalue of $\mathscr{P}(\lambda)$ if there exists $u_0 \in \Sob^2(X)$ such $\mathscr{P}(\lambda_0)u_0 = 0$. Corresponding to an eigenvalue $\lambda_0$, a sequence $(u_0,\ldots,u_k)$ with $u_0 \neq 0$ is said to be a chain of generalized eigenvectors if
\[\begin{cases}
P(\lambda_0)u_p + \partial_\lambda P(\lambda_0) u_{p-1} + \frac{1}{2} \partial_\lambda^2 P(\lambda_0) u_{p-2} = 0, \\
T(\lambda_0)u_p + \partial_\lambda T(\lambda_0)u_{p-1} = 0
\end{cases}
\]
for $p = 0,\ldots, k$. Thus $(u_0,\ldots,u_k)$ is a chain of generalized eigenvectors with eigenvalue $\lambda_0$ if and only if the function
\[
u(t) = e^{\lambda_0 t} \sum_{j=0}^k \frac{t^j}{j!} u_{k-j}
\]
solves the (time-dependent) equation $\mathscr{P}(\partial_t)u(t) = 0$. Such a solution $u(t)$ is called elementary. To each elementary solution is associated the Cauchy data $(u(0),\partial_t u(0))$. The set of generalized eigenvectors (for all possible eigenvalues) is said to be two-fold complete in a Hilbert space $H$ continuously embedded in $\Sob^0(X)\times \Sob^0(X)$ if the span of all Cauchy data $(u(0),\partial_t u(0))$ corresponding to elementary solutions (for all eigenvalues) is dense in $H$. The same definition holds for $\nu \geq 1$, this time replacing $\mathscr{P}(\lambda)$ with $P(\lambda)$.

A general criterion concerning two-fold completeness is given by \cite[Theorem 3.4]{yakubov1993completeness}, which is a refinement of the standard reference \cite[Corollary XI.9.31]{dunford1963linear}.

\begin{theo} \label{theo:completeness}
Let $P(\lambda), T(\lambda)$ be defined as above. Fix rays $\Gamma_1,\ldots, \Gamma_s$ through the origin of the complex plane such the angle between any two adjacent rays is less than or equal to $\pi/n$, where $\dim X = n$. 
\begin{enumerate} \itemsep6pt
	\item Let $0 < \nu < 1$. If $\mathscr{P}(\lambda)$ is elliptic with respect to $\Gamma_1,\ldots, \Gamma_s$, then the eigenvalues of $\mathscr{P}(\lambda)$ are discrete and the set of generalized eigenvectors is two-fold complete in the space $\{(v_1,v_2)\in \Sob^2(X)\times \Sob^1(X): T_0 v_2 + T_1 v_1 = 0 \}$.
	
	\item Let $\nu \geq 1$.  If $P(\lambda)$ is elliptic with respect to $\Gamma_1,\ldots, \Gamma_s$, then the eigenvalues of $P(\lambda)$ are discrete and the set of generalized eigenvectors is two-fold complete in the space $\Sob^2(X)\times \Sob^1(X)$.
\end{enumerate}
\end{theo}
\begin{proof} 
	\begin{inparaenum}
	\item First suppose that $0 < \nu < 1$. To apply \cite[Theorem 3.4]{yakubov1993completeness}, it must be verified that the singular values of the embeddings $J_k : \Sob^{k}(X)\hookrightarrow \Sob^{k-1}(X)$ satisfy $s_j(J_k) \leq Cj^{-1/n}$ for $k = 1,2$, and that the space $\{(v_1,v_2)\in \Sob^2(X)\times \Sob^1(X): T_0 v_2 + T_1 v_1 = 0 \}$ is dense in $\Sob^1(X)\times \Sob^0(X)$.
	
The claim about the singular values follows from Lemma \ref{lem:singularvaluesmanifold}. To verify the density claim, let $(u_1,u_2) \in \Sob^1(X)\times \Sob^0(X)$, and take a sequence $(v_1^n,v_2^n) \in \Sob^2(X)\times \Sob^1(X)$ such that $(v_1^n,v_2^n) \rightarrow (u_1,u_2)$ in $\Sob^1(X)\times \Sob^0(X)$ as $n \rightarrow \infty$. Note that 
\[
T_0 v_2^n + T_1 v_1^n \in H^{2-\mu}(\partial X).
\]
Once the sequence $(v_1^n,v_2^n)$ is fixed, choose a sequence $\lambda_n\in\mathbb{C}$ such that $|\lambda_n|$ tends to infinity along one of the rays of ellipticity (say $\Gamma_1$) and 
\begin{equation} \label{eq:lambdagrowth}
|\lambda_n|^{-1} \|T_0 v_2^n + T_1 v_1^n \|_{H^{2-\mu}(\partial X)} \rightarrow 0
\end{equation}
as $n\rightarrow \infty$. According to Theorem \ref{theo:Lambdafredholm}, the operator \[
\mathscr{P}(\lambda_n)^{-1} : \Sob^0(X) \times H^{2-\mu}(X) \rightarrow \Sob^2(X)
\] 
exists for $n$ sufficiently large, where $ \mathrm{ord}_\nu^{(\lambda)}(T(\lambda)) \leq \mu$. Let
\[
w_1^n = \mathscr{P}(\lambda_n)^{-1}(0,-T_0 v_2^n - T_1 v_1^n),
\] 
so $w_1^n$ lies in $\Sob^2(X)$, and set $w_2^n = \lambda_n w_1^n$. Then 
\[
(v_1^n + w_1^n, v_2^n + w_2^n) \in \{(v_1,v_2)\in \Sob^2(X)\times \Sob^1(X): T_0 v_2 + T_1 v_1 = 0 \}.
\]
Furthermore, according to Theorems \ref{theo:bvptheosemiclassical} and \ref{theo:Lambdafredholm} the solution $w_1^n$ satisfies
\[
|\lambda_n|^{2-s} \| w_1^n \|_{\Sob^s(X)} \leq C \|T_0 v_2^n + T_1 v_1^n \|_{H^{2-\mu}(\partial X)}
\]
for $s = 0,1$. Thus $(w_1^n,w_2^n) \rightarrow 0$ in $\Sob^1(X)\times \Sob^0(X)$ by the choice of $\lambda_n$ in \eqref{eq:lambdagrowth}. This shows that $(v_1^n + w_1^n, v_2^n + w_2^n) \rightarrow (u_1,u_2)$, establishing the density.

\item  For $\nu \geq 1$ the singular value estimates remain the same, and the corresponding density of $\Sob^2(X) \times \Sob^1(X)$ in $\Sob^1(X)\times \Sob^0(X)$ is trivial.
\end{inparaenum}
\end{proof}

\section*{Acknowledgements}
This paper is based on work supported by NSF grants DMS-1201417 and DMS-1500852. I would like to thank Maciej Zworski for his encouragement on this project.

\appendix  

\section{Proof of lemma \ref{lem:traceproperty}} \label{appendix1}

The proof of Lemma \ref{lem:traceproperty} is broken up into several stages. Recall in this section that $\gamma_\pm$ are defined as in the beginning of Section \ref{subsect:traces} without any mention of the space $\Cnu$.

\begin{lem} \label{lem:absolutelycontinuoustrace} Let $\nu  > 0$.
	\begin{enumerate} \itemsep6pt
		\item If $u \in \Sob^{1}(\RNP)$ and $\gamma_- u = 0$, then for a.e. $y \in \RN$,
		\[
		u(x,y) = x^{1/2-\nu}\int_0^x t^{\nu-1/2} \TP u(t,y) \,dt.
		\]
		\item Suppose in addition that $0 < \nu < 1$. If $u \in \Sob^{2}(\RNP)$, and $\UL{\gamma}u = 0$, then for a.e. $y \in \RN$,
		\[
		u(x,y) = x^{1/2-\nu}\int_0^x t^{-2\nu + 1} \int_0^t s^{1/2-\nu} \TP^*\TP u(s,y)\, ds\, dt.
		\]
	\end{enumerate}
\end{lem}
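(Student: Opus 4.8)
The plan is to recognize both identities as statements about solving a first-order ODE in the normal variable $x$ with a vanishing boundary condition, exploiting the factorization $|\TD|^2 = \TP^*\TP$ together with the explicit formulas for $\TP$ and $\TP^*$. The key observation is that the operator $\TP = x^{1/2-\nu}\partial_x x^{\nu-1/2}$ has an elementary integrating-factor structure: if $v = \TP u$, then $\partial_x(x^{\nu-1/2}u) = x^{\nu-1/2}v$, so formally $x^{\nu-1/2}u(x,y) = \int_0^x t^{\nu-1/2}v(t,y)\,dt$ once one knows the boundary term at $x=0$ vanishes. The entire content of part $(1)$ is justifying this fundamental-theorem-of-calculus step rigorously for $\Sob^1$ functions, and part $(2)$ is simply applying part $(1)$ twice.

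For part $(1)$, first I would fix the Fourier coefficient $\hat u(q)$, which lies in $\Sob^1(\RP)$ for each $q\in\mathbb{Z}^{n-1}$, reducing everything to a one-dimensional problem where $u(\cdot,y)$ should be read as the scalar function $x\mapsto \hat u(q)(x)$ (or, working directly in $y$, as the $L^2(\RN)$-valued function of $x$). The function $w(x) := x^{\nu-1/2}u(x,y)$ satisfies $w' = x^{\nu-1/2}\TP u \in L^1_{loc}$, so $w$ is absolutely continuous on $(0,\infty)$ and $w(x) - w(x_0) = \int_{x_0}^x t^{\nu-1/2}\TP u(t,y)\,dt$. The point is then to let $x_0\to 0^+$: the condition $\gamma_- u = 0$ is precisely the statement that $w(x_0) = x_0^{\nu-1/2}u(x_0,y)\to 0$, interpreted through the trace theory of Lemma \ref{lem:trace} (recall $u\mapsto x^{\nu-1/2}u$ is an isometric isomorphism $\Sob^1\to H^1_{1-2\nu}$, and the trace of an $H^1_{1-2\nu}$ function is its value at $0$ via the Sobolev embedding $H^1_\mu(\RP;E)\hookrightarrow C^0(\OL{\RP};E)$ for $\mu<1$). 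I would invoke the Sobolev embedding directly on $w$ as an $L^2(\RN)$-valued function to get continuity up to $x=0$ with $w(0)=\gamma_- u = 0$, yielding the stated formula for a.e. $y$.

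For part $(2)$, with $0<\nu<1$ so that both traces $\gamma_\pm$ are defined, I would set $v = \TP u$. Since $u\in\Sob^2(\RNP)$ we have $v\in\Sob^1_*(\RNP)$, and $\gamma_+ u = \gamma_-^*(v) = \gamma_-^* \TP u = 0$ by hypothesis. Applying the adjoint analogue of part $(1)$ to $v$ — that is, the same absolute-continuity argument run with $\TP^*= -x^{\nu-1/2}\partial_x x^{1/2-\nu}$ in place of $\TP$ and the trace $\gamma_-^*$ in place of $\gamma_-$ — gives
\[
v(t,y) = t^{\nu-1/2}\int_0^t s^{1/2-\nu}\TP^*v(s,y)\,ds = t^{\nu-1/2}\int_0^t s^{1/2-\nu}\TP^*\TP u(s,y)\,ds.
\]
Substituting this expression for $v=\TP u$ into the formula from part $(1)$ (which applies because $\gamma_- u = 0$) produces exactly the iterated integral in the statement, after writing $t^{\nu-1/2}\cdot t^{\nu-1/2} = t^{2\nu-1}$ so that $x^{1/2-\nu}\int_0^x t^{\nu-1/2}v(t,y)\,dt = x^{1/2-\nu}\int_0^x t^{-2\nu+1}\!\int_0^t s^{1/2-\nu}\TP^*\TP u(s,y)\,ds\,dt$.

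The main obstacle I anticipate is the rigorous passage to the limit $x_0\to 0$ and the identification of the boundary value with the abstractly-defined trace $\gamma_-$ (respectively $\gamma_-^*$), rather than the mechanics of the fundamental theorem of calculus. One must be careful that $w$ is genuinely absolutely continuous up to the boundary and that its limiting value at $0$ coincides with the trace supplied by Lemma \ref{lem:trace}; this is exactly where the weighted Sobolev embedding $H^1_\mu\hookrightarrow C^0$ for $\mu<1$ and the density of $C_c^\infty(\OL{\RNP})$ must be used to control the $y$-dependence in an $L^2(\RN)$-valued (or Fourier-coefficient-by-Fourier-coefficient) sense. A secondary technical point is verifying that the inner integral in $(2)$ is finite for a.e. $y$ and that Fubini/Tonelli justifies the iterated-integral manipulation, which follows from $\TP^*\TP u\in L^2(\RNP)$ together with the integrability of the weights near $x=0$ granted by $0<\nu<1$.
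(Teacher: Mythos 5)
Your approach is essentially the paper's: the published proof likewise invokes the weighted Sobolev embedding of Section \ref{subsect:weighted} to conclude that $x\mapsto x^{\nu-1/2}u(x,y)$ (and, for part (2), also $x\mapsto x^{1/2-\nu}\TP u(x,y)$) is absolutely continuous on $\OL{\RP}$ for a.e.\ $y\in\RN$, identifies the limiting value at $x=0$ with the trace conditions $\gamma_-u=0$, $\gamma_+u = \gamma_-^*\TP u = 0$, and finishes by the fundamental theorem of calculus. Your Fourier-coefficient/$L^2(\RN)$-valued framing of the embedding, and your reduction of part (2) to ``part (1) run with $\TP^*$ and $\gamma_-^*$,'' are faithful renditions of exactly this argument, and you correctly flagged the trace identification as the only real content.

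However, your final algebra in part (2) contains a genuine bookkeeping error---two, in fact, which you then paper over to force agreement with the displayed statement. From $\TP^* = -x^{\nu-1/2}\partial_x x^{1/2-\nu}$ one gets $\partial_t\bigl(t^{1/2-\nu}v\bigr) = -t^{1/2-\nu}\TP^* v$, so the adjoint analogue of part (1) reads
\[
v(t,y) = -\,t^{\nu-1/2}\int_0^t s^{1/2-\nu}\,\TP^*v(s,y)\,ds,
\]
with a minus sign you dropped. Substituting into part (1) then produces the weight $t^{\nu-1/2}\cdot t^{\nu-1/2} = t^{2\nu-1}$, exactly as you compute---but your next display silently replaces $t^{2\nu-1}$ by $t^{-2\nu+1}$, which is false unless $\nu=1/2$. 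The honest output of your (and the paper's) argument is
\[
u(x,y) = -\,x^{1/2-\nu}\int_0^x t^{2\nu-1}\int_0^t s^{1/2-\nu}\,\TP^*\TP u(s,y)\,ds\,dt,
\]
as one can verify on $u = x^{\nu+5/2}$, for which $\TP^*\TP u = -2(2\nu+2)x^{\nu+1/2}$ and both traces vanish: the corrected formula returns $u$, while the formula as printed in the lemma yields $-\frac{2\nu+2}{4-2\nu}\,x^{9/2-3\nu}$ and fails even at $\nu=1/2$. In other words, the exponent $-2\nu+1$ and the missing sign are typos in the statement itself; your derivation was on the right track and would have caught this had you not forced the last line to match the display.
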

\begin{proof}
	
	These two facts follow from the Sobolev embedding for weighted spaces, as in Section \ref{subsect:weighted}. In the first case, for a.e. $y\in \RN$ the function $x \mapsto x^{\nu-1/2}u(x,y)$ is absolutely continuous on $\OL{\RP}$, and $\gamma_- u = 0$ implies that $x^{\nu - 1/2}u(x,y) \rightarrow 0$ as $x\rightarrow 0$ for a.e. $y \in \RN$. The the result follows from the fundamental theorem of calculus. A similar argument applies in the second case, in which the functions $x \mapsto x^{\nu-1/2}u(x,y)$ and  $x \mapsto x^{1/2-\nu}\TP u(x,y)$ are absolutely continuous on $\OL{\RP}$ for a.e. $y\in \RN$, and vanish at $x=0$.
\end{proof}

\begin{lem} \label{lem:mathring}
	Let $0 < \nu < 1$. Then $\dot{\Sob}^{1}(\RNP)= \ker \gamma_-$, and $\dot{\Sob}^{2}(\RNP) = \ker \UL{\gamma}$.
\end{lem}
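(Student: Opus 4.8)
The plan is to prove both equalities via a pair of inclusions, exploiting the isomorphism $u \mapsto x^{\nu-1/2}u$ that identifies $\Sob^1(\RNP)$ with $H^1_{1-2\nu}(\RNP)$ (and the corresponding statement relating $\Sob^1_*$ and $\Sob^2$ to weighted spaces). The inclusions $\mathring{\Sob}^1(\RNP) \subseteq \ker\gamma_-$ and $\mathring{\Sob}^2(\RNP) \subseteq \ker\UL{\gamma}$ are immediate from continuity of the trace maps: every element of $C_c^\infty(\RNP)$ is annihilated by $\gamma_-$ (respectively $\UL\gamma$), and $\gamma_-,\UL\gamma$ are continuous, so the closures are contained in the kernels. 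The substance of the lemma is the reverse inclusions. First I would handle $\ker\gamma_- \subseteq \mathring{\Sob}^1(\RNP)$. Since $u\mapsto x^{\nu-1/2}u$ carries $\Sob^1(\RNP)$ isometrically onto $H^1_{1-2\nu}(\RNP)$ and carries the distributional trace $\gamma_-$ onto the weighted trace of Lemma \ref{lem:trace}, the statement $\ker\gamma_- = \mathring{\Sob}^1(\RNP)$ is equivalent to the assertion that the kernel of the trace on $H^1_{1-2\nu}(\RNP)$ equals $\mathring H^1_{1-2\nu}(\RNP)$. This is exactly the content cited after \eqref{eq:L2trace}, namely \cite[Proposition 1.2]{grisvard:1963}, valid since $1-2\nu < 1$; so for $s=1$ the result may be obtained by transferring Grisvard's statement through the isomorphism.

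For the second equality the key structural fact is the representation formula in Lemma \ref{lem:absolutelycontinuoustrace}(2): if $u\in\Sob^2(\RNP)$ with $\UL\gamma u = 0$, then $u$ is recovered by a double integral of $\TP^*\TP u$. I would use this to construct an explicit approximating sequence in $C_c^\infty(\RNP)$. The strategy is to first reduce to compactly supported data by a cutoff in the tangential variable and a cutoff away from $x=0$ controlled via the representation formula, then mollify. Concretely, given $u\in\ker\UL\gamma$, I would write $\TP u \in \Sob^1_*(\RNP)$ and observe that $\UL\gamma u = 0$ forces both $\gamma_- u = 0$ and $\gamma_-^*(\TP u)=0$; the former places $u$ in $\mathring{\Sob}^1(\RNP)$ by the $s=1$ case, and the latter places $\TP u$ in $\ker\gamma_-^* = \mathring{\Sob}^1_*(\RNP)$ (again by the weighted-space version of Grisvard applied to $H^1_{2\nu-1}$, legitimate because $2\nu - 1 < 1$). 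The point is then that approximations $u_n\to u$ in $\Sob^1$ and $w_n \to \TP u$ in $\Sob^1_*$ with $u_n,w_n\in C_c^\infty(\RNP)$ must be reconciled so that $\TP u_n \to \TP u$ simultaneously, which is what $\Sob^2$-convergence demands.

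The hard part will be this reconciliation: a naive approximation of $u$ in $\Sob^1$ need not have its $\TP$-derivative converging in $\Sob^1_*$, so I cannot simply mollify. The natural fix is to run the approximation through the Dirichlet Laplacian machinery of Section \ref{subsect:dirichletlaplacian}: by \eqref{eq:D(L)} one has $D(L) = \mathring{\Sob}^1(\RNP)\cap\{u:\Delta_\nu u\in L^2\}$, and the remark there asserts $D(L) = \Sob^2\cap\mathring{\Sob}^1$ in one dimension with matching norms. I would therefore show $\ker\UL\gamma \subseteq D(L)$ with the graph norm controlled by the $\Sob^2$ norm, then approximate in $D(L)$ using that $(L+a)^{-1}$ (Lemma \ref{lem:laxmilgram}) maps $L^2$ into $D(L)$ and that $C_c^\infty(\RNP)$ is a core. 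Identifying $D(L)$ with $\Sob^2\cap\mathring{\Sob}^1$ via the open-mapping argument in the remark, and using the density of $C_c^\infty$ in $D(L)$, then yields $\ker\UL\gamma\subseteq\mathring{\Sob}^2(\RNP)$, completing the proof.
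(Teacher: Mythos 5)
Your treatment of the first equality is fine and matches the paper: transferring \cite[Proposition 1.2]{grisvard:1963} through the isometry $u \mapsto x^{\nu-1/2}u$ is exactly how the paper disposes of $\mathring{\Sob}^1(\RNP) = \ker\gamma_-$. You also correctly diagnose the hard point for $s=2$ (a naive $\Sob^1$-approximation of $u$ gives no control of $\TP u_n$ in $\Sob^1_*$). But your proposed fix has a genuine gap: the claim that $C_c^\infty(\RNP)$ is a core for the Dirichlet Laplacian $L$ is \emph{false} when $0 < \nu < 1$. This is the limit-circle regime: the minimal operator $\Delta_\nu|_{C_c^\infty}$ is not essentially self-adjoint, and $L$ is a proper extension of its closure. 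Concretely, $D(L) = \Sob^2(\RNP)\cap\mathring{\Sob}^1(\RNP)$ imposes only $\gamma_- u = 0$, so it contains, e.g., $u = x^{1/2+\nu}\varphi(x^2)e^{i\langle q,y\rangle}$ with $\gamma_+ u = 2\nu\varphi(0) \neq 0$. Since the graph norm on $D(L)$ is equivalent to the $\Sob^2$ norm (your own open-mapping remark) and $\gamma_+$ is $\Sob^2$-continuous, the graph-norm closure of $C_c^\infty(\RNP)$ inside $D(L)$ is contained in $\ker\gamma_+$, hence cannot be all of $D(L)$. And if you retreat to the weaker claim that $C_c^\infty$ is graph-dense merely in $\ker\UL{\gamma}\subseteq D(L)$, that is precisely the statement $\ker\UL{\gamma} \subseteq \mathring{\Sob}^2(\RNP)$ you set out to prove — the argument becomes circular. (Note that $C_c^\infty$ \emph{is} a core when $\nu \geq 1$; that is exactly why no boundary condition is imposed in that regime, cf.\ Lemma \ref{app:nugeq1}, and it is what makes your route plausible-looking but regime-inappropriate here.)

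Ironically, you cite the right tool and then abandon it. The paper's proof of $\ker\UL{\gamma} \subseteq \mathring{\Sob}^2(\RNP)$ runs entirely through the representation formula of Lemma \ref{lem:absolutelycontinuoustrace}(2): after a routine truncation at large $x$, one sets $u_n = \chi(nx)u$ with $\chi$ vanishing on $[0,1]$ and equal to $1$ on $[2,\infty)$. Each $u_n$ is supported in a compact subset of $\RNP$, where the $\Sob^2$ and $H^2$ norms are comparable, so $u_n \in \mathring{H}^2(\RNP) = \mathring{\Sob}^2(\RNP)$ locally. The convergence $u_n \to u$ in $\Sob^2(\RNP)$ is then a Hardy-type estimate (in the style of \cite[Chapter 5.5, Theorem 2]{evans:2010}): the commutator errors $n\chi'(nx)\TP u$ and $n^2\chi''(nx)u$ live on $\{1/n \leq x \leq 2/n\}$, and the double-integral representation of $u$ in terms of $\TP^*\TP u$ — valid exactly because \emph{both} traces vanish — supplies the quantitative vanishing of $u$ and $\TP u$ at $x=0$ needed to kill these terms in $L^2$. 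If you replace your $D(L)$-core step with this cutoff-plus-Hardy argument, the rest of your outline goes through.
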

\begin{proof} The first equality comes from \cite[Proposition 1.2]{grisvard:1963}. It remains to show the second equality.
	\begin{inparaenum}

\item First show that if $u \in \Sob^{2}(\RNP)$ and $\UL{\gamma}u = 0$, then $u \in \dot{\Sob}^{2}(\RNP)$.  Begin by assuming that $u$ has compact support in $\OL{\RNP}$, which is possible by a standard truncation argument. Next, fix $\chi \in C_c^\infty(\RP)$ satisfying 
\[
	0 \leq \chi \leq 1, \quad \chi(x) = 0 \text{ for } 0 \leq x \leq 1, \quad \chi(x) = 1 \text{ for } x\geq 2.
\] 
	Consider the sequence $u_n(x,y) = \chi(nx)u(x,y)$; since $u_n$ has compact support in the interior $\RNP$, it follows that 
	\[
	u_n \in \dot\Sob^2(\RNP) \Longleftrightarrow u_n\in  \dot H^2(\RNP)
	\]
	 by comparability of norms in the interior and the well known characterization of $\dot{H}^2(\RNP)$ as the kernel of the smooth trace maps.
	
	It now suffices to prove that $u_n \rightarrow u$ in $\Sob^{2}(\RNP)$, since $\dot{\Sob}^{2}(\RNP)$ is closed. This is easily deduced from Lemma \ref{lem:absolutelycontinuoustrace}, imitating the proof of \cite[Chapter 5.5, Theorem 2]{evans:2010} for instance.
	
	\item The inclusion $\dot{\Sob}^{2}(\RNP) \subseteq \ker \UL{\gamma}$ is clear, since $\UL{\gamma} = 0$ for each $u \in C_c^\infty(\RNP)$, and hence $\UL{\gamma}u = 0$ for each $u \in \dot{\Sob}^{2}(\RNP)$ by density and continuity.
\end{inparaenum}
\end{proof}

\begin{lem}  \label{lem:tracelift}
	There exists a map 
	\[
	\mathcal{K}: C^\infty(\RN) \times C^\infty(\RN) \rightarrow \Cnu
	\]
	such that $\UL{\gamma} \circ \mathcal{K} = 1$ on $C^\infty(\RN)\times C^\infty(\RN)$ and $\widetilde{\mathcal{K}} = \mathcal{K} \times 1$ extends by continuity to a map
	\[
	\mathcal{K} : H^{s-\UL{\nu}}(\RN) \rightarrow \TSob^{s}(\RNP)
	\]
	for each $s=0,\pm 1,\pm 2$. If $s=2$, then $\mathcal{K}$ extends to a right inverse for $\UL{\gamma}$ acting on $\Sob^2(\RNP)$.
\end{lem}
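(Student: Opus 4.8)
The plan is to build $\mathcal{K}$ explicitly, mode by mode in the tangential Fourier variable, from two fixed one-dimensional profiles carrying the prescribed trace values. First I would fix $\tilde\chi\in C_c^\infty(\OL{\RP})$ with $\tilde\chi(0)=1$ and set
\[
\omega_-(x)=x^{1/2-\nu}\tilde\chi(x^2),\qquad \omega_+(x)=\tfrac{1}{2\nu}\,x^{1/2+\nu}\tilde\chi(x^2).
\]
These lie in the one-dimensional analogue of $\Cnu$, hence in $\Sob^s(\RP)$ for every $s=0,\pm1,\pm2$ by the remark following Definition \ref{defi:cnu} together with the duality embedding $\Sob^0(\RP)\hookrightarrow\Sob^{s}(\RP)$ for $s<0$. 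By Lemma \ref{lem:directtrace} they satisfy $\gamma_-\omega_-=1,\ \gamma_+\omega_-=0$ and $\gamma_-\omega_+=0,\ \gamma_+\omega_+=1$.

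For $\phi_\pm\in C^\infty(\RN)$ I would then define $\mathcal{K}(\phi_-,\phi_+)$ through its tangential Fourier coefficients,
\[
\widehat{\mathcal{K}(\phi_-,\phi_+)}(q)=\langle q\rangle^{\nu-1/2}\,\hat\phi_-(q)\,S_{\langle q\rangle}\omega_- + \langle q\rangle^{-\nu-1/2}\,\hat\phi_+(q)\,S_{\langle q\rangle}\omega_+,
\]
with $S_\tau$ the dilation of Section \ref{subsect:fourier}. Unwinding the definition shows $\mathcal{K}(\phi_-,\phi_+)(x,y)=x^{1/2-\nu}u_-(x^2,y)+x^{1/2+\nu}u_+(x^2,y)$, where $u_\mp$ are series in the rescaled cutoffs $\tilde\chi(\langle q\rangle^2 x^2)$; since $\hat\phi_\pm(q)$ decay faster than any polynomial and $\tilde\chi$ is compactly supported, these series converge in $C^\infty(\OL{\RNP})$ with supports in a fixed set $\{x^2< R_0\}$, so that $\mathcal{K}(\phi_-,\phi_+)\in\Cnu$. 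The trace identity $\UL{\gamma}\circ\mathcal{K}=1$ is checked one mode at a time: restriction in $x$ commutes with the $y$-Fourier transform, so $\widehat{\gamma_\pm \mathcal{K}(\phi_-,\phi_+)}(q)=\gamma_\pm\widehat{\mathcal{K}(\phi_-,\phi_+)}(q)$, and the scaling relations $\gamma_-S_\tau=\tau^{1/2-\nu}\gamma_-$, $\gamma_+S_\tau=\tau^{1/2+\nu}\gamma_+$ of Section \ref{subsect:tildespace} collapse each coefficient exactly to $\hat\phi_\pm(q)$.

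For the norm bound I would invoke the Fourier characterization of Lemma \ref{lem:fouriernorm}. The key cancellation is $S_{\langle q\rangle^{-1}}S_{\langle q\rangle}=\mathrm{id}$, which gives
\[
S_{\langle q\rangle^{-1}}\widehat{\mathcal{K}(\phi_-,\phi_+)}(q)=\langle q\rangle^{\nu-1/2}\hat\phi_-(q)\,\omega_- + \langle q\rangle^{-\nu-1/2}\hat\phi_+(q)\,\omega_+,
\]
so the $x$-profile no longer depends on $q$. Feeding this into Lemma \ref{lem:fouriernorm}, estimating $\|\cdot\|_{\Sob^s(\RP)}$ by $\|\omega_\pm\|_{\Sob^s(\RP)}$, and pairing $\langle q\rangle^{2s-1}$ against the two weights $\langle q\rangle^{2\nu-1}$ and $\langle q\rangle^{-2\nu-1}$ produces exactly $\sum_q\langle q\rangle^{2(s-1+\nu)}|\hat\phi_-(q)|^2+\sum_q\langle q\rangle^{2(s-1-\nu)}|\hat\phi_+(q)|^2$, i.e. $\|\phi_-\|_{H^{s-1+\nu}(\RN)}^2+\|\phi_+\|_{H^{s-1-\nu}(\RN)}^2=\|\UL{\phi}\|_{H^{s-\UL{\nu}}(\RN)}^2$, since $\UL{\nu}=(1-\nu,1+\nu)$. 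This yields $\|\mathcal{K}(\phi_-,\phi_+)\|_{\Sob^s(\RNP)}\le C\|\UL{\phi}\|_{H^{s-\UL{\nu}}(\RN)}$ for all $s=0,\pm1,\pm2$, and density of $C^\infty(\RN)\times C^\infty(\RN)$ in $H^{s-\UL{\nu}}(\RN)$ extends $\mathcal{K}$ continuously. The right-inverse property at $s=2$ then persists by continuity of $\UL{\gamma}:\Sob^2\to H^{2-\UL{\nu}}$ and of $\mathcal{K}$.

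The main obstacle I anticipate is purely bookkeeping: confirming that the assembled series genuinely represents an element of $\Cnu$ — with the smooth, $x^2$-dependent structure and compact support — rather than merely an abstract $\Sob^s$ element. Once that is secured, both the trace identity and the norm estimate are forced by the homogeneity and dilation relations already recorded in Sections \ref{subsect:fourier} and \ref{subsect:tildespace}, and the negative-$s$ cases require no extra work beyond the duality embedding noted above.
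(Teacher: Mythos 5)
Your proposal is correct and is essentially the paper's own proof: your $\omega_\pm$ and the Fourier-mode definition with $S_{\langle q\rangle}$ coincide, after unwinding the dilation, with the paper's profiles $v_\pm(x)=x^{1/2\mp\nu}\varphi(x^2)$ (up to the factor $(2\nu)^{-1}$) and the coefficients $\hat u_\pm(x,q)=\langle q\rangle^{-(1/2\pm\nu)}\hat f_\pm(q)v_\pm(\langle q\rangle x)$, and the continuity claim is likewise obtained from the Fourier characterization of Lemma \ref{lem:fouriernorm}. Your write-up merely makes explicit the norm computation and the membership of the assembled series in $\Cnu$, which the paper leaves as an appeal to Section \ref{subsect:fourier}.
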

\begin{proof}
	Let $\varphi \in C^\infty_c(\overline{\RP})$ be such that $\varphi = 1$ near $x =0$, and set
	\[
	v_-(x) = x^{1/2-\nu} \varphi(x^2), \quad v_+(x) = (2\nu)^{-1} x^{1/2+\nu} \varphi(x^2),
	\]
	so $v_\pm \in \Cnu$. Given $(f_-,f_+) \in C^\infty(\RN) \times C^\infty(\RN)$, define $u_\pm(x,y)$ by its Fourier coefficients,
	\[
	\hat{u}_\pm(x,q) = \left<q \right>^{-(1/2\pm \nu)} \hat{f}_\pm(q) v_\pm(\left<q\right>x). 
	\]
	Then $u_\pm\in \Cnu$ and $\gamma_\pm (u_- + u_+)  =  f_\pm$ in the sense of Lemma \ref{lem:directtrace}; set 
	\[
	\mathcal{K}(f_-,f_+) := u_- + u_+.
	\] 
	 Appealing to Section \ref{subsect:fourier} shows that the map defined by
	\[
	\widetilde{\mathcal{K}}(f_-,f_+) := (\mathcal{K}(f_-,f_+),f_-,f_+)
	\]
	extends by continuity to a map $H^{s-\UL{\nu}}(\RN) \rightarrow \TSob^s(\RNP)$. The last statement about the $s=2$ case follows from the natural identification $\Sob^2(X) = \TSob^2(X)$.
\end{proof}

\begin{lem}
	If $\nu > 0$, then $\Cnu$ is dense in $\Sob^{s}(\RNP)$ for each $s = 0,1,2$.
\end{lem}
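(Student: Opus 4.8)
The plan is to prove density of $\Cnu$ in $\Sob^2(\RNP)$ by combining the trace lift $\mathcal{K}$ from Lemma \ref{lem:tracelift} with the kernel characterization from Lemma \ref{lem:mathring}. The strategy is to first reduce an arbitrary $u \in \Sob^2(\RNP)$ to an element of $\mathring{\Sob}^2(\RNP)$ by subtracting off a function in $\Cnu$ carrying the correct boundary data, and then to approximate the resulting element of $\mathring{\Sob}^2(\RNP) = \ker\UL\gamma$ by elements of $C_c^\infty(\RNP) \subseteq \Cnu$.

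Concretely, first I would fix $u \in \Sob^2(\RNP)$ and set $\UL\phi = \UL\gamma u = (\gamma_- u, \gamma_+ u) \in H^{2-\UL\nu}(\RN)$, which makes sense since the trace $\UL\gamma$ is bounded on $\Sob^2(\RNP)$. Applying the lift, let $w = \mathcal{K}(\gamma_- u, (2\nu)^{-1}\gamma_+ u) \in \Cnu$; by Lemma \ref{lem:tracelift} with $s=2$, one has $\UL\gamma w = \UL\gamma u$, so that $u - w \in \ker \UL\gamma$. Here I must be slightly careful about the normalization convention in Lemma \ref{lem:directtrace}, namely that $\gamma_+$ of the $x^{1/2+\nu}$-component picks up a factor of $2\nu$; the definition of $\mathcal{K}$ in Lemma \ref{lem:tracelift} is already arranged so that $\UL\gamma \circ \mathcal{K} = 1$, so it suffices to feed $\mathcal{K}$ the pair $(\gamma_- u, \gamma_+ u)$ directly and conclude $\UL\gamma w = \UL\gamma u$. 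By Lemma \ref{lem:mathring}, $u - w \in \ker\UL\gamma = \mathring{\Sob}^2(\RNP)$, which is by definition the closure of $C_c^\infty(\RNP)$ in $\Sob^2(\RNP)$.

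It then follows that $u - w$ can be approximated in $\Sob^2(\RNP)$ by a sequence $\psi_n \in C_c^\infty(\RNP)$. Since $C_c^\infty(\RNP) \subseteq \Cnu$ (for $0 < \nu < 1$ the functions in $C_c^\infty(\RNP)$ vanish near $x=0$, hence trivially admit the form \eqref{eq:Cnu} with $u_\pm$ compactly supported away from the boundary), each $\psi_n + w \in \Cnu$, and $\psi_n + w \to (u - w) + w = u$ in $\Sob^2(\RNP)$. This exhibits $u$ as a limit of elements of $\Cnu$, completing the proof.

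The main obstacle is establishing that the two ingredients interlock correctly at the level of norms and boundary data: one must ensure $w \in \Cnu$ genuinely lies in $\Sob^2(\RNP)$ (guaranteed by the $s=2$ case of Lemma \ref{lem:tracelift}, since $\mathcal{K}$ maps $H^{2-\UL\nu}(\RN) \to \Sob^2(\RNP)$ continuously) and that the subtraction lands exactly in $\ker\UL\gamma$ rather than merely close to it. Everything downstream is then a formal consequence of Lemma \ref{lem:mathring}, which does the real analytic work by identifying $\mathring{\Sob}^2(\RNP)$ with $\ker\UL\gamma$; the present argument is essentially a bookkeeping reduction to that lemma together with the explicit lift $\mathcal{K}$.
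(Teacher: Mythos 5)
There is a genuine gap at the final step: your approximating sequence $\psi_n + w$ does not in general lie in $\Cnu$, because $w = \mathcal{K}(\UL{\gamma}u)$ need not lie in $\Cnu$. Lemma \ref{lem:tracelift} guarantees that $\mathcal{K}$ maps $C^\infty(\RN)\times C^\infty(\RN)$ into $\Cnu$; its extension to Sobolev boundary data is only asserted (and only true) as a continuous map $H^{2-\UL{\nu}}(\RN) \rightarrow \Sob^{2}(\RNP)$. For a general $u \in \Sob^{2}(\RNP)$ the traces satisfy merely $\UL{\gamma}u \in H^{1+\nu}(\RN)\times H^{1-\nu}(\RN)$, so $w$ is an $\Sob^{2}$ function with the right traces but is not of the form \eqref{eq:Cnu} with \emph{smooth}, compactly supported $u_\pm$. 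Notice that you flag the wrong risk in your closing paragraph: you worry about whether $w$ lies in $\Sob^{2}(\RNP)$, which is automatic from Lemma \ref{lem:tracelift}, while the assertion actually doing the work --- $w \in \Cnu$ --- is the one that fails. (Your preliminary steps are fine: $\UL{\gamma}\circ\mathcal{K} = 1$ extends by continuity to $H^{2-\UL{\nu}}(\RN)$ when $s=2$, which settles the $2\nu$ normalization issue you raise, so $u - w \in \ker\UL{\gamma} = \mathring{\Sob}^{2}(\RNP)$ by Lemma \ref{lem:mathring}, and $C_c^\infty(\RNP)\subseteq\Cnu$ as you argue.)

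The repair is a single extra approximation, and it is exactly what the paper's proof does: approximate $\UL{\gamma}u$ in $H^{2-\UL{\nu}}(\RN)$ by smooth pairs $\underline{v}_j \in C^\infty(\RN)\times C^\infty(\RN)$ and set $w_j = \mathcal{K}\underline{v}_j$, so that $w_j \in \Cnu$ by the smooth-data part of Lemma \ref{lem:tracelift} and $w_j \rightarrow w$ in $\Sob^{2}(\RNP)$ by continuity of $\mathcal{K}$; then $\psi_j + w_j \in \Cnu$ and $\psi_j + w_j \rightarrow u$. With that insertion your argument coincides with the paper's. One further small omission: the lemma claims density in $\Sob^{s}(\RNP)$ for all $s = 0,1,2$, and your proof treats only $s=2$; the case $s=0$ is immediate (as $C_c^\infty(\RNP)\subseteq\Cnu$ is already dense in $L^2$) and $s=1$ follows by the same scheme using only the trace $\gamma_-$, but this should be stated.
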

\begin{proof}
	The proof for $\nu \geq 1$ can be done directly by a mollification argument, so only the case $0 < \nu < 1$ is considered here. Furthermore, the result is obvious when $s=0$. The proof is given here in the case $s= 2$; the case $s=1$ is simpler, and can be handled similarly. 
	
	Suppose that $u \in \Sob^{2}(\RNP)$, and let $\tilde{u} = \mathcal{K}(\UL{\gamma}u)$, viewed as an element of $\Sob^2(X)$. Then $\UL{\gamma}(u - \tilde u) = 0$, so $u - \tilde u \in \dot{\Sob}^{2}(\RNP)$ by Lemma \ref{lem:mathring}. It follows that there exists a sequence $u_j \in C_c^\infty(\RNP)$ such that $u_j \rightarrow u-\tilde{u}$ in $\Sob^{2}(\RNP)$. On the other hand, approximate $\UL{\gamma}u$ by a sequence $\underline{v}_j \in C^\infty(\RN) \times C^\infty(\RN)$, and hence $\tilde{u}_j = \mathcal{K}\underline{v}_j$ satisfies $\tilde{u}_j \in \Cnu$ and $\tilde{u}_j \rightarrow \tilde u$ in $\Sob^{2}(\RNP)$. Therefore, $u_j + \tilde{u}_j \in \Cnu$ and $u_j + \tilde{u}_j \rightarrow u$, which shows that $\Cnu$ is dense in $\Sob^{2}(\RNP)$.
\end{proof}

\section{Singular values}

\begin{lem} \label{lem:singularvaluesmanifold} Let $\OL{X}$ be a compact manifold with boundary. If $\nu > 0$, then the embeddings
	\[
	J_1: \Sob^1(X) \hookrightarrow \Sob^0(X), \quad J_2: \Sob^2(X) \hookrightarrow \Sob^1(X)
	\]
	are compact, and the singular values of $J_i$ satisfy $s_j(J_i) \leq Cj^{-1/n}$.
\end{lem}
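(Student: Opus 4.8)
The plan is to reduce the statement on the compact manifold with boundary $\OL{X}$ to the model case on $\TSHARP = \RN \times (0,1)$ handled in Lemma \ref{lem:schattenlocal}, together with the standard interior singular-value estimate for the inclusions $H^{s+1}_K(X) \hookrightarrow H^s_K(X)$ over compact subsets $K \subseteq X$. The two ingredients are: the local Schatten bound near the boundary furnished by Lemma \ref{lem:schattenlocal}, and the classical fact that for a compact $n$-dimensional manifold the singular values of the inclusion $H^{s+1} \hookrightarrow H^s$ decay like $j^{-1/n}$ (by Weyl asymptotics for an elliptic operator, or by the same model computation on $\mathbb{T}^n$). These must be glued using the covering $(U_i,\psi_i)$ and quadratic partition of unity $\sum_i \chi_i^2 = 1$ used to define the norms in Definition \ref{defi:normonmanifold}, exactly as in the reduction carried out in the proof of Theorem \ref{theo:bvptheo} and Lemmas \ref{lem:invariance1}, \ref{lem:invariance2}.

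First I would fix the covering and partition of unity. For each boundary chart $U_i$, the map $u \mapsto (\chi_i u)\circ \psi_i^{-1}$ identifies the piece of $\Sob^s(X)$ supported there with a subspace of $\Sob^s(\RNP)$ (indeed $\Sob^s(\TSHARP)$, after arranging supports in $x < 1$), and by Lemma \ref{lem:invariance1} this identification is a norm-equivalence on the relevant support class; for interior charts $U_i \cap \partial X = \emptyset$, Lemma \ref{lem:invariance2} identifies the piece with a subspace of $H^s_K(\mathbb{T}^n)$. The compactness of $J_1, J_2$ is already known from Lemma \ref{lem:compact}, so the real content is the singular-value bound. The key structural tool is the direct-sum/assembly estimate: the map $u \mapsto ((\chi_i u)\circ\psi_i^{-1})_i$ embeds $\Sob^s(X)$ isometrically (by the quadratic partition of unity) into $\bigoplus_i \Sob^s(\TSHARP)$ (with interior summands being $H^s_K(\mathbb{T}^n)$), and similarly on the target, so that $J_i$ factors through the direct sum of the local inclusions up to bounded maps. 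Then I would invoke the submultiplicativity $s_j(AB) \leq s_j(A)\|B\|$ together with the sub-additivity-type bound $s_{i+j-1}(A \oplus B) $ controlled by interleaving the singular values of the finitely many summands, each of which satisfies $s_j \leq C j^{-1/n}$ by Lemma \ref{lem:schattenlocal} (boundary summands) or by the classical elliptic estimate (interior summands). Since a finite direct sum of sequences decaying like $j^{-1/n}$ still decays like $j^{-1/n}$ (the constant absorbs the finite number $N$ of charts), the conclusion $s_j(J_i) \leq C j^{-1/n}$ follows.

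The main obstacle I anticipate is making the assembly rigorous rather than heuristic: the issue is that $\Sob^s(X)$ is not literally the direct sum of the local spaces but only a closed subspace, and the partition of unity introduces commutator/lower-order terms. The clean way around this is to use the two elementary inequalities for singular values of a sum and of a composition, $s_{i+j-1}(A+B)\le s_i(A)+s_j(B)$ and $s_j(AB)\le s_j(A)\|B\|$ (both already used in the proof of Lemma \ref{lem:schattenlocal}), applied to the decomposition $J_i = \sum_k R_k \circ (\text{local inclusion}) \circ S_k$ where $S_k$ is the localization $u \mapsto (\chi_k u)\circ\psi_k^{-1}$ and $R_k$ is a bounded reconstruction map; the finitely many terms each carry the $Cj^{-1/n}$ bound, and interleaving the $N$ decaying sequences costs only a factor of $N^{1/n}$ in the constant. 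I would verify that the localization and reconstruction maps are genuinely bounded on the relevant Sobolev scales using Lemma \ref{lem:leibniz} and its remark (multipliers act continuously, with $\partial_x\chi_i|_{\RN}=0$ automatic since $\chi_i = (\alpha\beta_i)\circ\phi^{-1}$ with $\alpha$ constant near $x=0$), which is precisely why the quadratic partition of unity and the structure of $\chi_i$ were chosen. The remaining details are the routine bookkeeping of constants and indices.
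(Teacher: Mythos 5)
Your proposal is correct and is precisely the argument the paper leaves implicit: the paper offers no written proof, stating only that Lemma \ref{lem:schattenlocal} ``easily extends'' to the compact manifold case, and your chart-by-chart factorization $J_i = \sum_k R_k \circ (\text{local inclusion}) \circ S_k$, controlled via $s_j(AB) \leq s_j(A)\|B\|$ and $s_{i+j-1}(A+B) \leq s_i(A)+s_j(B)$ with the quadratic partition of unity and Lemma \ref{lem:leibniz} (where $\partial_x\chi_i|_{\partial X}=0$ is indeed guaranteed by the choice $\chi_i = (\alpha\beta_i)\circ\phi^{-1}$ with $\alpha$ constant near $x=0$), is exactly the intended extension. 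The only point worth noting is that you correctly supply the interior ingredient (Weyl-type decay for $H^{s+1}_K \hookrightarrow H^s_K$ via Lemma \ref{lem:invariance2}) that the paper also takes for granted.
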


To prove the lemma, first let $L$ denote the self-adjoint operator on $\TSHARP = (0,1) \times \RN$ with distributional action given by $\Delta_\nu$ and form domain $\{ u \in {\Sob}^1(\TSHARP): u(1,\cdot) = 0\}$. The remarks following Lemma \ref{lem:graphnorm2}, Green's formula, and Theorem \ref{theo:bvptheo} show that 
\[
D(L) = \begin{cases} u \in \Sob^2(\TSHARP): \gamma_+ u = u(1,\cdot)= 0 & \text{ if } 0 < \nu <1,
\\
u \in \Sob^2(\TSHARP): u(1,\cdot)= 0 & \text{ if } \nu \geq 1,
\end{cases} 
\]
and in either case Theorem \ref{theo:fredholm} guarantees that $L$ has discrete spectrum. Note that $D((L+1)^{1/2}) = \{ u \in {\Sob}^1(\TSHARP): u|_{x=1} = 0\}$. The eigenvalues and eigenvectors are well known. When $0 < \nu < 1$ the eigenvalues are $|q|^2 + y_{\nu,n}^2+1$, where $q \in \mathbb{Z}^{n-1}$ and $y_{\nu,n}$ is the $n$'th positive root of the Bessel function $Y_\nu$. The corresponding eigenfunction is 
\[
\sqrt{x}Y_\nu(y_{\nu,n}x)\otimes e^{i\left<q,y\right>}.
\]
The zeros $y_{\nu,n}$ satisfy the asymptotic formula
\[
y_{\nu,n} = \left( n + \tfrac{1}{2}\nu - \tfrac{3}{4}\right)\pi + \mathcal{O}(n^{-1})
\]
as $n \rightarrow \infty$. The eigenvalues of the compact operator $(L+1)^{-1/2}$ are therefore $(1+|q|^2 +y_{\nu,n}^2)^{-1/2}$, and if they are listed in descending order  with multiplicity, then
\begin{equation} \label{eq:singularbound}
\lambda_j \leq Cj^{-1/n}
\end{equation}
for some $C>0$. If $\nu \geq 1$ then $Y_{\nu}$ should be replaced by $J_\nu$, and $y_{\nu,n}$ by the zeros $j_{\nu,n}$ of $J_\nu$; the bound \eqref{eq:singularbound} remains valid.

\begin{proof}[Proof of Lemma \ref{lem:singularvaluesmanifold}]
	\begin{inparaenum}
\item 	 First consider the operator $J_1$. By passing to a partition of unity, it suffices to bound the singular values of the inclusion
	\[
	J_1 : D((L+1)^{1/2}) \hookrightarrow L^2(\TSHARP).
	\]
	The operator $(L+1)^{-1/2}$ is an isomorphism acting $L^2(\TSHARP) \rightarrow \{u \in {\Sob}^1(\TSHARP): u(1,\cdot)= 0\}$. Write 
	\[
	J_1=  (L + 1)^{-1/2} (L+1)^{1/2}.
	\]
	Now $ (L + 1)^{-1/2}$ is self-adjoint and non-negative on $L^2(\TSHARP)$, so its singular values are the $\lambda_j$ which satisfy $\lambda_j \leq Cj^{-1/n}$. Furthermore, $(L + 1)^{1/2}$ is bounded ${\Sob}^1(\TSHARP) \rightarrow L^2(\TSHARP)$, so the inequality $s_j(AB) \leq s_j(A)\|B\|$ shows that $s_j(J_1) \leq Cj^{-1/n}$. 
	
\item Next, consider $J_2$. First suppose that $\nu \geq 1$. Again passing to a partition of unity, it suffices to bound the singular values of 
\[
J_2 : \{u \in \Sob^2(\TSHARP): u(1,\cdot) = 0\} \hookrightarrow D((L+1)^{1/2}).
\]
Since the space on the left hand side is just $D(L)$ when $\nu \geq 1$, one proceeds as in the first part of the proof.
	
	The proof when $0 < \nu < 1$ proceeds differently.	 In that case, one considers the inclusion $J_2 : \Sob^2(\TSHARP) \hookrightarrow \Sob^1(\TSHARP)$ directly. Note that $\Sob^2(\TSHARP)$ may be identified with a closed subspace $H$ of $\Sob^1(\TSHARP)^{n} \times \Sob^1_{*}(\TSHARP)$ via the mapping
	\[
	u \mapsto (u,\partial_{y_1}u,\ldots, \partial_{y_{n-1}}u, \TP u).
	\]
	With this in mind, the embedding $\Sob^2(\TSHARP) \hookrightarrow \Sob^1(\TSHARP)$ is identified with the embedding 
	\begin{equation} \label{eq:H2compact}
	\Sob^1(\TSHARP)^{n} \times \Sob^1_{*}(\TSHARP) \hookrightarrow L^2(\TSHARP)^{n+1},
	\end{equation}
	restricted to $H$.  Since $0 < 1-\nu < 1$, by the first part of the proof the singular values of the embedding \eqref{eq:H2compact} are bounded by $Cj^{-1/n}$. The same is therefore true of the embedding $\Sob^2(\TSHARP)\hookrightarrow \Sob^1(\TSHARP)$.
\end{inparaenum}
\end{proof}

\bibliographystyle{alphanum}

\bibliography{adsbib}

\end{document}